\documentclass[11pt,a4paper]{article}
\usepackage[utf8]{inputenc}
\usepackage{amsmath}
\usepackage{amsfonts}
\usepackage{amssymb}
\usepackage{amsthm}
\usepackage{setspace}
\usepackage{hyperref}
\usepackage{graphicx}
\usepackage{stmaryrd}
\usepackage{mathtools}
\usepackage{subcaption}
\usepackage[T1]{fontenc}
\usepackage{xcolor}
\usepackage{float}
\usepackage{tikz-cd}
\usepackage[noadjust]{cite}
\usepackage{cite}
\usepackage{ragged2e}
\usepackage[margin=1.2in]{geometry}
\graphicspath{ {./images/} }
\newtheorem{theorem}{Theorem}[section]
\newtheorem{proposition}[theorem]{Proposition}
\newtheorem{lemma}[theorem]{Lemma}
\newtheorem{corollary}[theorem]{Corollary}

\theoremstyle{definition}
\newtheorem{definition}[theorem]{Definition}
\newtheorem{example}[theorem]{Example}
\newtheorem{algo}[theorem]{Algorithm}
\newtheorem{convention}[theorem]{Convention}
\newtheorem{remark}[theorem]{Remark}

\newtheorem{question}[theorem]{Question}

\newcommand{\Aut}{\mathrm{Aut}}
\newcommand{\Out}{\mathrm{Out}}

\newcommand{\Fix}{\mathrm{Fix}}
\newcommand{\im}{\mathrm{Im}}
\newcommand{\Min}{\mathrm{Min}}
\newcommand{\Stab}{\mathrm{Stab}}

\title{Homomorphisms between XL-type Artin groups}
\author{Mart\'in Blufstein, Alexandre Martin, and Nicolas Vaskou}
\date{}

\begin{document}

\maketitle 

\begin{abstract}
\centering \justifying We study homomorphisms between XL-type Artin groups and show that, in a suitable sense, a generic Artin group is both hopfian and co-hopfian.

For XL-type Artin groups over complete graphs, we describe all possible homomorphisms with sufficiently large image, and prove in particular that such groups are both hopfian and co-hopfian. For Artin groups over general graphs with all labels at least $6$, we characterise in terms of the presentation graph exactly when these groups are co-hopfian, as well as when they have a finite outer automorphism group. When in addition the presentation graph has no cut-vertex, we show that their automorphism group is finitely generated and we provide a generating set.  
\end{abstract}

\noindent \rule{7em}{.4pt}\par

\small

\noindent 2020 \textit{Mathematics subject classification.} 20F65, 20F36, 20F67, 20F28.

\noindent \textit{Key words.} Artin groups, automorphism groups, Hopf property, co-Hopf property.

\normalsize

\section{Introduction}

\paragraph{Motivation.}  Artin groups are an actively studied class of groups, with connections to braid groups and Coxeter groups. Recall that a \textbf{presentation graph} is a finite simplicial graph $\Gamma$ where every edge between vertices $s, t$ is labelled by an integer $m_{st} \geq 2$.
Given a presentation graph $\Gamma$, the associated  \textbf{Artin group} $A_\Gamma$ is the group given by the following presentation:
\[A_\Gamma \coloneqq \langle s \in V(\Gamma) \mid \Pi(s, t; m_{st}) = \Pi(t, s; m_{st})  ~ \mbox{ whenever } s,t \mbox{ are adjacent in }\Gamma \rangle, \]
where $\Pi(x, y; k)$ denotes the alternating product of $xyxy\cdots$ with $k$ letters. The \textbf{rank} of an Artin group $A_{\Gamma}$ with presentation graph $\Gamma$ is defined as the number $|V(\Gamma)|$ of standard generators for that presentation.

Given a group, or a family of groups, the study of the isomorphisms and homomorphisms between groups in this family is a natural problem. For Artin groups, besides being interesting in their own right, such questions are also relevant for other open problems:

\medskip

\textit{Isomorphisms and automorphism groups.} Since an Artin group is given by means of a presentation graph, i.e. a choice of presentation, the Isomorphism Problem asks when two presentation graphs yield isomorphic Artin groups.
This problem is still wide open for general Artin groups. It is also still open for Coxeter groups, although much more is known in that case (see \cite{santosregoschwer2024} for a recent survey).
In the case of Artin groups, this has been solved only for a few classes, such as right-angled Artin groups \cite{droms1987isomorphisms}, spherical-type Artin groups \cite{paris2003artin}, and large-type Artin groups \cite{vaskou2023isomorphism}.

Once one knows when two Artin groups are isomorphic, the descriptions of all possible isomorphisms between them boils down to a study of the automorphism groups of individual Artin groups.
For right-angled Artin groups, a generating set is known by work of Laurence and Servatius \cite{servatius1989automorphisms, laurence1995generating}. For general Artin groups, the automorphism group is known only for a few families: certain large-type Artin groups \cite{crisp2005automorphisms, an2022automorphism, vaskou2023automorphisms,paris2024endomorphisms, paris2024endomorphismsB}, as well as certain spherical-type and affine-type Artin groups \cite{gilbert2000tree, charney2005automorphism, crisp2005automorphisms, soroko2021artin, castel2023endomorphism}.
\medskip

\textit{Endomorphisms, and the hopf / co-hopf property.} Very little is currently known about the general endomorphisms of an Artin group, with only a few cases being known \cite{paris2024endomorphisms, paris2024endomorphismsB}.
However, more results are known for certain classes of endomorphisms, such as monomorphisms and epimorphisms.

An important particular case is to determine which Artin groups are \textbf{co-hopfian}, i.e. which Artin groups are not isomorphic to one of their strict subgroups.
Many Artin groups are known not to be co-hopfian, such as non-trivial right-angled Artin groups and spherical-type Artin groups.
Before this article, only Artin groups of affine type $\widetilde{A}_n$ and $\widetilde{C}_n$ are known to be co-hopfian \cite{bell2007injections}.

Another related problem is to determine whether Artin groups are \textbf{hopfian}, i.e. whether they cannot be isomorphic to a non-trivial quotient.
This is connected to the problem of determining which Artin groups are residually finite, as finitely generated residually finite groups are hopfian.
At the moment, this is only known for a few families including right-angled Artin groups, spherical-type Artin groups \cite{cohenwaleslinearity2002}, some even Artin groups \cite{blascogarcia2019even}, and some two-dimensional Artin groups  \cite{jankiewicz2022RF}. 
To our knowledge, before the results presented in this article, there were no examples of hopfian Artin groups for which we do not already know that they are residually finite. (In a forthcoming article \cite{MangioniSisto}, Mangioni and Sisto prove the Hopf property for another class of Artin groups that includes in particular the even Artin groups of XL-type.)

\paragraph{Statement of results.} In this article, we study the homomorphisms and isomorphisms between certain families of Artin groups.
We will focus on \textbf{extra-large-type} Artin groups (hereafter simply \textbf{XL-type}), i.e. those for which all labels satisfy $m_{st} \geq 4$.
Our results concern in particular the following two families: 
\begin{itemize}
    \item XL-type Artin groups that are \textbf{free-of-infinity} (i.e. the presentation graph $\Gamma$ is a complete graph), and
    \item Artin groups such that all labels satisfy $m_{st}\geq 6$ (which we will simply refer to as \textbf{XXXL-type} Artin groups, in accordance with existing terminology). 
\end{itemize}

In \cite{goldsborough2023random}, Goldsborough--Vaskou introduced a model of random Artin groups, that depends on a choice of growth parameter.
In this model, the aforementioned two classes are ``generic” for a suitable choice of parameter.
In particular, a ``random Artin group'' satisfies the results listed below.
This contrasts with other well-studied families of Artin groups that are not generic, such as spherical-type Artin groups, right-angled Artin groups, or the large-type triangle-free Artin groups studied by Crisp and An-Cho \cite{crisp2005automorphisms, an2022automorphism}.

\paragraph{Free-of-infinity Artin groups.} Let us start with free-of-infinity Artin groups. In this case, we can actually describe \textit{all} homomorphisms that have a sufficiently large image.

\begin{theorem}\label{thm:structure_hom}
    Let $f: A_\Gamma \rightarrow A_{\Gamma'}$ be a homomorphism between XL-type free-of-infinity Artin groups with rank at least $3$, such that the image of $f$ is not cyclic or contained in a subgroup of $A_{\Gamma'}$ that virtually splits non-trivially as a  direct product. Let $\Gamma_1 \subseteq \Gamma$ be a minimal induced subgraph such that $f(A_{\Gamma_1}) = f(A_\Gamma)$. Then there exist an induced subgraph $\Gamma_1' \subseteq \Gamma'$, an isomorphism  $\iota: \Gamma_1 \rightarrow \Gamma_1'$ of unlabelled graphs, an element $g \in A_{\Gamma'}$, and an integer $\varepsilon =\pm 1$ such that the following holds: 
    \begin{itemize}
        \item for every distinct $s, t \in V(\Gamma_1)$, we have that $m_{st}$ is a multiple of $m_{\iota(s)\iota(t)}$, and
        \item for every $s\in V(\Gamma_1)$ we have $f(s)=g\iota(s)^\varepsilon g^{-1}$.
    \end{itemize}   
    In particular, the image of $f$ is the parabolic subgroup $gA_{\Gamma_1'}g^{-1}$ of $A_{\Gamma'}$. 
\end{theorem}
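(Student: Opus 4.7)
The plan is to prove the theorem in three main stages: first, characterize each $f(s)$ for $s \in V(\Gamma_1)$ as a conjugate of a power of a standard generator of $A_{\Gamma'}$; second, promote these individual conjugacies into a simultaneous one, yielding a common conjugator $g \in A_{\Gamma'}$, a common sign $\varepsilon \in \{\pm 1\}$, and the injection $\iota$; third, deduce the divisibility condition on labels from the braid relations.

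For the first stage, the main tool will be the fact that a large-type free-of-infinity Artin group of hyperbolic type admits an acylindrically hyperbolic action (for instance on a hyperbolic model associated to the Deligne complex), together with the observation that each standard generator $s$ has cyclic centralizer $\langle s \rangle$ in $A_{\Gamma'}$. The hypotheses on the image of $f$ are precisely designed to rule out the two degenerate situations that could arise: the image being (virtually) cyclic, and the image living inside a dihedral parabolic (excluded by the no-virtual-product assumption, since dihedral parabolics of large-type Artin groups are virtually direct products of a free group with $\mathbb{Z}$). Under these non-degeneracy assumptions, combined with the rigidity of axes of standard generators, I aim to force each $f(s)$ to be loxodromic with axis equal to that of some conjugate of a standard generator, producing a decomposition $f(s) = g_s \iota(s)^{k_s} g_s^{-1}$ with $\iota(s) \in V(\Gamma')$, $g_s \in A_{\Gamma'}$ and $k_s \in \mathbb{Z}\setminus\{0\}$.

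For the second stage, I will exploit the braid relations of $A_\Gamma$. For any two distinct $s,t \in V(\Gamma_1)$, applying $f$ to $\Pi(s,t;m_{st}) = \Pi(t,s;m_{st})$ forces the subgroup $\langle f(s), f(t) \rangle$ to lie in a conjugate of a dihedral parabolic of $A_{\Gamma'}$; by the rigidity of such subgroups, this will give $k_s = k_t = \pm 1$, the vertices $\iota(s), \iota(t)$ must be distinct (else minimality of $\Gamma_1$ would be contradicted, since one of $s,t$ could then be removed while preserving the image), and the conjugators $g_s, g_t$ can be adjusted to coincide. Since $\Gamma_1$ is connected (as an induced subgraph of the complete graph $\Gamma$), this local uniformity propagates around $\Gamma_1$, producing the global $g$ and common sign $\varepsilon$ and showing that $\iota$ is injective on vertices. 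Because $\Gamma'$ is also complete, $\iota$ automatically induces an isomorphism of unlabelled graphs onto its image $\Gamma_1' \coloneqq \iota(\Gamma_1)$.

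For the third stage, conjugating the image of the braid relation $\Pi(s,t;m_{st}) = \Pi(t,s;m_{st})$ by $g^{-1}$ and substituting $f(s) = g\iota(s)^\varepsilon g^{-1}$ yields, in $A_{\Gamma'}$, the identity $\Pi(\iota(s)^\varepsilon, \iota(t)^\varepsilon; m_{st}) = \Pi(\iota(t)^\varepsilon, \iota(s)^\varepsilon; m_{st})$, which is equivalent to the braid relation of length $m_{st}$ between $\iota(s)$ and $\iota(t)$. By standard facts about dihedral Artin groups, this relation holds inside $A_{\{\iota(s),\iota(t)\}}$ if and only if $m_{\iota(s)\iota(t)}$ divides $m_{st}$, yielding the divisibility condition. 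The main obstacle will be the first stage: forcing each $f(s)$ to act loxodromically with axis inside a cyclic parabolic of $A_{\Gamma'}$, rather than as some more complicated element, is the deepest part of the argument and relies crucially on the hyperbolic-type, free-of-infinity and rank hypotheses to control the dynamics of $f(A_\Gamma)$ on the associated hyperbolic complex.
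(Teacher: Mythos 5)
Your outline reproduces the general shape of the argument, but two of its load-bearing steps are either wrong as stated or assumed rather than proved. First, in Stage 1 the claimed ``observation that each standard generator $s$ has cyclic centralizer $\langle s\rangle$ in $A_{\Gamma'}$'' is false and is in fact the opposite of what is needed: in a connected large-type Artin group of rank at least $3$ the centraliser of a standard generator contains $\mathbb{Z}\times F$ with $F$ non-abelian free (Proposition~\ref{PropCentralisers}), and it is precisely this largeness that constrains the possible images of generators. Moreover, since $f$ is not assumed injective, one cannot transfer centraliser information through $f$ directly; the paper has to pass to an \emph{optimal} restriction (Lemma~\ref{lem:restriction_injective_S}) and run a minset analysis in the Deligne complex (Lemmas~\ref{lem:not_cyclic_centraliser}--\ref{lem:minset_tree_intersect}) to exclude the cases where $\Min(f(s))$ is a vertex or a line before concluding that $f(s)$ is conjugate to a power of a standard generator; also note that such elements are elliptic (they fix standard trees), not loxodromic with an axis, so the dynamical picture you describe needs revising. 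You acknowledge Stage 1 is the hard part, but no actual mechanism is offered for it.

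Second, and more seriously, Stage 2 is a genuine gap: knowing for each edge $\{s,t\}$ of $\Gamma_1$ that $f(s),f(t)$ can be \emph{separately} conjugated into standard position does not formally yield a single conjugator, and ``local uniformity propagates by connectedness'' is exactly the statement that has to be proved geometrically. The conjugator for a pair is only determined up to the setwise stabiliser of the relevant dihedral parabolic, and a third generator gives a priori incompatible choices. In the paper this is the content of the Triangle of Standard Trees Lemma~\ref{lem:triangle_standard_trees} together with Lemmas~\ref{lem:trivial_labels_complete}, \ref{lem:config_standard_trees} and~\ref{lem:minset_tree_empty_triple_intersection}: one shows the standard trees $\Min(f(s))$ pairwise intersect with empty triple intersections and then, using angles and convexity of fundamental domains in the CAT(0) Deligne complex, that all of them meet a \emph{single} translate $gK_{\Gamma'}$. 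That this cannot be waved through is shown by the paper's example with labels $(3,7,7)\to(3,2,7)$, where three elements are pairwise conjugate into dihedral standard position but not simultaneously; so the local-to-global step genuinely requires the free-of-infinity, large-type hypotheses via this standard-tree geometry. Finally, your sketch does not address why distinct generators of the minimal $\Gamma_1$ have distinct trees (injectivity of $\iota$ uses optimality, Lemma~\ref{lem:subgroup_conjugates_generators}, and Lemma~\ref{lem:alternating product equality}), which is also needed for the parabolic-image conclusion. The third stage (divisibility of labels via Lemma~\ref{lem:alternating product equality}) is fine and matches the paper.
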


We obtain the following corollary, which in particular provides the first  ``generic'' family of Artin groups known to be  co-hopfian:

\begin{corollary}\label{thm:hopf_cohopf}
    Let $A_\Gamma$ be an XL-type free-of-infinity Artin group of rank at least $3$. Then $A_\Gamma$ is hopfian and co-hopfian.
\end{corollary}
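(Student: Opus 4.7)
The plan is to derive both properties simultaneously by applying Theorem~\ref{thm:structure_hom} to an endomorphism $f : A_\Gamma \to A_\Gamma$ that is either injective or surjective, and to show that $f$ must in fact be an automorphism. I would first verify the hypotheses of Theorem~\ref{thm:structure_hom} for the image $f(A_\Gamma)$. The image is non-cyclic since $A_\Gamma$ has rank at least $3$. For the ``not contained in a virtual direct product'' condition, I would use the fact that under our hypotheses $A_\Gamma$ is acylindrically hyperbolic with trivial centre; any embedding of a finite-index subgroup of $A_\Gamma$ into a direct product of two infinite groups would produce two commuting non-elementary subgroups, which is forbidden in an acylindrically hyperbolic group.

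Once the hypotheses are met, Theorem~\ref{thm:structure_hom} produces a minimal $\Gamma_1 \subseteq \Gamma$, an induced subgraph $\Gamma_1' \subseteq \Gamma$, an isomorphism of unlabelled graphs $\iota : \Gamma_1 \to \Gamma_1'$ satisfying $m_{\iota(s)\iota(t)} \mid m_{st}$, an element $g \in A_\Gamma$, and a sign $\varepsilon$, such that $f(s) = g\iota(s)^\varepsilon g^{-1}$ on $V(\Gamma_1)$ and $f(A_\Gamma) = gA_{\Gamma_1'}g^{-1}$. I then argue that $\Gamma_1 = \Gamma_1' = \Gamma$. In the injective case, the equality $f(A_{\Gamma_1}) = f(A_\Gamma)$ forces $A_{\Gamma_1} = A_\Gamma$, hence $\Gamma_1 = \Gamma$ by the rigidity of parabolic subgroups in large-type Artin groups. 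In the surjective case, $gA_{\Gamma_1'}g^{-1} = A_\Gamma$ forces $A_{\Gamma_1'} = A_\Gamma$, hence $\Gamma_1' = \Gamma$. Either way, matching vertex counts via $\iota$ yields $\Gamma_1 = \Gamma_1' = \Gamma$. Since $\iota$ is now a permutation of the finite set $V(\Gamma)$, it has some finite order $N$; iterating the divisibility $m_{\iota(s)\iota(t)} \mid m_{st}$ along the powers of $\iota$ gives a cyclic chain $m_{st} = m_{\iota^N(s)\iota^N(t)} \mid m_{\iota^{N-1}(s)\iota^{N-1}(t)} \mid \cdots \mid m_{\iota(s)\iota(t)} \mid m_{st}$, forcing equality throughout, so $\iota$ is in fact an isomorphism of labelled graphs.

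Therefore $f$ is the composition of three automorphisms of $A_\Gamma$: the automorphism induced by the labelled graph automorphism $\iota$, the inversion $s \mapsto s^\varepsilon$ (which extends to an automorphism because the braid relations are invariant under simultaneously inverting all standard generators), and conjugation by $g$. Hence $f$ is an automorphism, proving at once that every injective endomorphism is surjective (co-hopfian) and that every surjective endomorphism is injective (hopfian). The only delicate step in this deduction is verifying the virtual-direct-product hypothesis; the rest is a rigidity bookkeeping exercise once Theorem~\ref{thm:structure_hom} is available, and the main difficulty lies in the proof of that theorem.
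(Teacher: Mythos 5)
Your overall route is exactly the paper's: check the hypotheses of Theorem~\ref{thm:structure_hom} for an injective or surjective endomorphism, then use the data $(\Gamma_1,\Gamma_1',\iota,g,\varepsilon)$ to force $\Gamma_1=\Gamma_1'=\Gamma$ and conclude that $f$ is an automorphism. The bookkeeping after the theorem is correct, and in places more explicit than the paper's: the cyclic divisibility chain along the powers of $\iota$ is a clean way to see that $\iota$ preserves labels (the paper only says ``the same argument as in a previous proof''), and in the injective case you could even shortcut it, since once $\Gamma_1'=\Gamma$ the theorem already gives $f(A_\Gamma)=gA_{\Gamma}g^{-1}=A_\Gamma$, i.e.\ surjectivity.

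The one genuine weak point is your verification of the hypothesis that the image is not contained in a subgroup of the target that virtually splits non-trivially as a direct product. The claim that an embedding of a finite-index subgroup of $A_\Gamma$ into a direct product of two infinite groups would produce two commuting non-elementary subgroups is false: a subgroup of $H_1\times H_2$ need not meet either factor non-trivially (the diagonal copy of $F_2$ in $F_2\times F_2$, or $F_2\times\{1\}$ inside $F_2\times\mathbb{Z}$, are acylindrically hyperbolic groups embedded in products of infinite groups). This does no harm in the surjective case, where the image is all of $A_\Gamma$ and one only needs that $A_\Gamma$ itself does not virtually split, which does follow from acylindrical hyperbolicity (a group with a finite-index subgroup $H_1\times H_2$, both factors infinite, is never acylindrically hyperbolic). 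But in the injective case the image is a priori a proper subgroup, and you must rule out its containment in some virtually splitting $H\leq A_\Gamma$, not merely show that the image does not split; your argument as written does not do this. A correct fix uses the geometry rather than abstract product arguments: if $H_0=H_1\times H_2\leq A_\Gamma$ with both factors non-trivial (hence infinite, by torsion-freeness), then $H_0$ contains $\mathbb{Z}^2$ and is not acylindrically hyperbolic, so by Theorem~\ref{thm:cone-off} and Osin's trichotomy (as used throughout Section~\ref{sec:image_generator}) it is elliptic on $\widehat{D}_\Gamma$ and hence lies in a dihedral parabolic subgroup or in a standard-tree stabiliser $\mathbb{Z}\times F_k$. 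Then $J:=f(A_\Gamma)\cap H_0$ is a finite-index subgroup of a copy of $A_\Gamma$, so it is torsion-free acylindrically hyperbolic (hence has trivial centre) and contains $\mathbb{Z}^2$; since $J$ meets the infinite cyclic centre of the dihedral parabolic, respectively the $\mathbb{Z}$-factor of $\mathbb{Z}\times F_k$, trivially, it injects into a virtually free group, contradicting $\mathbb{Z}^2\leq J$. (Alternatively, observe, as the authors implicitly do, that the hypothesis is only ever invoked in the proof of Theorem~\ref{thm:structure_hom} for precisely these subgroups.) With that repair your proof is complete and agrees with the paper's.
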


An important open problem related to the Isomorphism Problem is to determine whether the rank of an Artin group $A_{\Gamma}$ is an isomorphism invariant.
In the case of large-type Artin groups, this is indeed the case and follows from Vaskou's solution to the Isomorphism Problem for that class (\cite{vaskou2023isomorphism}).
However, a natural related problem is to understand how the rank behaves under other types of endomorphisms, such as injective and surjective maps.
We obtain the following monotonicity result: 

\begin{corollary}\label{cor:surj_inj}
    Let $f: A_\Gamma \rightarrow A_{\Gamma'}$ be a homomorphism between XL-type free-of-infinity Artin groups. If $f$ is surjective, then $|V(\Gamma)|\geq |V(\Gamma')|$. If $f$ is injective, then $|V(\Gamma)|\leq |V(\Gamma')|$.
\end{corollary}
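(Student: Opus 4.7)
The plan is to derive both implications from Theorem~\ref{thm:structure_hom} applied to $f$ itself, after handling small-rank configurations by ad hoc arguments.

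For the surjective direction, I first dispose of the cases where $|V(\Gamma)|$ or $|V(\Gamma')|$ is at most $2$. If $|V(\Gamma')|\leq 2$ the inequality is immediate (a cyclic or dihedral target imposes no real constraint on the source rank), while a source with $|V(\Gamma)|\leq 2$ surjecting onto a target of rank at least $3$ would force $A_{\Gamma'}$ to be generated by at most two elements, contradicting known lower bounds on the minimal generating number in this class. When both ranks are at least $3$, the image of $f$ is all of $A_{\Gamma'}$, which is neither cyclic nor virtually a non-trivial direct product (a standard feature of large-type free-of-infinity Artin groups of hyperbolic type), so Theorem~\ref{thm:structure_hom} applies. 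It produces a minimal $\Gamma_1\subseteq\Gamma$ and an induced subgraph $\Gamma_1'\subseteq\Gamma'$ with $|V(\Gamma_1)|=|V(\Gamma_1')|$, together with the description $f(A_\Gamma)=gA_{\Gamma_1'}g^{-1}$. Surjectivity forces this conjugate parabolic to be all of $A_{\Gamma'}$, hence $\Gamma_1'=\Gamma'$ by the standard fact that proper induced subgraphs give proper standard parabolic subgroups. Therefore $|V(\Gamma)|\geq|V(\Gamma_1)|=|V(\Gamma_1')|=|V(\Gamma')|$.

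For the injective direction, small ranks are again handled directly (an injective map from $\mathbb{Z}$ or from a dihedral Artin group into something of strictly smaller rank is ruled out by abelianness or by comparing abelianisations). When $|V(\Gamma)|\geq 3$, the image is an isomorphic copy of $A_\Gamma$, hence non-cyclic, and one must check that it is not contained in any virtually direct-product subgroup of $A_{\Gamma'}$. This should follow from the strong negative-curvature features of $A_\Gamma$ in the hyperbolic-type setting (acylindrical, and in fact Gromov, hyperbolicity), which impose virtual cyclicity of centralisers of infinite-order elements and are incompatible with sitting inside a virtual direct product. Theorem~\ref{thm:structure_hom} then yields a minimal $\Gamma_1\subseteq\Gamma$ with $f(A_{\Gamma_1})=f(A_\Gamma)$. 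Injectivity gives $A_{\Gamma_1}=A_\Gamma$ as subgroups of $A_\Gamma$, and the properness of standard parabolics associated to proper induced subgraphs forces $\Gamma_1=\Gamma$. The unlabelled-graph isomorphism $\iota:\Gamma_1\to\Gamma_1'\subseteq\Gamma'$ supplied by Theorem~\ref{thm:structure_hom} then gives $|V(\Gamma)|=|V(\Gamma_1')|\leq|V(\Gamma')|$.

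The only step I expect to require real work is verifying the no-virtual-splitting hypothesis in the injective case, i.e. ruling out that an isomorphic copy of a large-type free-of-infinity Artin group of hyperbolic type can sit inside a virtually direct-product subgroup of another such Artin group. This rests on centraliser rigidity and the hyperbolic-like features of these groups rather than on Theorem~\ref{thm:structure_hom} itself. Every other step is an immediate bookkeeping consequence of Theorem~\ref{thm:structure_hom} together with standard properties of parabolic subgroups and of minimal generating sets in the class considered.
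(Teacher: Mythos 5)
Your overall route is the intended one: both inequalities are meant to fall out of Theorem~\ref{thm:structure_hom} exactly as in your bookkeeping (surjectivity forces $\Gamma_1'=\Gamma'$, injectivity forces $\Gamma_1=\Gamma$, and proper induced subgraphs give proper standard parabolics by Theorem~\ref{thm:vdL}). However, the step you yourself single out as the crux --- verifying, in the injective case, that the image is not contained in a subgroup of $A_{\Gamma'}$ that virtually splits --- is justified by facts that are false for these groups. Large-type free-of-infinity Artin groups of hyperbolic type and rank at least $3$ are never Gromov hyperbolic (every dihedral parabolic contains $\mathbb{Z}^2$), and centralisers of infinite-order elements are not virtually cyclic: by Proposition~\ref{PropCentralisers} the centraliser of a standard generator contains $\mathbb{Z}\times F$ with $F$ non-abelian free. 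The correct verification is the one used in the proof of Corollary~\ref{thm:hopf_cohopf}: for rank at least $3$ these groups are acylindrically hyperbolic \cite{vaskou2021acylindrical}, hence centre-less and not virtually a direct product, and since they contain $\mathbb{Z}^2$ they cannot sit (even up to finite index) inside the relevant virtually $\mathbb{Z}\times(\mathrm{free})$ subgroups of $A_{\Gamma'}$ such as dihedral parabolics or standard-tree stabilisers, whose finitely generated subgroups are free or $\mathbb{Z}\times(\mathrm{free})$. Relatedly, Theorem~\ref{thm:structure_hom} is stated only when the target also has rank at least $3$, so the configuration $|V(\Gamma)|\geq 3$, $|V(\Gamma')|\leq 2$ must be excluded \emph{before} invoking it; the embedding obstruction just described does this, but your write-up applies the theorem there without comment.

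In the surjective direction, your disposal of the case $|V(\Gamma)|\leq 2$, $|V(\Gamma')|\geq 3$ leans on ``known lower bounds on the minimal generating number in this class''; I am not aware of such a citable bound (abelianisations give nothing when labels are odd, and rank alone does not preclude $2$-generation --- spherical braid groups of any rank are $2$-generated), so as written this is a gap. A short correct argument is available with the paper's tools: a surjection from a dihedral Artin group sends its centre to a central element of $A_{\Gamma'}$, which is trivial because $A_{\Gamma'}$ is torsion-free and acylindrically hyperbolic, hence centre-less; the map then factors through the quotient of the dihedral group by its centre, a free product of two cyclic groups at least one of which is finite, and torsion-freeness of $A_{\Gamma'}$ kills the finite free factors, so the image would be cyclic, contradicting non-abelianness of $A_{\Gamma'}$. (Also, your parenthetical that a dihedral target ``imposes no real constraint on the source rank'' is slightly off: when $|V(\Gamma')|=2$ you still need to rule out a source of rank at most $1$, which non-abelianness of the dihedral target does.) With these repairs your argument is complete and coincides with the deduction the paper intends.
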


Note that in the injective case, we obtain the following stronger corollary: 

\begin{corollary}\label{cor:injective_labels}
    Let $f:A_\Gamma \hookrightarrow A_{\Gamma'}$ be an injective homomorphism between XL-type free-of-infinity Artin groups of rank at least $3$. Then $\Gamma$ is isomorphic (as a labelled graph) to a subgraph of $\Gamma'$.
\end{corollary}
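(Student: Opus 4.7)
The plan is to invoke Theorem~\ref{thm:structure_hom} applied to $f$ and upgrade the resulting unlabelled graph isomorphism to a labelled one, using injectivity to control the divisibility of labels in both directions.

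First I would verify that Theorem~\ref{thm:structure_hom} applies. Since $f$ is injective and $A_\Gamma$ has rank at least $3$, the image $f(A_\Gamma)\cong A_\Gamma$ is non-cyclic; it should also not be contained in a subgroup of $A_{\Gamma'}$ that virtually splits as a non-trivial direct product, since large-type free-of-infinity Artin groups of hyperbolic type of rank at least $3$ are acylindrically hyperbolic with trivial centre and hence no finite-index subgroup of $A_\Gamma$ embeds into a direct product of two infinite groups. Theorem~\ref{thm:structure_hom} then produces a minimal induced subgraph $\Gamma_1\subseteq \Gamma$ with $f(A_{\Gamma_1})=f(A_\Gamma)$, an induced subgraph $\Gamma_1'\subseteq \Gamma'$, an unlabelled isomorphism $\iota:\Gamma_1\to \Gamma_1'$, an element $g\in A_{\Gamma'}$ and $\varepsilon\in\{\pm 1\}$ such that $f(s)=g\iota(s)^\varepsilon g^{-1}$ for all $s\in V(\Gamma_1)$ and $m_{\iota(s)\iota(t)}\mid m_{st}$ for all distinct $s,t\in V(\Gamma_1)$. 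Injectivity now forces $\Gamma_1=\Gamma$: if $\Gamma_1\subsetneq \Gamma$, the standard parabolic subgroup $A_{\Gamma_1}$ is proper in $A_\Gamma$ (as one sees by projecting to the Coxeter group $W_\Gamma$), contradicting $f(A_{\Gamma_1})=f(A_\Gamma)$.

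With $\iota:\Gamma\to \Gamma_1'\subseteq \Gamma'$ now an unlabelled isomorphism satisfying $m_{\iota(s)\iota(t)}\mid m_{st}$, the remaining task is to upgrade divisibility to equality. For distinct $s,t\in V(\Gamma)$, set $k=m_{\iota(s)\iota(t)}$; then $\Pi(\iota(s)^\varepsilon,\iota(t)^\varepsilon;k)=\Pi(\iota(t)^\varepsilon,\iota(s)^\varepsilon;k)$ in $A_{\Gamma'}$ (for $\varepsilon=-1$ this is obtained by inverting both sides of the standard Artin relation), so conjugating by $g$ gives $\Pi(f(s),f(t);k)=\Pi(f(t),f(s);k)$. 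By injectivity of $f$, this lifts to $\Pi(s,t;k)=\Pi(t,s;k)$ in $A_\Gamma$. Projecting to the Coxeter group $W_\Gamma$, this relation must hold in the dihedral subgroup generated by the images of $s$ and $t$, which has order $2m_{st}$; a direct computation in a dihedral group shows this forces $m_{st}\mid k$. Combined with $m_{\iota(s)\iota(t)}\mid m_{st}$, we obtain $m_{st}=m_{\iota(s)\iota(t)}$, so $\iota$ is a labelled graph isomorphism onto a subgraph of $\Gamma'$.

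The main anticipated obstacle is the verification of the structural hypothesis of Theorem~\ref{thm:structure_hom} --- namely that $f(A_\Gamma)$ is not contained in a virtually direct-product subgroup of $A_{\Gamma'}$ --- which requires invoking properties of these Artin groups beyond what the statement provides, and is the only non-formal step. Once this is granted, the argument is a direct combination of Theorem~\ref{thm:structure_hom} with the standard dihedral-group computation.
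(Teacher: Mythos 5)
Your overall route coincides with the paper's: apply Theorem~\ref{thm:structure_hom}, observe that injectivity forces $\Gamma_1=\Gamma$, and then use injectivity once more to rule out strict divisibility $m_{\iota(s)\iota(t)}<m_{st}$. The only real difference is the mechanism in the last step: the paper pulls the relation $\Pi(s,t;m_{\iota(s)\iota(t)})=\Pi(t,s;m_{\iota(s)\iota(t)})$ back to $A_\Gamma$ and invokes \cite[Lemma 6]{appel1983artin} (no non-trivial word of length $<2m_{st}$ represents the identity in the dihedral Artin group $\langle s,t\rangle$), whereas you project to the Coxeter group and compute in the finite dihedral group of order $2m_{st}$ that the relation forces $m_{st}\mid m_{\iota(s)\iota(t)}$. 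That computation is correct (the relation is equivalent to $(\bar{s}\bar{t})^{k}=1$ there), and is if anything more elementary; you could also have cited the paper's own Lemma~\ref{lem:alternating product equality} inside the parabolic $\langle s,t\rangle\cong A_{m_{st}}$ via Theorem~\ref{thm:vdL}. Your explicit handling of $\Gamma_1=\Gamma$ and of the sign $\varepsilon$ fills in details the paper leaves implicit.

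The one step that fails as written is your verification of the hypothesis of Theorem~\ref{thm:structure_hom}. The auxiliary claim that ``no finite-index subgroup of $A_\Gamma$ embeds into a direct product of two infinite groups'' is false for every infinite group: $P$ embeds into $P\times\mathbb{Z}$ as the first factor (and, e.g., $F_2\leq F_2\times\mathbb{Z}$ shows acylindrical hyperbolicity is no obstruction to such embeddings). Acylindrical hyperbolicity plus torsion-freeness does show that the image \emph{itself} does not virtually split as a product, which is all the paper records in the proof of Corollary~\ref{thm:hopf_cohopf}; but to exclude containment in a virtually-product subgroup one should argue geometrically, as in Lemma~\ref{lem:abstract_dihedral_subgroups}: a subgroup of $A_{\Gamma'}$ that virtually splits as a product of two infinite groups has bounded orbits on the coned-off Deligne complex, hence fixes a vertex, so any such subgroup containing $f(A_\Gamma)$ would place $f(A_\Gamma)$ inside a dihedral parabolic or a standard-tree stabiliser; both are cyclic-centre-by-(virtually free), and since $f(A_\Gamma)\cong A_\Gamma$ has trivial centre it would then embed into a virtually free group, contradicting the presence of $\mathbb{Z}^2$ in $A_\Gamma$. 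With this repair (or by noting that the only virtually-product subgroups actually used in the proof of Theorem~\ref{thm:structure_hom} are precisely these two kinds), your argument is complete.
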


\paragraph{XXXL-type Artin groups.} We are also able to study Artin groups over non-complete graphs, specifically in the case where the labels all satisfy $m_{st} \geq 6$.
Our results will involve the following graph-theoretic notions: given a connected simplicial graph $\Gamma$, a vertex $v$ of $\Gamma$ is a \textbf{cut-vertex} if $\Gamma - v$ is disconnected.
An edge $e$ of $\Gamma$ is a \textbf{separating edge} if there exist two induced connected subgraphs $\Gamma_1, \Gamma_2$ such that $\Gamma = \Gamma_1 \cup \Gamma_2$ and $\Gamma_1 \cap \Gamma_2 = e$.
Note that this definition is coherent with the literature on Artin groups \cite{crisp2005artin, an2022automorphism} but differs from another notion of separating edge (sometimes also referred to as ``bridge'') found in the graph theory literature.

For this family of Artin groups, we completely characterise when the group is co-hopfian:

\begin{theorem}\label{thm:XXXL_cohopf}
     An Artin group of XXXL-type $A_\Gamma$ is co-hopfian if and only if $\Gamma$ is connected, is not an edge, and does not have a cut-vertex.
\end{theorem}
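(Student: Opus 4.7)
I would prove the biconditional by handling necessity and sufficiency separately.

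For the \emph{necessity} direction, I would exhibit an injective non-surjective endomorphism of $A_\Gamma$ in each obstructed case. If $\Gamma$ is disconnected, $A_\Gamma$ is a non-trivial free product, which is classically not co-hopfian (e.g., conjugating one free factor by a non-trivial element of another yields a proper isomorphic copy via the free-product normal form). If $\Gamma$ is a single edge labelled $m \geq 6$, then $A_\Gamma$ is a dihedral Artin group whose centre $Z = \langle z \rangle$ is infinite cyclic; the assignment $s \mapsto sz$, $t \mapsto tz$ is well-defined by centrality of $z$, is readily checked to be injective, and its image has infinite index (visible on the abelianisation, where the image lies in a proper subgroup of $\mathbb{Z}$ or $\mathbb{Z}^2$). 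If $v$ is a cut-vertex, $A_\Gamma$ splits as an amalgamated product $A_{\Gamma_1} *_{\langle v \rangle} A_{\Gamma_2}$, and the Dehn-twist endomorphism that is the identity on $A_{\Gamma_1}$ and conjugation by $v^k$ (for any $k \neq 0$) on $A_{\Gamma_2}$ is injective with proper image, as a Bass-Serre tree argument shows.

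For the \emph{sufficiency} direction, assume $\Gamma$ is connected, is not an edge, and has no cut-vertex, and let $f : A_\Gamma \to A_\Gamma$ be an injective endomorphism. The plan is to apply the paper's rigidity machinery for injective homomorphisms between XXXL-type Artin groups --- the same circle of ideas underlying Theorem~\ref{thm:structure_hom} in the free-of-infinity setting --- to produce a labelled graph automorphism $\sigma : \Gamma \to \Gamma$, a sign $\varepsilon \in \{\pm 1\}$, and an element $g \in A_\Gamma$ such that
\[ f(s) = g\,\sigma(s)^{\varepsilon}\,g^{-1} \quad \text{for every } s \in V(\Gamma). \]
Granting this, $f$ factors as an inner automorphism composed with a labelled graph automorphism and possibly the inversion involution; each factor is an automorphism of $A_\Gamma$, so $f$ is surjective.

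Proving the structural claim would proceed in three stages. First, show that each image $f(s)$ is a conjugate of a power of a standard generator, using rigidity of cyclic and dihedral parabolic subgroups in the XXXL setting. Second, extract from the preservation of commuting and dihedral relations a labelled graph map $\Gamma \to \Gamma$, and use the divisibility constraints on labels together with finiteness of $\Gamma$ to conclude this map is a graph automorphism. Third --- and this is where I expect the main difficulty --- stitch the local conjugators attached to each generator into a single global $g$. Each $f(s)$ only determines a conjugator up to the centraliser of the corresponding generator, and the Dehn twists constructed in the necessity direction are precisely the obstruction to globalising these local choices. The no-cut-vertex hypothesis eliminates exactly this ``twisting freedom'' along paths in $\Gamma$: combined with connectedness, it forces the local conjugators to propagate coherently through $\Gamma$, yielding the required global element $g$ and, with it, the surjectivity of $f$.
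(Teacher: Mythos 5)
Your dihedral-edge construction ($s \mapsto sz$, $t \mapsto tz$, proper image detected on the abelianisation) is fine, but the other two necessity arguments do not work as stated. If $v$ is a cut-vertex and you define $f$ to be the identity on $A_{\Gamma_1}$ and conjugation by $v^k$ on $A_{\Gamma_2}$, then, since $v^k$ lies in (and is central in) the amalgamating subgroup $\langle v \rangle$, this map is a well-defined \emph{automorphism}: its image contains $A_{\Gamma_1}$, hence $v^k$, hence $v^{-k}\bigl(v^k A_{\Gamma_2} v^{-k}\bigr)v^k = A_{\Gamma_2}$, so it is surjective and no Bass--Serre argument can show its image is proper. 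Such partial conjugations are exactly the infinite-order elements of $\Out(A_\Gamma)$ exploited in Corollary~\ref{cor:characterisation_finite_Out}; they never witness failure of the co-Hopf property. The same objection applies to your parenthetical for the disconnected case: conjugating one free factor by an element of the other factor is again an automorphism of the free product (the fact that non-trivial free products are not co-hopfian is true, but needs a different construction). The paper instead conjugates $A_{\Gamma_1}$ by an element $h_2 \in A_{\Gamma_2}\setminus\langle v\rangle$ centralising $v$ and $A_{\Gamma_2}$ by an element $h_1 \in A_{\Gamma_1}\setminus\langle v\rangle$ centralising $v$ (e.g.\ centres of dihedral parabolics containing $v$), and shows via the action on the Bass--Serre tree that $\langle A_{\Gamma_1}^{h_2}, A_{\Gamma_2}^{h_1}\rangle$ is a \emph{proper} subgroup isomorphic to $A_{\Gamma_1} *_{\langle v \rangle} A_{\Gamma_2}$.

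The sufficiency direction has a more serious gap: the structural claim you aim for --- a single $g$, a labelled graph automorphism $\sigma$ and a sign $\varepsilon$ with $f(s) = g\sigma(s)^{\varepsilon}g^{-1}$ for \emph{all} $s$ --- is false under your hypotheses. ``Connected, not an edge, no cut-vertex'' still allows separating edges (e.g.\ the graph of Figure~\ref{FigureTriforce}), and then the partial conjugation by a central element of a separating-edge subgroup (equivalently, an edge-twist) is an injective endomorphism, indeed an automorphism, that is \emph{not} of this global form; this is precisely why $\Out(A_\Gamma)$ is infinite in the presence of a separating edge (Corollary~\ref{cor:characterisation_finite_Out}). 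So the no-cut-vertex hypothesis does not eliminate the ``twisting freedom'', and your third stage cannot be carried out: the paper's rigidity statement, Proposition~\ref{prop:inj_hom_general_case}, requires no cut-vertex \emph{and} no separating edge (together with the Cycle of Standard Trees Property, Proposition~\ref{prop:cycle_trees}, which is where the XXXL hypothesis enters via the Gauss--Bonnet argument). The paper's actual proof of the hard direction decomposes $\Gamma$ into chunks, applies the rigidity result to each chunk to get local conjugators $\alpha_f(\Gamma_i)$ and local embeddings $\varphi_i$ (Lemma~\ref{lem:induced_isom_chunks}), shows that conjugators of adjacent chunks differ by a power of the Garside element of the shared separating edge (Lemma~\ref{lem:domains_sharing_two_trees}), and then runs an induction along the chunk tree (Lemma~\ref{lem:cohopf_induction_g_i}) to conclude that the image of $f$ contains $|V(\Gamma)|$ distinct standard generators and hence all of $A_\Gamma$. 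Surjectivity is obtained without ever producing a single global conjugator, and your plan needs to be restructured along these lines to handle separating edges.
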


We are also able to describe their automorphism group. We provide a generating set, that generalises that of Crisp's (\cite{crisp2005automorphisms}), for the automorphism in the XXXL-case, as soon as the underlying graph does not have a cut-vertex. 

\begin{theorem}
    Let $A_\Gamma$ be an XXXL-type Artin group, such that $\Gamma$ is connected, is not an edge, and does not have a cut-vertex. Then $\Aut(A_\Gamma)$ is finitely generated. (See Theorem~\ref{thm:Aut_generating_set} for an explicit generating set.) 
\end{theorem}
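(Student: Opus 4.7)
The plan is to exhibit an explicit finite generating set of $\Aut(A_\Gamma)$ and then show that every automorphism is a product of its elements, following the strategy pioneered by Crisp in \cite{crisp2005automorphisms}. The natural candidate generators are: (i) the inner automorphisms, which are themselves generated by the finitely many conjugations by standard generators; (ii) the graph automorphisms, i.e.\ those induced by label-preserving simplicial automorphisms of $\Gamma$, forming a finite group; (iii) the global inversion $g \mapsto g^{-1}$; and (iv) one ``edge twist'' (a Dehn twist along the dihedral parabolic generated by the edge) for each separating edge of $\Gamma$, since such edges give rise to non-trivial amalgamated decompositions of $A_\Gamma$ that persist even under the no-cut-vertex assumption.

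Given $\varphi \in \Aut(A_\Gamma)$, the first step would be to invoke a classification of homomorphisms with large image for XXXL-type Artin groups (an analogue of Theorem~\ref{thm:structure_hom} for the XXXL setting, which the paper should establish en route to Theorem~\ref{thm:XXXL_cohopf}) in order to conclude that $\varphi$ sends each standard generator $s$ to a conjugate of $\sigma(s)^{\varepsilon}$, for a common sign $\varepsilon \in \{\pm 1\}$ and some bijection $\sigma$ of $V(\Gamma)$ that extends to a label-preserving automorphism of $\Gamma$. Post-composing $\varphi$ with that graph automorphism and, if needed, with the global inversion, one reduces to the case $\varphi(s) = g_s\, s\, g_s^{-1}$ for each $s \in V(\Gamma)$, for some family of conjugators $(g_s)_{s \in V(\Gamma)}$.

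The second step is a cocycle-type argument to straighten the $g_s$. For each edge $st$ of $\Gamma$, applying $\varphi$ to the braid relation $\Pi(s,t;m_{st}) = \Pi(t,s;m_{st})$ forces $g_s^{-1}g_t$ to normalize the dihedral parabolic $\langle s,t\rangle$; by the known description of normalizers of parabolics in large-type Artin groups, $g_s^{-1}g_t$ must in fact lie in $\langle s,t\rangle$ itself. Viewing $(g_s)$ as a kind of $1$-cochain on $\Gamma$ with values in dihedral parabolics, simultaneous left-multiplication by a common element corresponds to post-composition with an inner automorphism (a coboundary), so it remains to identify the residual degrees of freedom. The expectation is that these are parametrised exactly by the Dehn twists attached to separating edges of $\Gamma$: the absence of cut-vertices provides enough cycles in $\Gamma$ to absorb all potential ``twists'' localized at non-separating edges via the cycle relations, while separating edges remain as genuine obstructions.

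The main obstacle is the final step of this cocycle analysis: rigorously propagating conjugators along cycles and showing that the residual class is carried exactly by the separating-edge twists. This requires a delicate use of the induced cycles of $\Gamma$ of all lengths, and of the $m_{st} \geq 6$ hypothesis, which controls both the structure of dihedral parabolics $\langle s,t \rangle$ and the normalizers through which the $g_s$ are detected. This hypothesis should in particular rule out the exceptional automorphisms of $2$-generated subgroups that arise in smaller labels and that would otherwise introduce extraneous generators. Once the cocycle is fully identified, the theorem then follows, together with the explicit generating set described in Theorem~\ref{thm:Aut_generating_set}.
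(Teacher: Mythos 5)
Your outline has the right general shape (images of generators, then straightening of conjugators, with separating edges as the source of twists), but two of its load-bearing steps are genuinely gapped. First, the generating set and the step-1 reduction are not correct as stated. When a separating edge $e=\{a,b\}$ has \emph{odd} label, the ``Dehn twist along $e$'' is not an automorphism of $A_\Gamma$ at all: conjugation by $\Delta_{ab}$ swaps $a$ and $b$, so the twist is an isomorphism onto the Artin group of a \emph{different} presentation graph, which need not be isomorphic to $\Gamma$. This is exactly why Theorem~\ref{thm:Aut_generating_set} has to work with the whole twist-equivalence class $\mathcal{T}$ and include the conjugated twist isomorphisms $\overline\tau$ \emph{and} the graph isomorphisms $\overline\psi$ between different graphs of $\mathcal{T}$; your list (i)--(iv) is strictly smaller and in the odd case not even well defined, so deferring at the end to ``the generating set of Theorem~\ref{thm:Aut_generating_set}'' is inconsistent with your own list. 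For the same reason, your step 1 is unjustified: the paper proves no ``analogue of Theorem~\ref{thm:structure_hom} in the XXXL setting'' for graphs with separating edges, and the statement you want (each generator goes to a conjugate of $\sigma(s)^{\varepsilon}$ for a label-preserving automorphism $\sigma$ of $\Gamma$) is precisely what can fail: an automorphism of $A_\Gamma$ may be a twist followed by a graph isomorphism from a twisted graph $\Gamma'\neq\Gamma$ back to $\Gamma$, and establishing when the induced vertex map is (or is not) realised by an automorphism of $\Gamma$ is part of what the chunk analysis of Sections~\ref{sec:coHopf}--\ref{sec:Aut} is for. What is available for an automorphism is only Lemma~\ref{lem:generators_to_generators} (conjugates of generators or inverses) and, chunk by chunk, Proposition~\ref{prop:inj_hom_general_case}.

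Second, the ``cocycle'' step, which you yourself flag as the main obstacle, is the actual content of the proof and is not supplied. The conjugator $g_s$ is only well defined up to right multiplication by the centraliser $C(s)\cong\mathbb{Z}\times F$, so applying $\varphi$ to a braid relation does not by itself show that $g_s^{-1}g_t$ normalises $\langle s,t\rangle$; making a coherent choice of conjugators is exactly where the geometry enters. In the paper this is done by: (a) the Cycle of Standard Trees Property for XXXL groups (Definition~\ref{def:cycle_standard_trees_property}, proved via combinatorial Gauss--Bonnet in Proposition~\ref{prop:cycle_trees}); (b) the graph of induced cycles argument, which upgrades the property from single cycles to any graph with no cut-vertex and no separating edge (Proposition~\ref{prop:inj_hom_general_case}); and (c) the chunk-tree induction, where Lemma~\ref{lem:domains_sharing_two_trees} (a Garside normal form argument) shows that conjugators of adjacent chunks differ exactly by a power of $\Delta_{ab}$, so that the residual freedom is absorbed by twists over $\mathcal{T}$ and a final graph isomorphism (Lemmas~\ref{lem:change_labels_chunks}--\ref{lem:iso_trivial_labels}). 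Without (a)--(c), the claim that ``cycles absorb all potential twists at non-separating edges while separating edges remain as genuine obstructions'' is a restatement of the theorem rather than a proof of it.
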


As a corollary, we characterise precisely when the outer automorphism group is finite: 

\begin{corollary}\label{cor:XXXL_split}
    Let $A_\Gamma$ be an Artin group of XXXL-type. Then $\Out(A_\Gamma)$ is finite if and only if $\Gamma$ connected, is not an even edge, and does not have a cut-vertex or a separating edge. Moreover, when $\Out(A_{\Gamma})$ is finite, $\Aut(A_{\Gamma})$ is generated by the conjugations, the graph automorphisms, and the global inversion.
\end{corollary}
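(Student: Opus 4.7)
My plan is to derive both directions from the explicit generating set of Theorem~\ref{thm:Aut_generating_set}, supplemented by ad hoc constructions of outer automorphisms of infinite order when the graph-theoretic conditions fail. The ``moreover'' clause will then fall out of the forward direction.

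For the direction asserting that the four conditions imply $\Out(A_\Gamma)$ is finite, the hypotheses ``$\Gamma$ connected, not an edge, and with no cut-vertex'' are precisely those of the previous theorem, which yields an explicit finite generating set for $\Aut(A_\Gamma)$. The strategy is then to inspect that generating set: beyond $\Inn(A_\Gamma)$, graph automorphisms, and the global inversion, the additional generators produced are Dehn twists associated to separating edges. Under the additional hypothesis that $\Gamma$ has no separating edge, these extra generators are absent, so $\Aut(A_\Gamma)$ is generated solely by $\Inn(A_\Gamma)$, graph automorphisms, and the global inversion. Hence $\Out(A_\Gamma)$ is a quotient of the finite group $\Aut(\Gamma) \times \{\pm 1\}$, and the ``moreover'' clause is established at the same stroke.

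For the reverse implication I argue by contrapositive, exhibiting in each failure case an outer automorphism of infinite order. If $\Gamma$ is disconnected, then $A_\Gamma$ is a non-trivial free product and any partial conjugation by a standard generator supplies such an automorphism. If $\Gamma$ is a single edge with even label $2k \geq 6$, the centre $Z(A_\Gamma)$ is infinite cyclic, generated by $z = (st)^k$; the assignment $s \mapsto sz$, $t \mapsto t$ preserves the defining relation by centrality of $z$ (since $(sz \cdot t)^k = z^k (st)^k = z^k (ts)^k = (t \cdot sz)^k$), and its $n$-th iterate sends $s$ to $sz^n$, which acts non-trivially on the abelianisation $\mathbb{Z}^2$ for $n \neq 0$, hence has infinite order in $\Out(A_\Gamma)$. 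Finally, a cut-vertex $v$ or a separating edge $e$ yields a splitting of $A_\Gamma$ as an amalgamated free product over an infinite cyclic subgroup (respectively $\langle v \rangle$, or $Z(A_e)$, which is infinite cyclic for any dihedral Artin group with label at least $3$), and the associated Dehn twist supplies the required outer automorphism of infinite order.

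The main obstacle lies in verifying that the Dehn twists in the cut-vertex and separating-edge cases are genuinely of infinite order modulo $\Inn(A_\Gamma)$. The standard approach is to suppose, for contradiction, that some iterate $\tau^n$ is realised by conjugation by an element $g \in A_\Gamma$; then $g$ must centralise one side of the splitting while $g$ times an appropriate power of the twisting element centralises the other side. One would then rule this out using the centraliser and normaliser information for standard parabolic subgroups in XXXL-type Artin groups developed earlier in the paper, exploiting the fact that irreducible non-dihedral parabolic subgroups of rank at least $2$ in this class have trivial centre, while the infinite cyclic centres of the dihedral parabolics are explicit enough to make the resulting equations on $g$ inconsistent unless $n = 0$.
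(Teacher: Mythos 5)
Your overall architecture is the same as the paper's: the forward direction via the structural generation results (your route through Theorem~\ref{thm:Aut_generating_set}, noting that $\mathcal{T}=\{\Gamma\}$ and the edge-twist generators disappear when there is no separating edge, is a mild variant of the paper's use of Remark~\ref{rem:recovering_Vaskou}), and the reverse direction via partial conjugations whose non-innerness is checked with self-normalising parabolics and trivial/explicit centres, essentially as in the proof of Corollary~\ref{cor:characterisation_finite_Out}. However, three steps are genuinely faulty. First, your even-edge automorphism is not an automorphism: with $m_{st}=2k$ and $z=(st)^k$, the assignment $s\mapsto sz$, $t\mapsto t$ is only an endomorphism; on the abelianisation $\mathbb{Z}^2$ it acts by a matrix of determinant $k+1\neq\pm1$, so it is not surjective, and the map $s\mapsto sz^{-1}$, $t\mapsto t$ does not invert it. The correct transvection is $s\mapsto sz$, $t\mapsto tz^{-1}$ (equivalently $x\mapsto xy^{k}$, $y\mapsto y$ in the presentation $\langle x,y\mid [x,y^{k}]=1\rangle$ with $x=s$, $y=st$); with that correction your abelianisation argument does give infinite order in $\Out$.

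Second, the hypothesis in the statement is ``not an even edge'', not ``not an edge'': a single odd-labelled edge satisfies the right-hand conditions, and there Theorem~\ref{thm:Aut_generating_set} does not apply, so you must argue separately that an odd dihedral Artin group has finite $\Out$ and that its $\Aut$ is generated by conjugations, the graph swap and the global inversion (true, e.g.\ because conjugation by $\Delta_{st}$ realises the swap and $\Out\cong\mathbb{Z}/2$ by the known computation of automorphisms of dihedral Artin groups); your proposal silently replaces the hypothesis and drops this case, which is needed both for the ``if'' direction and for the ``moreover'' clause. Third, in the disconnected case the claim that \emph{any} partial conjugation by a standard generator is of infinite order in $\Out$ is false: if the fixed factor is a single vertex $\langle s\rangle$ and you conjugate the complementary factor by $s$, the map is exactly conjugation by $s$, hence inner; in particular when $\Gamma$ is two isolated vertices ($A_\Gamma\cong F_2$) every such partial conjugation is inner and you need another argument ($\Out(F_2)\cong GL_2(\mathbb{Z})$). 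The paper handles this by treating the rank-two free case separately and otherwise conjugating by a non-central infinite-order element of a non-cyclic factor, and your argument needs the same care. (A minor point: a separating edge gives a splitting over the dihedral group $A_e$, not over $Z(A_e)$; your twist is nonetheless well defined because the twisting element is chosen central in $A_e$, and your centraliser-based verification of non-innerness is then sound in outline.)
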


Using a different approach, Jones recently proved (using results from this article) that the outer automorphism group of every XXXL-type Artin group over a connected graph is finitely presented \cite{OliAut}.

\paragraph{Strategy of the proofs and structure of the paper.}

Our proofs are geometric in nature and rely on the action of large-type Artin groups on their modified Deligne complex, a simplicial complex with a rich CAT(0) geometry. (See Section~\ref{background} for background.) We present below the overall strategy to study injective homomorphisms, for the sake of simplicity:
\medskip

\textit{Step 1:} In Section~\ref{sec:image_generator}, we show that standard generators are sent to powers of conjugates of standard generators or their inverses.
This implies in particular that for every standard generator $v\in V(\Gamma)$, there exists an element $g_v\in A_{\Gamma'}$ and a standard generator $w \in V(\Gamma')$ such that $f(v) = g_v w^{\pm 1} g_v^{-1}$.
This step relies on a characterisation of the elements that have a centraliser that contains a non-abelian free group (see Proposition~\ref{PropCentralisers}).
\medskip 

We then want to show that if $\Gamma$ has no cut-vertex and no separating edge, then the elements $g_v$ from Step 1 can be taken to be the same element $g \in A_{\Gamma'}$.
The case of a complete graph had already been treated by Vaskou \cite{vaskou2023automorphisms}, which allows us to prove Theorem \ref{thm:structure_hom} in Section~\ref{sec:free_of_infinity}.
 
From a geometric perspective, elements of the form $g_v w^{\pm 1} g_v^{-1}$ are very well understood: their fixed-point set in the Deligne complex of $A_{\Gamma'}$ is a so-called \textbf{standard tree} (see Lemma~\ref{lem:standardtrees}), and one can thus try to understand the set of images of the standard generators $v \in V(\Gamma)$ by controlling the combinatorics of the standard trees associated to the $f(v)$'s, and their intersections. 
\medskip 

\textit{Step 2:} In Section~\ref{sec:cycle_trees}, we prove than when $\Gamma$ is a cycle, the elements $g_v$ from Step 1 can be taken to be the same element $g \in A_{\Gamma'}$.
We do this by controlling the combinatorial geometry of the associated ``cycle of standard trees'' in $D_{\Gamma'}$.
Using Gauss--Bonnet-style arguments, we prove that this cycle of standard trees determines a loop of $D_{\Gamma'}$ that is contained in a single translate of the fundamental domain of $D_{\Gamma'}$, which is enough to prove that all the $g_v$'s can be taken to be equal. 

\medskip 

\textit{Step 3:} In Section~\ref{sec:no_separating}, we generalise the above Step to the case of a graph $\Gamma$ with no cut-vertex and no separating edge.
In order to do this, we introduce the graph of induced cycles (Definition~\ref{def:graph_of_cycles}) that encodes the patterns of induced cycles and their intersections.
For such graphs $\Gamma$, this graph of induced cycles has a sufficiently rich combinatorial structure to allow us to prove that all $g_v$'s can be taken to be equal. 

\medskip 

This approach is already enough to cover the case of injective homomorphisms between free-of-infinity XL-type Artin groups.
In the case where $\Gamma$ has no cut-vertex but has separating edges, there is an additional step:

\medskip 

\textit{Step 4:} We generalise the previous step to the case where $\Gamma$ admits separating edges (but no cut-vertex).
The graph $\Gamma$ decomposes as a union of chunks.
The pattern of intersection of these chunks is encoded in a tree.
One can thus use results from previous steps to deal with individual chunks.
Moreover, one can use certain edge-twists isomorphisms to control the passage from one chunk to an adjacent one.
This allows us to prove Theorem~\ref{thm:cohopf_XXXL_characterisation} in Section~\ref{sec:coHopf}, and Theorem~\ref{thm:Aut_generating_set} in Section~\ref{sec:Aut}.

\paragraph{Generalisations and necessity of the hypotheses.}

The results proved in this article for XXXL-type Artin groups hold more generally for XL-type Artin groups, as soon as one can prove the corresponding Cycle of Standard Trees Property~\ref{def:cycle_standard_trees_property} mentioned in Step 2 above for larger classes of Artin groups. We thus ask:

\begin{question}
    Does the Cycle of Standard Trees Property~\ref{def:cycle_standard_trees_property} hold for large-type Artin groups?
\end{question}

We now give some examples to illustrate why some of the hypotheses that appear in Theorem~\ref{thm:structure_hom} cannot be removed.

\begin{example}
Theorem~\ref{thm:structure_hom} does not extend to general two-dimensional Artin groups, as illustrated with the following example:  
Let $A_{\Gamma} = \langle a, b, c \rangle$ and $A_{\Gamma'} = \langle s, t, r \rangle$ be two Artin groups of rank $3$, such that
\[m_{ab} = 3, \ m_{ac} = 7, \ m_{bc} = 7, \text{ and } m_{st} = 3, \ m_{sr} = 2, \ m_{tr} = 7.\]
We can define a homomorphism $f : A_{\Gamma} \rightarrow A_{\Gamma'}$ given by
\[f(a) = s, \ f(b) = t, \ f(c) = trt^{-1}\]
that is not of the form described in Theorem~\ref{thm:structure_hom}.
The failure here comes from the fact that the three standard trees for $f(a), f(b), f(c)$ form a triangle of standard trees that does not satisfy the Cycle of Standard Trees Property, as illustrated in the picture below.  (In particular, the Cycle of Standard Trees Property does not hold for more general two-dimensional Artin groups.)

\begin{figure}[H]
\centering
\includegraphics[scale=1]{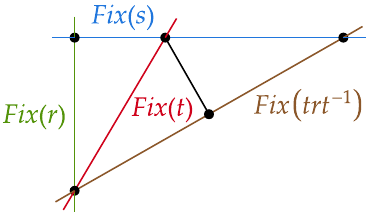}
\caption{The triangle formed by the standard trees $\Fix(s)$, $\Fix(t)$ and $\Fix(trt^{-1})$ is not contained in a translate of the fundamental domain.}
\label{2-dimensional_fail}
\end{figure}
\end{example}

\begin{example}
    For a different reason, general large-type Artin groups cannot be covered either.
    Let $A_{\Gamma} = \langle a, b, c \rangle$ and $A_{\Gamma'} = \langle s, t, r \rangle$ be two Artin groups of rank $3$, such that
    \[m_{ab} = 4, \ m_{ac} = 4, \ m_{bc} = 4, \text{ and } m_{st} = 4, \ m_{sr} = 3, \ m_{tr} = 3.\]
    Now let $f:A_\Gamma \to A_{\Gamma'}$ defined by
    \[f(a) = r^2, \ f(b) = s, \ f(c) = t\]
    This map is well defined, as $\Pi (r^2,s;4) = \Pi (s,r^2;4)$ and $\Pi (r^2,t;4) = \Pi (t,r^2;4)$ in $A_{\Gamma´}$.
    The homomorphism $f$ does not fit the structure in Theorem \ref{thm:structure_hom}, and the failure comes from the failure of Lemma \ref{lem:alternating product equality} for dihedral Artin groups with label $3$.
\end{example}

\begin{example}
    The hypothesis that the image is not contained in virtual product cannot be removed either. Consider the dihedral Artin groups $A_4$ and $A_3$ given by the non-standard presentations $\langle a,b \ | \ b^2=a^{-1}b^2a \rangle$ and $\langle c,d \ | \ c^2=d^3 \rangle$ respectively.
    Then there is an injective morphism $f:A_4 \hookrightarrow A_3$ given by 
    \[f(a)=dcd, \ f(b)=c\]
    which is not of the form given by Theorem~\ref{thm:structure_hom}.
\end{example}

\paragraph{Acknowledgements.} The second author was partially supported by the EPSRC Grant EP/S010963/1. The third author is supported by the Postdoc Mobility $\sharp$P500PT$\_$210985 of the Swiss National Science Foundation. We also thank Luis Paris for pointing out that Lemma~\ref{lem:alternating product equality} does not hold for the dihedral Artin group $A_{st}$ when $m_{st}=3$ (as was stated in a previous version). 

\section{Background on Artin groups}\label{background}

In this section, we recall some standard results about Artin groups that will be used throughout this article.

\subsection{Artin groups and their Deligne complexes}

Given a subset $S\subseteq V(\Gamma)$, the subgroup $\langle S \rangle \leqslant A_\Gamma$ is called a \textbf{standard parabolic subgroup} of $A_\Gamma$. Conjugates of standard parabolic subgroups are called \textbf{parabolic subgroups}.
We recall following fundamental result: 

\begin{theorem}[van der Lek \cite{van1983homotopy}]\label{thm:vdL}
    For a subset $S\subseteq V(\Gamma)$, the parabolic subgroup $\langle S \rangle$ is isomorphic to the Artin group $A_{\Gamma'}$, where $\Gamma'$ is the induced subgraph of $\Gamma$ with vertex set~$S$.
Moreover, given two subsets $S, S' \subseteq V(\Gamma)$, we have $\langle S\rangle  \cap \langle S'\rangle = \langle S\cap S' \rangle$.
\end{theorem}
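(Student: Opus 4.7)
The plan is to follow van der Lek's original topological approach via complements of complexified hyperplane arrangements. Let $W_\Gamma$ be the Coxeter group obtained from $A_\Gamma$ by imposing $s^2 = 1$ for every $s \in V(\Gamma)$, realize $(V, W_\Gamma)$ via the Tits representation with Tits cone $I \subseteq V$, and let $\mathcal{A}$ denote the union of the complexified reflection hyperplanes of $W_\Gamma$ inside $V_{\mathbb{C}}$. Consider $Y_\Gamma := (I + iV) \setminus \mathcal{A}$, on which $W_\Gamma$ acts freely and properly, and set $X_\Gamma := Y_\Gamma / W_\Gamma$. The classical Brieskorn--Deligne identification (extended by van der Lek to arbitrary Coxeter systems) gives $\pi_1(X_\Gamma) \cong A_\Gamma$, with the standard generators corresponding to monodromies around the codimension-one reflection strata.

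For the first assertion, the natural surjection $A_{\Gamma'} \twoheadrightarrow \langle S \rangle$ is immediate, since the braid relations among elements of $S$ in the presentation of $A_{\Gamma'}$ are also relations in $A_\Gamma$. To prove injectivity, I would exhibit a natural map $\iota: X_{\Gamma'} \to X_\Gamma$ induced by the inclusion of the reflection arrangement of $W_S$ on a subspace $V_S \subseteq V$ into $\mathcal{A}$, and check that $\iota_*$ sends standard generators to standard generators. The core technical step is to construct a $W_\Gamma$-equivariant deformation retraction from a suitable tubular neighborhood of the stratum associated to $V_S^\perp$ onto the image of $Y_{\Gamma'}$; passing to the quotients shows that $\iota_*: \pi_1(X_{\Gamma'}) \to \pi_1(X_\Gamma)$ is injective, which combined with the surjection above yields the isomorphism $\langle S \rangle \cong A_{\Gamma'}$.

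For the intersection statement, the inclusion $\langle S \cap S' \rangle \subseteq \langle S \rangle \cap \langle S' \rangle$ is trivial. For the reverse inclusion, I would work in the universal cover $\widetilde{X_\Gamma}$, whose preimages of the subspaces corresponding to $S$ and $S'$ form disjoint unions of copies of $\widetilde{X_{\Gamma_S}}$ and $\widetilde{X_{\Gamma_{S'}}}$. The geometric input is that in the Tits cone the real subspaces $V_S$ and $V_{S'}$ meet cleanly in $V_{S \cap S'}$, and this transversality is preserved under passage to the hyperplane-arrangement complement and to the universal cover. An element of $A_\Gamma$ lying in both $\langle S \rangle$ and $\langle S' \rangle$ lifts to a loop that can be homotoped into each of these two tubular neighborhoods and therefore into their intersection, which deformation-retracts onto the component corresponding to $\langle S \cap S' \rangle$.

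The main obstacle is the $W_\Gamma$-equivariant deformation retraction in the first step: controlling it requires a careful stratum-by-stratum analysis, using the convex-cone geometry of the Tits cone together with an induction on the codimension of the strata, in order to straighten paths while keeping them in the complement of the appropriate sub-arrangement. Once this retraction is in place, the intersection property in part (b) follows rather formally from covering-space theory and the clean intersection of the real subspaces $V_S \cap V_{S'} = V_{S \cap S'}$.
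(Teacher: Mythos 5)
This statement is not proved in the paper at all: it is imported verbatim as van der Lek's theorem \cite{van1983homotopy} and used as a black box, so there is no in-paper argument to compare against; what you have written is an outline of van der Lek's original topological proof, and as such it has two genuine gaps. First, the entire first part rests on the $W_\Gamma$-equivariant deformation retraction onto (a neighbourhood of) the sub-arrangement complement associated to $S$, which you explicitly defer (``the main obstacle''). That retraction is not a routine tubular-neighbourhood argument: the Tits cone is neither open nor a manifold in general, the strata are not compact, and the whole content of van der Lek's injectivity theorem is precisely the construction and control of this retraction stratum by stratum. Announcing it does not constitute a proof, so the isomorphism $\langle S\rangle \cong A_{\Gamma'}$ is not established by your text.

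Second, and more seriously, the intersection step contains an invalid inference. From $g\in\langle S\rangle\cap\langle S'\rangle$ you get a based loop homotopic into the piece carrying $\langle S\rangle$ and also homotopic into the piece carrying $\langle S'\rangle$; it does \emph{not} follow that it is homotopic into the intersection of the two pieces. For subspaces $X_1,X_2\subseteq X$ with $\pi_1$-injective inclusions one has in general only $\mathrm{Im}\,\pi_1(X_1\cap X_2)\subseteq \mathrm{Im}\,\pi_1(X_1)\cap \mathrm{Im}\,\pi_1(X_2)$, and the reverse inclusion is exactly the statement being proved. The clean intersection $V_S\cap V_{S'}=V_{S\cap S'}$ of the \emph{linear} subspaces does not transfer automatically to the relevant neighbourhoods in the arrangement complement, nor to their preimages in the universal cover: one must show that the component of the preimage of the $S$-piece and the component of the preimage of the $S'$-piece stabilised by $g$ meet, and meet in a connected set whose fundamental group maps onto (a conjugate of) $\langle S\cap S'\rangle$. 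Controlling this intersection pattern of components is the hard combinatorial--geometric core of van der Lek's argument, and your sketch replaces it with the phrase ``transversality is preserved,'' which is not justified. So the proposal reproduces the correct general strategy from the literature but does not close either of the two steps where the actual difficulty lies.
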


Because of this result, we will denote by $A_{\Gamma'}$ a standard parabolic subgroups of $A_\Gamma$, for $\Gamma'$ an induced subgraph of $\Gamma$.

An Artin group $A_\Gamma$ is said to be of \textbf{spherical type} if the corresponding Coxeter group $W_\Gamma$ is finite. A particular example is the case of a \textbf{dihedral} Artin group, which is an Artin group with a single edge as presentation graph. Spherical parabolic subgroups play a crucial role in our current understanding of Artin groups:

\begin{definition}
    The \textbf{(modified) Deligne complex} of an Artin group $A_\Gamma$ is the simplicial complex $D_\Gamma$ defined as follows: \begin{itemize}
    \item Vertices correspond to left cosets of standard parabolic subgroups of spherical type.
    \item For every $g \in A_\Gamma$ and for every chain of induced subgraphs $\Gamma_0 \subsetneq \cdots \subsetneq \Gamma_k$ such that $A_{\Gamma_0}, \ldots, A_{\Gamma_k}$ are of spherical type, we put a $k$-simplex between the vertices $gA_{\Gamma_0}, \ldots, gA_{\Gamma_k}$.
    \end{itemize}
    Equivalently, the (modified) Deligne complex $D_\Gamma$ is the geometric realisation of the poset of left cosets of standard parabolic subgroups of spherical type.
    
    The group $A_\Gamma$ acts on $D_\Gamma$ by left multiplication on left cosets, and we denote by $K_{\Gamma}$ the fundamental domain of this action, that is, the subcomplex induced by the vertices of the form $1 \cdot A_{\Gamma'}$.
\end{definition}

\begin{definition}
    A vertex $g A_{\Gamma'}$ of the Deligne complex will be said to be of \textbf{type} $n$ if $|V(\Gamma')| = n$.
\end{definition}

It is a standard result (for instance, see \cite{charney1995k}) that when $A_{\Gamma}$ is large-type, then every vertex of $D_{\Gamma}$ has type $\leq 2$. In particular, $D_{\Gamma}$ is a two-dimensional simplicial complex.

\subsection{Standard trees and the coned-off Deligne complex}

Throughout this section we suppose that $A_{\Gamma}$ is an Artin group of large-type.

Recall that the \textbf{fixed-set} of an element $g \in A_{\Gamma}$ on the Deligne complex $D_{\Gamma}$ is the (possibly empty) set
\[\Fix(g) \coloneqq \{x \in D_{\Gamma} \ | \ g \cdot x = x \}.\]

We have a complete description of the possible fixed-point sets:

\begin{lemma} \cite[Lemma 8]{crisp2005automorphisms}\label{lem:standardtrees}
    Let $g \in A_{\Gamma} \backslash \{1\}$. Then exactly one of the following happens:
    \begin{enumerate}
        \item If $g$ is conjugated to a power of a standard generator, then $g \in h \langle a \rangle h^{-1}$ for some $a \in V(\Gamma)$ and $h \in A_{\Gamma}$. In particular, $\Fix(g)$ is a simplicial tree. 
        \item If $g$ is contained in a dihedral Artin parabolic subgroup of $A_{\Gamma}$, then $g \in h \langle a, b \rangle h^{-1}$ for some $a, b \in V(\Gamma)$ and $h \in A_{\Gamma}$. Moreover if $g$ does not satisfy (1), then $\Fix(g) = h A_{e^{ab}}$ is a single type $2$ vertex, where $e^{ab}$ denote the edge of $\Gamma$ spanned by $a$ and $b$.
        \item Otherwise $\Fix(g)$ is trivial.
    \end{enumerate}
\end{lemma}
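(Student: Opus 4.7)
The plan is to determine $\Fix(g)$ by first identifying which vertices of $D_\Gamma$ are fixed by $g$ and then invoking the CAT(0) geometry of the Deligne complex. Three background facts drive the analysis: (i) the stabiliser of a vertex $hA_{\Gamma'}$ under the action of $A_\Gamma$ on $D_\Gamma$ is the conjugate parabolic $hA_{\Gamma'}h^{-1}$; (ii) in the large-type setting the only spherical-type standard parabolic subgroups are $\{1\}$, the cyclic subgroups $\langle a \rangle$, and the dihedral subgroups $\langle a, b \rangle$ for each edge $ab$ of $\Gamma$; and (iii) $D_\Gamma$ is CAT(0) (Charney--Davis), so $\Fix(g)$ is a convex, hence connected, subcomplex.

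Case (3) is immediate: if $g \ne 1$ is neither conjugate to a power of a standard generator nor contained in any dihedral parabolic, then $g$ lies in no proper standard spherical parabolic and hence (up to conjugation) in no vertex stabiliser of $D_\Gamma$, so $\Fix(g) = \emptyset$. For case (1), write $g = h a^k h^{-1}$. Then $g$ fixes the type $1$ vertex $h\langle a \rangle$ and, for every $b \in V(\Gamma)$ adjacent to $a$, the type $2$ vertex $h\langle a, b\rangle$. Together with the edges joining them, these form the closed star of $h\langle a\rangle$ in $D_\Gamma$, which is a simplicial tree $T$. For case (2), assume $g \in h\langle a, b\rangle h^{-1}$ but $g$ is not conjugate to a power of a standard generator; then $g$ visibly fixes the type $2$ vertex $hA_{e^{ab}}$, while neither of the two adjacent type $1$ vertices $h\langle a\rangle$ and $h\langle b\rangle$ can be fixed, for otherwise $g$ would fall under case (1).

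The remaining task --- and the main obstacle --- is uniqueness. I would argue by contradiction using convexity: any additional fixed vertex $v$ would be joined to the already-identified fixed locus by a geodesic contained in $\Fix(g)$, so the stabiliser of each intermediate vertex would contain $g$. Applying van der Lek's intersection theorem (Theorem~\ref{thm:vdL}) to the resulting chain of conjugate spherical parabolics --- suitably reduced to the standard case by a global conjugation --- traps $g$: in case (1) it must lie in a conjugate of $\langle a \rangle$, which in turn forces $v$ to already lie in the star $T$; in case (2) it must lie in a cyclic (rank $\leq 1$) conjugate parabolic, contradicting the hypothesis that $g$ is not conjugate to a power of a standard generator. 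The delicate point throughout is controlling intersections of conjugate (rather than merely standard) spherical parabolic subgroups, but in the large-type setting this is well-understood and follows from the combinatorial structure of $D_\Gamma$ together with van der Lek's theorem.
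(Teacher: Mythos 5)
The paper does not prove this lemma (it is quoted from Crisp's work), so your attempt stands on its own; unfortunately, it contains a genuine error in case (1). You assert that for $g = ha^kh^{-1}$ the fixed set is exactly the subgraph spanned by $h\langle a\rangle$ and its type $2$ neighbours $h\langle a,b\rangle$, and your uniqueness step claims any further fixed vertex is ``forced to already lie in the star''. This is false: standard trees are in general much larger than a star, and typically unbounded. Concretely, let $b$ be adjacent to $a$ and let $z_{ab}$ be the generator of the centre of $A_{ab}$; since $z_{ab}$ commutes with $a$, the element $a$ fixes the type $1$ vertex $hz_{ab}\langle a\rangle$, which is distinct from $h\langle a\rangle$ and does not lie in the star of $h\langle a\rangle$. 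If moreover $a$ has another neighbour $c$, then $a$ also fixes the type $2$ vertex $hz_{ab}A_{e^{ac}}$, and iterating with $z_{ac}$, $z_{ab}$, \dots{} produces fixed vertices arbitrarily far from $h\langle a\rangle$. So $\Fix(g)$ strictly contains your candidate set, the claimed equality fails, and the convexity-plus-van der Lek argument you sketch cannot be repaired, because it is trying to prove a false statement. (A smaller slip: the genuine closed star of $h\langle a\rangle$ also contains type $0$ vertices $hg\{1\}$ with $g\in\langle a\rangle$, which have trivial stabiliser and are certainly not fixed, so even the inclusion of the closed star into $\Fix(g)$ is wrong.) That standard trees are large is not incidental: the rest of the paper (e.g.\ the properties of standard trees and the cycles of standard trees in Sections 4--5) depends on exactly this richness.

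The correct route, and the one behind Crisp's Lemma 8, does not identify $\Fix(g)$ explicitly at all. Since the action is simplicial and type-preserving, $\Fix(g)$ is a subcomplex, and it is convex in the CAT(0) complex $D_\Gamma$. Type $0$ vertices have trivial stabiliser, so for $g\neq 1$ the set $\Fix(g)$ contains no type $0$ vertex and hence no $2$-simplex (every triangle of $D_\Gamma$ has a type $0$ corner in the large-type case); thus $\Fix(g)$ is a convex, hence contractible, subcomplex of the graph spanned by type $1$ and type $2$ vertices, i.e.\ a simplicial tree whenever it is nonempty. Your cases (2) and (3) are essentially sound and become cleaner from this viewpoint: in case (2), no type $1$ vertex can be fixed (type $1$ stabilisers are exactly the conjugates of the $\langle a\rangle$'s, so fixing one would put $g$ in case (1)), and since two type $2$ vertices are never adjacent, the connected set $\Fix(g)$ is the single vertex $hA_{e^{ab}}$ --- no appeal to van der Lek's intersection theorem is needed, only the identification of vertex stabilisers.
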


\begin{definition}[{\cite{martin2022acylindrical}}]
    A \textbf{standard tree} of $D_\Gamma$ is the fixed-point set of a conjugate of a standard generator of $A_\Gamma$.  
\end{definition}

We have the following useful properties of standard trees: 

\begin{lemma} \cite[Corollary 2.18, Corollary 2.16]{hagen2024extra} \label{lem:standard_trees_properties}
    Two distinct standard trees of $D_\Gamma$ are either disjoint or intersect along a unique vertex of type $2$ of $D_\Gamma$. 

    Moreover, let $x$ be a conjugate of a standard generator. Then for every non-zero integer $k$, we have $\Fix(x^k) = \Fix(x)$. \qedhere
\end{lemma}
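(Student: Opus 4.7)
The plan is to leverage two inputs about the action of $A_\Gamma$ on the Deligne complex $D_\Gamma$: (a) vertex stabilisers are conjugates of standard parabolic subgroups of spherical type, which in the large-type setting have rank at most $2$; and (b) a root-closure property for parabolic subgroups, namely that if a conjugate of a standard generator has a non-trivial power lying in a parabolic subgroup $P$, then the element itself lies in $P$. Both are now well-understood facts in the large-type regime, and together they reduce the lemma to combinatorics of spherical vertex stabilisers.

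For the second assertion, the inclusion $\Fix(x) \subseteq \Fix(x^k)$ is immediate from the definition of the fixed set. For the converse, after conjugating I may assume $x = a$ for some $a \in V(\Gamma)$. If $a^k$ fixes a vertex $v = g A_{\Gamma'}$, then $g^{-1} a^k g \in A_{\Gamma'}$, i.e.\ the conjugate $g^{-1} a g$ of the standard generator $a$ has its $k$-th power inside the spherical parabolic $A_{\Gamma'}$. The root-closure property then yields $g^{-1} a g \in A_{\Gamma'}$, so $a$ fixes $v$, as required.

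For the first assertion, I would argue by contradiction: suppose two distinct standard trees $T_1 = \Fix(x_1)$ and $T_2 = \Fix(x_2)$ share at least two points of $D_\Gamma$. After translating by an element of $A_\Gamma$, I may assume one of these shared points is the vertex $A_{e}$ for some standard edge $e$, so both $x_1, x_2$ lie in the dihedral parabolic $A_e$. A second shared vertex $g A_{\Gamma''}$ would force $x_1, x_2 \in A_e \cap g A_{\Gamma''} g^{-1}$, which by Theorem~\ref{thm:vdL} together with the description of parabolic intersections in large-type Artin groups is again a proper parabolic subgroup of $A_e$, hence cyclic. Thus $x_1$ and $x_2$ are both non-zero powers of a single conjugate of a standard generator, and the second assertion of the lemma then forces $T_1 = T_2$, contradicting the assumption of distinctness. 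A parallel short argument (working inside a rank-one vertex stabiliser rather than a dihedral one) rules out a shared type-$1$ vertex adjacent to a shared type-$2$ vertex.

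The main obstacle is the root-closure property: it is intuitive but requires genuine combinatorial input on parabolic subgroups of large-type Artin groups. Once that is in hand, the remainder of the argument is pure bookkeeping of spherical vertex stabilisers and their pairwise intersections. I expect the authors' proof, or the cited Corollaries 2.16 and 2.18 of \cite{hagen2024extra}, to either invoke a known parabolic root-closure statement or to prove it \emph{en route} using the combinatorial geometry of $D_\Gamma$.
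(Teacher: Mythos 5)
The paper does not actually prove this lemma: it is quoted verbatim from \cite[Corollaries 2.16 and 2.18]{hagen2024extra}, so there is no internal argument to compare against, and your strategy (deriving both assertions from root-closure of parabolic subgroups and from the behaviour of parabolic intersections) is a genuinely different, citation-heavy route. Your treatment of the second assertion is fine modulo the black box: a vertex $gA_{\Gamma'}$ is fixed by $x^k$ iff $g^{-1}x^kg\in A_{\Gamma'}$, and root-closure (known for two-dimensional, hence large-type, Artin groups) gives $g^{-1}xg\in A_{\Gamma'}$; since the action preserves types, fixed sets are full subcomplexes, so checking vertices suffices. This is heavier machinery than what \cite{hagen2024extra} uses (they work directly with the CAT(0)/Garside geometry of $D_\Gamma$), but it is not circular and it is correct.

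The first assertion is where your write-up has a concrete gap. You begin by assuming that one of the two shared points may be translated to a type $2$ vertex $A_e$; nothing justifies this reduction, and the configuration in which the two trees share only type $0$ or type $1$ vertices is covered neither by your main case nor by your closing sentence, which only claims to exclude ``a shared type $1$ vertex adjacent to a shared type $2$ vertex''. Moreover, ruling out shared type $0$ and type $1$ vertices is not a side case: it is exactly what is needed to prove the ``of type $2$'' part of the statement. The fix is the argument you gesture at, applied in full generality: a type $0$ vertex has trivial stabiliser, so it cannot lie on a standard tree; if a type $1$ vertex $g\langle s\rangle$ lay on both trees, then $x_1,x_2\in g\langle s\rangle g^{-1}$, so both are powers of $gsg^{-1}$, and either the homomorphism $A_\Gamma\to\mathbb{Z}$ sending every generator to $1$ forces $x_1=x_2$, or your second assertion forces $\Fix(x_1)=\Fix(gsg^{-1})=\Fix(x_2)$; either way $T_1=T_2$. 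Once type $\leq 1$ vertices are excluded, fullness of the fixed subcomplexes shows that two shared points yield two shared type $2$ vertices, and your dihedral-stabiliser argument (or, more economically, convexity of fixed sets in the CAT(0) metric: the geodesic between two shared vertices lies in both trees and must cross a simplex containing a type $0$ or type $1$ vertex, which is impossible) finishes uniqueness without needing the full parabolic-intersection theorem. So the skeleton is sound, but as written the case analysis does not establish the statement.
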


In \cite{martin2022acylindrical}, Martin--Przytycki introduced the \textbf{coned-off Deligne complex}, denoted $\widehat{D}_\Gamma$, obtained from $D_\Gamma$ by coning-off every standard tree of $D_\Gamma$. We have the following: 

\begin{theorem}[\cite{martin2022acylindrical}]\label{thm:cone-off}
    Let $A_\Gamma$ be a large-type Artin group of hyperbolic type. Then the coned-off Deligne complex $\widehat{D}_\Gamma$ admits a piecewise hyperbolic CAT($-1$) metric, with an action of $A_\Gamma$ on it by isometries. Moreover, the action of $A_\Gamma$ on $\widehat{D}_\Gamma$ is acylindrical.
\end{theorem}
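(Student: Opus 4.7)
The plan is to build $\widehat{D}_\Gamma$ in two stages — first metrise $D_\Gamma$ itself, then understand the effect of coning off the standard trees — and then deduce acylindricity from the CAT($-1$) geometry together with the classification of fixed-point sets in Lemma~\ref{lem:standardtrees}.

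First I would put a Moussong-style piecewise hyperbolic metric on $D_\Gamma$. Since $A_\Gamma$ is large-type, every vertex of $D_\Gamma$ has type at most $2$, so the fundamental domain is assembled out of triangular $2$-simplices of the form $(\{1\}, \langle a \rangle, A_{e^{ab}})$, one per edge of $\Gamma$. I would metrise each such triangle as a hyperbolic triangle with angle $\pi/m_{ab}$ at the type-$2$ vertex and the remaining two angles chosen small enough that Gromov's link condition can be checked. The delicate link is the one at a type-$0$ vertex: it is a bipartite graph whose embedded cycles correspond to the triangles of $\Gamma$, and the hyperbolic type hypothesis — no $(3,3,3)$ triangle — is exactly what is needed to push the length of every such cycle strictly above $2\pi$, making $D_\Gamma$ piecewise hyperbolic CAT($-1$).

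Next I would analyse the coning-off. Each standard tree $T$ is a genuine simplicial tree in $D_\Gamma$ by Lemma~\ref{lem:standardtrees}(1), and I would attach a hyperbolic cone of some small fixed radius $r$ over $T$ with apex $c_T$. The link at $c_T$ is essentially $T$ itself, which is a tree and hence contains no embedded loops, so this new vertex is trivially CAT($-1$)-friendly. Lemma~\ref{lem:standard_trees_properties} controls how standard trees meet in $D_\Gamma$ (at most in a single type-$2$ vertex), so only boundedly many cones are glued at any pre-existing point; for $r$ small enough, Gromov's link condition at the old vertices is preserved, and $\widehat{D}_\Gamma$ becomes piecewise hyperbolic CAT($-1$). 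Equivariance of the construction automatically gives the isometric action of $A_\Gamma$.

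Finally, for acylindricity I would argue by reducing to the fixed-point classification. If an element $g$ almost-fixes two points far apart in $\widehat{D}_\Gamma$, then convexity of displacement in a CAT($-1$) space forces $g$ to almost-fix the entire geodesic between them, and a standard stabilisation argument on the isometry group yields a nontrivial element (a bounded power or commutator of $g$) that genuinely fixes a long segment. Lemma~\ref{lem:standardtrees} then forces such an element to be a conjugate of a power of a standard generator, whose fixed set in $D_\Gamma$ is a single standard tree; but after coning off, the image of that tree in $\widehat{D}_\Gamma$ has diameter at most $2r$, contradicting the long fixed segment once the original distance is chosen large enough. The main technical obstacle, in my view, is verifying the link condition at the type-$0$ vertices after coning-off: the new cone edges enter these links and one must carefully show that they cannot combine with pre-existing edges to create a cycle of length $\leq 2\pi$.
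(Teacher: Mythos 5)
First, a point of comparison: the paper does not prove this statement at all — Theorem~\ref{thm:cone-off} is imported wholesale from \cite{martin2022acylindrical} — so your proposal is really an attempt to reprove Martin--Przytycki's construction, and as it stands it has genuine gaps at both of its crucial steps.

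On the metric side, your angle prescription does not satisfy the link conditions. The link of a type-$1$ vertex $g\langle a\rangle$ in $D_\Gamma$ is a complete bipartite graph (the cosets $ga^k$ on one side, the vertices $gA_{ab}$, one per neighbour $b$ of $a$, on the other), hence has girth $4$; so the CAT($-1$) condition forces the triangle angles at type-$1$ corners to be strictly \emph{greater} than $\pi/2$, not ``small''. Moreover, once the type-$1$ angle exceeds $\pi/2$ and you put $\pi/m_{ab}$ at the type-$2$ corner, the type-$0$ angle is forced below $\pi/2-\pi/m_{ab}$, and then any triangle of $\Gamma$ containing a label $3$ produces a cycle of length $<2\pi$ in the type-$0$ link: the workable choice is just above $\pi/(2m_{ab})$ and just above $\pi/2$, and it is precisely there that excluding $(3,3,3)$ buys the strict margin. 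Your treatment of the coning is also off: the claim that only boundedly many cones meet a pre-existing point is false, since every type-$2$ vertex $A_{ab}$ lies on infinitely many standard trees (fixed trees of the infinitely many conjugates of $a$ and $b$ inside $A_{ab}$), so Lemma~\ref{lem:standard_trees_properties} does not give that finiteness; and ``$r$ small enough'' goes the wrong way, because the new link edges at an old vertex have length equal to the base angles of the cone triangles, which for an isosceles hyperbolic triangle are always $<\pi/2$ and shrink with $r$, while the dangerous cycles (apex direction plus two directions of the same tree at a type-$2$ vertex, which meet at angle only about $\pi$ in the Moussong metric) need those base angles to be large. This forces one to re-metrise the simplices with extra angle at type-$2$ corners \emph{before} coning, which is the actual hard part of the CAT($-1$) verification in \cite{martin2022acylindrical}, not a routine perturbation.

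On acylindricity, the gap is more fundamental. Acylindricity bounds the \emph{cardinality} of the coarse stabiliser $\{g : d(x,gx)\le\varepsilon,\ d(y,gy)\le\varepsilon\}$ for $d(x,y)\ge R$. Your ``standard stabilisation argument'' promoting almost-fixing to a nontrivial element genuinely fixing a long segment has no basis here: $\widehat{D}_\Gamma$ is not locally finite and vertex and cone-point stabilisers are infinite, so there is no properness from which to extract such an element, and even granting trivial pointwise stabilisers of long segments this does not bound the number of almost-stabilising elements (compare $\mathrm{Isom}(\mathbb{H}^2)$ acting on $\mathbb{H}^2$, where pairs of points have finite pointwise stabilisers but the action is not acylindrical). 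In addition, Lemma~\ref{lem:standardtrees} classifies fixed sets in $D_\Gamma$, not in $\widehat{D}_\Gamma$: after coning, any element of the setwise stabiliser $\mathbb{Z}\times F_k$ of a standard tree fixes the corresponding apex, so the fixed-set analysis has to be redone in the coned-off complex. Controlling these coarse stabilisers is the main technical content of Martin--Przytycki's acylindricity proof; your sketch correctly identifies \emph{why} one cones off (the unbounded fixed trees obstruct acylindricity on $D_\Gamma$), but it does not supply the argument that replaces it.
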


The following observation will be useful:

\begin{lemma}\label{lem:abstract_dihedral_subgroups}
    Let $A_\Gamma$ be a large-type Artin group of hyperbolic type. Let $H$ be a subgroup of~$A_\Gamma$ isomorphic to a dihedral Artin group. Then there exists a dihedral parabolic subgroup of $A_\Gamma$ containing $H$.
\end{lemma}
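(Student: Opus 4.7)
The plan is to use the acylindrical action of $A_\Gamma$ on the CAT($-1$) coned-off Deligne complex $\widehat{D}_\Gamma$ from Theorem~\ref{thm:cone-off}, focused on the centre of $H$.

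For $m \geq 3$, let $z_0$ generate $Z(H) \cong \mathbb{Z}$ (a suitable power of the Garside element of $A_m$); for $m = 2$, $H \cong \mathbb{Z}^2$ and we take any non-trivial $z_0 \in H$. Since $H \leq C_{A_\Gamma}(z_0)$ is not virtually cyclic and loxodromic elements of acylindrical actions on hyperbolic spaces have virtually cyclic centralisers, $z_0$ is elliptic on $\widehat{D}_\Gamma$. By Lemma~\ref{lem:standardtrees}, $\Fix_{D_\Gamma}(z_0)$ is either a standard tree, a single type-$2$ vertex, or empty. In the middle case, say $\Fix_{D_\Gamma}(z_0) = \{p\}$, every element of $C_{A_\Gamma}(z_0) \supseteq H$ preserves this singleton and thus fixes $p$; since $\Stab(p)$ is a dihedral parabolic subgroup of $A_\Gamma$, the lemma follows.

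In the remaining cases, I would argue that $H$ preserves a standard tree $T = \Fix(gvg^{-1})$ setwise. If $\Fix_{D_\Gamma}(z_0)$ is itself a standard tree $T$, this is immediate since $C(z_0)$ preserves it. If $\Fix_{D_\Gamma}(z_0) = \emptyset$, then ellipticity of $z_0$ on $\widehat{D}_\Gamma$ gives a fixed cone point; a convexity argument (the geodesic in $\widehat{D}_\Gamma$ between two distinct cone points always contains a vertex of $D_\Gamma$) forces $\Fix_{\widehat{D}_\Gamma}(z_0)$ to consist of a single cone point, which is then fixed by $C_{A_\Gamma}(z_0) \supseteq H$, so $H$ preserves the associated standard tree. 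By Lemmas~\ref{lem:standardtrees} and~\ref{lem:standard_trees_properties}, the pointwise stabiliser of $T$ is exactly $g\langle v\rangle g^{-1}$ (any $h \neq 1$ fixing $T$ pointwise has $\Fix(h) \supseteq T$, forcing $\Fix(h) = T$ and $h \in g\langle v\rangle g^{-1}$), and so $H$ acts on the simplicial tree $T$ with cyclic kernel.

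The main obstacle is then to show that $H$ must fix a vertex of $T$, reducing to the already-handled singleton case. If $H$ did not fix a vertex of $T$, Bass--Serre theory applied to the abelian subgroup $\langle a_0, z_0\rangle \cong \mathbb{Z}^2 \leq H$ (with $a_0$ a standard generator of $H$) would produce a non-trivial element of this $\mathbb{Z}^2$ fixing an infinite ray of $T$, hence by Lemma~\ref{lem:standard_trees_properties} fixing all of $T$ pointwise, and therefore conjugate in $A_\Gamma$ to a non-zero power of the standard generator $v$. Combined with an understanding of the centraliser of a standard generator in a large-type Artin group of hyperbolic type (essentially controlled by the dihedral parabolics containing $v$), this either yields the desired containment of $H$ in a dihedral parabolic or a contradiction with the dihedral structure of $H$ that forces the global fixed vertex on $T$ after all.
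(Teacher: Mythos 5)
Your overall strategy (exploit the acylindrical action on the CAT($-1$) complex $\widehat{D}_\Gamma$, get ellipticity, then split into ``fixed type-$2$ vertex'' versus ``invariant standard tree'') is the same as the paper's, but your argument does not close the decisive case, namely when $H$ preserves a standard tree $T$. Your Bass--Serre detour, even if carried out carefully, at best produces a non-trivial element $w\in H$ lying in the pointwise stabiliser of $T$, hence shows $H\leq C(w)\leq \Stab(T)$; at that point you still owe an argument that this is impossible, and your final sentence (``this either yields the desired containment \ldots{} or a contradiction \ldots{} that forces the global fixed vertex on $T$ after all'') is not a proof but a placeholder. The missing ingredient is exactly the one-line fact the paper uses: by \cite[Lemma 4.5]{martin2022acylindrical} the stabiliser of a standard tree is isomorphic to $\mathbb{Z}\times F_k$, and a dihedral Artin group with coefficient $m\geq 3$ cannot embed in such a group (every subgroup of $\mathbb{Z}\times F_k$ is free or of the form $\mathbb{Z}\times(\text{free})$, while $A_m$ is neither: it has infinite cyclic centre and $A_m/Z(A_m)$ has torsion). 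With that fact the whole tree case dies immediately, and in fact the paper's proof is shorter still: it applies Osin's trichotomy to $H$ itself (being virtually $\mathbb{Z}\times F_k$, $H$ is neither virtually cyclic nor acylindrically hyperbolic, so it has bounded orbits and fixes a vertex of $\widehat{D}_\Gamma$), rather than to the central element $z_0$.

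A second, related slip: you explicitly include the case $m=2$, taking $H\cong\mathbb{Z}^2$ and an arbitrary non-trivial $z_0$. For that reading the statement is actually false: inside $\Stab(T)\cong\mathbb{Z}\times F_k$ one can take the generator of $\Fix(T)$ together with an element acting loxodromically on $T$; this is a $\mathbb{Z}^2$ which fixes no vertex of $D_\Gamma$ and hence lies in no dihedral parabolic. The lemma (and the paper's proof) should be read with ``dihedral'' meaning label at least $3$, as in Section 2.4 and in every application in the paper, and your argument cannot work for $m=2$ precisely because the tree case then genuinely occurs. The earlier parts of your proposal (ellipticity of $z_0$ via virtually cyclic centralisers of loxodromics, the single-vertex case, and the convexity argument locating a single fixed cone point when $\Fix_{D_\Gamma}(z_0)=\emptyset$) are fine.
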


\begin{proof}
    Since $H$ virtually splits as a product of the form $\mathbb{Z}\times F_k$ and acts acylindrically on $\widehat{D}_\Gamma$, it follows from \cite{osin2013} that $H$ has bounded orbits.
    Since $\widehat{D}_\Gamma$ is CAT($-1$) by Theorem~\ref{thm:cone-off} and the action is without inversion, $H$ fixes a vertex of $\widehat{D}_\Gamma$.
    Thus, either $H$ is contained in a (necessarily dihedral) parabolic subgroup, or $H$ preserves a standard tree.
    The latter is impossible since the stabiliser of a standard tree is of the form $\mathbb{Z}\times F_k$ by \cite[Lemma 4.5]{martin2022acylindrical}, and such subgroups cannot contain a dihedral Artin group. 
\end{proof}

\subsection{Centralisers of elements}

Centralisers of elements will play a key role in understanding the image of a standard generator under a homomorphism. Recall that centralisers in the large-type case are fully classified. In particular, we have the following:

\begin{proposition} \cite[Remark 3.6]{martin2023characterising} \label{PropCentralisers}
    Let $A_{\Gamma}$ be a large-type Artin group of hyperbolic type of rank at least $3$, with $\Gamma$ connected. Then the centraliser $C(g)$ of a non-trivial element $g \in A_{\Gamma}$ contains a subgroup isomorphic to $\mathbb{Z} \times F$ where $F$ is a non-abelian free group, if and only if up to conjugation, one of the following happens:
    \begin{enumerate}
        \item $g$ is conjugated to a power of a standard generator $s \in V(\Gamma)$, where $s$ is not the tip of an even-labelled leaf of $\Gamma$, or
        
        \item $g$ belongs to the centre of a dihedral Artin parabolic subgroup of $A_{\Gamma}$.
    \end{enumerate}
\end{proposition}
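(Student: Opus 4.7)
The plan is to prove both implications, using as principal tools the acylindrical action of $A_\Gamma$ on the CAT($-1$) coned-off Deligne complex $\widehat{D}_\Gamma$ (Theorem~\ref{thm:cone-off}), the description of fixed-point sets in Lemma~\ref{lem:standardtrees}, and the structure of standard-tree stabilisers from \cite[Lemma 4.5]{martin2022acylindrical}, which identifies $\Stab(T_s)$ with a group of the form $\mathbb{Z}\times F_k$.

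For the $(\Leftarrow)$ direction, I would verify directly in each case that $C(g)$ contains a copy of $\mathbb{Z}\times F_2$. In case $(2)$, up to conjugation $C(g) \supseteq A_{e^{ab}}$, and for $m_{ab}\geq 3$ the dihedral Artin group $A_{e^{ab}}$ is a central $\mathbb{Z}$-extension of a Fuchsian triangle-type quotient, hence virtually $\mathbb{Z}\times F_2$. In case $(1)$, after conjugation take $g=s^n$; by Lemma~\ref{lem:standard_trees_properties} any element of $C(g)$ must preserve $\Fix(s^n)=T_s$, and a direct check shows $C(g)=\Stab(T_s)$. Using Theorem~\ref{thm:vdL} to describe the type-$2$ vertices incident to $\{s\}$ in $T_s$, one sees that the hypothesis ``not the tip of an even-labelled leaf'' is exactly what is needed to force the rank $k$ of the free factor in $\Stab(T_s)\cong\mathbb{Z}\times F_k$ to be at least $2$.

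For the $(\Rightarrow)$ direction, I would first rule out loxodromicity of $g$ on $\widehat{D}_\Gamma$: otherwise acylindricity would force $C(g)$ to be virtually cyclic, contradicting the presence of $\mathbb{Z}\times F_2$. Hence $g$ fixes a vertex of $\widehat{D}_\Gamma$, which is either a type-$1$ vertex of $D_\Gamma$, a type-$2$ vertex, or a cone-point over a standard tree $T$. In the cone-point case $g$ acts loxodromically on $T$, so $C(g)\subseteq \Stab(T)\cong \mathbb{Z}\times F_k$, and the centraliser of a loxodromic element inside such a group is contained in $\mathbb{Z}^2$, excluding $\mathbb{Z}\times F_2$. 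If $g$ fixes only a type-$2$ vertex $A_{e^{ab}}$ then Lemma~\ref{lem:standardtrees}(2) gives $C(g)\subseteq A_{e^{ab}}$; since $A_{e^{ab}}/Z(A_{e^{ab}})$ is virtually free and its non-trivial centralisers are virtually cyclic, $g$ must be central in $A_{e^{ab}}$, which is case $(2)$. If $g$ fixes a type-$1$ vertex then up to conjugation $g=s^n$, and the same standard-tree analysis from the $(\Leftarrow)$ direction shows that the containment $\mathbb{Z}\times F_2\subseteq C(g)=\Stab(T_s)$ forces $s$ not to be the tip of an even-labelled leaf.

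The main technical hurdle is the precise combinatorial link between the local structure of $\Gamma$ at $s$ and the rank $k$ of the free factor of $\Stab(T_s)$: when $s$ has degree $\geq 2$, or is a leaf incident to an odd-labelled edge, $T_s$ has enough branching to give $k\geq 2$, whereas if $s$ is the tip of an even-labelled leaf the centre of the incident dihedral parabolic causes $T_s$ to degenerate to a line with $\Stab(T_s)\cong \mathbb{Z}^2$. Verifying this is essentially a case analysis using van der Lek's theorem together with the Garside structure of dihedral Artin groups.
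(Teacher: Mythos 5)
The paper does not actually prove this proposition: it is imported verbatim from \cite[Remark 3.6]{martin2023characterising}, so there is no in-paper argument to compare you against; your sketch has to stand on its own. Its architecture is sound and is the natural one (and is the same machinery the paper uses elsewhere): acylindricity of the action on $\widehat{D}_\Gamma$ rules out $g$ loxodromic, the trichotomy of Lemma~\ref{lem:standardtrees} splits the elliptic case, the type-$2$-only case is correctly reduced to $g$ being central in a dihedral parabolic via the virtually free quotient, the cone-point case is correctly excluded since centralisers of elements with non-trivial image in the $F_k$-factor of $\Stab(T)\cong\mathbb{Z}\times F_k$ are $\mathbb{Z}^2$, and in case (2) of $(\Leftarrow)$ the central extension of a virtually non-abelian-free group does split over a free subgroup, giving $\mathbb{Z}\times F_2$ inside the dihedral parabolic.

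The genuine gap is that the crux — the equivalence between ``$s$ is not the tip of an even-labelled leaf'' and $k\geq 2$ in $C(s)=\Stab(T_s)\cong\langle s\rangle\times F_k$ — is asserted rather than proved, and the one concrete claim you make about it is wrong: in the even-leaf case $T_s$ does not degenerate to a line, it is an infinite-valence star with unique type-$2$ vertex $A_{st}$ and type-$1$ vertices $z_{st}^k\langle s\rangle$; the correct route to $C(s)=\langle s,z_{st}\rangle\cong\mathbb{Z}^2$ is that every element of $C(s)$ must fix that unique type-$2$ vertex, so $C(s)=C_{A_{st}}(s)$. In the other direction you should actually exhibit the free group: if $s$ has two neighbours $t_1,t_2$, the centres $z_{st_1},z_{st_2}$ lie in $C(s)$ and are elliptic on $T_s$ with distinct one-point fixed sets (Lemma~\ref{lem:standardtrees}(2)), so their images in $\Stab(T_s)/\langle s\rangle\cong F_k$ cannot lie in a common cyclic subgroup and hence generate $F_2$, giving $k\geq 2$; and if $s$ is the tip of an \emph{odd}-labelled leaf the branching does not occur at $s$ itself — one must conjugate $s$ to its neighbour $t$ (possible only because the label is odd) and use that $t$ has valence $\geq 2$, which holds precisely because $\Gamma$ is connected with $|V(\Gamma)|\geq 3$. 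Your sketch never invokes these hypotheses, yet they are exactly what prevents the rank-$2$ counterexample of a single odd edge, so they must appear explicitly at this step for the case analysis to close.
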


\subsection{Dihedral Artin groups}

Artin groups on two generators play a special role, as they appear as vertex stabilisers in the Deligne complex of a large-type Artin group. We recall here a few results about these groups, and prove certain lemmas that will be used throughout this article. For an integer $m_{st} \geq 3$, we denote by $A_{st}$ the \textbf{dihedral} Artin group on the standard generators $s, t$. 

We consider the usual quotient map $\pi : F_{st} \twoheadrightarrow A_{st}$. A \textbf{simple element} is a word $w \in F_{st}$ that is a non-empty strict subword of either $\Pi(s,t;m_{st})$ or $\Pi(t,s;m_{st})$. Let
\[\Delta_{st} \coloneqq \pi(\Pi(s,t;m_{st})) = \pi(\Pi(t,s;m_{st})).\]
be the \textbf{Garside element} of $A_{st}$. It is known that the centre of $A_{st}$ is generated by $z_{st} \coloneqq \Delta_{st}$ if $m_{st}$ is even, and by $z_{st} \coloneqq \Delta_{st}^2$ if $m_{st}$ is odd (\cite{brieskorn1972artin}).
    
It is a standard result that for every word $g \in F_{st}$ there is a word $Gars(g) \in F_{st}$ representing the same element of $A_{st}$ as $g$, called the \textbf{Garside normal form} of $g$, such that one can write
\[Gars(g) = u_1 \cdots u_k \cdot \Pi(s,t;m_{st})^{N_k},\]
where the $u_i$'s are simple elements that have the property that the last letter of $u_i$ always coincides with the first letter of $u_{i+1}$, and $N_k \in \mathbb{Z}$ (with the convention that $\Pi(s,t;m_{st})^{0}$ is the empty word, and that $\Pi(s,t;m_{st})^{N_i} = \Pi(t^{-1},s^{-1};m_{st})^{|N_i|}$ if $N_i<0$). 
Moreover, the ordered sequence of simple elements $u_1, \ldots, u_k$
associated with $g$ is unique.
An algorithm for constructing this Garside normal form was given in \cite{mairesse2006growth}, and is explained in detail in \cite[Algorithm 4.15]{vaskou2021acylindrical}.
When the word  $g$ has no subword equal to $\Pi(s,t;m_{st})$,  $\Pi(t,s;m_{st})$, $\Pi(s^{-1},t^{-1};m_{st})$ or  $\Pi(t^{-1},s^{-1};m_{st})$, this algorithm admits a simpler form, which we present below:

\begin{algo}[Simplified algorithm for Garside normal form {\cite[Algorithm 4.15]{vaskou2021acylindrical}}]\label{Garside_algo}
We introduce the following notations: for a word $w$, we define $\widetilde{w}$ to be the same word as $w$ if $m_{st}$ is even, and to be the word obtained from $w$ by swapping all the $s$'s and the $t$'s if $m_{st}$ is odd. Note that the words $w \cdot \Pi(s,t;m_{st})$ and $\Pi(s,t;m_{st}) \cdot \widetilde{w}$ represent the same element of $A_{st}$, as $\Delta_{st}$ is central in $A_{st}$ when $m_{st}$ is even, and $\Delta_{st}$ conjugates $s$ into $t$ in $A_{st}$ when $m_{st}$ is odd. 

Let $g$ be a word that does not contain a subword of the form $\Pi(s,t;m_{st})$,  $\Pi(t,s;m_{st})$, $\Pi(s^{-1},t^{-1};m_{st})$ or  $\Pi(t^{-1},s^{-1};m_{st})$.
We construct by induction a sequence of words $g_i$, $i \geq 0$, representing the same element of $A_{st}$ as $g$, and with decompositions of the form
\[g_i = (u_1\cdots u_i) \cdot v_i \cdot \Pi(s,t;m_{st})^{N_i}\]
where the $u_1, \ldots, u_i$ are simple elements, $v_i$ is either a subword of $g$ or of $\widetilde{g}$, and $N_i\in \mathbb{Z}$ (with the same convention as above for powers of $\Pi(s,t;m_{st})$).
For the initialisation stage, we set $v_0 = g$ and $N_0 = \varnothing$.
The word $g_{i+1}$ is obtained from $g_i$ by performing the following: 
\begin{itemize}
    \item If the first letter of $v_i$ is positive, let $u_{i+1}$ be the maximal simple element that appears as a left prefix of $v_i$.  We then define $v_{i+1}$ as the unique subword of $v_i$ such that $v_i = u_{i+1} \cdot v_{i+1}$, and we set $N_{i+1} = N_i$.
    \item If the first letter of $v_i$ has negative exponent, let $n_{i+1}$ be the maximal strict subword of $\Pi(s^{-1},t^{-1};m_{st})$ or $\Pi(t^{-1},s^{-1};m_{st})$ appearing as a left prefix $v_i$, and let $z_{i+1}$ be the unique subword of $v_i$ such that $v_i = n_{i+1} \cdot z_{i+1}$. Let $m_{i+1}$ be the unique simple element such that the word $n_{i+1}$ represents the same element of $A_{st}$ as $\Pi(s^{-1},t^{-1};m_{st}) \cdot m_{i+1}$ or $\Pi(t^{-1},s^{-1};m_{st})\cdot m_{i+1}$. We thus have that $g_i$ represents the same element of $A_{st}$ as 
    $$g_{i+1} = (u_1\cdots u_i \cdot \widetilde{m}_{i+1}) \cdot \widetilde{z}_{i+1} \cdot (\Pi(s^{-1},t^{-1};m_{st})\cdot \Pi(s,t;m_{st})^{N_i})$$
    and so we set $u_{i+1} = \widetilde{m}_{i+1}$, $v_{i+1}= \widetilde{z}_{i+1}$, and $N_{i+1} = N_i -1$.
\end{itemize}
Note that since the length of $v_i$ decreases at each step,  the word $v_i$ becomes empty after finitely many steps, at which point the algorithm terminates.
The resulting decomposition 
$$ g_k = u_1 \cdots u_k \cdot \Pi(s,t;m_{st})^{N_k}$$ is a Garside form for $g$ by \cite[Algorithm 4.15]{vaskou2021acylindrical}. 
\end{algo}

\begin{example}
    In the dihedral Artin group $A_{st}$ with $m_{st}=5$, the above algorithm applied to the word  $g_0= st^2s^{-1}t^{-2}$ yields the sequence of words
    $$g_1 = st \cdot (ts^{-1}t^{-2}), ~~ g_2 = st\cdot t \cdot (s^{-1}t^{-2}), ~~g_3 = st\cdot t \cdot tst \cdot (t^{-1})\cdot \Pi(s,t;5)^{-1},$$ and finally the Garside normal form $g_4 = st \cdot t \cdot tst \cdot tsts \cdot \Pi(s,t;5)^{-2}$.
\end{example}

As an application of Garside normal forms, we mention the following lemma, which will be used several times in this article: 

\begin{lemma}\label{lem:alternating product equality}
    Let $A_{st}$ be a dihedral Artin group with coefficient $m_{st} \geq 4$ and standard generators $s, t$.
    Let $ m, \ell$ be non-zero integers, and let $k>1$ be a positive integer.
    The equality 
    \[\Pi(s^m,t^\ell;k) = \Pi(t^\ell,s^m;k)\]
    holds in $A_{st}$ if and only if $m=\ell=\pm 1$, and $k$ is a multiple of $m_{st}$.
\end{lemma}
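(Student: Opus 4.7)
The plan is to handle the ``if'' direction by a direct computation, and the ``only if'' direction by case analysis combined with an argument in the central quotient $\bar A := A_{st}/Z(A_{st})$. For the ``if'' direction, when $m = \ell = 1$ and $k = q\cdot m_{st}$, iterating the defining relation $\Pi(s,t;m_{st}) = \Pi(t,s;m_{st}) = \Delta_{st}$ immediately yields $\Pi(s,t;k) = \Delta_{st}^q = \Pi(t,s;k)$; the case $m = \ell = -1$ follows by inverting both sides, with a short parity-of-$k$ bookkeeping. For the converse, my first move will be to exploit the abelianisation of $A_{st}$, which is $\mathbb{Z}^2$ when $m_{st}$ is even and $\mathbb{Z}$ (via $\bar s = \bar t$) when $m_{st}$ is odd. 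Comparing the $s$- and $t$-exponents on both sides of the claimed equality forces $k$ to be even whenever $m_{st}$ is even, and forces $m = \ell$ whenever $k$ is odd.

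The main step is the analysis in $\bar A$. A direct computation (using that $\Delta_{st}$ is central when $m_{st}$ is even, and conjugates $s$ to $t$ when $m_{st}$ is odd) yields the free-product descriptions $\bar A \cong \mathbb{Z} * \mathbb{Z}/(m_{st}/2)$ with generators $\bar s$ and $\bar r := \overline{st}$ when $m_{st}$ is even, and $\bar A \cong \mathbb{Z}/2 * \mathbb{Z}/m_{st}$ with generators $a := \bar\Delta_{st}$ and $b := \overline{s\Delta_{st}}$, satisfying $\bar s = ba$ and $\bar t = ab$, when $m_{st}$ is odd. For $k = 2j$ even, the claimed equality rewrites as $(s^m t^\ell)^j = (t^\ell s^m)^j$, and a short conjugation argument using $t^\ell s^m = s^{-m}(s^m t^\ell)s^m$ (and its symmetric version for $t^\ell$) shows that $(s^m t^\ell)^j$ commutes with both $s$ and $t$, hence is central; equivalently, $\bar s^m \bar t^\ell$ is torsion in $\bar A$. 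Since every torsion element of a free product is conjugate into a finite factor, its cyclically reduced representative must lie in a single factor. Writing out the free-product normal form of $\bar s^m \bar t^\ell$ using the expressions above then shows this is possible only when $m = \ell = \pm 1$, in which case $\bar s^m \bar t^\ell$ equals (up to inversion) the generator of the torsion factor---namely $\bar r$ for $m_{st}$ even, or $b^2$ for $m_{st}$ odd---whose order is exactly $m_{st}/2$ or $m_{st}$ respectively. The condition $(\bar s^m \bar t^\ell)^j = 1$ then translates precisely into $m_{st} \mid k$.

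For $k$ odd, the abelianisation step has already forced $m_{st}$ odd and $m = \ell$. Here one uses that conjugation by $\Delta_{st}$ realises the swap $s \leftrightarrow t$, so the claimed equality says $\Pi(s^m, t^m; k)$ commutes with $\Delta_{st}$. A quick Bass--Serre argument in the free product $\bar A$ gives $C_{\bar A}(a) = \langle a\rangle$, which lifts to $C_{A_{st}}(\Delta_{st}) = \langle \Delta_{st}\rangle$; hence $\Pi(s^m, t^m; k) \in \langle\Delta_{st}\rangle$, and a parallel free-product normal-form analysis in $\bar A$ forces $|m| = 1$ together with $m_{st} \mid k$. The main technical obstacle throughout is the normal-form bookkeeping in $\bar A$: carefully handling the partial cancellations that occur in the low cases $|m| = 1$ or $|\ell| = 1$, and verifying that the cyclically reduced form genuinely mixes syllables of both factors except in the prescribed $(m,\ell) = (\pm 1, \pm 1)$ cases.
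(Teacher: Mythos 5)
Your route is genuinely different from the paper's: the paper works entirely inside the Garside structure of $A_{st}$ and compares the first simple elements of Garside normal forms, whereas you pass to the central quotient $\bar{A}=A_{st}/\langle z_{st}\rangle$, use its structure as a free product of cyclic groups, and translate the equality (for $k=2j$ even) into the statement that $\bar{s}^m\bar{t}^\ell$ is torsion in $\bar{A}$. Before the main issue, one smaller imprecision: your conjugation trick only shows that $(s^mt^\ell)^j$ commutes with $s^m$ and with $t^\ell$, not with $s$ and $t$; to upgrade this to centrality you need something like $C(s^m)=C(s)$ in $A_{st}$ (true, but it requires a centraliser argument in $\bar{A}$ via the action on the Bass--Serre tree, plus an abelianisation argument to kill a possible central error term), so this step is fillable but not automatic.

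The genuine gap is the asserted outcome of the normal-form bookkeeping, namely that $\bar{s}^m\bar{t}^\ell$ can be torsion only when $m=\ell=\pm1$: this fails when $m_{st}=3$. In $\bar{A}\cong\mathbb{Z}/2*\mathbb{Z}/3$ one has $\bar{s}^2\bar{t}=(ba)^2(ab)=bab^2$, which is conjugate to $ab^3=a$, a torsion element of order $2$. Back in $A_{st}$ this is the identity $s^2t=s\Delta_{st}s^{-1}$, whence $(s^2t)^2=\Delta_{st}^2=(ts^2)^2$, i.e.\ $\Pi(s^2,t;4)=\Pi(t,s^2;4)$ with $m=2$, $\ell=1$, $k=4$ not a multiple of $3$. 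So your scheme cannot close the case $m_{st}=3$ --- and in fact this is a counterexample to the lemma as stated, so no proof can: the statement is only correct for $m_{st}\geq 4$ (your cyclic-reduction analysis, together with the retraction $\bar{A}\to\mathbb{Z}$ in the even case, does go through there, as does your $k$-odd branch), and the case $m_{st}=3$ must be excluded or the conclusion weakened. It is worth noting that the paper's own Garside argument also silently breaks on this example: the words above are powers of $\Delta_{st}$, so their normal forms have no simple-element prefix and the claimed comparison of ``first simple elements'' does not apply. Your central-quotient framework is actually well suited to detecting exactly where the statement is true, but as written the key claim is asserted rather than proved, and it is false in the boundary case $m_{st}=3$.
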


\begin{proof}
    The condition $m=\ell=\pm 1$ is clearly sufficient, so let us check that it is necessary.
    Without loss of generality (up to taking inverses), we can assume that $m$ is positive. 
    Note that since $m_{st}\geq 4$, if the condition $m=\ell=\pm 1$ does not hold, then none of the words in the statement contains a subword equal to the Garside element or its inverse.
    Hence we can follow Algorithm~\ref{Garside_algo} to compute the Garside normal form of these words.
    
    Let us first show that $\ell$ is positive.
    It will be enough to compare the first simple element in the Garside normal forms.
    If $\ell$ is negative, one checks that the first simple element in the Garside normal form of $\Pi(s^m,t^\ell;k)$ is $s$, while the first simple element in the Garside normal form of $\Pi(t^\ell, s^m;k)$ is $\Pi(s,t;m_{st}-1)$.
    By uniqueness of the Garside normal form, the corresponding elements are distinct.
    
    Let us now show that $m=\ell=1$. By symmetry of the equation, it is enough to show that $m=1$. 
    Again, it will be enough to compare the first simple element in the Garside normal forms.
    If $m>1$, one checks that the first simple element of $\Pi(s^m,t^\ell;k)$ is $s$, while the first simple element of $\Pi(t^\ell, s^m;k)$ is either $t$ or $ts$ (depending on whether $\ell>1$ or $\ell=1$, respectively).
    By uniqueness of the Garside normal form, the corresponding elements are distinct.
    
    Finally, assume that $\ell = m =1$, and let us show that $k$ is a multiple of $m_{st}$. If that were not the case, then let us write the Euclidean division $k = qm_{st} + r$ with $0<r<m_{st}$.
    Then one checks that $\Pi(s,t;k)$ and $\Pi(t, s;k)$ have Garside normal forms  $\Pi(s,t;r)\cdot\Pi(s,t;m_{st})^q$ and $\Pi(t, s;r)\cdot\Pi(s,t;m_{st})^q$
    respectively.
    As these are distinct, the corresponding elements are distinct by uniqueness of the Garside normal form, a contradiction. 
\end{proof}

\begin{remark}
    If $m_{st}=3$, the above statement no longer holds, as we have  relations such as $\Pi(s^2, t;4) = \Pi(t, s^2;4)$. However, we still get in that case that $k=\pm 1$ or $\ell = \pm 1$, for otherwise $\Pi(s^m, t^\ell;k)$ does not contain any subword of the form $sts, tst, s^{-1}t^{-1}s^{-1},$ or $t^{-1}s^{-1}t^{-1}$, and the same proof as above applies.
\end{remark}

There is a geometric interpretation of the uniqueness of Garside normal forms, which we now present.
Consider the Cayley graph $\mathrm{Cay}(A_{st}, R)$, where $R$ denotes the set of simple elements of $A_{st}$. Conjugation by the Garside element $\Delta_{st}$ either fixes the standard generators (if $m_{st}$ is even) or swaps them (if $m_{st}$ is odd), so it preserves $R$.
Thus, there is an action of $\langle \Delta_{st}\rangle$ on this graph by multiplication on the \textit{right}.
We denote by $X_n$ the quotient graph, where $n:=m_{st}$.
The following was already observed in \cite{Bestvina1999}, see also \cite{martin2024tits} for a more detailed discussion:

\begin{lemma}\label{lem:Bestvina-tree}
    The graph $X_n$ is a tree of simplices, where the maximal simplices are the translates of the simplex spanned by $1, s, st, \cdots, \Pi(s,t;n-1)$ and of the simplex spanned by $1, t, ts, \cdots, \Pi(t,s;n-1)$. 
    
    Let $T_n$ be the corresponding dual tree, whose vertices are the maximal simplices of $X_n$, and whose edges correspond to simplices of $X_n$ with non-empty intersection. Then the graph $T_n$ is a regular $n$-tree. Moreover, the dihedral Artin group $A_n$ with coefficient $n$ acts transitively on the edges of $T_n$. \qedhere
\end{lemma}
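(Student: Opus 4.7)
The plan is to analyze the maximal cliques in $\mathrm{Cay}(A_n, R)$ directly and then track them through the right quotient by $\langle \Delta_{st}\rangle$. First I would check that $\sigma_s := \{1, s, st, \ldots, \Pi(s,t;n-1)\}$ is a clique: for any $0 \leq i < j \leq n-1$, the product $\Pi(s,t;i)^{-1}\Pi(s,t;j)$ equals $\Pi(s,t;j-i)$ or $\Pi(t,s;j-i)$ according to the parity of $i$, and is thus a simple element of length $j-i<n$. The same argument treats $\sigma_t$. To see that the $A_n$-translates of $\sigma_s$ and $\sigma_t$ exhaust the maximal cliques, I would argue that any clique containing $1$ becomes, after left-translation by a suitable element of $R^{-1}$, a clique of positive simple elements; invoking the uniqueness of Garside normal forms (in the spirit of Lemma~\ref{lem:alternating product equality}), two positive simple elements whose ratio is again simple must begin with the same letter, forcing the clique into $\sigma_s$ or $\sigma_t$.

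Next I would descend to the quotient $X_n$. The key computations are $\Delta_{st}s\Delta_{st}^{-1} = t$ for $n$ odd (and $\Delta_{st}$ central for $n$ even), together with the explicit identity $\Pi(t,s;n-1)\Delta_{st}^{-1} = s^{-1}$ in $A_n$. These yield the identification $s^{-1}\sigma_s = \sigma_t$ in $X_n$, and an analogous analysis shows that every candidate maximal simplex through the vertex $[1]$ coincides in $X_n$ with either $\sigma_s$ or $\sigma_t$. Combined with the vertex-transitivity of the $A_n$-action on $X_n$, this implies that every vertex of $X_n$ lies on exactly two maximal simplices. Moreover, two distinct maximal simplices of $X_n$ share at most one vertex, again by the uniqueness of the Garside normal form; this rules out multi-edges in $T_n$ and establishes its $n$-regularity.

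The main obstacle is to prove that $T_n$ has no cycles. I would argue topologically by exhibiting $X_n$ (viewed as a 2-dimensional simplicial complex with the size-$n$ cliques filled in as $(n-1)$-simplices) as simply connected: the unique defining relator $\Pi(s,t;n) = \Pi(t,s;n)$ of $A_n$ corresponds to a loop in $\mathrm{Cay}(A_n, R)$ traversing the boundaries of $\sigma_s$ and $\sigma_t$ in succession, and this loop becomes contractible in $X_n$ after the right $\langle \Delta_{st}\rangle$-quotient identifies its two endpoints. Combined with the tree-of-simplices structure already established, this forces the dual graph $T_n$ to be a tree. Finally, edge-transitivity of the $A_n$-action on $T_n$ is immediate: any edge of $T_n$ corresponds to a vertex of $X_n$, which can be moved to $[1]$ by the vertex-transitive action, and at $[1]$ the unique edge of $T_n$ is the canonical pair $\{\sigma_s, \sigma_t\}$.
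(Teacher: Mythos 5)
The paper does not actually prove this lemma: it is recorded as an observation from the cited references (Bestvina's paper and \cite{martin2024tits}), so your argument has to stand on its own rather than be compared with an in-paper proof. Its architecture is sound and can be completed. The upstairs step is fine: $x\sim y$ in $\mathrm{Cay}(A_n,R)$ means $x^{-1}y$ or $y^{-1}x$ is a positive simple, so every clique is a chain in the left-divisibility order on divisors of $\Delta_{st}$, hence sits inside a translate of $\sigma_s$ or $\sigma_t$; and your identities $\Pi(t,s;n-1)=s^{-1}\Delta_{st}$ and $s^{-1}\sigma_s=\sigma_t$ in $X_n$ are correct. The weight of the lemma, however, sits exactly in the two places you dispatch with ``an analogous analysis''. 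First, adjacency in $X_n$ is $[x]\sim[y]$ if and only if $x^{-1}y\in(R\cup R^{-1})\langle\Delta_{st}\rangle$, i.e.\ $x^{-1}y$ is simple up to a power of $\Delta_{st}$; this is strictly coarser than adjacency upstairs, and the upstairs clique analysis does not transfer formally --- for instance $\sigma_s$ and $s\sigma_t$ are distinct maximal cliques of the Cayley graph sharing $n-1$ vertices, and only become equal in $X_n$. What you must actually prove is, e.g., that $\Pi(s,t;i)^{-1}\Pi(t,s;j)\Delta_{st}^m$ is never simple nor the inverse of a simple for $1\le i,j\le n-1$, and that two translates of $\sigma_s,\sigma_t$ sharing two vertices of $X_n$ coincide. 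These do follow in the Garside spirit you invoke (if $x,y$ are nontrivial simples with trivial left gcd, an identity $x^{-1}y=r\Delta_{st}^m$ with $r$ simple forces $x$ to left-divide $y$ or vice versa), but this computation is the actual content and should be written out.

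Second, the simple-connectivity step is under-argued as stated: contracting the image of the single Artin relator is not enough. You need that $\pi_1$ of the filled-in $X_n$ is killed by the images of \emph{all} relators of a presentation of $A_n$ on the generating set $R$ --- the Artin relator together with the relators expressing each longer simple generator as a word in $s,t$ --- and, in addition, by one loop projecting to the generator of the deck group $\langle\Delta_{st}\rangle$ of the graph covering $\mathrm{Cay}(A_n,R)\to X_n$. All of these do die inside single maximal simplices (the definitional relators already upstairs, the Artin relator splitting into two simplex loops once $[\Delta_{st}]=[1]$, and the $\Delta_{st}$-loop lying in $[\sigma_s]$), and then the nerve argument you gesture at (no triple intersections, pairwise intersections are points) gives that $T_n$ is a tree; but the covering-space bookkeeping must be said, especially since the flag completion of the Cayley graph itself is \emph{not} simply connected (for $n=3$ the triangle relations among $R$ present only a free group), so the argument genuinely has to be run downstairs. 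As an alternative to the topological route, once the local structure of $X_n$ is in place one can identify $T_n$ with (an unsubdivision of) the Bass--Serre tree of the standard splitting of $A_n$ --- the amalgam $\langle st\rangle*_{\langle\Delta_{st}^2\rangle}\langle\Delta_{st}\rangle$ for $n$ odd, the HNN extension of $\langle st\rangle$ over $\langle\Delta_{st}\rangle$ for $n$ even --- which, after identifying stabilisers, yields acyclicity, $n$-regularity and edge-transitivity simultaneously.
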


It is straightforward to check that the standard generators $s$ and $t$ act by translations of length $1$ on the tree $T_n$. Moreover, $\mathrm{Axis}(s)\cap \mathrm{Axis}(t)$ is a single edge (see Figure \ref{FigureBestvinaTree}). 

\begin{figure}[H]
\centering
\includegraphics[scale=0.78]{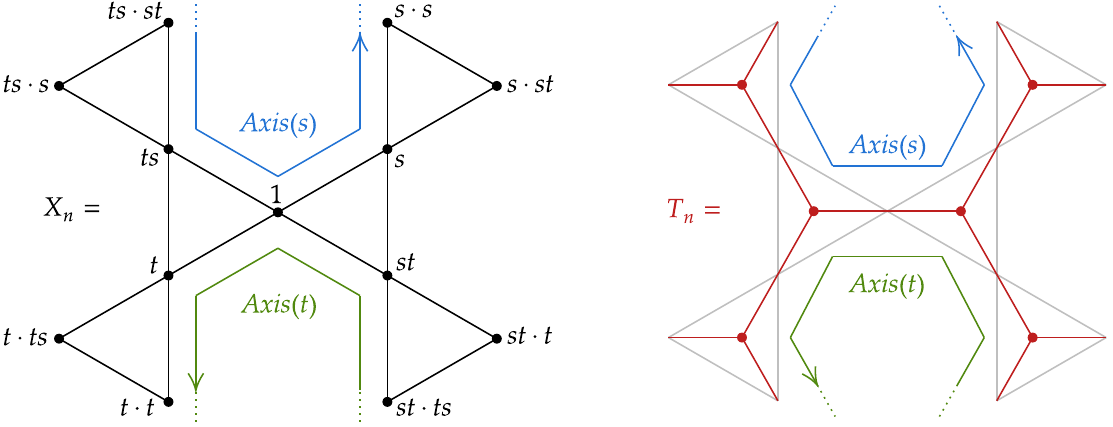}
\caption{The graph $X_n$ and its dual tree $T_n$ when $n = 3$.}
\label{FigureBestvinaTree}
\end{figure}

As an example of application, we mention the following result, which will be used several times in this article:

\begin{lemma}\label{lem:subgroup_conjugates_generators}
    Let $A_{st}$ be a dihedral Artin group with $m_{st} \geq 3$. Let $x, y \in A_{st}$ be two distinct elements that are conjugates of standard generators.
    Then either $x, y$ generate a non-abelian free group, or there exists $g\in A_{st}$ such that  $gxg^{-1}$ and $gyg^{-1}$ are standard generators. 

    In particular, the subgroup generated by any two conjugates of standard generators is either $\mathbb{Z}$, $F_2$, or the whole dihedral group $A_{st}$.
\end{lemma}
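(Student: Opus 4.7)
The plan is to analyse the situation using the action of $A_{st}$ on the regular $n$-tree $T_n$ provided by Lemma~\ref{lem:Bestvina-tree}, where $n = m_{st}$. Since $s$ and $t$ act as translations of length $1$ on $T_n$, both $x$ and $y$ act as hyperbolic isometries of translation length $1$; let $A_x, A_y \subset T_n$ denote their axes. I first argue that $A_x \neq A_y$: if the axes coincided, then $x$ and $y$ would both translate this common line by $\pm 1$, so some product $x^{\varepsilon_1} y^{\varepsilon_2}$ (with $\varepsilon_i \in \{\pm 1\}$) would fix the line pointwise. Using the central-extension structure of $A_{st}$ over the virtually free quotient $A_{st}/Z(A_{st})$ (which acts faithfully on $T_n$), such an element must lie in $Z(A_{st}) = \langle z_{st}\rangle$; the abelianisation of $A_{st}$ (under which every conjugate of a standard generator has the image of $s$ or $t$, while $z_{st}$ has independent image of norm at least $m_{st}$) then forces $x = y$, contradicting distinctness.

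A similar argument rules out the possibility that $A_x \cap A_y$ contains two consecutive edges of $T_n$: choosing signs $\varepsilon_1, \varepsilon_2$ appropriately so that $x^{\varepsilon_1}$ and $y^{\varepsilon_2}$ translate the shared segment in the same direction, one checks that $x^{\varepsilon_1}(y^{\varepsilon_2})^{-1}$ pointwise-fixes an edge of the shared segment and hence lies in $Z(A_{st})$, again leading to $x = y$. This leaves three possibilities for $A_x \cap A_y$: empty, a single vertex, or a single edge.

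In the first two cases, a standard tree-theoretic ping-pong argument for two hyperbolic isometries whose axes have bounded overlap yields $\langle x, y\rangle \cong F_2$. In the third case, let $e = A_x \cap A_y$; by the edge-transitivity of the $A_{st}$-action on $T_n$ (Lemma~\ref{lem:Bestvina-tree}), there exists $g \in A_{st}$ with $g\cdot e_0 = e$, where $e_0 := \mathrm{Axis}(s) \cap \mathrm{Axis}(t)$, and after conjugating $x, y$ by $g^{-1}$ we may assume $e = e_0$. I then claim that the only conjugate of $s$ whose axis passes through $e_0$ is $s$ itself, and similarly for $t$: if $h s h^{-1}$ has axis $h\cdot\mathrm{Axis}(s)$ containing $e_0$, then $h^{-1}\cdot e_0 \in \mathrm{Axis}(s) = \{s^k\cdot e_0 : k \in \mathbb{Z}\}$, which (combined with the description of the pointwise edge-stabiliser of $e_0$ as a central subgroup up to the factor $\langle s\rangle$) forces $h \in \langle s\rangle\cdot Z(A_{st})$, so that $h s h^{-1} = s$. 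Consequently $\{x, y\} = \{s, t\}$, giving $\langle x, y\rangle = \langle s, t\rangle = A_{st}$ and providing the element $g$ required by the statement.

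The ``in particular'' assertion then follows by including the case $x = y$, which yields $\langle x\rangle \cong \mathbb{Z}$. The main obstacle in this plan is the characterisation of pointwise stabilisers of edges and short subsegments of $T_n$ in $A_{st}$—specifically, that fixing such a segment forces an element to lie in $Z(A_{st})$. This is established by a careful analysis of the action of the virtually free quotient $A_{st}/Z(A_{st})$ on $T_n$ via Bass-Serre theory, using the explicit description of vertex stabilisers arising from the dihedral Artin presentation.
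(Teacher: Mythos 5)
Your strategy is essentially the paper's: act on the Bestvina tree $T_n$, distinguish cases according to $\mathrm{Axis}(x)\cap\mathrm{Axis}(y)$, use ping-pong when the overlap is at most a vertex, and identify the two elements with standard generators when the axes share an edge. However, the step where you pin down conjugates through a common edge contains a genuine error when $m_{st}$ is odd. The pointwise stabiliser of the edge $e_0=\mathrm{Axis}(s)\cap\mathrm{Axis}(t)$ is $\langle\Delta_{st}\rangle$, and for odd $m_{st}$ this is strictly larger than $Z(A_{st})=\langle\Delta_{st}^2\rangle$ and is \emph{not} contained in $\langle s\rangle\cdot Z(A_{st})$. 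Consequently, in your second paragraph the deduction ``$x^{\varepsilon_1}(y^{\varepsilon_2})^{-1}$ pointwise-fixes an edge and hence lies in $Z(A_{st})$'' is false (you only get a conjugate of a power of $\Delta_{st}$), and, more seriously, in your third paragraph the claim ``the only conjugate of $s$ whose axis passes through $e_0$ is $s$ itself'' is false: your own computation gives $h\in\langle\Delta_{st}\rangle\langle s\rangle$, say $h=\Delta_{st}^{-m}s^{-k}$, so $hsh^{-1}=\Delta_{st}^{-m}s\Delta_{st}^{m}$, which equals $t$ when $m$ and $m_{st}$ are odd. Indeed $t=\Delta_{st}s\Delta_{st}^{-1}$ is a conjugate of $s$ whose axis contains $e_0$, so your claim, taken literally, would wrongly exclude the genuinely occurring configuration where both $x$ and $y$ are conjugates of $s$ (for odd labels all standard generators are conjugate). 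The correct statement, which is exactly the Claim in the paper's proof, is only that $hsh^{-1}\in\{s,t\}$, because conjugation by $\Delta_{st}$ preserves the pair $\{s,t\}$ (fixing each generator for even $m_{st}$, swapping them for odd $m_{st}$); combined with $x\neq y$ this still gives $\{g^{-1}xg,g^{-1}yg\}=\{s,t\}$, so your argument is repairable, but as written it rests on a wrong description of the edge stabiliser, which is also the ``deferred'' Bass--Serre input you flag at the end.

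Two smaller points. First, once the claim is stated in its corrected form, your exclusion of a two-edge overlap (and your separate treatment of coinciding axes) is unnecessary: whenever $\mathrm{Axis}(x)\cap\mathrm{Axis}(y)$ contains \emph{any} edge $g e_0$, conjugating by $g^{-1}$ and applying the claim to both elements puts them simultaneously in $\{s,t\}$, which is how the paper argues; distinctness then forces $\{s,t\}$, and in particular the overlap was a single edge after all. Second, the pointwise stabiliser computation you actually need is elementary: edges of $T_n$ on $\mathrm{Axis}(u)$ correspond to cosets $u^k\langle\Delta_{st}\rangle$, so $\Stab(e_0)=\langle\Delta_{st}\rangle$; the blanket statement ``fixing an edge forces an element into $Z(A_{st})$'' is simply not true in the odd case, so no amount of careful Bass--Serre analysis will establish it.
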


\begin{proof}
    Consider the action of $A_{st}$ on the Bestvina tree $T_n$ introduced in Lemma~\ref{lem:Bestvina-tree}. Denote by $e$ the edge $\mathrm{Axis}(s)\cap \mathrm{Axis}(t)$.
    \medskip 

    \noindent \underline{Claim:} Let $z$ be a conjugate of a standard generator. If $\mathrm{Axis}(z)$ contains the edge~$e$, then $z \in \{s, t\}$.
    \medskip 

    Let us write $z = gug^{-1}$ with $u \in \{s, t\}$. We have $\mathrm{Axis}(z) = g\mathrm{Axis}(u)$. Since $\mathrm{Axis}(z)$ contains $e$, it follows that $g$ sends some edge of $\mathrm{Axis}(u)$ to $e$. In particular, $g$ sends some coset of the form $u^*\langle \Delta_{st}\rangle$ to the coset $\langle \Delta_{st}\rangle$. Thus, we have $g = \Delta_{st}^k u^\ell$ for some $k, \ell \in \mathbb{Z}$. In particular, we get 
    $$ z = gug^{-1} = \Delta_{st}^k u\Delta_{st}^{-k}$$
    Since conjugation by $\Delta_{st}$ either fixes the two standard generators (if $m_{st}$ is even) or swaps them (otherwise), the claim follows.

    \medskip 

    Now consider the axes $\mathrm{Axis}(x)$ and $\mathrm{Axis}(y)$.  We consider several cases:
    If $\mathrm{Axis}(x)$ and $\mathrm{Axis}(y)$ meet along a single vertex, then the usual ping-pong argument guarantees that $\langle x, y \rangle \cong F_2$.  
    If $\mathrm{Axis}(x)$ and $\mathrm{Axis}(y)$ are disjoint, then since $x$ and $y$ are infinite order elements with disjoint minsets in $T_n$, a standard result from group actions on trees guarantees that $\langle x, y \rangle \cong F_2$.
    Finally, if $\mathrm{Axis}(x) \cap \mathrm{Axis}(y)$ contains an edge, then in particular it contains a translate of the form $ge$. Thus, the elements $g^{-1}xg$ and $g^{-1}yg$ are conjugates of standard generators whose axes contain $e$. By the previous claim, it follows that $g^{-1}xg \in \{s, t\}$, and similarly $g^{-1}yg \in \{s, t\}$.
\end{proof}

\section{The image of a standard generator}\label{sec:image_generator}

We start our study of the homomorphisms between large-type Artin groups by controlling the possible images of a standard generator.
\medskip

\noindent When the homomorphism under study is injective, we have the following general result:

\begin{lemma}\label{lem:generators_to_generators}
    Let $f : A_{\Gamma} \hookrightarrow A_{\Gamma'}$ be an injective morphism between two XL-type Artin groups of rank at least $3$, with $\Gamma$  connected and with no even-leaves.
    Then for every pair $s, t \in V(\Gamma)$ of adjacent standard generators, there exist elements $g \in A_{\Gamma'}$, $\varepsilon = \pm 1$, and adjacent standard generators $s', t' \in V(\Gamma')$ such that 
    \[f(s) = g(s')^\varepsilon g^{-1}, ~~~ f(t) = g(t')^\varepsilon g^{-1}\]
    In particular, $f$ maps standard generators to conjugates of standard generators or to conjugates of their inverses.
\end{lemma}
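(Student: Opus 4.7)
The plan is to first localise $\langle f(s), f(t)\rangle$ inside a dihedral parabolic of $A_{\Gamma'}$: since $f$ is injective and $s,t$ generate a dihedral Artin group, $\langle f(s), f(t)\rangle$ is isomorphic to $A_{st}$, so by Lemma~\ref{lem:abstract_dihedral_subgroups} it is contained in a dihedral parabolic subgroup $g_0 A_{s't'} g_0^{-1}$ for some adjacent $s',t'\in V(\Gamma')$ and some $g_0\in A_{\Gamma'}$. Replacing $f$ with its composition with conjugation by $g_0^{-1}$, I may assume $f(s),f(t)\in A_{s't'}$; the element $g$ from the statement will be recovered at the end by composing all conjugators used along the way.

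Next, I constrain the image of a generator using centralisers. Since $\Gamma$ has no even leaves, Proposition~\ref{PropCentralisers} implies that $C_{A_\Gamma}(s)$ contains a subgroup of the form $\mathbb{Z}\times F$ with $F$ non-abelian free, and by injectivity the same holds for $C_{A_{\Gamma'}}(f(s))$. Applying Proposition~\ref{PropCentralisers} in the other direction, $f(s)$ is either a conjugate of a power of a standard generator of $A_{\Gamma'}$, or lies in the centre of some dihedral parabolic $Q$. The second case is ruled out: by Lemma~\ref{lem:standardtrees}, $\Fix(f(s))$ would then be a single type-$2$ vertex, but $f(s)\in A_{s't'}$ also fixes the type-$2$ vertex $A_{s't'}$, so $Q=A_{s't'}$ and $f(s)\in Z(A_{s't'})$; this would however make $f(s)$ commute with $f(t)\in A_{s't'}$, contradicting $\langle f(s),f(t)\rangle\cong A_{st}$. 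The same analysis applies to $f(t)$.

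I then upgrade this conjugacy to one realised inside $A_{s't'}$. By Lemma~\ref{lem:standardtrees}, the fixed set $\Fix(f(s))$ is a standard tree passing through the type-$2$ vertex $A_{s't'}$, hence it contains a neighbouring type-$1$ vertex, necessarily of the form $wA_{a}$ with $w\in A_{s't'}$ and $a\in\{s',t'\}$; since $f(s)$ fixes this vertex, $w^{-1}f(s)w\in\langle a\rangle$. Therefore $f(s)=w_1 X^{k_1}w_1^{-1}$ for some $X\in\{s',t'\}$, $w_1\in A_{s't'}$ and $k_1\neq 0$, and similarly $f(t)=w_2 Y^{k_2}w_2^{-1}$.

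The main obstacle is the final step: controlling the exponents $k_1,k_2$ and simultaneously matching the pair $\{X,Y\}$ to $\{s',t'\}$. I apply Lemma~\ref{lem:subgroup_conjugates_generators} to the ``roots'' $X':=w_1 X w_1^{-1}$ and $Y':=w_2 Y w_2^{-1}$, which are conjugates of standard generators inside $A_{s't'}$. The cyclic case would force $f(s)$ and $f(t)$ to commute, and the $F_2$ case would embed the dihedral Artin group $A_{st}$ into a free group --- both impossible. Hence $\langle X',Y'\rangle = A_{s't'}$, and Lemma~\ref{lem:subgroup_conjugates_generators} provides an element $g_1\in A_{s't'}$ simultaneously conjugating $X',Y'$ to two distinct standard generators, which I relabel as $s',t'$. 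The braid relation $\Pi(f(s),f(t);m_{st})=\Pi(f(t),f(s);m_{st})$ then reduces, after conjugation by $g_1$, to $\Pi((s')^{k_1},(t')^{k_2};m_{st})=\Pi((t')^{k_2},(s')^{k_1};m_{st})$, and Lemma~\ref{lem:alternating product equality} forces $k_1=k_2=\varepsilon\in\{\pm 1\}$ (and incidentally $m_{s't'}\mid m_{st}$). Composing all the conjugators used along the way yields the desired element $g\in A_{\Gamma'}$, completing the proof.
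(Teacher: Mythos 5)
Your proof is correct and takes essentially the same route as the paper: the centraliser argument via Proposition~\ref{PropCentralisers}, the exclusion of the central case through the dihedral parabolic containing $f(\langle s,t\rangle)$ (Lemma~\ref{lem:abstract_dihedral_subgroups}), and Lemma~\ref{lem:alternating product equality} to force equal exponents $\pm1$. The only difference is that you make explicit, via the standard-tree adjacency argument and Lemma~\ref{lem:subgroup_conjugates_generators}, the simultaneous conjugation of $f(s),f(t)$ onto powers of $s',t'$, a step the paper states tersely and justifies by the same device in the proof of Proposition~\ref{prop:image_generator}.
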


\begin{proof}
    By Proposition \ref{PropCentralisers}, for any $s \in V(\Gamma)$, the centraliser $C(s)$ splits as a product $\mathbb{Z} \times F$ where $F$ is a non-abelian free group.
    Since $f$ is injective, the centraliser $C(f(s))$ must contain a product isomorphic to $\mathbb{Z} \times F$ too. Again, by Proposition \ref{PropCentralisers}, this is only possible if $f(s)$ is conjugated to a power of a standard generator, or if $f(s)$ is central in a dihedral Artin parabolic subgroup.
    
    For the sake of contradiction, suppose that there is a standard generator $s \in V(\Gamma)$ such that $f(s)$ is central in a dihedral Artin parabolic subgroup of $A_{\Gamma'}$.
    Up to conjugation, we call it $A_{s't'}$, and we call its central element $z_{s't'}$.
    Because $\Gamma$ is connected and $A_\Gamma$ is not cyclic, $s$ has some neighbour $t \in V(\Gamma)$.
    The subgroup $\langle s, t \rangle$ is a dihedral Artin subgroup of $A_{\Gamma}$, so the image $f(\langle s, t \rangle)$ is isomorphic to a dihedral Artin group, hence is contained in a  dihedral parabolic subgroup by Lemma~\ref{lem:abstract_dihedral_subgroups}.
    In particular, every element of $f(\langle s, t \rangle)$ fixes some vertex of $D_{\Gamma'}$.
    This subgroup contains $f(s) = z_{s't'}^k$ (for some $k \neq 0$), whose fixed-point set is the single vertex with stabiliser $A_{s't'}$.
    It thus follows that $f(\langle s, t \rangle)$ is contained in $A_{s't'}$. 
    But $z_{s't'}^k$ is central in $A_{s't'}$, and since $f(t) \in A_{s't'}$, it follows that $f(\langle s, t \rangle)$ is abelian, a contradiction with $A_{\Gamma}$ being large-type. 
    
    Thus $f(s)$ is conjugated to a power of a standard generator $s' \in V(\Gamma')$.
    Let $s, t \in V(\Gamma)$ be adjacent vertices. Since $f$ is injective, the subgroup $f(\langle s, t \rangle)$ is isomorphic to a dihedral Artin group, thus is contained in the stabiliser of a type 2 vertex.
    In particular, there exist adjacent vertices $s', t' \in V(\Gamma')$ and $g \in \langle s', t' \rangle$ such that \[f(s) = g(s')^n g^{-1}, ~~~ f(t) = g(t')^m g^{-1}\]
    for some $n, m \in \mathbb{Z} \backslash \{0\}$. By Lemma~\ref{lem:alternating product equality}, it follows that the pair $\{f(s), f(t)\}$ is conjugated to the pair $\{s', t'\}$ or the pair $\{(s')^{-1}, (t')^{-1}\}$, which concludes the proof.  
\end{proof}

In order to study more general homomorphisms, we will restrict ourselves to the case where the presentation graphs $\Gamma, \Gamma'$ are complete graphs.
The main result of this section is the following:

\begin{proposition}\label{prop:image_generator}
Let $f: A_\Gamma \rightarrow A_{\Gamma'}$ be a homomorphism as in the statement of Theorem~\ref{thm:structure_hom}. Then for every $s\in V(\Gamma)$, the element $f(s)$ is either trivial or is conjugated to some element of the form $(s')^{\pm 1}$ for some standard generator $s' \in V(\Gamma')$.
\end{proposition}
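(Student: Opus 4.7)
The plan is to apply the centraliser characterisation of Proposition~\ref{PropCentralisers} to $f(s)$. Assuming $f(s)\neq 1$, the proof proceeds in three sub-steps: \emph{(i)} show that $C_{A_{\Gamma'}}(f(s))$ contains a subgroup of the form $\mathbb{Z}\times F$ with $F$ a non-abelian free group; \emph{(ii)} rule out the case where $f(s)$ is central in a dihedral parabolic of $A_{\Gamma'}$, leaving $f(s)$ conjugate to a power of a standard generator; \emph{(iii)} refine this power to $\pm 1$.

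For step \emph{(i)}: since $\Gamma$ is complete of rank $\geq 3$, the vertex $s$ has (at least) two distinct neighbours $t_1, t_2$, and in $A_\Gamma$ the element $s$ commutes with the central elements $z_{st_1}, z_{st_2}$ of the dihedral parabolics $\langle s, t_i\rangle$. Their images $f(z_{st_1}), f(z_{st_2})$ therefore commute with $f(s)$ in $A_{\Gamma'}$. The delicate point is to argue that, together with $f(s)$, they generate a subgroup containing $\mathbb{Z}\times F_2$. Should this fail---for instance if one $f(z_{st_i})$ is trivial, or if the two commute in $A_{\Gamma'}$---then the subgroup $\langle f(s), f(z_{st_1}), f(z_{st_2})\rangle$ is virtually abelian; combining this collapse with the braid and commutation relations satisfied by the remaining standard generators of $A_\Gamma$ forces the whole image $f(A_\Gamma)$ to lie either in a cyclic subgroup or in one that virtually splits as a direct product, contradicting the hypotheses on $f$.

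Step \emph{(ii)}: by step \emph{(i)} and Proposition~\ref{PropCentralisers}, $f(s)$ is either conjugate to a power of a standard generator of $A_{\Gamma'}$, or central in a dihedral parabolic $hA_{s't'}h^{-1}$. In the latter case, the centraliser of this central element is precisely $hA_{s't'}h^{-1}$ (since $A_{\Gamma'}$ is of hyperbolic type), so each $f(z_{st})$ (for $t$ a neighbour of $s$ in $\Gamma$) lies in $hA_{s't'}h^{-1}$. Propagating this through the braid and commutation relations of $A_\Gamma$ (via Lemma~\ref{lem:abstract_dihedral_subgroups} and direct manipulation) forces $f(A_\Gamma)$ into $hA_{s't'}h^{-1}$, which virtually splits as $\mathbb{Z}\times F_\infty$---contradicting the image hypothesis.

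For step \emph{(iii)}: write $f(s) = g(s')^k g^{-1}$ with $k \neq 0$, and pick a neighbour $t$ of $s$ (the subcase $f(t) = 1$ must be handled separately, but is excluded by the image hypotheses together with the previous steps). Write $f(t) = h(t')^\ell h^{-1}$. The braid relation $\Pi(s,t;m_{st}) = \Pi(t,s;m_{st})$ in $A_\Gamma$ yields $\Pi(f(s), f(t); m_{st}) = \Pi(f(t), f(s); m_{st})$. If $\Fix(f(s))$ and $\Fix(f(t))$ were disjoint in $D_{\Gamma'}$, a ping-pong argument on $\widehat{D}_{\Gamma'}$ would give $\langle f(s), f(t)\rangle \cong F_2$, precluding the braid relation; so by Lemma~\ref{lem:standard_trees_properties} these standard trees meet at a type-2 vertex, placing $f(s)$ and $f(t)$ in a common dihedral parabolic of $A_{\Gamma'}$. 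Applying Lemma~\ref{lem:subgroup_conjugates_generators} inside this dihedral, one may simultaneously conjugate the underlying conjugates of standard generators to be actual standard generators of the dihedral; Lemma~\ref{lem:alternating product equality} then forces $k = \ell = \pm 1$. The principal obstacle is step \emph{(i)}: establishing the required $\mathbb{Z}\times F$ structure in $C(f(s))$ without the luxury of injectivity, which requires carefully identifying how collapses of commuting subgroups of $A_\Gamma$ propagate to violations of the non-cyclic and non-virtual-product hypotheses on $f(A_\Gamma)$.
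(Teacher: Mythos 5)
Your overall plan (apply Proposition~\ref{PropCentralisers} to $f(s)$, rule out the dihedral-centre case via the virtual-product hypothesis, then pin the exponent to $\pm 1$ with Lemmas~\ref{lem:subgroup_conjugates_generators} and~\ref{lem:alternating product equality}) matches the paper's endgame, but the load-bearing step \emph{(i)} has a genuine gap, and the fallback you propose for it is false. You claim that if $\langle f(s), f(z_{st_1}), f(z_{st_2})\rangle$ collapses (one image trivial, or the two central elements commuting), then the whole image of $f$ must be cyclic or contained in a subgroup that virtually splits as a direct product. This implication does not hold. For instance, take $\Gamma$ complete of rank $4$ with all labels at a vertex $a$ even, and let $f:A_\Gamma\to A_\Gamma$ kill $a$ and fix the remaining generators; this is a well-defined homomorphism whose image is a rank-$3$ complete large-type Artin group of hyperbolic type, hence neither cyclic nor contained in a virtually splitting subgroup, so $f$ satisfies the hypotheses of Theorem~\ref{thm:structure_hom}. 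Yet for $s=b$ with neighbours $a,c$ one has $f(z_{ab})=b^{m_{ab}/2}$ and $f(z_{bc})=z_{bc}$, which commute, so your ``collapse'' occurs without any contradiction (with more killed generators one can even make your three elements generate a cyclic group). The same example shows that $f(t)=1$ for a neighbour $t$ is \emph{not} excluded by the image hypotheses, contrary to what you assert in step \emph{(iii)}; and in step \emph{(ii)} the claim that the containment $f(z_{st})\in hA_{s't'}h^{-1}$ ``propagates through the relations'' to force $f(A_\Gamma)$ into the dihedral is likewise unjustified without injectivity, since $C(f(z_{st}))$ need not be that dihedral (e.g.\ if $f(z_{st})$ is a power of a conjugate of a standard generator, or trivial).

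The paper's mechanism for handling exactly these degeneracies is different and is what your sketch is missing. It first replaces $f$ by an \emph{optimal} restriction $f_{S_0}$ (Lemma~\ref{lem:restriction_injective_S}), so that collapses among the $f(s)$ and $f(z_{st})$ contradict \emph{minimality} rather than the image hypotheses; it then never tries to exhibit $\mathbb{Z}\times F$ inside $C(f(s))$ directly. Instead, Lemma~\ref{lem:not_cyclic_centraliser} only produces a power $f(s)^n$ with non-cyclic centraliser (via the contracted graph $f(\Gamma_{bar})$ and centre-lessness of the acylindrically hyperbolic image), which by Osin's results forces $f(s)$ to be elliptic on the coned-off Deligne complex; the subsequent minset analysis (Lemmas~\ref{LemmaMin(f(s))}--\ref{lem:minset_tree}) shows that $\Min(f(s))$ is a standard tree, the vertex and line cases being excluded because they would force the whole image into a vertex or tree stabiliser, which \emph{does} virtually split. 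Only then do the arguments you outline in step \emph{(iii)} (intersecting standard trees, Lemma~\ref{lem:subgroup_conjugates_generators}, Lemma~\ref{lem:alternating product equality}) take over, with $t$ chosen in $S_0$ so that $f(s),f(t)$ do not lie in a common cyclic subgroup. Without the optimality reduction and the minset machinery, your step \emph{(i)} cannot be completed as written.
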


\noindent In what follows, we denote by $S$ the standard generating set of $V(\Gamma)$. For any $T \subseteq S$, we denote by $\overline{T}$ the set \[\overline{T} \coloneqq T \cup \{z_{st} ~|~ s, t \in T\},\]
where $z_{st}$ was defined in Section 2.4. We denote by $\Gamma_{bar}$ the barycentric subdivision of the graph $\Gamma$, and we see $\overline{S}$ as the vertex set of $\Gamma_{bar}$.

\begin{definition}
    Let $f:A_\Gamma \to A_{\Gamma'}$ be a homomorphism of Artin groups, and let $S_0$ be a minimal subset among all subsets $T \subseteq V(\Gamma)$ such that the restriction $f_T$ of $f$ to $A_T$ is such that $\mathrm{Im}(f_{T}) = \mathrm{Im}(f)$.
    We say that $f$ is \emph{optimal} if  $S_0=V(\Gamma)$.
\end{definition}

\begin{lemma}\label{lem:restriction_injective_S}
    Let $f:A_\Gamma \to A_{\Gamma'}$ be a homomorphism of Artin groups, and let $S_0$ be a minimal subset among all subsets $T \subseteq V(\Gamma)$ such that $\mathrm{Im}(f_{T}) = \mathrm{Im}(f)$.
    Then $S_0$ also satisfies the following: 
    \begin{itemize}
     \item $f$ is injective on $S_0$, and 
     \item if $A_{\Gamma}$ is torsion-free, no element of $\overline{S}_0$ is in the kernel of $f$.    
    \end{itemize}
\end{lemma}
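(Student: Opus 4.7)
The plan is to exploit the minimality of $S_0$ and rule out redundancies in the generating set $\{f(s) : s \in S_0\}$ by constructing strictly smaller subsets whose image under $f$ still equals $\mathrm{Im}(f)$.

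For the injectivity of $f$ on $S_0$, the approach is by contradiction. If $f(s) = f(t)$ for distinct $s, t \in S_0$, then $\mathrm{Im}(f_{S_0 \setminus \{s\}})$ contains $f(t) = f(s)$, so it contains all generators $f(u)$ for $u \in S_0$, and thus equals $\mathrm{Im}(f)$. This contradicts the minimality of $S_0$. The same minimality argument applied to $S_0 \setminus \{s\}$ in the case $f(s)=1$ shows that $f(s)\neq 1$ for every $s \in S_0$, which handles the ``$s \in S_0$'' half of the second claim (this half in fact requires no torsion-freeness hypothesis).

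The substantive part is showing $f(z_{st}) \neq 1$ for adjacent $s, t \in S_0$. Writing $z_{st} = \Delta_{st}^c$ with $c \in \{1, 2\}$ according to the parity of $m_{st}$, the assumption $f(z_{st}) = 1$ gives $f(\Delta_{st})^c = 1$, so $f(\Delta_{st})$ has finite order in $A_{\Gamma'}$. In the intended application $A_{\Gamma'}$ is also torsion-free (as is automatic for large-type Artin groups), so one obtains $f(\Delta_{st}) = 1$. If $m_{st}$ is odd, conjugation by $\Delta_{st}$ exchanges $s$ and $t$ in $A_{st}$, so $f(s) = f(\Delta_{st}) f(t) f(\Delta_{st})^{-1} = f(t)$, contradicting the injectivity of $f$ on $S_0$ already established. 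If $m_{st}$ is even, then $\Delta_{st} = (st)^{m_{st}/2}$, so $(f(s)f(t))^{m_{st}/2}=1$, and torsion-freeness of $A_{\Gamma'}$ forces $f(s)f(t)=1$; hence $f(t) \in \langle f(s)\rangle$, so $\mathrm{Im}(f_{S_0 \setminus \{t\}}) = \mathrm{Im}(f)$, again violating minimality.

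The main obstacle is this last case: the argument outlined naturally uses torsion-freeness of $A_{\Gamma'}$, whereas the lemma only hypothesises that $A_\Gamma$ is torsion-free. In the paper's contexts this is not a real issue because all relevant target Artin groups are large-type and hence torsion-free; otherwise one would need to analyse the quotient $A_{st}/\langle z_{st}\rangle$ directly and use the Garside-theoretic structure of $\Delta_{st}$ together with the torsion-freeness of $A_\Gamma$ to extract an analogous relation between $f(s)$ and $f(t)$, after which the same minimality argument applies.
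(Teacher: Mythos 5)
Your proof is correct and essentially the same as the paper's: minimality of $S_0$ rules out $f(s)=1$ and $f(s)=f(t)$, and since $z_{st}$ is a power of $st$, triviality of $f(z_{st})$ together with torsion-freeness forces $f(s)=f(t)^{-1}$, again contradicting minimality (the paper treats odd and even $m_{st}$ uniformly this way, rather than via your case split through $f(\Delta_{st})=1$ and the swap of generators). Your remark about the hypothesis is well taken: the paper's own proof also invokes torsion-freeness of $A_{\Gamma'}$ rather than of $A_\Gamma$ as written in the statement, which is harmless in all the paper's applications since the target groups there are large-type and hence torsion-free.
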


\begin{proof} 
    If $s\in S_0$ is such that $f(s)=1$, then $S_0 \backslash \{s\}$  satisfies $\mathrm{Im}(f_{S_0 \backslash \{s\}}) = \mathrm{Im}(f_{S_0})$, contradicting the minimality of $S_0$.

    If $s, t\in S_0$ are such that $f(s)=f(t)$, then $S_0 \backslash \{s\}$  satisfies $\mathrm{Im}(f_{S_0 \backslash \{s\}}) = \mathrm{Im}(f_{S_0})$, contradicting the minimality of $S_0$.

    If $s, t\in S_0$ are such that $f(z_{st}) = 1$, then some power of $f(s)f(t)$ is trivial. Since $A_{\Gamma'}$ is torsion-free, we have $f(s)=f(t)^{-1}$, so  $S_0 \backslash \{s\}$  satisfies $\mathrm{Im}(f_{S_0 \backslash \{s\}}) = \mathrm{Im}(f_{S})$, contradicting the minimality of $S_0$.
\end{proof}

 Note that to prove Theorem~\ref{thm:structure_hom} for $f$, it is enough to prove it for $f_{S_0}$. We will thus restrict ourselves to the study of optimal homomorphisms.

\begin{lemma}\label{lem:not_cyclic_centraliser}
    Let $f: A_\Gamma \rightarrow A_{\Gamma'}$ be an optimal  homomorphism as in Theorem~\ref{thm:structure_hom}. Then for every $s \in \overline{S}$, there exists $n\geq 1$ such that the centraliser $C(f(s)^n)$ is not cyclic.
\end{lemma}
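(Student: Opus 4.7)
The plan is to argue via the acylindrical action of $A_{\Gamma'}$ on its coned-off Deligne complex $\widehat{D}_{\Gamma'}$, which is CAT($-1$) by Theorem~\ref{thm:cone-off}. The main goal is to prove that $f(s)$ is elliptic on $\widehat{D}_{\Gamma'}$, after which the desired power is extracted by inspecting which vertex it fixes. Note that $f(s) \neq 1$ by Lemma~\ref{lem:restriction_injective_S} (since $A_{\Gamma'}$ is torsion-free).

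The first step is to rule out the possibility that $f(s)$ is loxodromic. Assume $s \in V(\Gamma)$ and suppose for contradiction that $f(s)$ is loxodromic. Acylindricity together with torsion-freeness of $A_{\Gamma'}$ force $C(f(s))$ to be a maximal cyclic subgroup $\langle h \rangle$, every non-trivial element of which is loxodromic with the same axis, and hence has centraliser again equal to $\langle h \rangle$. The key is then to propagate this using the completeness of $\Gamma$: for any $t \neq s$, the element $z_{st}$ commutes with $s$, so $f(z_{st}) \in \langle h \rangle$, and it is non-trivial by Lemma~\ref{lem:restriction_injective_S}; hence $C(f(z_{st})) = \langle h \rangle$, and since $t$ also commutes with $z_{st}$, we get $f(t) \in \langle h \rangle$. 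Iterating over $t$ shows $\mathrm{Im}(f) \subseteq \langle h \rangle$, contradicting the hypothesis of Theorem~\ref{thm:structure_hom} that $\mathrm{Im}(f)$ is not cyclic. When $s = z_{uv}$, if $f(z_{uv})$ were loxodromic then $f(u) \in C(f(z_{uv}))$ would be loxodromic too, contradicting the case just handled for $u \in V(\Gamma)$.

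Once $f(s)$ is known to be elliptic, it fixes a vertex of $\widehat{D}_{\Gamma'}$ (the action being simplicial without inversions), and I would split on the type of this vertex. If it is a type~$1$ vertex of $D_{\Gamma'}$, then $f(s)$ is conjugate to a power of a standard generator, and Proposition~\ref{PropCentralisers}(1) gives $C(f(s)) \supseteq \mathbb{Z}\times F$ with $n=1$ (using that the complete graph $\Gamma'$ has no leaves). If it is a type~$2$ vertex, then $f(s)$ lies in a dihedral parabolic $A_{u'v'}$; since $A_{u'v'}/Z(A_{u'v'})$ is a finite Coxeter group, some positive power $f(s)^n$ is a non-trivial central element of $A_{u'v'}$, to which Proposition~\ref{PropCentralisers}(2) applies. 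If it is a cone point, then $f(s)$ lies in the stabiliser of a standard tree, which by \cite[Lemma~4.5]{martin2022acylindrical} has the form $\langle w' \rangle \times F_k$; writing $f(s) = (w')^m b$ with $b \in F_k$, either $b=1$ (reducing to the type~$1$ case) or $\langle w', f(s)\rangle \cong \mathbb{Z}^2 \subseteq C(f(s))$ is non-cyclic with $n=1$. The main obstacle, in my view, is justifying that $C(f(z_{st})) = \langle h\rangle$ exactly, not merely $\langle h\rangle \subseteq C(f(z_{st}))$: this uses the fact that in an acylindrical action by a torsion-free group, the centraliser of any non-trivial power of a loxodromic element coincides with the elementary closure of its axis, and it is this rigidity that lets the argument travel through the intermediate $z_{st}$'s and ultimately trap the entire image in $\langle h\rangle$.
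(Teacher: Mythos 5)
Your route is genuinely different from the paper's. The paper proves this lemma combinatorially: it contracts the barycentric subdivision $\Gamma_{bar}$ along pairs whose images share a common power, rules out the degenerate case (all images sharing a power) using centre-lessness of the acylindrically hyperbolic image, and then exhibits a commuting non-cyclic pair $\langle f(s)^n, f(t)\rangle$ inside $C(f(s)^n)$; ellipticity of $f(s)$ on $\widehat{D}_{\Gamma'}$ is only deduced \emph{afterwards}, in Lemma~\ref{LemmaMin(f(s))}, from the present lemma. You reverse the order: you eliminate the loxodromic case directly, using that for the acylindrical action on the CAT($-1$) complex $\widehat{D}_{\Gamma'}$ a loxodromic element of the torsion-free group $A_{\Gamma'}$ has infinite cyclic elementary closure equal to its centraliser, and propagating through the elements $z_{st}$ (using completeness of $\Gamma$ and Lemma~\ref{lem:restriction_injective_S}) to trap $\mathrm{Im}(f)$ in a cyclic subgroup, contradicting the hypotheses of Theorem~\ref{thm:structure_hom}. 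This is non-circular, the loxodromic-elimination, type~$1$ and cone-point steps are correct (the omission of type~$0$ vertices is harmless, as their stabilisers are trivial), and it buys you ellipticity of $f(s)$ up front, which the paper only obtains in the next lemma.

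However, your type~$2$ case rests on a false claim. A dihedral Artin group modulo its centre is \emph{not} a finite Coxeter group: for $m_{u'v'}=3$ one gets $B_3/Z(B_3)\cong \mathbb{Z}/2 * \mathbb{Z}/3\cong \mathrm{PSL}_2(\mathbb{Z})$, and in general the quotient is an infinite virtually free group (the finite Coxeter group is the quotient by the normal closure of the squared generators, not by the centre). Consequently it is not true that every non-trivial element of $A_{u'v'}$ has a central power: no power of a standard generator is central, and no power of, say, $u'(v')^{-1}$ is central either. The conclusion you need is nevertheless true, and the repair is short: letting $z$ denote a generator of the centre of the dihedral parabolic $P$ containing $f(s)$, the subgroup $\langle f(s), z\rangle$ is a torsion-free abelian group on two generators, hence isomorphic to $\mathbb{Z}$ or $\mathbb{Z}^2$. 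In the second case $C(f(s))$ is not cyclic and $n=1$ works; in the first case $f(s)$ and $z$ share a non-trivial power, so $f(s)^n$ is central in $P$ for some $n\geq 1$ and $C(f(s)^n)$ contains the non-cyclic group $P$. With this replacement your argument goes through.
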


\begin{proof}
    Consider the barycentric subdivision $\Gamma_{bar}$ of the presentation graph $\Gamma$. We construct a new graph, denoted $f(\Gamma_{bar})$, which is obtained from $\Gamma_{bar}$ by contracting an edge of $\Gamma_{bar}$ between $s$ and $t$ whenever $f(s)$ and $f(t)$ have a common power in $A_{\Gamma'}$.
    Note that $f(\Gamma_{bar})$ is connected, since $\Gamma_{bar}$ is connected and $f(\Gamma_{bar})$ is obtained from it by a sequence of edge contractions.
    
    Suppose by contradiction that $f(\Gamma_{bar})$ is a single vertex. Then for every $s\in S$ there exists an integer $\ell_s$ such that for every $s, t \in S$, we have  $f(s)^{\ell_s} = f(t)^{\ell_t}$. In particular, there exists an element $r$ that is a power of every $f(s)$ for $s\in S$. 
    Since the $\{f(s)\}_{s\in S}$ generate $\im(f)$ by assumption, we would have that $r$ is central in $\im(f)$.
    But $\im(f)$ acts acylindrically on the coned-off Deligne complex $D_{\Gamma'}$ by \cite[Theorem A]{martin2022acylindrical}, and $\im(f)$ is neither cyclic nor contained in a virtual direct product subgroup, so it follows that $\im(f)$ is acylindrically hyperbolic.
    As it is also torsion-free (since $A_{\Gamma'}$ is $2$-dimensional), it follows from Osin \cite[Corollary 7.3]{osin2013} that $\im(f)$ is centre-less.
    So $r$, and hence each $f(s)$, is trivial, contradicting the fact that $f$ is optimal.
    
    Thus, in $f(\Gamma_{bar})$, every vertex has valence $\geq 1$.
    Let $\pi: \Gamma_{bar} \rightarrow f(\Gamma_{bar})$ be the graph contraction.
    Let $s \in \overline{S}$ and $s_0:= \pi(s)$, and let $t_0\in f(\Gamma_{bar})$ be a neighbour of $s_0$.
    By construction of $f(\Gamma_{bar})$, this means that there exists $s'\in \pi^{-1}(s_0)$ and $t\in \pi^{-1}(t_0)$ that are adjacent in $\Gamma_{bar}$.
    Let $n, m \geq 1$ such that $f(s)^n = f(s')^m$.
    Since $s'$ and $t$ commute by construction, we have that $f(t)$ and $f(s)^n = f(s')^m$ commute, hence $\langle f(s)^n, f(t)\rangle \subseteq C(f(s)^n)$.
    Moreover $\langle f(s)^n, f(t)\rangle$ is not cyclic since $f$ is optimal and $s_0 \neq t_0$ in $f(\Gamma_{bar})$. 
\end{proof}

\begin{lemma} \label{LemmaMin(f(s))}
    Let $f: A_\Gamma \rightarrow A_{\Gamma'}$ be an optimal homomorphism as in Theorem~\ref{thm:structure_hom}. Then for every $s\in \overline{S}$, the minset $\Min(f(s))$ is either a type $2$ vertex, a standard tree, or a line contained in a standard tree. In all cases, we have that for any $n \neq 0$, $\Min(f(s)) = \Min(f(s)^n)$.
\end{lemma}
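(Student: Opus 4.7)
The plan is to set $g := f(s)$, invoke Lemma~\ref{lem:not_cyclic_centraliser} to fix an integer $n_0 \geq 1$ with $C(g^{n_0})$ non-cyclic, first show that $g$ acts \emph{elliptically} on the coned-off Deligne complex $\widehat{D}_{\Gamma'}$, and then case-split on the type of vertex that $g$ fixes there, reading off $\Min(g)$ in each case and verifying stability under taking powers.

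For the ellipticity step, suppose for contradiction that $g^{n_0}$ acts loxodromically on $\widehat{D}_{\Gamma'}$. By Theorem~\ref{thm:cone-off} the action is acylindrical on a CAT$(-1)$ simplicial complex, so the centraliser of a loxodromic element is virtually cyclic; since $A_{\Gamma'}$ is torsion-free, virtually cyclic forces cyclic, contradicting our choice of $n_0$. Hence $g$ is elliptic, and as $A_{\Gamma'}$ acts simplicially and without inversions on $\widehat{D}_{\Gamma'}$, $g$ fixes a vertex, which is either a vertex of $D_{\Gamma'}$ of type $0$, $1$, or $2$, or a cone point above some standard tree $T$.

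Next I would case-split. Type $0$ is excluded since Lemma~\ref{lem:restriction_injective_S} and optimality give $g \neq 1$. A type $1$ fixed vertex means $g$ is a non-trivial power of a conjugate of a standard generator $x$; then Lemma~\ref{lem:standard_trees_properties} yields $\Fix(g^n) = \Fix(x)$ for every $n \neq 0$, so $\Min(g^n)$ equals the standard tree $\Fix(x)$. In the type $2$ case $v = hA_{e^{ab}}$, the element $g$ lies in the dihedral parabolic $hA_{ab}h^{-1}$; either $g$ is a power of a conjugate of a standard generator (reducing to the previous bullet), or Lemma~\ref{lem:standardtrees}(2) gives $\Fix(g) = \{v\}$ and hence $\Min(g) = \{v\}$. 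In the remaining case $g$ fixes a cone point above a standard tree $T$ without fixing $T$ pointwise; then $g$ belongs to the setwise stabiliser $\Stab(T) \cong \mathbb{Z} \times F_k$ from \cite[Lemma 4.5]{martin2022acylindrical}, its projection to the $F_k$ factor is non-trivial and acts loxodromically on $T$, yielding an axis $L \subseteq T$. Since $T$ is convex in $D_{\Gamma'}$ and $L$ is geodesic in $T$, $L$ is an axis for $g$ in $D_{\Gamma'}$ as well, with the same translation length, and iteration does not alter this axis.

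The hard part is verifying the equality $\Min(g^n) = \Min(g)$ in the two delicate cases, namely type $2$ and cone-point. In the type $2$ case, one must rule out that $g^n$ suddenly becomes a power of a conjugate of a standard generator while $g$ is not; this reduces to a root-uniqueness statement inside a dihedral Artin group, which can be verified by computing the centraliser of $s^k$ inside $A_{ab}$, showing it equals $\langle s, z_{ab}\rangle \cong \mathbb{Z}^2$ via the action on the Bestvina tree of Lemma~\ref{lem:Bestvina-tree}, and deducing that any root of $s^k$ must itself be a power of~$s$. In the cone-point case, one must show that $\Min(g) = L$ rather than a $2$-dimensional flat strip; this is where the assumption that $A_{\Gamma'}$ is large-type of hyperbolic type enters, ruling out parallel axes to $L$ outside~$T$ using the flatness constraints on $D_{\Gamma'}$.
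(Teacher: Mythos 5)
Your ellipticity step and case analysis follow the paper's own route (Lemma~\ref{lem:not_cyclic_centraliser}, acylindricity of the action on $\widehat{D}_{\Gamma'}$, torsion-freeness to upgrade virtually cyclic to cyclic, then reading off the fixed vertex), and your handling of the type $1$/standard-tree case via Lemma~\ref{lem:standard_trees_properties} is exactly what is needed. Your type $2$ argument is a legitimate substitute for the paper's citation of \cite[Lemma 8]{crisp2005automorphisms}: the root-uniqueness via $C(s^k)=\langle s, z_{ab}\rangle$ in the Bestvina tree works, though you should also justify the reduction to the dihedral group, i.e.\ that if $g^n$ is a power of a conjugate $x$ of a standard generator then $x$ stabilises $v$ (because $v\in\Fix(g^n)=\Fix(x)$) and is $\Stab(v)$-conjugate to a standard generator of $\Stab(v)$ (as in the Claim in the proof of Lemma~\ref{lem:subgroup_conjugates_generators}).

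The genuine gap is the cone-point case, which you flag as ``the hard part'' but do not actually prove. Two things are missing. First, you never establish $\Min(g)\subseteq T$, i.e.\ that there is no axis parallel to $L$ outside $T$; appealing to ``flatness constraints on $D_{\Gamma'}$'' is not an argument, and it is misdirected: $D_{\Gamma'}$ is only CAT(0) (hyperbolic type is a hypothesis on the \emph{coned-off} complex), so flat strips in $D_{\Gamma'}$ are not excluded by local curvature considerations, and the large-type angle bounds at type $2$ vertices do not by themselves rule out a flat strip glued along a line of a standard tree. Second, even granting $\Min(g)=L$, your phrase ``iteration does not alter this axis'' only gives $L\subseteq\Min(g^n)$, not the required equality $\Min(g^n)=\Min(g)$; one must run the same containment argument for $g^n$. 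The paper closes both points at once with a different mechanism: $g$ is loxodromic on $D_{\Gamma'}$, so it fixes the cone point $p$ over $T$ in $\widehat{D}_{\Gamma'}$ but no neighbour of $p$, and the same holds for $g^n$; since fixed-point sets in the CAT(0) complex $\widehat{D}_{\Gamma'}$ are convex, $g^n$ fixes only $p$, which forces $\Min(g^n)\subseteq T$; the minset of a loxodromic isometry splits as $Y\times\mathbb{R}$, and inside a tree this must be a single line, so $\Min(g)\subseteq\Min(g^n)$ are both lines and hence equal. Without this (or an actual replacement for it), your proof of the statement ``$\Min(f(s))$ is a line contained in a standard tree, and $\Min(f(s))=\Min(f(s)^n)$'' is incomplete in precisely the case where the statement has content.
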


\begin{proof}
    By Lemma \ref{lem:not_cyclic_centraliser}, choose $n\geq 1$ such that $C(f(s)^n)$ is not cyclic.
    Since $A_\Gamma, A_{\Gamma'}$ are of XL-type, they act acylindrically without inversions on their coned-off Deligne complexes $\hat{D}_\Gamma$, $\hat{D}_{\Gamma'}$ by Martin--Przytycki \cite[Theorem A]{martin2022acylindrical}.
    Moreover, $C(f(s)^n)$ acts acylindrically on $\hat{D}_{\Gamma'}$.
    The centraliser $C(f(s)^n)$ is not cyclic, and it is also not virtually cyclic since $A_{\Gamma'}$ is torsion-free.
    It now follows from Osin \cite[Corollary 6.9]{osin2013} that $f(s)^n$, and hence $f(s)$, acts elliptically on $\hat{D}_{\Gamma'}$.
    Since $\hat{D}_\Gamma$ is CAT(0), it follows that $f(s)$ fixes a vertex.
    Thus, since $f$ is optimal, $\Min(f(s))$ is a vertex of $D_\Gamma$, or $f(s)$ is loxodromic and preserves a standard tree of $D_\Gamma$.

    If $\Min(f(s))$ is a type $2$ vertex or a standard tree, then $\Min(f(s)) = \Min(f(s)^n)$ by \cite[Lemma 8]{crisp2005automorphisms}.
    If $\Min(f(s))$ is a line in a standard tree, then $f(s)$ fixes a cone-point $p$ in $\hat{D}_{\Gamma}$, but no neighbour of $p$ as it acts loxodromically on $D_{\Gamma}$. Consequently, $f(s)^n$ also fixes $p$ but no neighbour of $p$. Because $\hat{D}_{\Gamma}$ is CAT(0), fixed-sets are convex, so the above proves that $f(s)^n$ only fixes $p$ in $\hat{D_{\Gamma}}$. In particular, the minset of $f(s)^n$ in $D_{\Gamma}$ is contained in the standard tree associated with the cone point.
    But $f(s)$ act as a translation along a line in the standard, so $f(s)^n$ must do so too. Finally, $\Min(f(s)^n)$ is just a single line.
\end{proof}

\begin{lemma} \label{LemmaMinsetsIntersect}
        Let $f: A_\Gamma \rightarrow A_{\Gamma'}$ be an optimal  homomorphism as in Theorem~\ref{thm:structure_hom}. Then for every $s, t$ adjacent in $\Gamma_{bar}$, the minsets $\Min(f(s))$ and $ \Min(f(t))$ intersect and lie in a common standard tree. Additionally, if both $\Min(f(s))$ and $ \Min(f(t))$ are a vertex, a line or a tree, then they are equal.
\end{lemma}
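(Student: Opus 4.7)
The plan is to combine the commutativity of $f(s)$ and $f(t)$ with the classification of their minsets provided by Lemma~\ref{LemmaMin(f(s))}, and then perform a case analysis. Every edge of $\Gamma_{bar}$ joins a standard generator $v\in V(\Gamma)$ to some $z_{vv'}$ with $v'$ a neighbour of $v$; since $z_{vv'}$ lies in the centre of the dihedral parabolic $\langle v,v'\rangle$, the two endpoints of any edge of $\Gamma_{bar}$ commute in $A_\Gamma$. Thus $f(s)$ and $f(t)$ commute in $A_{\Gamma'}$, so each of them preserves the minset of the other. By Lemma~\ref{LemmaMin(f(s))}, each minset is either (a) a type~$2$ vertex of $D_{\Gamma'}$, (b) a standard tree, or (c) a line contained in a standard tree.

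The cases in which at least one of the minsets is of type (a) or (c) are handled directly. If $\Min(f(s))$ is a type~$2$ vertex $v$, the fact that $f(t)$ preserves $\{v\}$ forces $v\in\Fix(f(t))$; this excludes $\Min(f(t))$ from being a line (since such an element is loxodromic on $D_{\Gamma'}$ and hence has empty fixed set in $D_{\Gamma'}$), leaves $v\in\Min(f(t))$ in the two remaining sub-cases, and yields $\Min(f(s))=\Min(f(t))$ when both minsets are vertices. The symmetric situation is identical. If $\Min(f(s))=\ell_s$ is a line in a standard tree $T_s$ and $\Min(f(t))$ is a standard tree $T_t$, then $f(t)$ preserves $\ell_s$ with bounded orbits, so fixes $\ell_s$ pointwise; hence $\ell_s\subseteq T_t\cap T_s$, which by Lemma~\ref{lem:standard_trees_properties} forces $T_s=T_t$. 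If both minsets are lines $\ell_s,\ell_t$, then $f(t)$ preserves $\ell_s$ and cannot fix it pointwise (being loxodromic on $D_{\Gamma'}$), so it translates $\ell_s$ and $\ell_s\subseteq\Min(f(t))=\ell_t$, yielding $\ell_s=\ell_t$.

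The main obstacle is the case where $\Min(f(s))=T_s$ and $\Min(f(t))=T_t$ are both standard trees, where one must establish $T_s=T_t$. By Lemma~\ref{lem:standardtrees}, I would write $f(s)=(a')^{k_1}$ and $f(t)=(b')^{k_2}$, where $a', b'$ are conjugates of standard generators of $A_{\Gamma'}$ with $\Fix(a')=T_s$, $\Fix(b')=T_t$, and $k_1,k_2\neq 0$. Suppose for contradiction that $T_s\neq T_t$. If $T_s\cap T_t=\emptyset$, then $f(s)$ preserves $T_t$ and has bounded orbits on it (being elliptic on $D_{\Gamma'}$), hence fixes a point of $T_t$ by the fixed-point theorem for tree isometries, contradicting $\Fix(f(s))=T_s$ being disjoint from $T_t$. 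Lemma~\ref{lem:standard_trees_properties} therefore forces $T_s\cap T_t=\{v\}$ for a single type~$2$ vertex $v$, and both $a', b'$ lie in the dihedral parabolic $\Stab(v)\cong A_{uw}$. Applying Lemma~\ref{lem:subgroup_conjugates_generators} to these two distinct conjugates of standard generators inside $A_{uw}$ yields two sub-cases: either $\langle a', b'\rangle\cong F_2$, in which case the non-trivial powers $(a')^{k_1}$ and $(b')^{k_2}$ generate a rank-two subgroup of $F_2$ and do not commute; or there exists $g\in A_{uw}$ with $\{g a' g^{-1}, g b' g^{-1}\}=\{u,w\}$, so that the axes of $(a')^{k_1}$ and $(b')^{k_2}$ on the Bestvina tree of $A_{uw}$ (Lemma~\ref{lem:Bestvina-tree}) are distinct translates of $\mathrm{Axis}(u)$ and $\mathrm{Axis}(w)$, forcing these two hyperbolic tree isometries not to commute either. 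Both sub-cases contradict the commutativity of $f(s)$ and $f(t)$, so $T_s=T_t$ and the two minsets coincide.
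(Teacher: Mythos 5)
Your argument is essentially correct, but it takes a genuinely different route from the paper. The paper does not run a case analysis on $D_{\Gamma'}$ at all: it notes that if $f(s),f(t)$ have a common power the claim is immediate from $\Min(f(s))=\Min(f(s)^n)$, and otherwise they generate a $\mathbb{Z}^2$, which by acylindricity of the action on the CAT($-1$) coned-off complex $\widehat{D}_{\Gamma'}$ (Theorem~\ref{thm:cone-off} and Osin) must fix a vertex of $\widehat{D}_{\Gamma'}$; this yields either a common fixed vertex of $D_{\Gamma'}$ or a common stabilised standard tree $T$, and the tree case is finished algebraically inside $\Stab(T)\cong\mathbb{Z}\times F_k$. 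You instead work directly in $D_{\Gamma'}$, using that commuting isometries preserve each other's minsets, the classification of Lemma~\ref{LemmaMin(f(s))}, and the Bestvina tree of a dihedral vertex stabiliser (Lemmas~\ref{lem:Bestvina-tree} and~\ref{lem:subgroup_conjugates_generators}). Your route is more elementary (no appeal to acylindricity or to the $\mathbb{Z}\times F_k$ structure of tree stabilisers), it already excludes the vertex--line configuration that the paper only rules out in Lemma~\ref{lem:minset_adjacency}, and it treats the ``same type implies equal'' clause more explicitly than the paper's closing remark; the paper's route is shorter because ellipticity of the $\mathbb{Z}^2$ is obtained in one stroke.

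Two steps need patching. First, in the line--tree case you assert that bounded orbits of $f(t)$ on the line $\ell_s$ force $f(t)$ to fix $\ell_s$ pointwise; as stated this is false, since an elliptic isometry preserving a line could a priori reflect it. The fix is the commutation you already have: $f(t)|_{\ell_s}$ commutes with the nontrivial translation $f(s)|_{\ell_s}$, hence is itself a translation of $\ell_s$, and bounded orbits then make it the identity (note the pointwise fixing is genuinely needed, since the ``common standard tree'' clause and its use in Lemma~\ref{lem:minset_adjacency} require $\ell_s\subseteq T_t$, not just one common point). Second, Lemma~\ref{lem:subgroup_conjugates_generators} is about elements conjugate \emph{within} the dihedral group to its standard generators, whereas your $a',b'$ are only known to be conjugates in $A_{\Gamma'}$ of standard generators of $A_{\Gamma'}$ lying in $\Stab(v)$. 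This is bridged by a short argument (an edge of $T_s=\Fix(a')$ incident to $v$ lies in a coset $p\langle x\rangle$ with $p\in\Stab(v)$ and $x$ a generator of $\Stab(v)$, so $a'=px^kp^{-1}$, and the exponent-sum homomorphism forces $k=1$); the paper makes the same move without comment in the proof of Proposition~\ref{prop:image_generator}, so this is a gap of justification rather than of substance, but you should state it. With these two repairs your case analysis is complete and proves the full statement, including the equality clause.
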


\begin{proof}
    Since by Lemma \ref{LemmaMin(f(s))} $\Min(f(s)) = \Min(f(s)^n)$ for all $n \geq 1$, the result is clear if $f(s)$ and $f(t)$ have a common power.
    Otherwise, since $f$ is optimal, $f(s)$ and $f(t)$ generate a $\mathbb{Z}^2$-subgroup denoted $H$.
    Since $A_\Gamma$ acts acylindrically without inversions on the CAT(-1) coned-off Deligne complex $\hat{D}_\Gamma$ by Theorem~\ref{thm:cone-off}, it follows that $f(s)$ and $f(t)$ fix a common vertex of $\widehat{D}_\Gamma$.
    Thus, $f(s)$ and $f(t)$ either fix a common vertex of $D_\Gamma$, in which case we are done, or stabilise a common standard tree $T$ of $D_\Gamma$. In the latter case, we look at the induced action of $\Stab(T) \supset H$ on $T$.
    Let $g$ be a conjugate of a standard generator such that the cyclic subgroup $\langle g \rangle$ is the pointwise stabiliser of $T$, and recall that $g$ is central in $\Stab(T)$.
    The quotient $H/(\langle g \rangle \cap H)$ acts with trivial edge stabilisers, hence acylindrically, on $T$. 
    So if $H/(\langle g \rangle \cap H) \cong \mathbb{Z}^2$, we get that $H$, and hence $f(s), f(t)$ fix a vertex of $T$, and we are done.
    Otherwise, either $f(s)$ or $f(t)$ are torsion in $H/(\langle g \rangle \cap H)$, or they have a common power in $H/(\langle g \rangle \cap H)$. Hence, there exist integers $\ell, m, n\geq 0$ not all zero such that $f(s)^\ell = f(t)^mg^n$. If $\ell=0$ (respectively $m=0$), then $\Min(f(t))=T$ (respectively $\Min(f(s))=T$), and thus the two minsets intersect non-trivially.
    Otherwise, we have \[\Min(f(s)) = \Min(f(s)^\ell) = \Min(f(t)^mg^n) = \Min(f(t)^m)= \Min(f(t))\]
    and thus the minsets intersect non-trivially.
    Note that in all cases, when both $\Min(f(s))$ and $ \Min(f(t))$ are a vertex, a line or a tree, then they are equal.
\end{proof}

\begin{lemma}\label{lem:minset_adjacency}
    Let $f: A_\Gamma \rightarrow A_{\Gamma'}$ be an optimal  homomorphism as in Theorem~\ref{thm:structure_hom}. Let $s, t \in \overline{S}$ adjacent in $\Gamma_{bar}$ and assume that the minsets $\Min(f(s))$ and $\Min(f(t))$ are distinct. Then: 
    \begin{itemize}
        \item If $\Min(f(s))$ is either a vertex or a line contained in a standard tree, then $\Min(f(t))$ is a standard tree containing $\Min(f(s))$.
        \item If $\Min(f(s))$ is a standard tree, then $\Min(f(t))$ is either a vertex contained in $\Min(f(s))$, or a line contained in $\Min(f(s))$.
    \end{itemize}
\end{lemma}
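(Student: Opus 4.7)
The plan is to combine the commutation of $f(s)$ and $f(t)$ (which will follow from their adjacency in $\Gamma_{bar}$) with the minset classification of Lemma \ref{LemmaMin(f(s))} and the intersection rigidity of standard trees provided by Lemma \ref{lem:standard_trees_properties}.

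First I would observe that since $s, t \in \overline{S}$ are adjacent in $\Gamma_{bar}$, one of them is a standard generator $v \in S$ and the other is some $z_{vw}$ with $w$ adjacent to $v$ in $\Gamma$. Since $z_{vw}$ is central in $A_{vw}$, the elements $s$ and $t$ commute in $A_\Gamma$, and hence $f(s)$ and $f(t)$ commute in $A_{\Gamma'}$, so each preserves the other's minset setwise. Lemma \ref{LemmaMinsetsIntersect} then gives $\Min(f(s)) \cap \Min(f(t)) \neq \emptyset$ together with a common standard tree $T$ containing both minsets, and guarantees equality whenever the two minsets are of the same type in the trichotomy (vertex / line / tree) of Lemma \ref{LemmaMin(f(s))}. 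The hypothesis $\Min(f(s)) \neq \Min(f(t))$ therefore forces the two types to differ.

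The crux will be excluding the mixed (vertex, line) and (line, vertex) cases. Suppose $\Min(f(s)) = \{v\}$ is a type $2$ vertex and $\Min(f(t))$ is a line. Commutation forces $f(t)$ to preserve $\{v\}$ and hence to fix $v$. On the other hand $v$ lies on the axis $\Min(f(t))$ by Lemma \ref{LemmaMinsetsIntersect}, so $d(v, f(t)\cdot v)$ equals the translation length $\ell(f(t))$. This length must be positive, for otherwise $\Min(f(t)) = \Fix(f(t))$ would be a line, which is ruled out by the classification of fixed-sets in Lemma \ref{lem:standardtrees}. We thus obtain $f(t)\cdot v \neq v$, contradicting what commutation gave. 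The (line, vertex) case is symmetric.

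In each of the remaining four cases, one of the two minsets, call it $M'$, is a standard tree. Then $M' \subseteq T$ combined with the fact that distinct standard trees meet in at most a single vertex (Lemma \ref{lem:standard_trees_properties}) forces $M' = T$. The required inclusions are then immediate: in the (vertex, tree) and (tree, vertex) cases the vertex lies in the non-empty minset intersection, hence in $M' = T$; in the (line, tree) and (tree, line) cases the line also sits inside its own ambient standard tree $T_s$ and contains infinitely many vertices, so the same rigidity yields $T_s = T = M'$, placing the line inside $M'$. The only genuine obstacle is the exclusion of the (vertex, line) case; all else is standard tree rigidity.
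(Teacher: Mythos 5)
Your proof is correct and takes essentially the same route as the paper: reduce via Lemma~\ref{LemmaMinsetsIntersect} to the mixed-type cases, get the containments from the fact that distinct standard trees meet in at most a vertex, and rule out the (vertex, line) configuration using that $f(s)$ and $f(t)$ commute (the paper handles this last case by citing that an elliptic element with bounded fixed-point set cannot commute with a hyperbolic one, which is exactly your direct axis argument). One small remark: the positivity of the translation length in the ``line'' case is immediate from Lemma~\ref{LemmaMin(f(s))}, where that case is by construction the loxodromic one, so your detour through Lemma~\ref{lem:standardtrees} is unnecessary --- and strictly speaking a line is itself a simplicial tree, so that lemma alone would not exclude it without the extra observation that standard trees contain vertices of valence greater than two.
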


\begin{proof}
    By Lemma \ref{LemmaMinsetsIntersect}, if both $\Min(f(s))$ and $ \Min(f(t))$ are a vertex, a line or a tree, then they are equal.
    In any other case they intersect and lie in a common standard tree.
    Thus, there is only one case to rule out. 

    Assume $\Min(f(s))$ is a vertex and $\Min(f(t))$ is a line.
    This is impossible as $f(s)$, which is elliptic with bounded fixed-point set, cannot commute with $f(t)$, which is hyperbolic (for instance see \cite[Lemma 2.29]{vaskou2023isomorphism}).
\end{proof}

\begin{lemma}\label{lem:minset_tree}
        Let $f: A_\Gamma \rightarrow A_{\Gamma'}$ be an optimal homomorphism as in Theorem~\ref{thm:structure_hom}. Then for every $s\in S$ the minset  $\Min(f(s))$ is a standard tree.  
\end{lemma}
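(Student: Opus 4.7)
The plan is to rule out the two possibilities left open by Lemma~\ref{LemmaMin(f(s))}, namely $\Min(f(s))$ being a type~2 vertex $v$ or a line $L$ contained in a standard tree $T$. In each case the goal is to force $\im(f)$ into a subgroup of $A_{\Gamma'}$ which is either cyclic or virtually splits non-trivially as a direct product, contradicting the hypothesis of Theorem~\ref{thm:structure_hom}. The main tool is Lemma~\ref{lem:minset_adjacency}, propagated along the chain $s$--$z_{st}$--$t$ in $\Gamma_{bar}$ for each $t \in V(\Gamma) \setminus \{s\}$, combined with the explicit description $\Stab(T) \cong \mathbb{Z} \times F_k$ from \cite[Lemma 4.5]{martin2022acylindrical}.

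For the line case, Lemma~\ref{lem:standard_trees_properties} makes $T$ the unique standard tree containing $L$, so Lemma~\ref{lem:minset_adjacency} forces $\Min(f(z_{st})) \in \{L, T\}$; in both sub-cases $f(z_{st})$ preserves $T$ and lies in $\Stab(T)$. Centralising $f(z_{st})$ then pushes $f(t)$ into $\Stab(T)$ as well: if $f(z_{st}) \in \langle c \rangle$ this follows from $C(c) = \Stab(T)$, and if $f(z_{st})$ is hyperbolic with axis $L$, any commuting element must preserve $L$ and hence $T$. Therefore $\im(f) \subseteq \Stab(T) \cong \mathbb{Z} \times F_k$, which is either cyclic ($F_k$ trivial) or a non-trivial direct product, in both cases contradicting the hypothesis.

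For the vertex case, write $P = \Stab(v) = gA_{ab}g^{-1}$; as a dihedral Artin group with $m_{ab} \geq 3$, $P$ virtually splits non-trivially as a direct product (its quotient by the centre is virtually free), so it suffices to establish $\im(f) \subseteq P$. Applying Lemma~\ref{lem:minset_adjacency} to $(s, z_{st})$, $\Min(f(z_{st}))$ is either $v$ or some standard tree $T_1$ through $v$. In the first sub-case, $C(f(z_{st})) \subseteq \Stab(v) = P$ and hence $f(t) \in P$. In the second sub-case, $f(z_{st}) = c_1^n$ with $T_1 = \Fix(c_1)$, and one only gets $f(t) \in C(c_1) = \Stab(T_1) = \langle c_1\rangle \times F_k$, with no direct guarantee that $f(t)$ fixes $v$.

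To handle this last sub-case, decompose $f(s) = c_1^{a_1}\alpha_1$ and $f(t) = c_1^{a_2}\alpha_2$ in $\Stab(T_1)$. The braid relation between $f(s)$ and $f(t)$ projects to $\Pi(\alpha_1, \alpha_2; m_{st}) = \Pi(\alpha_2, \alpha_1; m_{st})$ in the free group $F_k$; comparing reduced words forces $\alpha_1, \alpha_2$ to commute, hence to be powers of a common element $\gamma \in F_k$. Since $\Min(f(s)) = v$ (not $T_1$), one must have $\alpha_1 \neq 1$, so $\gamma$ is non-trivial and fixes $v$. If $\Min(f(t))$ were a vertex $v' \neq v$ or a line $L' \subset T_1$ not containing $v$, then $\gamma$ would have to fix a second vertex of $T_1$ (hence an edge on the path between $v$ and $v'$) or simultaneously fix $v$ and translate along $L'$. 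The latter is impossible for a single tree isometry, and the former is ruled out because the pointwise stabiliser of $T_1$ is exactly $\langle c_1\rangle$, making the induced $F_k$-action on $T_1$ have trivial edge stabilisers and forcing $\gamma = 1$. Thus $\Min(f(t)) \in \{v, T_1\}$ and $f(t) \in P$ in both remaining possibilities. The main obstacle of the proof is precisely this last sub-case: one must leverage the braid relation together with the trivial-edge-stabiliser property of $F_k$ acting on $T_1$ to rule out the possibility that $f(t)$ moves $v$, a conclusion not available from adjacency considerations alone.
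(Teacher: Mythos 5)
Your proof is correct, and it follows the same skeleton as the paper's argument: reduce to the two bad cases via Lemma~\ref{LemmaMin(f(s))}, propagate information along $s$--$z_{st}$--$t$ using Lemma~\ref{lem:minset_adjacency}, exploit the splitting $\Stab(T)\cong \mathbb{Z}\times F_k$ with the central factor acting trivially on $T$, and land on the same contradictions ($\im(f)\subseteq \Stab(T)$, resp.\ $\im(f)\subseteq \Stab(v)$). The execution of the key step differs, though. The paper proves one uniform Claim (for every $t$, $\Min(f(t))$ equals $\Min(f(s))$ or is a standard tree containing it) by contradiction: if it failed, the projections of $f(s)$ and $f(t)$ to $F_k$ would have distinct minsets in $T$, hence generate a rank-two free group, which is incompatible with the braid relation between $s$ and $t$. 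You avoid the Claim: in the line case you get $\im(f)\subseteq\Stab(T)$ directly from commutation with $f(z_{st})$ and uniqueness of the standard tree containing $L$, which is a genuine (small) simplification; in the vertex case you use the braid relation ``constructively,'' projecting it to $F_k$ to force $\alpha_1,\alpha_2$ to have a common root $\gamma$, and then the trivial edge stabilisers of the $F_k$-action on $T_1$ force $\gamma$, hence $f(t)=c_1^{a_2}\gamma^{q}$, to fix $v$. So both proofs ultimately rest on the same tension between the braid relation and freeness inside $\Stab(T_1)/\langle c_1\rangle$, used by contradiction in the paper and positively in your version. Two presentational points, neither a gap: once $\gamma$ fixes $v$, the closing case analysis on $\Min(f(t))$ is superfluous, since $f(t)=c_1^{a_2}\gamma^{q}$ fixes $v$ outright; and the assertion that $\gamma$ fixes $v$ deserves its one-line justification (since $\gamma^{p}=\alpha_1\neq 1$ fixes $v$ and edge stabilisers of the $F_k$-action are trivial, $\Fix_{T_1}(\gamma^{p})=\{v\}$, and $\gamma$ must be elliptic, so $\Fix_{T_1}(\gamma)=\{v\}$).
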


\begin{proof}
Suppose by contradiction that for some $s\in S$, the minset $\Min(f(s))$ is either a vertex or a line. We will first need the following result:

\medskip

\noindent \underline{Claim:} For every $t\neq s$ in $S$, we have that either $\Min(f(t)) = \Min(f(s))$ or $\Min(f(t))$ is a standard tree containing $\Min(f(s))$.

\medskip

Indeed, if that were not the case, then it would follow from Lemma~\ref{lem:minset_adjacency} that $T\coloneqq \Min(f(z_{st}))$ is a standard tree and $\Min(f(t))$ is either a line or a vertex contained in $\Min(f(z_{st}))$. Since $f(z_{st})$ commutes with both $f(s)$ and $f(t)$, it follows that both $f(s)$ and $f(t)$ stabilise the standard tree $T= \Min(f(z_{st}))$. We can thus look at the action of $\langle f(s), f(t) \rangle \subseteq \Stab(T)$ on $T$.

The stabiliser of $T$ splits as a direct product of the form $\mathbb{Z}\times F_k$ where the central $\mathbb{Z}$ factor consists of the elements acting trivially on $T$, by \cite[Remark 4.6]{martin2022acylindrical}.
Thus, we can consider the action of the free group $\Stab(T) / \Fix(T) \cong F_k$ on $T$, and it follows that the images of $f(s)$ and $f(t)$ in $F_k$ still have distinct minsets in $T$.
It follows that these projections generate a free subgroup of $F_k$, hence $f(s)$ and $f(t)$ also generate a free subgroup of $A_\Gamma$.
Thus, $s$ and $t$ generate a free subgroup of $A_{\Gamma}$, contradicting the fact that $m_{st}\neq \infty$.
This proves the claim.

\medskip Suppose first that $\Min(f(s))$ is a line contained in a standard tree.
Since distinct standard trees intersect in at most one vertex by Lemma \ref{lem:standard_trees_properties}, it follows from the above claim that for $t\in S$, we either have $\Min(f(t)) = \Min(f(s))$ or $\Min(f(t))$ is the unique standard tree $T$ containing $\Min(f(s))$.
In any case, we get that each $f(t)$ stabilises $T$, hence $\im(f)$ is contained in $\Stab(T)$.
As this stabiliser splits as a direct product by Martin--Przytycki \cite[Lemma 4.5]{martin2022acylindrical}, this contradicts the fact that the image of $f$ is not contained in a subgroup of $A_{\Gamma'}$ that virtually splits as a direct product. 

Suppose now that $\Min(f(s))$ is a single vertex. 
It now follows from the claim that for every $t\in S$, the minset $\Min(f(t))$ is either the vertex $v:=\Min(f(s))$ or a standard tree containing that vertex.
It thus follows that for every $t\in S$, the element $f(t)$ fixes $v$, hence $f(A_{\Gamma})\subseteq \Stab(v)$, contradicting the fact that the image of $f$ is not contained in a dihedral parabolic subgroup of $A_{\Gamma'}$, as those virtually split as direct products of infinite groups.
\end{proof}

At this point, we know from Lemma~\ref{lem:minset_tree} that the image of every standard generator of $A_\Gamma$ is either trivial or conjugated to a \textit{power} of a standard generator of $A_{\Gamma'}$. We will need the following:

\begin{lemma}\label{lem:minset_tree_intersect}
        Let $f: A_\Gamma \rightarrow A_{\Gamma'}$ be an optimal homomorphism as in Theorem~\ref{thm:structure_hom}. Then for every distinct elements $s, t\in S$ with non-trivial images, the standard trees  $\Min(f(s))$ and $\Min(f(t))$ intersect non-trivially.
\end{lemma}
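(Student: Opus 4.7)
The plan is to exhibit a non-empty subset of $D_{\Gamma'}$ that is simultaneously contained in both $\Min(f(s))$ and $\Min(f(t))$; the natural candidate is $\Min(f(z_{st}))$, the minset of the image of the Garside generator of the dihedral parabolic $\langle s, t\rangle$.

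First, I note that since $\Gamma$ is a complete graph (free-of-infinity), any two distinct standard generators $s, t \in S$ span an edge of $\Gamma$, so $z_{st} \in \overline{S}$ is defined and, in the barycentric subdivision $\Gamma_{bar}$, the vertex $z_{st}$ is adjacent to both $s$ and $t$. Because $A_{\Gamma'}$ is torsion-free (large-type Artin groups being $2$-dimensional), Lemma~\ref{lem:restriction_injective_S} ensures that $f(z_{st}) \neq 1$, so Lemma~\ref{LemmaMin(f(s))} applies and gives that $\Min(f(z_{st}))$ is a type $2$ vertex, a standard tree, or a line contained in a standard tree; in any case it is non-empty.

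Next, I would apply Lemma~\ref{LemmaMinsetsIntersect} to the adjacent pair $(s, z_{st})$ in $\Gamma_{bar}$: the minsets $\Min(f(s))$ and $\Min(f(z_{st}))$ lie in a common standard tree $T$. By Lemma~\ref{lem:minset_tree}, $\Min(f(s))$ is itself a standard tree contained in $T$. Since distinct standard trees can meet in at most a single vertex of type $2$ (Lemma~\ref{lem:standard_trees_properties}), this inclusion forces $\Min(f(s)) = T$, and hence $\Min(f(z_{st})) \subseteq \Min(f(s))$. Running the identical argument on the pair $(t, z_{st})$ yields $\Min(f(z_{st})) \subseteq \Min(f(t))$, and combining the two containments gives
\[
\emptyset \neq \Min(f(z_{st})) \subseteq \Min(f(s)) \cap \Min(f(t)),
\]
which is the claimed conclusion.

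The subtle point—the only one that really needs attention—is the correct use of the ``common standard tree'' clause of Lemma~\ref{LemmaMinsetsIntersect}. When $\Min(f(z_{st}))$ happens to be just a single vertex or a single line, the ``intersect'' clause alone provides merely a shared point with $\Min(f(s))$, which would not be enough to transfer the intersection through $z_{st}$ from $s$ to $t$. It is precisely the extra information that both minsets sit inside a common standard tree, combined with the maximality of standard trees under inclusion from Lemma~\ref{lem:standard_trees_properties}, that lets me upgrade intersection to containment and close the argument.
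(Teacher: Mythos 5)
Your proof is correct and follows essentially the same route as the paper: both pass through the element $z_{st}$ and use the adjacency results for $\Gamma_{bar}$ to place $\Min(f(z_{st}))$ inside both $\Min(f(s))$ and $\Min(f(t))$. The only difference is that the paper simply invokes Lemma~\ref{lem:minset_adjacency}, which already packages the containment you re-derive by hand from Lemma~\ref{LemmaMinsetsIntersect} together with the fact that a standard tree contained in a standard tree must equal it.
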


\begin{proof}
    By Lemma~\ref{lem:minset_tree}, the two minsets $T:=\Min(f(s))$ and $T':=\Min(f(t))$ are standard trees, and we can further assume that they are distinct, otherwise there is nothing to prove. 
    By Lemma~\ref{lem:minset_adjacency} we know that $\Min(f(z_{st}))$ is non-trivial and contained in both $T$ and $T'$, so the result follows.
\end{proof}

We are now finally ready to prove Proposition~\ref{prop:image_generator}

\begin{proof}[Proof of Proposition~\ref{prop:image_generator}.]
Let $S_0$ be as in Lemma \ref{lem:restriction_injective_S} and note that $f_{S_0}$ is optimal. Let now $s\in S_0$. Since the image of $f$ is not cyclic by assumption, we can pick some $t\in S_0$ such that $f(s)$ and $f(t)$ do not belong to the same cyclic group.
By Lemma~\ref{lem:minset_tree}, we can pick elements $s', t'\in S'$, $k, l \in \mathbb{Z} \backslash \{0\}$ and $g, h \in A_{\Gamma'}$ such that 
\[f(s) = g(s')^kg^{-1}, ~~~ f(t) = h(t')^\ell h^{-1}.\]
By construction, the two standard trees $\Min(f(s))$ and $\Min(f(t))$ of $D_{\Gamma'}$ are distinct.
Moreover, they intersect by Lemma~\ref{lem:minset_tree_intersect}.
Thus, up to conjugation we can assume that $f(s)$ and $f(t)$ are conjugates of powers of standard generators in a dihedral Artin group.
Since $\Gamma$ is complete, $s$ and $t$ do not generate a free group, so neither do $f(s)$ and $f(t)$.
By applying Lemma~\ref{lem:subgroup_conjugates_generators} to the pair $\{gs'g^{-1},ht'h^{-1}\}$, it follows that the pair $\{gs'g^{-1},ht'h^{-1}\}$ is conjugated to the pair $\{s', t'\}$.
Thus, the pair $\{f(s), f(t)\}$ is conjugated to the pair $\{(s')^k, (t')^\ell\}$.
Up to conjugation, we can thus assume that $g=h=1$.
Now $s'\neq t'$ since $f$ is optimal.
Since $\Gamma$ is complete, we have the equality $\Pi(s,t;m_{st}) = \Pi(t,s;m_{st})$ in $A_\Gamma$, hence the equality $\Pi((s')^k,(t')^\ell;m_{st}) = \Pi((t')^k,(s')^\ell;m_{st})$ in $A_{\Gamma'}$. It now follows from Lemma~\ref{lem:alternating product equality} that $k=\ell= \pm 1$, which concludes the proof.
\end{proof}

\section{Configurations of standard trees}

In Section \ref{sec:image_generator} we saw that under good enough hypotheses, the image under a homomorphism $f : A_{\Gamma} \rightarrow A_{\Gamma'}$ of a standard generator $s \in V(\Gamma)$ is either trivial or conjugated to a standard generator (up to inversion).
In the latter case, $f(s)$ fixes a standard tree in $D_{\Gamma'}$.
If $f$ is injective or optimal, we also know that for every $t \in V(\Gamma)$ adjacent to $s$, the images $f(s)$ and $f(t)$ have intersecting standard trees.
It is thus natural to want to understand the intersection pattern of families of standard trees.
For instance, the following property was a key ingredient to understand the automorphisms of large-type Artin groups over complete graphs:

\begin{lemma} (Triangle of Standard Trees, \cite[Lemma 3.10]{vaskou2023automorphisms}) \label{lem:triangle_standard_trees}
    Let $A_{\Gamma}$ be a large-type Artin group with Deligne complex $D_{\Gamma}$. Let $T_1, T_2, T_3$ be three standard trees of $ D_{\Gamma}$ such that $T_i \cap T_j \neq \varnothing$ whenever $i\neq j$, but such that $T_1\cap T_2\cap T_3 = \varnothing$. 
    Then there exists a unique element $g \in A_{\Gamma}$ such that for every $i$, the translate $g K_{\Gamma}$ contains an edge of $T_i$.
\end{lemma}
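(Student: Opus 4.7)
The plan is to analyse the combinatorial triangle formed by the three standard trees inside the Deligne complex and to show that it fits uniquely inside a single translate of the fundamental domain $K_\Gamma$. By Lemma~\ref{lem:standard_trees_properties}, each pairwise intersection $T_i \cap T_j$ consists of a single type~2 vertex $v_{ij}$, and the hypothesis $T_1 \cap T_2 \cap T_3 = \varnothing$ forces these three vertices to be pairwise distinct. Each tree $T_i$ contains both $v_{ij}$ and $v_{ik}$ (with $\{i,j,k\}=\{1,2,3\}$), so there is a unique embedded arc $\gamma_i \subset T_i$ joining them; concatenating $\gamma_1, \gamma_2, \gamma_3$ produces a closed combinatorial loop $\gamma$ in $D_\Gamma$.

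The main step -- and the principal technical obstacle -- is to prove that each arc $\gamma_i$ has length exactly two, that is, passes through a single type~1 vertex $u_i$. I would argue by contradiction: if some $\gamma_i$ had length $\geq 4$, then there would be an intermediate type~2 vertex of $T_i$ strictly between $v_{ij}$ and $v_{ik}$, and the loop $\gamma$ would bound a disc of strictly larger combinatorial area in $D_\Gamma$. Applying a Gauss--Bonnet-style argument to a minimal-area disc filling $\gamma$, and combining the local analysis of links of type~2 vertices (governed by the Bestvina tree of Lemma~\ref{lem:Bestvina-tree}) with the CAT($-1$) geometry of the coned-off Deligne complex from Theorem~\ref{thm:cone-off}, should produce an angle deficit violating non-positive curvature.

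Once $\gamma$ is a hexagon alternating between three type~2 corners $v_{ij}$ and three type~1 corners $u_i$, I would use links at the $u_i$ to extract a common type~0 neighbour $w$ of $u_1, u_2, u_3$, and check that the three $2$-simplices $\{w, u_i, v_{ij}\}$ fill $\gamma$. Since the fundamental domain $K_\Gamma$ is by construction the union of $2$-simplices around the type~0 vertex corresponding to $1 \in A_\Gamma$, writing $w = g \cdot 1$ exhibits a translate $gK_\Gamma$ containing the edge of $T_i$ from $u_i$ to $v_{ij}$ for each $i$, settling existence. Finally, uniqueness follows because $A_\Gamma$ acts freely on type~0 vertices, so $g$ is determined by $w$; any competing $g' \neq g$ would yield a second $2$-simplex disc filling the same loop $\gamma$, contradicting the uniqueness of minimal-area filling discs in the CAT(0) complex $D_\Gamma$.
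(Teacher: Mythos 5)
Your outline defers the entire content of the lemma to the one step you do not actually carry out. Note first that this paper does not prove the statement itself, it imports it from \cite{vaskou2023automorphisms}; what you propose is closer in spirit to the disc-diagram/Gauss--Bonnet argument the paper uses in Section~\ref{sec:cycle_trees} for the Cycle of Standard Trees Property. But there the curvature bookkeeping only closes up under the XXXL hypothesis $m_{st}\geq 6$ (see Lemma~\ref{lem:at_least_5_cells}, where $m_{st}\geq 6$ is what forces angle at least $5\pi/3$ at almost-corners), and for general large-type groups the analogous property for longer cycles is left as an open question. So the assertion that a minimal filling disc for $\gamma$ ``should produce an angle deficit violating non-positive curvature'' is exactly the point where all the work lies: with only $m_{st}\geq 3$ the generic count does not give a contradiction, and any correct argument must quantitatively use that there are exactly three trees together with the angle estimates at type~$2$ vertices between edges of the same standard tree (Martin--Przytycki), none of which appear in your sketch. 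Two further tools you invoke are not available or not relevant: Theorem~\ref{thm:cone-off} requires hyperbolic type, which is not a hypothesis of this lemma (a $(3,3,3)$-triangle is allowed), so the CAT($-1$) coned-off complex cannot be used; and Lemma~\ref{lem:Bestvina-tree} is a statement about the dihedral group itself, not a description of links of type~$2$ vertices of $D_\Gamma$, so the bridge from it to angle estimates in $D_\Gamma$ is missing.

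There are two further gaps after the hexagon step. First, ``use links at the $u_i$ to extract a common type~$0$ neighbour $w$'' assumes what must be proved: every edge of $D_\Gamma$ lies in infinitely many $2$-simplices, so each hexagon edge has infinitely many type~$0$ neighbours, and producing a \emph{single} type~$0$ vertex adjacent to all six hexagon vertices is essentially the statement that $\gamma$ lies in one translate $gK_\Gamma$ (also, you need six $2$-simplices of the form $\{w,u_i,v_{ij}\}$, not three, to fill the hexagon). Second, your uniqueness argument does not work: freeness of the action on type~$0$ vertices only shows that your chosen $w$ determines $g$, not that no other translate $g'K_\Gamma$ contains an edge of each $T_i$, and ``uniqueness of minimal-area filling discs in a CAT(0) complex'' is not a valid principle (nor would a competing translate have to fill the same loop). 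The clean argument is algebraic: if $gK_\Gamma$ and $g'K_\Gamma$ both contain an edge of each $T_i$, then the trees are $g$-conjugates (and $g'$-conjugates) of fixed trees of standard generators $s_1,s_2,s_3$, the vertices $T_i\cap T_j$ force $g^{-1}g'\in A_{s_is_j}$ for each pair, and Theorem~\ref{thm:vdL} gives $A_{s_1s_2}\cap A_{s_1s_3}\cap A_{s_2s_3}=\{1\}$, hence $g=g'$.
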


Lemma \ref{lem:triangle_standard_trees} can be used to study Artin groups over complete graphs because complete graphs are a union of $3$-cycles. We want to generalise the above property to larger cycles of standard trees, to be able to generalise results past the free-of-infinity case.

\begin{definition}\label{def:cycle_standard_trees_property}
    A \textbf{cycle of standard trees} is a sequence $T_1, \ldots, T_n$ of distinct standard trees of the Deligne complex $D_\Gamma$ such that for every $i \in \mathbb{Z}/n\mathbb{Z}$, the intersection $T_i \cap T_{i+1}$ is a vertex, and such that: 
    \begin{itemize}
        \item for every $i \in \mathbb{Z}/n\mathbb{Z}$, the generators $x_i$ and $x_{i+1}$ of $\Fix(T_i)$ and $\Fix(T_{i+1})$ respectively generate a dihedral Artin group, 
        \item for every distinct $i, j$ with $j \neq i\pm1$, the generators of $\Fix(T_i)$ and $\Fix(T_j)$ generate a non-abelian free group.
    \end{itemize}
    Given a cycle standard trees $T_1, \ldots, T_n$, we associate a loop $\gamma$ of $D_\Gamma$ obtained by concatenating, for $i \in \mathbb{Z}/n\mathbb{Z}$, the geodesic segments $\gamma_i$ of $T_i$ between the vertices $T_i\cap T_{i-1}$ and $T_i\cap T_{i+1}$.
    
    We say that the Artin group $A_\Gamma$ satisfies the \pmb{$N$}\textbf{-Cycle of Standard Trees Property} if the following holds: let $T_1, \ldots, T_n$ be a cycle of standard trees in $D_\Gamma$ with $n \leq N$, and let $\gamma$ be the associated loop.
    Then there exists a (necessarily unique) element $g\in A_\Gamma$ such that $\gamma$ is contained in the translate $gK_{\Gamma}$  of the fundamental domain of $D_\Gamma$.
    We say that the Artin group $A_\Gamma$ satisfies the \textbf{Cycle of Standard Trees Property} if it satisfies the $N$-Cycle of Standard Trees Property for all $N$.
\end{definition}

\begin{definition}\label{def:graph_of_cycles}
We say that a cycle $\gamma \subseteq \Gamma$ is \textbf{induced} if two vertices of $\gamma$ adjacent in $\Gamma$ are always adjacent in $\gamma$.
We define the \textbf{graph of induced cycles} $\mathcal{G}_\Gamma$ of $\Gamma$ as follows:
    \begin{itemize}
        \item The vertices of $\mathcal{G}_\Gamma$ are the induced simple cycles of $\Gamma$ ;
        \item Two induced cycles $\gamma$ and $\gamma'$ are connected by an edge if and only if $\gamma \cap \gamma'$ contains an edge of $\Gamma$.
    \end{itemize}
\end{definition}

\begin{lemma}\label{lem:graph_cycles_connected}
    If a simplicial graph $\Gamma$ is connected and has no cut-vertex then $\mathcal{G}_\Gamma$ is connected. 
\end{lemma}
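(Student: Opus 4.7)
The plan is as follows. If $|V(\Gamma)| \le 2$, then $\mathcal{G}_\Gamma$ is empty and the statement is vacuous; so I assume $\Gamma$ is $2$-connected. I proceed in three steps, the second of which is the main one.

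First, I would show that every edge of $\Gamma$ lies in some induced cycle. Given $e \in E(\Gamma)$, $2$-connectedness ensures $e$ lies on some cycle, and a shortest such cycle is induced: any chord would split it into two strictly shorter cycles, one of which would still contain $e$, contradicting minimality.

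The key step is to show that for any two edges $e = vu$ and $e' = vw$ sharing a vertex $v$, with $u \ne w$, there is a path in $\mathcal{G}_\Gamma$ from some induced cycle through $e$ to some induced cycle through $e'$. Since $v$ is not a cut-vertex, there is a shortest path $P = (u = x_0, x_1, \ldots, x_d = w)$ in $\Gamma - v$, and this path is induced in $\Gamma$ because $v \notin V(P)$. Let $0 = i_0 < i_1 < \cdots < i_{r+1} = d$ enumerate the indices $i$ for which $x_i$ is adjacent to $v$ (a set that contains $0$ and $d$ by assumption), and for $j \in \{0, \ldots, r\}$ set
\[ C_j \coloneqq (v,\, x_{i_j},\, x_{i_j+1},\, \ldots,\, x_{i_{j+1}},\, v). \]
Each $C_j$ is an induced cycle: a chord along the path portion would contradict the shortness of $P$, while by the definition of the $i_j$'s the vertex $v$ has no neighbour among $x_{i_j+1}, \ldots, x_{i_{j+1}-1}$. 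Consecutive cycles $C_j$ and $C_{j+1}$ share the edge $vx_{i_{j+1}}$ and are thus adjacent in $\mathcal{G}_\Gamma$; since $e \in C_0$ and $e' \in C_r$, this yields the required path.

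Finally, I would conclude by declaring two edges $e, e' \in E(\Gamma)$ equivalent when some induced cycle through $e$ lies in the same component of $\mathcal{G}_\Gamma$ as some induced cycle through $e'$. This is an equivalence relation because any two induced cycles sharing an edge are adjacent in $\mathcal{G}_\Gamma$; the first step guarantees reflexivity, and the key step above shows that edges sharing a vertex are equivalent. The connectedness of $\Gamma$ then forces all edges to be equivalent, and hence all induced cycles lie in a single component of $\mathcal{G}_\Gamma$. The delicate point is the second step, where subdividing $P$ precisely at the neighbours of $v$ is what guarantees the $C_j$ are induced cycles rather than merely closed walks.
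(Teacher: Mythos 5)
Your proof is correct. It follows the same overall strategy as the paper --- reduce to two edges meeting at a common vertex $v$, use that $v$ is not a cut-vertex to find a path avoiding $v$, and produce from it a chain of induced cycles --- but it handles the decisive decomposition step differently. The paper extends the two edges to a simple cycle through $v$ and then asserts that, by adding suitable chords, this cycle can be broken into a chain of induced cycles forming a path in $\mathcal{G}_\Gamma$; the details of that chord decomposition (in particular, that consecutive pieces share an edge) are left to the reader. You instead take a \emph{shortest} path in $\Gamma - v$ between the far endpoints and subdivide it exactly at the neighbours of $v$, obtaining an explicit fan of cycles through $v$: minimality of the path rules out chords along the path, the choice of subdivision points rules out chords at $v$, and consecutive cycles of the fan visibly share an edge incident to $v$. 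This makes adjacency in $\mathcal{G}_\Gamma$ immediate and removes the only loosely justified step of the paper's argument, at the cost of some extra bookkeeping (the preliminary observation that every edge lies on an induced cycle, and the equivalence relation on edges), which the paper avoids by working directly with an edge path joining the two given cycles.
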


\begin{proof}
    Assume that $\Gamma$ is connected and has no cut-vertex.
    Let $\gamma$ and $\gamma'$ be simple induced cycles in $\Gamma$.
    Let $e_1,\ldots,e_n$ be a path with $e_1$ in $\gamma$ and $e_n$ in $\gamma'$.
    For each $i$, since $e_i \cap e_{i+1}$ is not a cut-vertex, we can extend $e_i\cup e_{i+1}$ to a simple cycle $\sigma_i$.
    If $\sigma_i$ is not induced, then by adding some of the shortcut edges we can decompose $\sigma_i$ into a chain of induced simple cycles that form a path in $\mathcal{G}_\Gamma$, with the first one containing $e_i$ and the last one containing $e_{i+1}$.
    Using these cycles we get a path from $\gamma$ to $\gamma'$ in $\mathcal{G}_\Gamma$.
\end{proof}

\begin{lemma}\label{lem:decomposition_in_induced_cycles}
    If $\Gamma$ has no separating edge, then for every edge of $\mathcal{G}_\Gamma$ between induced cycles $\gamma$ and $\gamma'$, there exists a vertex $v$ of $\Gamma$, and a sequence of induced cycles $\gamma_1 \coloneqq \gamma, \ldots, \gamma_n \coloneqq \gamma'$ such that the concatenation of the $\gamma_i$'s for $i \in \mathbb{Z}/n\mathbb{Z}$ defines a non-backtracking loop in $\mathcal{G}_\Gamma$, and for every $i\in \mathbb{Z}/n\mathbb{Z}$, the cycles $\gamma_i$ and $\gamma_{i+1}$ share an edge containing $v$.
\end{lemma}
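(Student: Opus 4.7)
The plan is to pick the right vertex $v$, introduce an auxiliary multigraph $G_v$ of induced cycles through $v$, and realise the desired loop as a simple cycle of $G_v$ through the edges corresponding to $\gamma$ and $\gamma'$.

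For the choice of $v$, the intersection $\gamma \cap \gamma'$ contains an edge by assumption; let $P$ be the connected component of $\gamma \cap \gamma'$ containing such an edge and pick $v$ to be an endpoint of $P$. Write $u$ for the unique neighbour of $v$ on $P$, so that $e := \{u, v\}$ lies in $\gamma \cap \gamma'$, and let $a$ (resp.\ $b$) be the neighbour of $v$ in $\gamma$ (resp.\ $\gamma'$) other than $u$. Then $a \neq b$, for otherwise $\{v, a\} = \{v, b\}$ would lie in $\gamma \cap \gamma'$ and extend $P$ beyond the endpoint $v$.

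I introduce $G_v$, the multigraph with vertex set $N(v)$ and one edge per induced cycle of $\Gamma$ through $v$, joining the two neighbours of $v$ used by that cycle. Then $\gamma$ and $\gamma'$ correspond to distinct edges $\{u, a\}$ and $\{u, b\}$ incident to the vertex $u$ of $G_v$. Let $\tilde{G}_v$ be the subgraph of $G_v$ on vertex set $N(v) \setminus \{u\}$ whose edges come from induced cycles that avoid $u$ entirely. The key step is to show that $\tilde{G}_v$ is connected. Since $\Gamma$ has no separating edge, $\Gamma \setminus \{u, v\}$ is connected, so any two $a_1, a_2 \in N(v) \setminus \{u\}$ are joined by a path $Q$ in $\Gamma \setminus \{u, v\}$, yielding a (not necessarily induced) cycle $C$ through $v$ using the pair $\{a_1, a_2\}$ at $v$ and avoiding $u$. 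One argues by induction on $|V(C)|$: if $C$ is induced, it is itself an edge of $\tilde{G}_v$ from $a_1$ to $a_2$; otherwise a chord $\{x, y\}$ of $C$ with $v \notin \{x, y\}$ shortcuts $C$ to a strictly shorter cycle through $v$ still using $\{a_1, a_2\}$ and avoiding $u$ (and one applies induction), while a chord $\{v, y\}$ (necessarily with $y \in N(v) \setminus \{u\}$ since $u \notin V(C)$) splits $C$ into two strictly shorter cycles through $v$ of the form $(v, a_1, \ldots, y, v)$ and $(v, y, \ldots, a_2, v)$, each avoiding $u$, giving by induction a path $a_1 \leadsto y \leadsto a_2$ in $\tilde{G}_v$.

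Applying the connectivity of $\tilde{G}_v$ to $a$ and $b$ produces a simple path $a = w_0, w_1, \ldots, w_k = b$ in $\tilde{G}_v$ realised by induced cycles $C_1, \ldots, C_k$ through $v$ avoiding $u$, with $k \geq 1$ since $a \neq b$. Prepending the edge $\gamma$ and appending the edge $\gamma'$ yields a simple cycle in $G_v$ on the pairwise distinct vertices $u, a, w_1, \ldots, w_{k-1}, b$ of length $n := k + 2 \geq 3$. Setting $\gamma_1 := \gamma$, $\gamma_{i+1} := C_i$ for $1 \leq i \leq k$, and $\gamma_n := \gamma'$, consecutive cycles $\gamma_i, \gamma_{i+1}$ (indices mod $n$) share a vertex of $G_v$, hence an edge of $\Gamma$ containing $v$; in particular the wrap-around pair $\gamma_n, \gamma_1$ share $u$ in $G_v$, corresponding to the edge $e$. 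The simplicity of the cycle in $G_v$ together with $n \geq 3$ ensures the resulting loop in $\mathcal{G}_\Gamma$ is non-backtracking. The main technical point is the inductive chord argument for $\tilde{G}_v$, where the case distinction on whether $v$ is an endpoint of the chord is precisely what guarantees the sub-cycles produced remain cycles through $v$ while still avoiding $u$.
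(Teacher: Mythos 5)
Your proof is correct and follows essentially the same route as the paper's: you choose the same vertex $v$ (an endpoint of the shared segment of $\gamma\cap\gamma'$, so that edges of $\gamma\cap\gamma'$, of $\gamma\setminus\gamma'$ and of $\gamma'\setminus\gamma$ all meet at $v$), use the no-separating-edge hypothesis to join $a$ to $b$ in $\Gamma\setminus\{u,v\}$, and then decompose the resulting cycle through $v$ into induced cycles through $v$ by chord surgery, closing the loop via the edge $e=\{u,v\}$. The only difference is organisational: the paper takes a minimal-length cycle through $e_1\cup e_2$ avoiding $u$ (so that minimality forces every chord to pass through $v$) and subdivides it in one step, whereas you run an induction on cycle length handling both kinds of chords, packaged as connectivity of the auxiliary multigraph $\tilde{G}_v$ --- a slightly longer but more explicit way of justifying the paper's ``subdivide by adding edges incident to $v$'' step.
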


\begin{proof}
    Let $v$ be a vertex in $\gamma \cap \gamma'$ such that there exist edges $e \subset \gamma \cap \gamma'$, $e_1 \subset \gamma \setminus \gamma'$, and $e_2 \subset \gamma' \setminus \gamma$ containing $v$.
    Since $e$ is not a separating edge, we can extend $e_1 \cup e_2$ to a simple cycle $\sigma$ that does not pass through the other terminal vertex of $e$.
    Take $\sigma$ to be minimal length among such cycles.
    If $\sigma$ is induced, we are done.
    If it is not, since $\sigma$ is of minimal length, then by adding finitely many edges incident to $v$, we can subdivide $\sigma$ to obtain induced cycles $\gamma_2, \ldots, \gamma_{n-1}$ such that $(\gamma_i)_i$ defines a simple path in $\mathcal{G}_\Gamma$ going from $\gamma$ to $\gamma'$ (note that none of the edges we added is equal to $e$, since $\sigma$ does not contain the other terminal vertex of $e$).
    Furthermore, if we set $\gamma_1 \coloneqq \gamma$ and $\gamma_n \coloneqq \gamma'$, then for every $i\in \mathbb{Z}/n\mathbb{Z}$, the cycles $\gamma_i$ and $\gamma_{i+1}$ share an edge containing $v$.    
\end{proof}

\begin{figure}[H]
    \centering
    \includegraphics[scale=0.9]{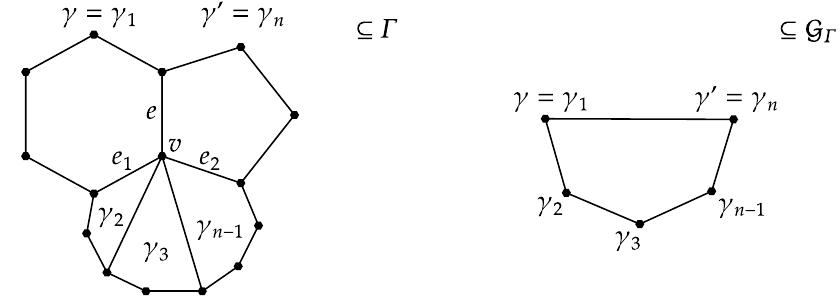}
    \caption{An illustration of the arguments used in the proof of Lemma \ref{lem:decomposition_in_induced_cycles}.}
\end{figure}

\paragraph{The case of injective homomorphisms.}

\begin{lemma}\label{lem:cycles_to_cycles}
       Let $f: A_\Gamma \hookrightarrow A_{\Gamma'}$ be an injective homomorphism between XL-type Artin groups, where $\Gamma$ is a cycle with vertices $v_1, \ldots, v_N$.  

       Then the elements $f(v_i)$ are conjugates of standard generators or their inverses, and the corresponding standard trees $T_1, \ldots, T_N$ form a cycle of standard trees.
\end{lemma}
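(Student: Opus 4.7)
The plan is to apply Lemma~\ref{lem:generators_to_generators} to each pair of adjacent vertices of the cycle $\Gamma$, and then verify the defining conditions of a cycle of standard trees one at a time.

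First, since $\Gamma$ is a cycle on $N \geq 3$ vertices, it is connected and every vertex has degree $2$, so in particular there are no even-leaves, and the hypotheses of Lemma~\ref{lem:generators_to_generators} are met. Applied to each adjacent pair $(v_i, v_{i+1})$, the lemma produces an element $g_i \in A_{\Gamma'}$, a sign $\varepsilon_i = \pm 1$, and adjacent standard generators $s_i, s_i' \in V(\Gamma')$ such that $f(v_i) = g_i s_i^{\varepsilon_i} g_i^{-1}$ and $f(v_{i+1}) = g_i (s_i')^{\varepsilon_i} g_i^{-1}$. In particular each $f(v_i)$ is a conjugate of a standard generator of $A_{\Gamma'}$ or of its inverse, which proves the first assertion of the lemma. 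By Lemma~\ref{lem:standardtrees} (together with $\Fix(x^{-1}) = \Fix(x)$), the set $T_i \coloneqq \Fix(f(v_i))$ is a standard tree of $D_{\Gamma'}$, and its pointwise stabilizer is generated by an element $x_i$ with $f(v_i) = x_i^{\pm 1}$.

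Next, I would verify the intersection condition and the dihedral condition on adjacent pairs. By construction $f(v_i)$ and $f(v_{i+1})$ both fix the type-$2$ vertex $g_i A_{s_i s_i'}$, so $T_i \cap T_{i+1}$ is non-empty. Lemma~\ref{lem:standard_trees_properties} then forces $T_i \cap T_{i+1}$ to be exactly a single type-$2$ vertex, provided $T_i \neq T_{i+1}$. This distinctness follows from injectivity of $f$: the vertices $v_i$ and $v_{i+1}$ do not commute since $m_{v_i v_{i+1}} \geq 3$, so neither do $f(v_i)$ and $f(v_{i+1})$, yet they would commute if they both lay in the cyclic pointwise stabilizer of a common standard tree. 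The same output of Lemma~\ref{lem:generators_to_generators} gives the dihedral condition for free: the generators $x_i$ and $x_{i+1}$ are $g_i$-conjugate to $s_i$ and $s_i'$, hence together they generate the dihedral Artin subgroup $g_i A_{s_i s_i'} g_i^{-1}$.

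Finally, the ``non-abelian free group'' condition on non-adjacent pairs—which only arises when $N \geq 4$—is the only place where the global cycle structure really enters. The key observation is that for $|i-j| \geq 2$ in $\mathbb{Z}/N\mathbb{Z}$, the vertices $v_i, v_j$ span an edgeless induced subgraph of $\Gamma$, so by Van der Lek's Theorem~\ref{thm:vdL} the parabolic subgroup $\langle v_i, v_j\rangle$ is free of rank~$2$. Injectivity of $f$ then yields $\langle f(v_i), f(v_j)\rangle \cong F_2$, and since $x_i = f(v_i)^{\pm 1}$, one also gets $\langle x_i, x_j\rangle \cong F_2$; this incidentally also establishes $T_i \neq T_j$ for $|i-j| \geq 2$, completing the distinctness verification. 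The main (modest) obstacle in this proof is simply to coordinate the per-edge output of Lemma~\ref{lem:generators_to_generators} with Van der Lek's global parabolic theorem; no new geometric input beyond the standard tree toolkit of Section~\ref{background} is required.
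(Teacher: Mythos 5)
Your proposal is correct and follows essentially the same route as the paper: apply Lemma~\ref{lem:generators_to_generators} to get that the $f(v_i)$ are conjugates of standard generators (or inverses), then use injectivity of $f$ together with Theorem~\ref{thm:vdL} to transfer the dihedral condition on adjacent pairs and the free-group condition on non-adjacent pairs. The extra verifications you include (distinctness of the trees and that adjacent trees meet in a single type-$2$ vertex via Lemma~\ref{lem:standard_trees_properties}) are details the paper leaves implicit, not a different argument.
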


\begin{proof}
    The first statement directly follows from Lemma \ref{lem:generators_to_generators}.
    For $i \in \mathbb{Z} / n \mathbb{Z}$, the elements $v_i$ and $v_{i+1}$ generate a dihedral Artin group by hypothesis, hence so must the elements $f(v_i)$ and $f(v_{i+1})$ by injectivity of $f$.
    Similarly, when $|i-j| > 1$, the elements $v_i$ and $v_j$ generate a free group hence so must the elements $f(v_i)$ and $f(v_j)$. The result follows.
\end{proof}

\begin{definition}
    Let $f: A_\Gamma \hookrightarrow A_{\Gamma'}$ be an injective homomorphism between XL-type Artin groups, where $\Gamma$ is not an edge and has no cut-vertex or separating edge, and where $A_{\Gamma'}$ satisfies the $N$-Cycle of Standard Trees Property, where $N$ is the maximal length of an induced cycle of $\Gamma$. For every induced cycle $\gamma$ of $\Gamma$ we let $g_\gamma \in A_{\Gamma'}$ be the (unique) element obtained from Definition \ref{def:cycle_standard_trees_property} applied to the restriction $f: A_\gamma \rightarrow A_{\Gamma'}$.
\end{definition}

The crucial lemma is the following:

\begin{lemma}\label{lem:trivial_labels}
   Let $f: A_\Gamma \rightarrow A_{\Gamma'}$ be an injective homomorphism $f:A_{\Gamma} \rightarrow A_{\Gamma'}$ between XL-type Artin groups, where $\Gamma$ is not an edge and has no cut-vertex or separating edge, and where $A_{\Gamma'}$ satisfies the $N$-Cycle of Standard Trees Property, where $N$ is the maximal length of an induced cycle of $\Gamma$. Then for every pair of induced cycles $\gamma, \gamma'$ of $\Gamma$, we have $g_\gamma = g_{\gamma'}$.
\end{lemma}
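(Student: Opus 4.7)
The plan is to leverage the graph of induced cycles $\mathcal{G}_\Gamma$ together with the local rigidity of the fundamental domain of $D_{\Gamma'}$. First, since $\Gamma$ is connected and has no cut-vertex, Lemma~\ref{lem:graph_cycles_connected} ensures that $\mathcal{G}_\Gamma$ is connected. Hence any two induced cycles $\gamma, \gamma'$ of $\Gamma$ are joined by a path in $\mathcal{G}_\Gamma$, and by induction on the length of this path it suffices to prove the equality $g_\gamma = g_{\gamma'}$ when $\gamma$ and $\gamma'$ are adjacent in $\mathcal{G}_\Gamma$, i.e.\ when they share an edge of $\Gamma$.

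Next, for two such adjacent cycles, I would invoke Lemma~\ref{lem:decomposition_in_induced_cycles} (applicable because $\Gamma$ has no separating edge) to obtain a non-backtracking loop $\gamma_1 = \gamma, \gamma_2, \ldots, \gamma_n = \gamma'$ in $\mathcal{G}_\Gamma$ (cyclically continued by $\gamma_{n+1} = \gamma_1$), in which consecutive cycles share an edge containing a fixed common vertex $v \in V(\Gamma)$. In particular every cycle $\gamma_i$ passes through $v$, so the corresponding cycle of standard trees contains the common standard tree $T_v = \Min(f(v))$, and the loop in $\mathcal{G}_\Gamma$ encodes a closed combinatorial tour around $v$.

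The crux of the argument is then to show $g_{\gamma_i} = g_{\gamma_{i+1}}$ for each consecutive pair. Letting $\{v, u_i\}$ be the shared edge, both loops $L_{\gamma_i}$ and $L_{\gamma_{i+1}}$ pass through the common type-$2$ vertex $p_i := T_v \cap T_{u_i}$, and each contains an edge of $T_v$ and an edge of $T_{u_i}$ incident to $p_i$. Because $K_{\Gamma'}$ contains exactly one edge of each standard tree adjacent to a given type-$2$ vertex in its support, these edges pin down the type-$1$ neighbours $g_{\gamma_i} A_{v'}$ and $g_{\gamma_i} A_{u_i'}$ of $p_i$, and similarly for $\gamma_{i+1}$. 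Exploiting the global cyclic closure provided by Lemma~\ref{lem:decomposition_in_induced_cycles}, one matches these neighbours so that $g_{\gamma_i}^{-1} g_{\gamma_{i+1}} \in A_{v'} \cap A_{u'_i}$, which is trivial by van der Lek (Theorem~\ref{thm:vdL}). Chaining these equalities around the loop yields $g_\gamma = g_{\gamma'}$.

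The main obstacle lies in this last step: a priori, the edges of $T_v$ at $p_i$ that appear in $L_{\gamma_i}$ and in $L_{\gamma_{i+1}}$ need not agree, since they are determined by the other neighbours of $v$ in the respective cycles, which typically differ. Overcoming this will require a global analysis making use of the full cyclic structure of the loop $\gamma_1, \ldots, \gamma_n$, plausibly via an area- or Gauss--Bonnet-style argument in the CAT$(-1)$ coned-off Deligne complex $\widehat{D}_{\Gamma'}$, rather than a purely local comparison at a single $p_i$.
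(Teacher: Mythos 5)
Your reduction is exactly the paper's: connectivity of $\mathcal{G}_\Gamma$ (Lemma~\ref{lem:graph_cycles_connected}) reduces to adjacent cycles, and Lemma~\ref{lem:decomposition_in_induced_cycles} produces the non-backtracking loop $\gamma_1,\dots,\gamma_n$ of induced cycles around a common vertex $v$, so that it suffices to prove $g_{\gamma_i}=g_{\gamma_{i+1}}$ for consecutive pairs. However, the step you yourself flag as ``the main obstacle'' is precisely the content of the lemma, and you leave it unresolved: you only speculate that a Gauss--Bonnet argument in $\widehat{D}_{\Gamma'}$ might close it. That is not how the paper proceeds (Gauss--Bonnet is used only in Section~6 to establish the Cycle of Standard Trees Property itself, which is a hypothesis here), so as written the proposal is an incomplete proof with the crucial argument missing.

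For comparison, the paper closes the gap as follows. One takes the loop around $v$ of \emph{minimal} length, so that the vertices $a_i$ with $\{v,a_i\}\subset \gamma_i\cap\gamma_{i+1}$ are pairwise distinct, and sets $w_i:=T_v\cap T_{a_i}$. A first claim is that the $w_i$ are pairwise distinct: otherwise two dihedral subgroups $\langle v,a_i\rangle$, $\langle v,a_j\rangle$ would map into one dihedral parabolic, forcing $f(z_{va_i})$ and $f(z_{va_j})$ to be powers of the same central element and contradicting injectivity of $f$ --- an input you do not use. Next, since $a_i, v, a_{i+1}$ all lie on $\gamma_{i+1}$, the $N$-Cycle of Standard Trees Property applied to that cycle (via Lemma~\ref{lem:cycles_to_cycles}) shows $w_i$ and $w_{i+1}$ are at simplicial distance $2$ in the tree $T_v$; the cyclic closure of the loop together with the distinctness of the $w_i$ then forces, by tree combinatorics, a single type~$1$ vertex of $T_v$ adjacent to \emph{all} the $w_i$. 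This is exactly what rules out your worry that the edges of $T_v$ at $w_i$ used by $\gamma_i$ and $\gamma_{i+1}$ might differ. Finally, a local metric argument (the angle between the edges of $T_v$ and $T_{a_i}$ inside a translate of $K_{\Gamma'}$ is $\le\pi/3$, while adjacent edges of a standard tree make a large angle, by convexity) shows $g_{\gamma_i}K_{\Gamma'}$ and $g_{\gamma_{i+1}}K_{\Gamma'}$ contain the same edge of $T_{a_i}$ at $w_i$, hence share two edges at angle $\pi/3$ and therefore a point with trivial stabiliser, giving $g_{\gamma_i}=g_{\gamma_{i+1}}$ --- this last step matches the mechanism you sketch with $\langle v'\rangle\cap\langle u_i'\rangle=\{1\}$, but it only becomes available once the common-neighbour statement is established.
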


\begin{proof}
    Let $\gamma$ and $\gamma'$ be two cycles of $\Gamma$ that are adjacent in $\mathcal{G}_\Gamma$.
    Then by Lemma \ref{lem:decomposition_in_induced_cycles} there is a sequence of induced cycles $\gamma_1 \coloneqq \gamma, \ldots, \gamma_n \coloneqq \gamma'$ such that the concatenation of the $\gamma_i$'s for $i \in \mathbb{Z}/n\mathbb{Z}$ defines a non-backtracking loop in $\mathcal{G}_\Gamma$, and for every $i\in \mathbb{Z}/n\mathbb{Z}$, the cycles $\gamma_i$ and $\gamma_{i+1}$ share an edge containing a vertex $v$ of $\gamma \cap \gamma'$. We will assume that this loop has minimal length among all such possible loops. 
    Let $a_i$ be a vertex adjacent to $v$ in $\gamma_i\cap \gamma_{i+1}$ (indices modulo $n$).
    Note that the $a_i$ are all distinct by minimality of the cycle. 

    For every $i$, the dihedral Artin group $\langle v, a_i \rangle$ maps isomorphically onto its image by the injectivity of $f$.
    Thus, there exists a unique vertex $w_i$ of $D_{\Gamma'}$ stabilised by $f(\langle v, a_i \rangle)$, and this vertex is precisely the unique intersection between the standard trees $T_v$ and $T_i$ corresponding to $f(v)$ and $f(a_i)$ respectively (these elements are conjugates of powers of standard generators by Lemma~\ref{lem:generators_to_generators}). 
    
    \medskip
    
    \noindent \underline{Claim.} The vertices $w_i$ are all distinct. 

    \medskip 

    If that were not the case, we could find two indices $i\neq j$ such that the dihedral Artin subgroup $\langle a_i, v \rangle$ and $\langle a_j, v \rangle$ both map inside some dihedral parabolic subgroup $H$ of $A_{\Gamma'}$.
    In particular, since central elements of a dihedral Artin group are the only elements with centralisers virtually of the form $\mathbb{Z} \times F$ in $H$, the elements $z_{a_iv}$ and $z_{a_jv}$ are mapped to some power of $z$, where $z$ generates the cyclic centre of $H$.
    In particular, some power of $z_{a_iv}$ and $z_{a_jv}$ have the same image under $f$, contradicting the injectivity of $f$.
    This proves the Claim.

    \medskip 
    
   For every $i$, the vertices $a_i, v, a_{i+1}$ are contained in the cycle $\gamma_{i+1}$, so by applying Lemma \ref{lem:cycles_to_cycles} and since $A_{\Gamma'}$ satisfies the $N$-Cycle of Standard Trees Property, the vertices $w_i$ and $w_{i+1}$ are at simplicial distance $2$ in the tree $T_v$. Since the $w_i$ are all distinct, and any two consecutive $w_i, w_{i+1}$ are at simplicial distance $2$ in the tree $T_v$ (indices mod $n$), it follows that there is a vertex of $T_v$ (necessarily of the form $h\langle x \rangle$ for $h\in A_{\Gamma'}$ and some standard generator $x \in V_{\Gamma'}$) adjacent to all the $w_i$.

   Set $g_i:= g_{\gamma_i}$ and $g_{i+1}:= g_{\gamma_{i+1}}$, and let us now prove that $g_i =g_{i+1}$. Let $e_i$ be edge of $T_i \cap g_i K_{\Gamma'}$ containing $w_i$ and $e_{i+1}$ be the edge of $T_i\cap g_{i+1} K_{\Gamma'}$ containing $w_i$.
   Note that the angle between $T_v \cap g_iK_{\Gamma'}$ and $T_i \cap g_iK_{\Gamma'}$ is $\leq \pi/3$ by construction of the metric on $D_{\Gamma'}$.
   In particular, if $e_i$ and $e_{i+1}$ were not the same edge, $T_i$ would contain two adjacent edges whose angle is $\leq 2\pi/3$, contradicting the convexity of $T_i$: 

  \begin{center}
    \includegraphics[scale=0.9]{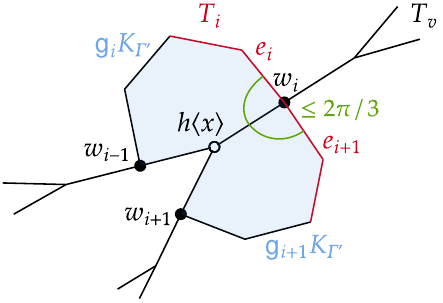}
    \end{center}

    The argument is now the same as in \cite[Figure 5]{vaskou2023automorphisms}: since $g_iK_{\Gamma'}$ and $g_{i+1}K_{\Gamma'}$ share two edges making an angle $\pi/3$, they must share a point with trivial stabiliser by convexity of the fundamental domains, which forces $g_i = g_{i+1}$.
    Since this holds for every $1 \leq i < n$, we obtain $g_\gamma = g_{\gamma'}$.
    The result now follows, since $\mathcal{G}$ is connected by Lemma~\ref{lem:graph_cycles_connected}.
\end{proof}

\paragraph{The free-of-infinity case.}

\begin{lemma}\label{lem:trivial_labels_complete}
   Let $f: A_\Gamma \rightarrow A_{\Gamma'}$ be an homomorphism between XL-type Artin groups, where $\Gamma$ and $\Gamma'$ are complete graphs with at least $3$ vertices. We also assume that the following holds: Every standard generator $v_i$ of $A_\Gamma$ is sent to a conjugate of a standard generator of $A_{\Gamma'}$ or its inverse. Let $T_i$ be the standard tree corresponding to $f(v_i)$ in $D_{\Gamma'}$. Further assume that the $T_i$'s pairwise intersect but every triple intersection $T_i \cap T_j \cap T_k$ is empty. 
   For each triangle $\gamma$ of $\Gamma$ with vertices $v_i, v_j, v_k$, let $g_\gamma$ be the element of $A_{\Gamma'}$ obtained by applying Lemma~\ref{lem:triangle_standard_trees} to $T_i, T_j, T_k$. 
   
   Then for every pair of triangles of $\Gamma$, we have $g_\gamma = g_{\gamma'}$.
\end{lemma}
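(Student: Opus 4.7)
The plan is to adapt the strategy of Lemma~\ref{lem:trivial_labels} to the simpler combinatorial setting of triangles in a complete graph. Consider the \emph{graph of triangles} of $\Gamma$, whose vertices are the triangles of $\Gamma$ and where two triangles span an edge whenever they share an edge of $\Gamma$. Since $\Gamma$ is a complete graph on at least three vertices, this graph of triangles is connected: given any two triangles, one can interpolate between them by swapping out one vertex at a time. It thus suffices to prove that $g_\gamma = g_{\gamma'}$ whenever $\gamma$ and $\gamma'$ share an edge of $\Gamma$.

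Fix two such triangles $\gamma = \{v_i, v_j, v_k\}$ and $\gamma' = \{v_i, v_j, v_\ell\}$, sharing the edge $\{v_i,v_j\}$. By Lemma~\ref{lem:standard_trees_properties}, the intersection $w := T_i \cap T_j$ is a single vertex of type $2$. Applying Lemma~\ref{lem:triangle_standard_trees} to each triangle, the fundamental domain $g_\gamma K_{\Gamma'}$ contains edges of $T_i$, $T_j$, and $T_k$, and likewise $g_{\gamma'} K_{\Gamma'}$ contains edges of $T_i$, $T_j$, and $T_\ell$. Since every edge of a standard tree has a type-$2$ endpoint, and the only type-$2$ vertex lying in both $T_i$ and $T_j$ is $w$, the edges of $T_i$ and $T_j$ contained in either fundamental domain must all be incident to $w$.

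The key step is to show that the edge of $T_i$ in $g_\gamma K_{\Gamma'}$ coincides with the edge of $T_i$ in $g_{\gamma'} K_{\Gamma'}$, and likewise for $T_j$. This is the same angle/convexity argument used in the last paragraph of the proof of Lemma~\ref{lem:trivial_labels}: inside any single fundamental domain, the angle at $w$ between an edge of $T_i$ and an edge of $T_j$ is $\pi/3$ by construction of the piecewise Euclidean metric on $D_{\Gamma'}$; if the two edges of $T_i$ through $w$ coming from the two fundamental domains were distinct, concatenating them would produce a path in $T_i$ making an angle $\leq 2\pi/3$ at $w$, contradicting the convexity of the tree $T_i$. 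The same reasoning gives the identification of the two $T_j$-edges.

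Once we know that $g_\gamma K_{\Gamma'}$ and $g_{\gamma'} K_{\Gamma'}$ share two distinct edges meeting at $w$ at angle $\pi/3$, the argument of \cite[Figure 5]{vaskou2023automorphisms} applies verbatim: by convexity of the fundamental domains, the two translates must then share an interior point with trivial stabiliser, forcing $g_\gamma = g_{\gamma'}$. The main obstacle, as in Lemma~\ref{lem:trivial_labels}, is the angle/convexity bookkeeping at $w$; once that is in place, connectedness of the graph of triangles propagates the equality to all pairs of triangles of $\Gamma$.
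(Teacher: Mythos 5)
Your reduction to two triangles sharing an edge via the connectivity of the graph of triangles is fine, but the ``key step'' has a genuine gap: the claim that the edge of $T_i$ in $g_\gamma K_{\Gamma'}$ must coincide with the edge of $T_i$ in $g_{\gamma'}K_{\Gamma'}$ does not follow from the angle/convexity argument as you state it. That argument needs a \emph{common reference edge} shared by the two translates: in the last paragraph of Lemma~\ref{lem:trivial_labels}, the bound $\leq 2\pi/3$ comes from comparing the two $T_i$-edges with one and the same edge of $T_v$ lying in \emph{both} fundamental domains, and producing that shared edge is exactly what the common-neighbour argument (all the $w_i$'s pairwise at distance $2$ in $T_v$, around a closed loop of cycles) is for. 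In your version there is no such shared edge: you only know that the $T_i$-edge and the $T_j$-edge inside $g_\gamma K_{\Gamma'}$ make a small angle at $w$, and likewise inside $g_{\gamma'}K_{\Gamma'}$; since the two $T_j$-edges need not coincide either, the argument is circular and no bound on the angle between the two $T_i$-edges follows. Concretely, the local claim you are implicitly using --- that a type-$2$ vertex $w$ admits at most one translate of $K_{\Gamma'}$ containing an edge of $T_i$ and an edge of $T_j$ through $w$ --- is false: if $w = hA_{s't'}$ and $z_{s't'}$ is the centre of $A_{s't'}$, then $hK_{\Gamma'}$ and $hz_{s't'}K_{\Gamma'}$ are distinct translates, each containing an edge of $\Fix(hs'h^{-1})$ and an edge of $\Fix(ht'h^{-1})$ incident to $w$, with the two $\Fix(hs'h^{-1})$-edges making angle $\geq \pi$ at $w$ (so no contradiction with convexity). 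Ruling out that $g_\gamma K_{\Gamma'}$ and $g_{\gamma'}K_{\Gamma'}$ sit in such a configuration is precisely what needs proof.

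This is where the paper's proof differs from yours and where completeness of $\Gamma$ is used beyond connectivity of the graph of triangles: writing $\gamma=\{v,a_1,a_2\}$ and $\gamma'=\{v,a_1,a_3\}$, one also considers the third triangle spanned by $v,a_2,a_3$ (it exists because $\Gamma$ is complete) and applies Lemma~\ref{lem:triangle_standard_trees} to it as well. This shows that the three vertices $w_i = T_v\cap T_i$ ($i=1,2,3$) are pairwise at simplicial distance $2$ in the tree $T_v$ (and distinct, by the empty triple intersections), hence admit a common neighbour $c$ in $T_v$. Consequently both $g_\gamma K_{\Gamma'}$ and $g_{\gamma'}K_{\Gamma'}$ contain the \emph{same} edge $[w_1,c]$ of $T_v$, and only then does your triangle-inequality step become valid: the two $T_1$-edges at $w_1$ each make angle $\leq \pi/3$ with $[w_1,c]$, so if they were distinct they would make angle $\leq 2\pi/3$, contradicting convexity of $T_1$; the two translates then share two edges at angle $\leq \pi/3$ and the argument of \cite[Figure 5]{vaskou2023automorphisms} gives $g_\gamma = g_{\gamma'}$. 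Without the third triangle (or some substitute producing a shared edge), your proof does not close.
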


\begin{proof}
    The proof will be almost identical to that of Lemma~\ref{lem:trivial_labels}. Let $\gamma$ and $\gamma'$ be two triangles of $\Gamma$ that share an edge. Let us denote by $v, a_1$  their common vertices, let $a_2$ be the remaining vertex of $\gamma$, and $a_3$ be the remaining vertex of $\gamma'$. Let also $\gamma''$ be the triangle spanned by $v, a_3, a_1$ (which exists since $\Gamma$ is complete).
    
    It follows from Lemma~\ref{lem:triangle_standard_trees} that for every $i$ the pair $\{f(v), f(a_i)\}$ is conjugated to a pair of standard generators of $A_{\Gamma'}$, hence the dihedral Artin group $\langle v, a_i \rangle$ maps isomorphically onto its image. Thus, there exists a unique vertex $w_i$ of $D_{\Gamma'}$ stabilised by $f(\langle v, a_i \rangle)$, and this vertex is precisely the unique intersection between the standard trees $T_v$ and $T_i$ corresponding to $f(v)$ and $f(a_i)$ respectively. Moreover, the vertices $w_i$ are all distinct by the assumption on empty triple intersections. 
    
    By Lemma~\ref{lem:triangle_standard_trees}, the vertices $w_1, w_2, w_3$ are pairwise at simplicial distance $2$ in the tree $T_v$. Thus, as before there exists a vertex of $T_v$ (necessarily of the form $h\langle x \rangle$ for $h\in A_{\Gamma'}$ and some standard generator $x$ of $A_{\Gamma'}$) adjacent to $w_1, w_2$, and $w_3$. From there, the proof is identical to that of Lemma~\ref{lem:trivial_labels}, and we omit it.
\end{proof}

\section{The case of complete graphs}\label{sec:free_of_infinity}

The goal of this section is to prove Theorem \ref{thm:structure_hom}.

\begin{lemma}\label{lem:config_standard_trees}
    Let $A_{\Gamma}$ be a large-type free-of-infinity Artin group. Let $T_1, \ldots, T_n$ be a collection of standard trees of $D_{\Gamma}$ such that $T_i \cap T_j \neq \varnothing$ whenever $i\neq j$, but such that $T_i \cap T_j \cap T_k = \varnothing$ whenever $i\neq j \neq k$.  Then there exists an element $g \in A_{\Gamma}$ such that the translate $g K_{\Gamma}$ contains an edge of $T_i$ for every $i$.
    In particular, for every $i$ there exists a standard generator $v_i$ of $A_\Gamma$ such that the pointwise stabiliser $\Fix(T_i)$ is generated by $gv_ig^{-1}$.
\end{lemma}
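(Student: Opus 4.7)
The plan is to reduce the statement to the triangle case, Lemma~\ref{lem:triangle_standard_trees}. For the small cases $n \leq 2$, I would note that every edge of a standard tree lies in some translate of $K_\Gamma$, and for $n = 2$ the intersection $T_1 \cap T_2$ is a single type-$2$ vertex $hA_{st}$ by Lemma~\ref{lem:standard_trees_properties}, so that both trees have edges in $hK_\Gamma$ incident to this vertex. The case $n = 3$ is precisely Lemma~\ref{lem:triangle_standard_trees}.

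For $n \geq 4$, I would apply Lemma~\ref{lem:triangle_standard_trees} to every triple $\{i, j, k\}$, obtaining a unique element $g_{ijk} \in A_\Gamma$ such that $g_{ijk}K_\Gamma$ contains an edge of each of $T_i, T_j, T_k$. The goal reduces to showing that all the $g_{ijk}$ coincide, and since the graph on triples (adjacent iff they share two indices) is connected for $n \geq 4$, it suffices to check $g_{ijk} = g_{ijl}$ for any four distinct indices $i, j, k, l$. The key geometric argument mirrors the one used in Lemma~\ref{lem:trivial_labels_complete}: the three type-$2$ vertices $w_{ij}, w_{ik}, w_{il}$ of $T_i$ are distinct (by the empty-triple-intersection hypothesis together with Lemma~\ref{lem:standard_trees_properties}) and pairwise at simplicial distance $2$ in $T_i$, with $w_{ij}$ and $w_{ik}$ sharing the type-$1$ midpoint $g_{ijk}A_{v_i}$ (the unique type-$1$ vertex of $T_i$ in $g_{ijk}K_\Gamma$) and $w_{ij}, w_{il}$ sharing $g_{ijl}A_{v_i}$. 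A standard tree argument shows that three vertices pairwise at distance $2$ in a tree must share a unique common midpoint (otherwise one constructs a length-$6$ cycle), so $g_{ijk}A_{v_i} = g_{ijl}A_{v_i}$. Repeating the argument with $T_j$ in place of $T_i$ gives $g_{ijk}A_{v_j} = g_{ijl}A_{v_j}$, and combined with Theorem~\ref{thm:vdL} these yield $g_{ijk}^{-1}g_{ijl} \in A_{v_i} \cap A_{v_j} = \{1\}$ (here $v_i \neq v_j$, as otherwise the trees $T_i, T_j$ would share pointwise stabilisers and coincide).

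The ``in particular'' statement is then immediate: the edge of $T_i$ in $gK_\Gamma$ has the form $[gA_{v_i}, gA_{v_i t}]$ for some $v_i, t$, so $\Fix(T_i)$ is a non-trivial cyclic subgroup of $gA_{v_i}g^{-1}$, and must equal $\langle gv_ig^{-1} \rangle$ by Lemma~\ref{lem:standard_trees_properties}. The main subtlety in the overall argument is verifying that $g_{ijk}K_\Gamma$ genuinely contains each of the vertices $w_{ij}, w_{ik}, w_{jk}$ (rather than merely some unspecified edges of each tree): this follows from the fact that $K_\Gamma$ has a unique type-$2$ vertex for each pair of standard generators, so that $g_{ijk}A_{v_iv_j}$ is a type-$2$ vertex lying in both $T_i$ and $T_j$, and is therefore forced to equal $w_{ij}$ by the uniqueness in Lemma~\ref{lem:standard_trees_properties}.
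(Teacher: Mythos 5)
Your argument for $n\geq 3$ is correct, and it reaches the conclusion by a genuinely different route than the paper. The paper encodes the configuration as a homomorphism $f\colon A_\Lambda\to A_\Gamma$ from an auxiliary Artin group over a complete graph $\Lambda$ (with $v_i\mapsto x_i$, the generator of $\Fix(T_i)$) and then quotes Lemma~\ref{lem:trivial_labels_complete}, whose proof compares two triangles sharing an edge exactly as you do (the three intersection vertices on $T_i$ are pairwise at distance $2$, hence admit a common neighbour), but closes the argument metrically: the two fundamental-domain translates share two edges meeting at angle $\pi/3$, and convexity forces them to share a point with trivial stabiliser, hence to coincide. You instead close it combinatorially: the common midpoint gives $g_{ijk}A_{v_i}=g_{ijl}A_{v_i}$, so $g_{ijk}^{-1}g_{ijl}\in\langle v_i\rangle$, and repeating with $T_j$ and invoking Theorem~\ref{thm:vdL} gives $g_{ijk}^{-1}g_{ijl}\in\langle v_i\rangle\cap\langle v_j\rangle=\{1\}$. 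This avoids both the auxiliary group and the CAT(0) angle argument, at the (mild) cost of needing a fourth tree, i.e.\ it only treats $n\geq 4$ directly and quotes Lemma~\ref{lem:triangle_standard_trees} for $n=3$; your handling of the subtlety that the intersection vertices $w_{ij}$ really lie in $g_{ijk}K_\Gamma$ (via uniqueness in Lemma~\ref{lem:standard_trees_properties}) is exactly right, as is the connectivity of the graph of triples.

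One caveat: your $n=2$ step is not correct as written. The two trees do contain the common type-$2$ vertex $hA_{st}$, but the edges of $T_1$ incident to that vertex lie in translates $hpK_\Gamma$ with $p\in A_{st}$, not necessarily in $hK_\Gamma$ itself. A single translate containing an edge of both trees exists precisely when the generators of $\Fix(T_1)$ and $\Fix(T_2)$ are simultaneously conjugate to a pair of standard generators (or lie in a common conjugate of $\langle s\rangle$), and by Lemma~\ref{lem:subgroup_conjugates_generators} this can fail: two conjugates of standard generators inside the dihedral vertex stabiliser may generate a non-abelian free group, in which case no such translate exists. So the case $n=2$ of the statement requires an extra hypothesis of this kind and should simply be excluded from your reduction. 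This does not affect the substance of your proof: the paper's own argument likewise operates with at least three trees (it needs a third tree already to invoke Lemma~\ref{lem:triangle_standard_trees}, and Lemma~\ref{lem:trivial_labels_complete} assumes at least three vertices), and the lemma is only ever applied in that range.
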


\begin{proof}
    Let $\Lambda$ be the complete graph on $n$ vertices $v_1, \ldots, v_n$. For every $1 \leq i\leq n$, let $x_i$ be the unique generator of the cyclic group of the pointwise stabiliser $\Fix(T_i)$ that is sent to $1$ under the homomorphism $A_\Gamma \rightarrow \mathbb{Z}$ that sends all standard generators to $1$. By Lemma \ref{lem:triangle_standard_trees}, for every $i\neq j$, the elements $x_i, x_j$ generate the dihedral parabolic subgroup $\Fix(T_i \cap T_j)$. Let $m_{ij}$ be the label of that dihedral Artin subgroup (this is well defined up to isomorphism). We label each edge of $\Lambda$ between $v_i$ and $v_j$ with the label $m_{ij}$.
    We can now define a homomorphism $f: A_\Lambda \rightarrow A_\Gamma$ that sends each $v_i$ to $x_i$.

    The result is now a consequence of Lemma~\ref{lem:trivial_labels_complete} applied to $f$.
\end{proof}

We are now almost ready to prove Theorem~\ref{thm:structure_hom} and its consequences.
We first need the following:

\begin{lemma}\label{lem:minset_tree_empty_triple_intersection}
        Let $f: A_\Gamma \rightarrow A_{\Gamma'}$ be an optimal homomorphism as in Theorem~\ref{thm:structure_hom}. Then for every distinct elements $s_1, s_2, s_3 \in S = V(\Gamma)$ such that $T_1:= \Min(f(s_1)), T_2:= \Min(f(s_2)),$ and $T_3:= \Min(f(s_3))$ are distinct standard trees, the intersection $T_1 \cap T_2 \cap T_3$ is empty.
\end{lemma}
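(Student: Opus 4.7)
The plan is to argue by contradiction: assume there exists a vertex $v \in T_1 \cap T_2 \cap T_3$, and derive a contradiction by analyzing the dihedral Artin parabolic subgroup that stabilises~$v$.

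First, by Lemma~\ref{lem:standard_trees_properties}, any two distinct standard trees of $D_{\Gamma'}$ meet in at most a single vertex of type~$2$, so $v$ is necessarily a type~$2$ vertex of $D_{\Gamma'}$ and its stabiliser $P \coloneqq \Stab(v)$ is a dihedral parabolic subgroup of $A_{\Gamma'}$. Each $f(s_i)$ fixes $T_i \ni v$ pointwise, so $f(s_i) \in P$. A direct inspection of the Deligne complex around~$v$ shows that the stabilisers of the type~$1$ vertices adjacent to~$v$ are exactly the cyclic subgroups that are $P$-conjugate to those generated by the two standard generators of the dihedral group~$P$; since $T_i$ contains an edge from $v$ to such a type~$1$ vertex, it follows that after choosing a suitable sign $\epsilon_i \in \{\pm 1\}$, the element $y_i \coloneqq f(s_i)^{\epsilon_i}$ is a $P$-conjugate of one of the two standard generators of the dihedral group~$P$.

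Now I apply Lemma~\ref{lem:subgroup_conjugates_generators} inside $P$ to each pair $\{y_i, y_j\}$. Since the trees $T_i$ are pairwise distinct, so are the cyclic subgroups $\langle y_i \rangle$, which excludes the $\mathbb{Z}$ conclusion of that lemma; hence each $\langle y_i, y_j \rangle$ is either a non-abelian free group $F_2$ or all of~$P$. If some pair, say $\{y_1, y_2\}$, generates $F_2$, then $\langle f(s_1), f(s_2) \rangle = \langle y_1, y_2 \rangle \cong F_2$, and since $A_\Gamma$ is large-type the braid relation $\Pi(s_1, s_2; m_{s_1 s_2}) = \Pi(s_2, s_1; m_{s_1 s_2})$ with $m_{s_1 s_2} \geq 3$ yields the corresponding identity in this $F_2$. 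This is manifestly false, because the two alternating products are distinct reduced words in a rank~$2$ free group (they begin with different free generators), so we reach a contradiction. If on the other hand every pair generates~$P$, then $\langle f(s_1), f(s_2) \rangle = P \ni f(s_3)$, so $f(s_3)$ lies in the subgroup generated by $\{f(s) : s \in S \setminus \{s_3\}\}$; this contradicts the optimality of $f$, since removing $s_3$ from $S$ would not change $\im(f)$.

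I expect the main technical point to be the local identification carried out in the first paragraph, namely the claim that each $y_i$ is a conjugate of a standard generator of the \emph{dihedral} parabolic~$P$ and not merely of a standard generator of the ambient group $A_{\Gamma'}$, since it is precisely this form that is required to apply Lemma~\ref{lem:subgroup_conjugates_generators} directly. Once this is in place, the dichotomy is short, and the optimality hypothesis plays the decisive role in closing out the second case.
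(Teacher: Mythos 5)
Your overall strategy is the same as the paper's: localise at the type~$2$ vertex $v$, work inside the dihedral parabolic $P=\Stab(v)$ via Lemma~\ref{lem:subgroup_conjugates_generators}, rule out the free case with the braid relation, and use optimality in the remaining case. The genuine problem is exactly the step you single out as the main technical point. Your ``direct inspection'' of $D_{\Gamma'}$ around $v$ only shows that $f(s_i)$ lies in the stabiliser of a type~$1$ vertex adjacent to $v$, and that stabiliser is infinite cyclic, generated by a $P$-conjugate of a standard generator of $P$; so all you get locally is $f(s_i)=(h s' h^{-1})^{k_i}$ for some $k_i\neq 0$. Nothing in the local geometry forces $k_i=\pm 1$, and if $|k_i|>1$ were allowed, Lemma~\ref{lem:subgroup_conjugates_generators} would not apply to the pair $\{f(s_i),f(s_j)\}$ and the key conclusion $\langle f(s_1),f(s_2)\rangle=P$ (which feeds the optimality contradiction) can fail, since $\langle (s')^{k},(t')^{\ell}\rangle$ is in general a proper subgroup of the dihedral group.

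The fact you need is precisely Proposition~\ref{prop:image_generator}, which is proved before this lemma and is what the paper's own proof invokes (``as in the proof of Proposition~\ref{prop:image_generator}''): the exponent $\pm 1$ is not a local geometric fact but is extracted from the braid relation $\Pi(s_i,s_j;m_{s_is_j})=\Pi(s_j,s_i;m_{s_is_j})$ (available because $\Gamma$ is complete) together with the Garside normal form Lemma~\ref{lem:alternating product equality}. If you replace your first paragraph by a citation of Proposition~\ref{prop:image_generator}, combined with your (correct) local observation to see that the relevant conjugations can be taken inside $P$, the rest of your argument goes through and coincides with the paper's: the $\mathbb{Z}$ case is excluded because distinct trees force distinct minsets (here it is cleanest to note that a common power of $f(s_i)$ and $f(s_j)$ would give $\Min(f(s_i))=\Min(f(s_j))$), the $F_2$ case is excluded by the braid relation, and the case $\langle f(s_1),f(s_2)\rangle=\Stab(v)\ni f(s_3)$ contradicts optimality.
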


\begin{proof}
    We know from Lemma~\ref{lem:minset_tree_intersect} that $T_1 \cap T_2$ is non-empty, hence is a single vertex $v$ by Lemma~\ref{lem:standard_trees_properties}.
    As in the proof of Proposition~\ref{prop:image_generator}, we can assume that $f(s_1)$ and $f(s_2)$ are conjugates of standard generators (or their inverses) in the dihedral Artin group $\Stab(v)$.
    Since $T_1$ and $T_2$ are distinct, the subgroup of $\Stab(v)$ generated by $f(s_1)$ and $f(s_2)$ is not cyclic.
    Since $\Gamma$ is a complete graph, the subgroup of $A_\Gamma$ generated by $s_1, s_2$ is not a free group, so neither is the subgroup of $\Stab(v)$ generated by $f(s_1)$ and $f(s_2)$.
    It now follows from Lemma~\ref{lem:subgroup_conjugates_generators} that $\langle f(s_1), f(s_2) \rangle = \Stab(v)$.
    If by contradiction, we had that the triple intersection $T_1 \cap T_2 \cap T_3$ was non-empty, then $T_3$ would contain $v$, hence $f(s_3)$ would be contained in $\Stab(v) = \langle f(s_1), f(s_2) \rangle$, contradicting the fact that $f$ is optimal. 
\end{proof}

\begin{proof}[Proof of Theorem~\ref{thm:structure_hom}]
    Consider a minimal subset $S_0 \subseteq S$ as in Lemma \ref{lem:restriction_injective_S}. Take a subset $S_1$ of $S_0$ such that the minsets $\Min(f(s))$ for $s \in S_1$ yield a list of all the minsets $\Min(f(s))$ for $s \in S$ without repetition.
    It follows from Lemmas~\ref{lem:minset_tree} and~\ref{lem:minset_tree_intersect} that for every distinct elements $s, t \in S_1$, the minsets $\Min(f(s))$, $\Min(f(t))$ are standard trees that intersect non-trivially, and in particular the intersection is a single vertex.
    Also, by Lemma \ref{lem:minset_tree_empty_triple_intersection} the triple intersections of these trees are empty, so $f$ satisfies the hypotheses of Lemma \ref{lem:config_standard_trees}.
    Hence there exists a translate of the fundamental domain of $D_\Gamma$ that contains edges of each of these standard trees.
    Thus, there exists $g\in A_{\Gamma'}$ a subset $S_1'\subseteq S'$ with $|S_1|=|S_1'|$ and a bijection $\iota: S_1 \rightarrow S_1'$ such that for every $s\in S_1$, $f(s)$ is of the form $g\iota(s)^{n_s}g^{-1}$ for some $\iota(s) \in S_1'$ and some non-zero integer $n_s$.
    In particular, we get that $f(s)\in gA_{S_1'}g^{-1}$ for all $s\in S_1$. By Lemma \ref{lem:alternating product equality}, we either have $n_s=1$ for all $s\in S_1$ or $n_s = -1$ for all $s \in S_1$. In particular, the $f(s)$ generate $gA_{S_1'}g^{-1}$, and it follows that $\mathrm{Im}(f)$ contains $gA_{S_1'}g^{-1}$.

    Let us now show by contradiction that $S_1=S_0$. Indeed, if there existed $t\in S_0 \backslash S_1$, let $s\in S_1$ such that $\Min(f(t))= \Min(f(s)) =: T$. We can then define $S_1' \coloneqq S_1 \backslash (\{s\} \cup \{t\})$. The same reasoning as in the previous paragraph implies that $f(t)$ is a conjugate of a standard generator, i.e. a generator of the pointwise stabiliser of $T$. We thus have $f(t) = f(s)^{\pm 1}$, which contradicts the optimality of $S$. Thus, $S_1 = S_0$.

    Finally, let $s, t \in S_1$ be two distinct standard generators. Since in $A_\Gamma$, we have $\Pi(s, t; m_{st}) = \Pi(t, s; m_{st}) $, by applying the homomorphism $f$ we get that $\Pi(\iota(s), \iota(t); m_{st}) = \Pi(\iota(t), \iota(s); m_{st})$ in the dihedral Artin group $A_{\iota(s) \iota(t)}$. By Lemma~\ref{lem:alternating product equality}, this forces $m_{\iota(s) \iota(t)}$ to be a divisor of $m_{st}$. This finishes the proof.
\end{proof}

\begin{proof}[Proof of Corollary~\ref{cor:injective_labels}]
    By Theorem \ref{thm:structure_hom}, we know that for every $s, t \in V(\Gamma_1)$, $m_{st}$ is a multiple of $m_{\iota(s) \iota(t)}$. Suppose that $m_{st} > m_{\iota(s) \iota(t)}$. There is a relation
    \[\underbrace{\iota(s) \iota(t) \iota(s) \cdots}_{m_{\iota(s) \iota(t)} \text{ times}} = \underbrace{\iota(t) \iota(s) \iota(t) \cdots}_{m_{\iota(s) \iota(t)} \text{ times}}.\]
    By injectivity, this forces in $A_{\Gamma}$ the equation
    \[\underbrace{sts \cdots}_{m_{\iota(s) \iota(t)} \text{ times}} = \underbrace{tst \cdots}_{m_{\iota(s) \iota(t)} \text{ times}}.\]
    This gives a non-trivial word of length $2 m_{\iota(s) \iota(t)} < 2 m_{st}$ representing the trivial element in the dihedral Artin subgroup $\langle s, t \rangle \leq A_{\Gamma}$. This contradicts \cite[Lemma 6]{appel1983artin}.
\end{proof}

\begin{proof}[Proof of Corollary~\ref{thm:hopf_cohopf}]
    Let $f: A_{\Gamma} \rightarrow A_{\Gamma}$ be a surjective homomorphism.
    Since $A_{\Gamma}$ is acylindrically hyperbolic as $|V(\Gamma)|\geq 3$ (see \cite{vaskou2021acylindrical}), $A_\Gamma$ does not virtually split as a direct product.
    Let $\Gamma_1, \Gamma_1'$ be as in Theorem~\ref{thm:structure_hom}.
    By surjectivity, we must have $S_1' = S$, hence $|S|\geq |S_1| = |S_1'| = |S|$. Thus, we have $|S|=|S_1|$. This means in particular that $S_1 = S$, and the same argument as in a previous proof shows that $f$ is, up to a conjugation and graph automorphism, the identity or the global inversion. In particular, it is an automorphism. 
    
    Let $f: A_{\Gamma} \rightarrow A_{\Gamma}$ be an injective homomorphism.
    Again, since $A_{\Gamma}$ is acylindrically hyperbolic as $|V(\Gamma)|\geq 3$, the image of $f$ does not virtually split as a direct product.
    Let $\Gamma_1, \Gamma_1'$ be as in Theorem~\ref{thm:structure_hom}.  By injectivity of $f$, we must have $S=S_0$ (where $S_0$ was defined in Lemma~\ref{lem:restriction_injective_S}). Moreover, if for some $s\neq s'$ we have $\Min(f(s)) = \Min(f(s'))$, then some powers of $f(s)$ and $f(s')$ would coincide, contradicting the injectivity of $f$. Thus, we have  $S_1 = S$, hence $|S|= |S_1| = |S_1'| \leq  |S|.$ Thus, we have $|S|=|S_1'|$. This means in particular that $S_1' = S$, and the same argument as in a previous, we get that $f$ is, up to a conjugation and graph automorphism, the identity or the global inversion. In particular, it is an automorphism. 
\end{proof}

Note that this result allows to obtain simple proofs for certain results previously proved in greater generality by Vaskou:

\begin{corollary}[\cite{vaskou2023automorphisms} Theorem A]
    Let $A_{\Gamma}$ be an XL-type free-of-infinity Artin group of rank at least $3$. Then the automorphism group $\Aut(A_{\Gamma})$ is generated by conjugations, graphs automorphisms, and the global inversion.
\end{corollary}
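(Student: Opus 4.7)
The plan is to obtain this statement as a direct corollary of Theorem~\ref{thm:structure_hom} applied to an arbitrary automorphism $f : A_\Gamma \to A_\Gamma$. Since $|V(\Gamma)| \geq 3$, the group $A_\Gamma$ is acylindrically hyperbolic (see \cite{vaskou2021acylindrical}); in particular its image under $f$, which is all of $A_\Gamma$, is neither cyclic nor contained in a subgroup that virtually splits as a nontrivial direct product. Hence the hypotheses of Theorem~\ref{thm:structure_hom} are met, and we obtain an induced subgraph $\Gamma_1 \subseteq \Gamma$, an induced subgraph $\Gamma_1' \subseteq \Gamma$, an unlabelled isomorphism $\iota : \Gamma_1 \to \Gamma_1'$, an element $g \in A_\Gamma$, and a sign $\varepsilon = \pm 1$, such that $f(s) = g \iota(s)^\varepsilon g^{-1}$ for every $s \in V(\Gamma_1)$.

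First, I would verify that $\Gamma_1 = \Gamma_1' = \Gamma$, arguing exactly as in the proof of Corollary~\ref{thm:hopf_cohopf}. Surjectivity of $f$ forces $\mathrm{Im}(f) = g A_{\Gamma_1'} g^{-1}$ to equal $A_\Gamma$, so $\Gamma_1' = \Gamma$. For injectivity, if two distinct generators $s, s' \in V(\Gamma)$ satisfied $\Min(f(s)) = \Min(f(s'))$, then some nontrivial powers of $f(s)$ and $f(s')$ would coincide, contradicting injectivity; hence the minimal subset $S_0$ provided by Lemma~\ref{lem:restriction_injective_S} is all of $V(\Gamma)$, and the construction of $S_1$ in the proof of Theorem~\ref{thm:structure_hom} then yields $\Gamma_1 = \Gamma$. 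Thus $\iota$ is a bijection $V(\Gamma) \to V(\Gamma)$ inducing an unlabelled graph automorphism of $\Gamma$.

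Second, I would upgrade $\iota$ to a \emph{labelled} graph automorphism by applying Theorem~\ref{thm:structure_hom} to $f^{-1}$ as well. The theorem gives, for every pair of distinct vertices $s, t \in V(\Gamma)$, that $m_{st}$ is a multiple of $m_{\iota(s)\iota(t)}$; applying the same conclusion to the automorphism $f^{-1}$ (whose image is also all of $A_\Gamma$, so the same hypotheses hold) yields the reverse divisibility. Together these force $m_{st} = m_{\iota(s)\iota(t)}$, so $\iota$ preserves edge labels and is therefore a graph automorphism of the labelled presentation graph $\Gamma$.

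Finally, the formula $f(s) = g\iota(s)^\varepsilon g^{-1}$ exhibits $f$ as the composition of the graph automorphism of $A_\Gamma$ induced by $\iota$, the global inversion (if $\varepsilon = -1$) or the identity (if $\varepsilon = 1$), and the inner automorphism given by conjugation by $g$. This proves that $\Aut(A_\Gamma)$ is generated by conjugations, graph automorphisms, and the global inversion. There is no real obstacle here, since the substance of the result lies entirely in Theorem~\ref{thm:structure_hom}; the only small point to be careful about is ensuring that $\iota$ is a \emph{labelled} isomorphism, which is why we must invoke the theorem for both $f$ and $f^{-1}$.
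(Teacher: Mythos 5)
Your overall route is the same as the paper's: the corollary is deduced directly from Theorem~\ref{thm:structure_hom} applied to an automorphism, exactly as in the proof of Corollary~\ref{thm:hopf_cohopf} (acylindrical hyperbolicity rules out the virtual-product hypothesis, surjectivity and injectivity force $\Gamma_1=\Gamma_1'=\Gamma$, and the formula $f(s)=g\iota(s)^{\varepsilon}g^{-1}$ then exhibits $f$ as a conjugation composed with a graph automorphism and possibly the global inversion). That part is fine.

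The one step that is glossed over is your label argument via $f^{-1}$. Theorem~\ref{thm:structure_hom} applied to $f^{-1}$ produces its \emph{own} bijection $\iota'\colon V(\Gamma)\to V(\Gamma)$, and there is no a priori reason that $\iota'=\iota^{-1}$: unwinding $f^{-1}\circ f=\mathrm{id}$ only shows that $\iota'(\iota(s))$ is \emph{conjugate} to $s$, and conjugate standard generators need not be equal (any odd-labelled edge gives conjugate generators). So the divisibility you get from $f^{-1}$ is $m_{s't'}\in m_{\iota'(s')\iota'(t')}\mathbb{Z}$, which does not immediately combine with $m_{st}\in m_{\iota(s)\iota(t)}\mathbb{Z}$ to give equality. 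This is repairable: since $\sigma:=\iota'\circ\iota$ is a permutation of the finite set $V(\Gamma)$, iterating the chain $m_{st}\geq m_{\iota(s)\iota(t)}\geq m_{\sigma(s)\sigma(t)}\geq m_{\sigma^2(s)\sigma^2(t)}\geq\cdots$ and using $\sigma^N=\mathrm{id}$ forces all these divisibilities to be equalities, hence $m_{\iota(s)\iota(t)}=m_{st}$. Alternatively (and more in the spirit of the paper), you can skip $f^{-1}$ altogether and invoke Corollary~\ref{cor:injective_labels}, whose proof shows precisely that for an \emph{injective} $f$ the map $\iota$ furnished by Theorem~\ref{thm:structure_hom} preserves labels, via the Appel--Schupp length bound in dihedral Artin groups. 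With either fix your argument is complete.
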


\begin{corollary}[\cite{vaskou2023isomorphism} Theorem B] \label{cor:rigidity}
    Let $A_{\Gamma_1}, A_{\Gamma_2}$ be two isomorphic XL-type free-of-infinity Artin groups. Then $\Gamma_1$ and $\Gamma_2$ are isomorphic as labelled graphs.
\end{corollary}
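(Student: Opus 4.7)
The plan is to reduce the statement to a direct application of the two corollaries already established: Corollary \ref{cor:surj_inj}, which controls the number of vertices under homomorphisms, and Corollary \ref{cor:injective_labels}, which embeds the source labelled graph into the target under injective homomorphisms. Together these two results should immediately pin down $\Gamma_1$ and $\Gamma_2$ up to labelled graph isomorphism.

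I would first handle the low-rank cases separately, since Corollary \ref{cor:injective_labels} requires rank at least $3$. For rank $1$, both $A_{\Gamma_1}$ and $A_{\Gamma_2}$ are infinite cyclic and both graphs are single vertices, so there is nothing to check. For rank $2$, both groups are dihedral Artin groups $A_{m_1}, A_{m_2}$ with $m_1, m_2 \geq 3$, and these are known to be non-isomorphic when $m_1 \neq m_2$ (for instance as a special case of Paris' solution to the isomorphism problem for spherical-type Artin groups \cite{paris2003artin}, or by computing the abelianisation together with basic invariants such as the associated Coxeter quotient).

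Now assume that $A_{\Gamma_1}$ and $A_{\Gamma_2}$ are isomorphic and have rank at least $3$, and let $f : A_{\Gamma_1} \to A_{\Gamma_2}$ be an isomorphism. Applying Corollary \ref{cor:surj_inj} to $f$, which is both surjective and injective, yields $|V(\Gamma_1)| \geq |V(\Gamma_2)|$ and $|V(\Gamma_1)| \leq |V(\Gamma_2)|$, so the two graphs have the same number of vertices. Applying Corollary \ref{cor:injective_labels} to the injective homomorphism $f$, we obtain that $\Gamma_1$ is isomorphic as a labelled graph to an induced subgraph of $\Gamma_2$. Since $\Gamma_2$ is a complete graph on the same number of vertices as $\Gamma_1$, this labelled embedding must be surjective on vertices, hence must in fact be an isomorphism of labelled graphs.

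There is essentially no obstacle in this argument: the entire content is already contained in Corollaries \ref{cor:surj_inj} and \ref{cor:injective_labels}, which themselves rest on the structural Theorem \ref{thm:structure_hom}. The only small point to be careful about is that Corollary \ref{cor:injective_labels} is stated for rank at least $3$, which is why we dispose of the low-rank cases separately; this is harmless since rank $\leq 2$ is classical.
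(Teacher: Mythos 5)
Your argument is correct and is essentially the proof the paper intends: Corollary~\ref{cor:rigidity} is presented as an immediate consequence of Theorem~\ref{thm:structure_hom} through exactly the corollaries you invoke (Corollary~\ref{cor:surj_inj} and Corollary~\ref{cor:injective_labels}), with completeness of the target graph forcing the labelled embedding to be onto. The only point left implicit in your case split is that an isomorphism cannot pair a group of rank $\leq 2$ with one of rank $\geq 3$ (abelianness and the non-trivial centre of dihedral Artin groups versus the trivial centre in the acylindrically hyperbolic rank $\geq 3$ case rule this out), a one-line remark that completes the low-rank reduction.
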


\section{The Cycle of Standard Trees Property in the XXXL case}\label{sec:cycle_trees}

The goal of this section is to prove the following: 

\begin{proposition} \label{prop:cycle_trees}
    $XXXL$-type Artin groups satisfy the Cycle of Standard Trees Property.
\end{proposition}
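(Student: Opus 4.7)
The plan is to prove the property by a combinatorial Gauss--Bonnet argument applied to a minimal disk diagram filling the loop $\gamma$ associated to a cycle of standard trees. The idea is that the XXXL hypothesis supplies enough negative curvature, both in the bulk of the diagram and at its corners, to force the entire diagram to collapse onto a single translate of the fundamental domain.

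We first equip $D_\Gamma$ with a piecewise hyperbolic metric in which every $2$-simplex is realised as a hyperbolic triangle with angles $\pi/2$, $\pi/3$, $\pi/m_{st}$ at its type-$2$, type-$1$, and type-$0$ vertex respectively, where $m_{st}$ is the label associated to the $2$-simplex. The XXXL hypothesis $m_{st} \geq 6$ ensures that $\pi/2 + \pi/3 + \pi/m_{st} \leq \pi$, so the triangles are genuine hyperbolic (or Euclidean when $m_{st} = 6$) triangles, and $D_\Gamma$ with this metric is CAT($0$), and CAT($-1$) as soon as some label strictly exceeds $6$.

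Given a cycle of standard trees $T_1, \ldots, T_n$ with associated loop $\gamma$, we fill $\gamma$ with a reduced simplicial disk diagram $\delta : D \to D_\Gamma$ of minimal area; such a diagram exists by simple-connectedness of $D_\Gamma$. At each corner $v_i = T_i \cap T_{i+1}$, the two generators $x_i, x_{i+1}$ of $\Fix(T_i)$ and $\Fix(T_{i+1})$ are conjugates of standard generators lying in the dihedral Artin stabiliser $H_i$ of $v_i$. The definition of a cycle of standard trees precludes $\langle x_i, x_{i+1}\rangle$ from being free or cyclic, so by Lemma~\ref{lem:subgroup_conjugates_generators} this subgroup is the full dihedral group $H_i$, with $x_i, x_{i+1}$ playing the role of two standard generators. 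Using the Bestvina tree description of $H_i$ (Lemma~\ref{lem:Bestvina-tree}), the axes $T_i$ and $T_{i+1}$ in the link of $v_i$ are in the tightest possible combinatorial configuration, yielding an explicit upper bound on the exterior angle $\pi - \theta_{v_i}$ of $\gamma$ at $v_i$, controlled by the label $m_i \geq 6$. The combinatorial Gauss--Bonnet formula then reads
$$2\pi + \mathrm{Area}(D) = \sum_{v \in \mathrm{int}(D)}(2\pi - \theta_v) + \sum_{v \in \partial D}(\pi - \theta_v),$$
where $\theta_v$ denotes the sum of $2$-simplex angles at $v$ coming from faces of $D$. The CAT($0$) link condition makes the interior summand non-positive and the non-corner boundary contributions vanish, leaving
$$2\pi + \mathrm{Area}(D) \leq \sum_{i=1}^n (\pi - \theta_{v_i}).$$
Combined with the corner bounds from the previous step, the XXXL hypothesis forces the right-hand side to be at most $2\pi$, so $\mathrm{Area}(D) = 0$. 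The rigidity case of equality then identifies $D$ with a subcomplex of a single translate $gK_\Gamma$, hence $\gamma \subseteq gK_\Gamma$, which is the desired conclusion.

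The main obstacle is extracting the precise corner angle estimate at each $v_i$: one must combine the Bestvina tree geometry of $H_i$ with the piecewise hyperbolic metric on $D_\Gamma$ to compute the exact angle subtended by two standard trees corresponding to two standard generators of a dihedral Artin group of label $m_i \geq 6$, and then verify that summed over all $n$ corners, the total exterior angle does not exceed $2\pi$, with the equality case rigidifying the disk to lie in one fundamental domain.
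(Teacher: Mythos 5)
Your overall strategy (fill the loop $\gamma$ with a minimal disc diagram and run a combinatorial Gauss--Bonnet count) is the same as the paper's, but the decisive step is missing and, as stated, false. At a corner $v_i = T_i \cap T_{i+1}$ the generators of $\Fix(T_i)$ and $\Fix(T_{i+1})$ are simultaneously conjugate to the two standard generators of the dihedral stabiliser of $v_i$ (your use of Lemma~\ref{lem:subgroup_conjugates_generators} is correct), but this means the two trees can cross at the \emph{small} angle realised inside a single translate of the fundamental domain (angle $\pi/3$ in the paper's normalisation), so the interior angle of $\gamma$ at a corner can be as small as $\pi/3$ and the exterior angle as large as $2\pi/3$. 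Summed over $n$ corners this can be as large as $2n\pi/3$, which exceeds $2\pi$ as soon as $n\geq 4$; since with labels equal to $6$ the metric is only CAT($0$), there is no strict negative curvature in the bulk to quote either, and Gauss--Bonnet alone does not force $\mathrm{Area}(D)=0$. Your sentence ``the XXXL hypothesis forces the right-hand side to be at most $2\pi$'' is therefore not a corner-angle estimate one can ``extract''; it is exactly the hard part of the proof, and it cannot be done corner-by-corner.

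The paper closes this gap with machinery your sketch does not contain: (i) a reduction to non-singular diagrams, which requires the weaker notion of a \emph{loop} of standard trees and an induction on a minimal-area counterexample; (ii) a curvature redistribution in which each corner-cell pays $2\pi/3$ to the ``special'' type $2$ vertices on its inner path, using that this inner path contains at least two type $2$ vertices (Lemma~\ref{LemmaInnerPath}) and that each type $2$ vertex is special for at most two corner-cells (Lemma~\ref{LemmaSpecialVertices}); and (iii) the XXXL bounds that boundary crossing vertices lie in $1$ or at least $5$ polygons and interior type $2$ vertices in at least $12$, so special vertices have curvature at most $-2\pi/3$. Crucially, steps (i) and (ii) use the hypothesis that generators of non-consecutive trees generate a non-abelian free group --- a hypothesis your argument never invokes, which is a strong sign the count cannot close as you describe (the triangle example in the introduction shows what goes wrong when this configuration condition fails). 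Two smaller issues: the non-corner boundary contributions do not ``vanish'' --- they are merely non-positive, by convexity of the standard trees --- and the final ``rigidity case of equality'' (that a zero-defect diagram lies in a single translate $gK_{\Gamma}$) is asserted, not proved; in the paper the conclusion is reached instead by showing the diagram is a single polygon, i.e.\ the star of one type $0$ vertex.
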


We will need the following weaker notion:

\begin{definition}
     A \textbf{loop of standard trees} is a sequence $T_1, \ldots, T_n$ of standard trees of the Deligne complex $D_\Gamma$ such that for every $i \in \mathbb{Z}/n\mathbb{Z}$, the intersection $T_i \cap T_{i+1}$ is a vertex, and such that: 
    \begin{itemize}
        \item for every $1\leq i<n$, the generators $x_i$ and $x_{i+1}$ of $\Fix(T_i)$ and $\Fix(T_{i+1})$ respetively generate a dihedral Artin group, 
        \item for every $1<|j-i|<n-1$, the generators of $\Fix(T_i)$ and $\Fix(T_j)$ generate a non-abelian free group.
    \end{itemize}
\end{definition}

Note that a cycle of standard trees is a loop of standard trees. However, we impose no condition on the subgroup generated by the generators of $\Fix(T_1)$ and $\Fix(T_n)$. We will need this more general notion of loop of standard trees in our proof of Proposition~\ref{prop:cycle_trees}, as certain subdiagrams we will be considering will naturally be bounded by loops of standard trees that are not necessarily cycles of standard trees. 

We will first consider the case where the associated loop $\gamma$ is embedded in $D_\Gamma$: 

\begin{lemma}\label{lem:cycle_trees_embedded_case}
    Let $A_\Gamma$ be an XXXL-type Artin group, and let $T_1, \ldots, T_n$ be a loop of standard trees in $D_\Gamma$. Assume that the corresponding loop $\gamma$ bounds a minimal disc diagram $D \rightarrow D_\Gamma$ that is non-singular (i.e. $D$ is homeomorphic to a disc). Then there exists a (necessarily unique) element $g\in A_\Gamma$ such that $\gamma$ is contained in the translate $gK_{\Gamma}$  of the fundamental domain of $D_\Gamma$. 
\end{lemma}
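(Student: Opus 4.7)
The plan is to apply a combinatorial Gauss--Bonnet argument to the disc diagram $D$, endowed with the piecewise Euclidean Moussong metric pulled back from $D_\Gamma$. The XXXL hypothesis, ensuring that every label satisfies $m_{st} \geq 6$, makes the angles at type-$2$ vertices uniformly small and drives the rigidity we are after.

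First, I would record the local Moussong geometry. Each triangle of $D_\Gamma$ is Euclidean with angle $\pi/m_{st}$ at the type-$2$ vertex, $\pi/2$ at the type-$1$ vertex, and $\pi/2 - \pi/m_{st}$ at the type-$0$ vertex; consequently, the link of every type-$2$ vertex is a circle of length $2\pi$ partitioned into $2m_{st}$ arcs of length $\pi/m_{st} \leq \pi/6$. At each corner $v_i = T_i \cap T_{i+1}$, the two tree directions of $T_i$ and $T_{i+1}$ correspond to two type-$1$ vertices of the link, separated by one intermediate type-$0$ vertex, so at spherical distance $2\pi/m_{s_i s_{i+1}}$ from each other along the short arc.

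Next, I would catalogue the Gauss--Bonnet contributions in $D$. Faces are Euclidean and contribute $0$. Interior vertices have angle sum at least $2\pi$ by the CAT($0$) link condition on $D_\Gamma$ (Charney--Davis), so contribute non-positively. Non-corner boundary vertices lie on the geodesic tree segments $\gamma_i \subset T_i$ and therefore have interior angle sum at least $\pi$, also contributing non-positively. Each corner $v_i$ contributes $\pi - \alpha_i$, where $\alpha_i$ is the interior angle of $D$ at $v_i$; because the two boundary directions at $v_i$ are type-$1$ vertices of the link, $\alpha_i$ is an even positive multiple of $\pi/m_{s_i s_{i+1}}$, bounded below by $2\pi/m_{s_i s_{i+1}}$ and hence bounded above (as a contribution) by $\pi - 2\pi/m_{s_i s_{i+1}}$. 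Plugging into $2\pi = 2\pi\chi(D) = \sum_v \kappa(v)$, I would then argue that the equality case forces $D$ to have exactly one interior vertex, of type $0$, at which the star is fully covered; each non-corner boundary vertex is a type-$1$ vertex $gA_{s_i}$ with exactly two adjacent triangles in $D$; and each corner has the minimal interior angle $2\pi/m_{s_i s_{i+1}}$. This pins $D$ down as a pie-slice of a single translate $gK_\Gamma$, giving $\gamma \subseteq gK_\Gamma$. Uniqueness of $g$ follows from the fact that distinct fundamental-domain translates meet only in lower-dimensional strata.

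The main obstacle I anticipate is the rigidity step: extracting the pie-slice structure from Gauss--Bonnet rather than just a numerical bound on $n$. In particular, one must rule out configurations in which $D$ bridges several translates of $K_\Gamma$ along shared edges, as well as configurations with "wide" corner angles ($\alpha_i$ a larger multiple of $\pi/m_{s_i s_{i+1}}$) compensated by extra negative contributions at interior or boundary vertices. The strategy will be a case analysis on the types of interior vertices of $D$ (type $0$, $1$, or $2$) in a potential multi-translate configuration, showing that the strict CAT($0$) inequalities coming from XXXL and the minimality of $D$ together make each such case incompatible with the Gauss--Bonnet identity.
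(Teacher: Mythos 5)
Your set-up (Gauss--Bonnet on the disc diagram, faces flat, interior and non-corner boundary vertices non-positively curved, positive curvature only at the corners $T_i\cap T_{i+1}$) matches the paper's framework, but the step you defer -- ``the equality case forces $D$ to be a pie-slice of a single translate'' -- is precisely where the proof lives, and as stated it does not work. With your angle bounds each corner contributes at most $\pi-2\pi/m_{s_is_{i+1}}\le 2\pi/3$, so the Gauss--Bonnet identity $2\pi=\sum_v\kappa(v)$ only yields $n\ge 3$; for any $n\ge 4$ (and even for $n=3$ with labels $>6$) the inequality is slack, there is no ``equality case'' to exploit, and nothing in the numerics prevents $D$ from spanning several translates of $K_\Gamma$. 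Note also that your argument never uses the hypothesis that the generators of non-consecutive trees generate a free group; since that hypothesis is essential to the conclusion, a purely local metric count that ignores it cannot suffice.

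What the paper adds, and what you would need, are three ingredients. First, a dichotomy at each corner: because the generators of $\Fix(T_i)$ and $\Fix(T_{i+1})$ generate a dihedral Artin group, Lemma~\ref{lem:subgroup_conjugates_generators} shows the pair is conjugate to a pair of standard generators, and combined with the XXXL bound on angles between edges of a single standard tree one gets that the boundary angle at $T_i\cap T_{i+1}$ is either the minimal one (the vertex lies in a single polygon) or very large (at least five polygons, hence non-positive curvature) -- see Lemma~\ref{lem:at_least_5_cells}. Second, the free-subgroup condition for non-consecutive trees is used to rule out short-cuts: it forces the inner path of a corner-cell to contain at least two type~$2$ vertices (Lemma~\ref{LemmaInnerPath}), and, inside a minimal-counterexample induction over all loops of standard trees, it shows a type~$2$ vertex can be special for at most two corner-cells, since otherwise a proper subdiagram would itself be a smaller counterexample whose resolution would make two non-consecutive generators span a dihedral group (Lemma~\ref{LemmaSpecialVertices}). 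Third, because one corner-cell may contain several corners, a direct vertex count is not enough and the paper redistributes curvature between corner-cells and their special vertices before summing; only then does Gauss--Bonnet give $2\pi\le\pi$, a contradiction. Your proposal contains none of these mechanisms (no minimal-counterexample induction, no use of the freeness hypothesis, no redistribution), so the rigidity step remains a genuine gap rather than a routine case analysis.
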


\begin{remark}
    Note that Lemma \ref{lem:cycle_trees_embedded_case} implies that the mentioned loop of standard trees is actually a cycle of standard trees.
\end{remark}

Let us explain how Proposition~\ref{prop:cycle_trees} follows from this. We have the following:

\begin{lemma}\label{lem:cycle-trees-embedded}
    Let $D \rightarrow D_\Gamma$ be a minimal disc diagram bounded by $\gamma$. Then $D$ is non-singular.
\end{lemma}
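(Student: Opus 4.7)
The plan is to argue by contradiction: suppose $D$ is a minimal disc diagram for $\gamma$ but is singular. Then the underlying planar $2$-complex of $D$ admits a cut vertex $v$, and after removing any boundary spurs (whose presence would already contradict minimality), we may decompose $D = D_1 \cup_v D_2$ into two sub-disc diagrams meeting only at $v$, each containing at least one $2$-cell. Correspondingly, the boundary cycle splits as $\gamma = \alpha \cdot \beta$ with $\alpha = \partial D_1$ and $\beta = \partial D_2$, and the image $\bar{v} \in D_\Gamma$ of $v$ is visited (at least) twice by $\gamma$.

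First I would pinpoint where $\bar{v}$ can sit on $\gamma$, using the structure of the $T_i$. Each geodesic segment $\gamma_i$ lies in the tree $T_i$ and so visits each vertex at most once; moreover, two distinct standard trees meet in at most a single type-$2$ vertex by Lemma~\ref{lem:standard_trees_properties}. Hence the two visits of $\gamma$ to $\bar{v}$ must fall into one of two cases: either $\bar{v}$ is the shared corner $T_i \cap T_{i+1}$ and simultaneously the shared corner $T_j \cap T_{j+1}$ for some $j \ne i$; or $\bar{v}$ lies in the interior of some $\gamma_i$ and coincides with the unique intersection $T_i \cap T_j$ for some non-consecutive index $j$. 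The dihedral-or-free dichotomy from Lemma~\ref{lem:subgroup_conjugates_generators}, applied to $\Fix(T_i)$ and $\Fix(T_j)$, significantly constrains these cases.

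The contradiction is then obtained case by case. In each configuration, both sub-loops $\alpha$ and $\beta$ decompose as concatenations of sub-segments of the original $\gamma_i$'s and form either (i) a strictly shorter loop of standard trees, which admits a smaller-area filling by downward induction on the number of trees, or (ii) a loop contained in a single standard tree, which is null-homotopic in the $1$-skeleton and so may be filled by a diagram with no $2$-cells. In either case, the corresponding $D_j$ can be replaced by a strictly smaller filling; reassembling the two new pieces at $v$ yields a disc diagram for $\gamma$ with strictly fewer $2$-cells than $D$, contradicting minimality.

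The main obstacle I anticipate is the combinatorial case analysis in the second step: one must check that \emph{every} possible placement of $\bar{v}$ genuinely produces simpler sub-configurations, and that the reassembled diagram really is a valid disc diagram for $\gamma$ of smaller area. This hinges on combining the intersection properties of standard trees with the XXXL hypothesis (to control which non-adjacent trees can share a vertex), and on handling carefully the degenerate case in which a sub-loop collapses into a single tree.
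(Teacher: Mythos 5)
There is a genuine gap: your contradiction is aimed at the wrong hypothesis. You try to contradict minimality of $D$ by replacing the sub-diagrams $D_1,D_2$ with strictly smaller fillings, and for this you invoke, in case (i), a ``downward induction on the number of trees'' asserting that a strictly shorter loop of standard trees admits a small-area filling. No such statement is available at this point: the only tool in hand is Lemma~\ref{lem:cycle_trees_embedded_case}, whose hypothesis is precisely that a minimal filling of the sub-loop is non-singular (which is the kind of statement you are in the middle of proving), and whose conclusion is not ``small area'' but ``the sub-loop lies in a single translate $gK_{\Gamma}$ of the fundamental domain''. Moreover, even if one grants your induction hypothesis, it yields no contradiction when the sub-diagram $D_1$ is already a single polygon, since then no strictly smaller filling can exist; your scheme simply stalls in exactly the critical case. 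Worse, for a sub-loop cut off at a singular point the ``one fundamental domain'' conclusion can never actually hold: the first and last trees $T_i, T_j$ of that sub-loop are non-consecutive in the original cycle, so by Definition~\ref{def:cycle_standard_trees_property} the generators of $\Fix(T_i)$ and $\Fix(T_j)$ generate a non-abelian free group, whereas both trees meeting a common translate $gK_{\Gamma}$ along edges would force these generators to generate a dihedral Artin group.

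That incompatibility, not an area count, is what closes the argument, and it is how the paper proceeds: if $D$ is singular, take a non-singular lobe $D'$ of $D$ containing exactly one cut-vertex; its boundary is an embedded sub-loop $\gamma'$ carried by a loop of standard trees $T_i,\ldots,T_j$ with $j>i+1$ (the case $j=i+1$ is excluded by convexity of standard trees, since a loop in $T_i\cup T_{i+1}$, two trees meeting in a single vertex by Lemma~\ref{lem:standard_trees_properties}, bounds no non-degenerate lobe); Lemma~\ref{lem:cycle_trees_embedded_case} then places $\gamma'$ in a single fundamental domain, contradicting the freeness condition between $\Fix(T_i)$ and $\Fix(T_j)$. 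Your first steps --- decomposing $D$ at a cut-vertex and observing that the seam point is an intersection of two of the standard trees --- are sound, but the case analysis and the re-filling/reassembly scheme should be replaced by this single structural contradiction; as written, case (i) rests on an unproved (and, in the relevant situation, false-in-conclusion) induction, so the proof does not go through.
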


\begin{proof}
    If that were not the case, then in particular $\gamma$ would not be embedded, and we can consider a non-singular diagram $D'$ of $D$ that contains exactly one cut-vertex of $D$. The restriction $D' \rightarrow D_\Gamma$ defines an embedded sub-loop $\gamma'$ of $\gamma$, and has a corresponding loop of standard trees $T_i, \ldots, T_j$ with $j>i$. (Note that necessarily $j>i+1$ since standard trees are convex in $D_\Gamma$.)  Note that $T_i, \ldots, T_j$ then satisfies the assumptions of Lemma~\ref{lem:cycle_trees_embedded_case}, and it follows that $\gamma'$ is contained in a fundamental domain of $D_\Gamma$. But this now implies that the generator of $\Fix(T_i)$ and $\Fix(T_j)$ generate a dihedral Artin group, contradicting the assumption on the original cycle of standard trees that $\Fix(T_i)$ and $\Fix(T_j)$ generate a free subgroup since $|j-i|>1$. 
\end{proof}

\begin{proof}[Proof of Proposition~\ref{prop:cycle_trees}]
    This proposition is now a direct consequence of Lemmas~\ref{lem:cycle_trees_embedded_case} and~\ref{lem:cycle-trees-embedded},
\end{proof}

It now remains to prove Lemma \ref{lem:cycle_trees_embedded_case}.
Assume by contradiction that Lemma \ref{lem:cycle_trees_embedded_case} does not hold for some $\gamma$, and pick such a $\gamma$ such that the area of $D$ (i.e. its number of polygons) is minimal. We will prove that such a configuration is incompatible with the combinatorial Gauss--Bonnet Theorem \cite{ballmannbuyalo1996}.

By hypothesis, the disc diagram $D$ is non-singular.
To apply Gauss--Bonnet, we must first choose a set of angles for each triangle that appears in $D$.
Note that the map $D \rightarrow D_{\Gamma'}$ is a homeomorphism on each triangle, so we can talk of vertices of type $0, 1, 2$ in $D$.
For the rest of this section, we will make the following choices: 
\begin{itemize}
    \item the angle at a type $0$ vertex of a triangle $\Delta$ of $D$ is equal to $\angle_v(\Delta)\coloneqq \pi/3$,
    \item the angle at a type $1$ vertex of a triangle $\Delta$ of $D$ is equal to $\angle_v(\Delta)\coloneqq\pi/2$,
    \item the angle at a type $2$ vertex of a triangle $\Delta$ of $D$ is equal to $\angle_v(\Delta)\coloneqq\pi/6$.
\end{itemize}

With this set, triangles all have zero curvature, and curvature is localised only on vertices.
It will be easier to think of $D$ as having a polygonal structure.
The link of each type $0$ vertex of $D$ is a bipartite cycle made of type $1$ and $2$ vertices.
So, for every type $0$ vertex $v$, we can imagine we ``erase'' it and replace its star with a single polygon $P_v$ on that link.
We can do this because $\gamma$ is contained in a union of standard trees and does not pass through a vertex of type $0$.

\begin{figure}[H]
\centering
\includegraphics[scale=0.9]{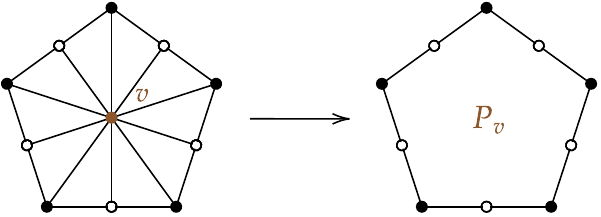}
\caption{Going from a triangular structure to a polygonal structure by erasing type $0$ vertices. Type $2$ vertices are drawn in black and type $1$ vertices in white.}
\label{FigurePolygonStructure}
\end{figure}

With the above choice of angles, we can define the combinatorial curvature $\kappa(-)$ of a vertex and of a triangle as in \cite{ballmannbuyalo1996}.
Note that by construction, triangles have zero curvature, and for vertices we have
\[\kappa(v) = 2\pi-\pi\cdot \chi(lk(v))-\sum\limits_{\substack{\Delta \text{ triangle} \\ \text{containing } v}} \angle_v(\Delta).\]

We can also define curvatures for this new polygonal structure. The curvature of a type $1$ or $2$ vertex remains unchanged, and we set the curvature of the polygon $P_v$ to be equal to $\kappa(v)$. 

We will assume by contradiction that $\gamma$ is not contained in a fundamental domain, in particular $D$ is not a single polygon. Let us denote by $V(D)$ the set of vertices of $D$, and by $P(D)$ the set of polygons of $D$.
With this polygonal structure, the combinatorial Gauss--Bonnet theorem \cite{ballmannbuyalo1996} gives us
\[\sum\limits_{v \in V(D)} \kappa(v) + \sum\limits_{P \in P(D)} \kappa(P) = 2\pi. \]

Because our Artin groups are of large type, internal vertices of type $1$ and $2$-cells have non-positive curvature by construction, and internal vertices of type $2$ always have non-positive curvature by Appel--Schupp \cite[Lemma 6]{appel1983artin}.
Moreover, because the boundary of $D$ is mapped to (convex) standard trees, the only cells of $D$ that can contribute positive curvature are vertices of type $2$ in $\partial D$ that correspond to intersection points between consecutive standard trees. 

\medskip

\noindent \textbf{Notation:} For a vertex $v$ of $D$, we denote by $n_v$ the number of polygons of $D$ containing~$v$. 

\begin{lemma} \label{lem:at_least_5_cells}
    Let $v$ be the vertex  $T_i \cap T_{i+1}$ for some $1 \leq i <n$. We either have $n_v =1$, or $n_v \geq 5$. In the latter case, we have $\kappa(v) \leq 0$.
\end{lemma}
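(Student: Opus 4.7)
The plan is to treat the two claims in the lemma separately: a direct curvature calculation for the bound when $n_v \geq 5$, and a combinatorial argument using the cycle-of-standard-trees hypothesis to exclude $n_v \in \{2,3,4\}$.

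For the curvature bound, $v$ is a type-$2$ vertex on $\partial D$ through which $\gamma$ transitions from $T_i$ to $T_{i+1}$, so the link of $v$ in $D$ is a path and $\chi(\mathrm{lk}(v)) = 1$. Each of the $n_v$ polygons at $v$ arises by erasing a type-$0$ neighbour $w_j$, merging the two triangles $(w_j, u_{j-1}, v)$ and $(w_j, u_j, v)$ that share the edge $vw_j$; at the type-$2$ corner $v$ each such triangle carries angle $\pi/6$, so the polygon contributes an angle of $\pi/3$ at $v$. Plugging into the combinatorial curvature formula gives
\[
\kappa(v) \;=\; 2\pi - \pi\cdot 1 - n_v\cdot \tfrac{\pi}{3} \;=\; \pi\bigl(1 - \tfrac{n_v}{3}\bigr),
\]
which is $\leq -\tfrac{2\pi}{3} < 0$ as soon as $n_v \geq 5$.

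For the dichotomy, the crucial input is the structure of the cycle of standard trees. By Definition~\ref{def:cycle_standard_trees_property} the generators $x_i, x_{i+1}$ of $\Fix(T_i), \Fix(T_{i+1})$ both lie in the dihedral parabolic $\Stab(v) \cong A_{st}$ (with $m_{st}\geq 6$ by the XXXL hypothesis) and are required to generate a dihedral Artin subgroup of it. Lemma~\ref{lem:subgroup_conjugates_generators} then forces $\langle x_i, x_{i+1}\rangle = A_{st}$ itself, since a cyclic subgroup would contradict $T_i \neq T_{i+1}$ and a non-abelian free subgroup is not dihedral. Conjugating inside $A_{st}$, we may assume $x_i = s$, $x_{i+1} = t$, so that $T_i = T_s$ and $T_{i+1} = T_t$, and the boundary type-$1$ vertices $u_{T_i}, u_{T_{i+1}}$ lie adjacent to $v$ on these two standard trees.

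From this normalisation, one closes the link-path of length $2n_v$ inside $D$ from $u_{T_i}$ to $u_{T_{i+1}}$ into a cycle in the link of $v$ in $D_\Gamma$ via a short path through a common type-$0$ neighbour of $u_{T_i}$ and $u_{T_{i+1}}$ (the identity of $A_{st}$ lies in both $\langle s\rangle$ and $\langle t\rangle$, providing such a neighbour). The girth of the link of $v$ in $D_\Gamma$ is governed by the braid relation $\Pi(s,t;m_{st}) = \Pi(t,s;m_{st})$: the shortest non-trivial alternating product in $A_{st}$ has at least $2m_{st} \geq 12$ factors, yielding a corresponding lower bound on any closed cycle in the link. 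Combining the length of the path inside $D$ with the short closing path then forces $n_v \geq 5$, excluding $n_v \in \{2,3,4\}$. The main obstacle is a careful control of the length of the closing path: the actual vertices $u_{T_i}, u_{T_{i+1}}$ of a minimal diagram $D$ may a priori be shifted from their canonical positions by a power of the Garside element $\Delta_{st}$ (which centralises $s,t$ in the even-label case), and one must either rule out such shifts via a reduction argument using the minimality of $D$, or bound the closing path finely enough that the girth inequality still yields $n_v \geq 5$.
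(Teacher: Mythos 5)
Your curvature computation for the case $n_v\geq 5$ is correct and consistent with the paper's choice of angles, and your normalisation step (the cycle hypothesis plus Lemma~\ref{lem:subgroup_conjugates_generators} to reduce to $x_i=s$, $x_{i+1}=t$ standard generators of the dihedral stabiliser of $v$) is exactly how the paper's proof begins. But the heart of the lemma is the exclusion of $n_v\in\{2,3,4\}$, and there your argument is incomplete, as you yourself flag. Your girth argument closes the link path of $D$ at $v$ through the common type-$0$ neighbour $1$ of $\langle s\rangle$ and $\langle t\rangle$, which is only available when the two boundary edges of $D$ at $v$ are in canonical (aligned) position. However, the edges of $\Fix(s)$ (resp.\ $\Fix(t)$) at $v$ are the translates of $\langle s\rangle$ (resp.\ $\langle t\rangle$) by powers of the Garside element (its square in the odd case), and nothing forces the boundary edges of $D$ to form an aligned pair; the shifted configuration is precisely the one the $n_v\geq 5$ alternative is about. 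Saying one must ``either rule out such shifts via a reduction argument or bound the closing path finely enough'' names the missing step rather than supplying it. Note also that the girth bound alone (the shortest alternating relation has at least $2m_{st}$ syllables) does not give a lower bound on the distance in the link between two \emph{specified} shifted edges; for that one needs a lower bound on the syllable length of elements of the double coset $\langle s\rangle\Delta_{st}^{c}\langle t\rangle$ with $c\neq 0$, which is a related but different statement.

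The paper closes exactly this gap with an angle argument: by \cite[Corollary 4.2]{martin2022acylindrical}, two distinct edges of a standard tree meeting at a type-$2$ vertex make an angle at least $m_{st}\pi/3\geq 2\pi$ in the XXXL case, while an aligned pair makes an angle exactly $\pi/3$; comparing the edge of $T_i$ at $v$ with the aligned partner of the edge of $T_{i+1}$ and using the triangle inequality shows that any non-aligned pair of edges of $T_i$ and $T_{i+1}$ at $v$ makes an angle at least $2\pi-\pi/3=5\pi/3$. Since the angle of $D$ at $v$ equals $n_v\pi/3$ and dominates the angle in $D_{\Gamma}$ between the images of the two boundary edges, the angle at $v$ is either $\pi/3$ (the corner case $n_v=1$) or at least $5\pi/3$, giving $n_v\geq 5$ and then $\kappa(v)\leq \pi-5\pi/3\leq 0$. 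To complete your route you would have to prove the distance estimate in the link for Garside-shifted edges (essentially re-deriving the quoted angle bound via Garside normal forms, in the spirit of Lemma~\ref{lem:domains_sharing_two_trees}); as it stands, the proposal does not establish the dichotomy that is the actual content of the lemma.
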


\begin{proof}
Let $x_i, x_{i+1}$ be generators of $\Stab(T_i)$, $\Stab(T_{i+1})$ (up to taking inverses, we assume that $x_i$ and $x_{i+1}$ are conjugated to standard generators). Since they generate a dihedral Artin group by assumption, Lemma \ref{lem:subgroup_conjugates_generators} implies that the pair $\{x_i, x_{i+1}\}$ is conjugated to a pair of standard generators.
In particular, there exist edges of $T_i$ and $T_{i+1}$ that make an angle of precisely $\pi/3$.
Recall also that two distinct adjacent edges of a standard tree make an angle at least $6\pi/3$ since our Artin groups are XXXL-type, by Martin--Przytycki \cite[Corollary 4.2]{martin2022acylindrical}.
So, the two edges of $D$ containing $v$ and contained in $T_{i}$ and $T_{i+1}$ either make an angle of $\pi/3$ or at least
$5\pi/3$
by the triangle inequality. The result follows.
\end{proof}

We call a vertex $v = T_i \cap T_{i+1}$ of $\partial D$ a \textbf{corner} if $n_v =1$, and an \textbf{almost-corner} if $n_v \geq 5$.
Let $v_0\coloneqq T_1\cap T_n$, which we will call the \textbf{basepoint} of $D$. Note that the previous lemma does not apply to $v_0$, in particular we could have $n_{v_0}=1$ (corner), $n_{v_0}\geq 5$ (almost-corner) but also $n_{v_0} = 2$, $3$, $4$. 

A \textbf{corner-cell} will be a polygon of $D$ that contains at least one corner of $D$.
Note that it may contain several.
Also note that, by minimality of $D$ and since $D$ is not a single polygon, every corner-cell $P$ is such that $P \cap \partial D$ is connected and homeomorphic to an interval.
Since corners and possibly the basepoint are the only cells of $D$ contributing positive curvature (with the basepoint contributing at most $\pi/3$ if it is not a corner), the combinatorial Gauss--Bonnet Theorem implies that corners and hence corner-cells exist.

Let $P$ be a corner-cell, and let $u, u'$ be the two extremal vertices of the path $P\cap \partial D$, which will be called \textbf{end-vertices} of $P$. 
These two vertices subdivide $\partial P$ into two paths, the outer path $\partial_o P$ contained in $\partial D$, and the inner path $\partial_i P$ such that $\partial_i P \cap \partial D = \{u, u'\}$.

\begin{figure}[H]
\centering
\includegraphics[scale=1]{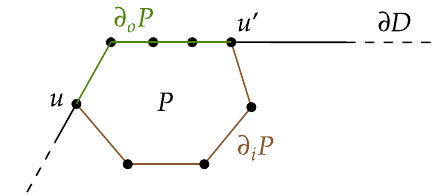}
\caption{The inner and outer paths of a corner-cell $P$.}
\label{FigureOuterInnerPaths}
\end{figure}

A priori, $u, u'$ can be either type $1$ or type $2$ vertices. We first need a small result:

\begin{lemma} \label{LemmaInnerPath}
    The inner path of a corner-cell contains at least two type $2$ vertices. 
\end{lemma}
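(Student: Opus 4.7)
The plan is to argue by contradiction, supposing $\partial_i P$ contains at most one type $2$ vertex. A parity analysis of the endpoints $u, u'$ of $\partial_o P$, combined with the observation that the corner $v$ must lie in the interior of $\partial_o P$ (since $n_v = 1$ forces both $\partial D$-edges at $v$ into $P$), reduces the hypothesis to two ``bad'' configurations: either (I) both $u, u'$ are of type $1$ and $\partial_i P$ is the length-$2$ path $u - q - u'$ with $q$ of type $2$, or (II) $u, u'$ are of different types and $\partial_i P$ is a single edge. In both cases, the $\Gamma$-cycle of length $m \geq 3$ corresponding to $\partial P$ (via the link of the central type-$0$ vertex of $P$) yields two adjacent standard generators $a, b \in V(\Gamma)$ naturally associated to the geometry near $u$ and $u'$.

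I would then identify two distinct standard trees $T_a, T_b$ appearing in the loop of standard trees $T_1, \ldots, T_n$ bounding $D$ that share a common type-$2$ fixed vertex $q$. In Case (I), $T_a$ and $T_b$ are the unique standard trees through the type-$1$ vertices $u$ and $u'$, and $q$ is the middle vertex of $\partial_i P$. In Case (II), $T_a$ is the tree through $u$, $T_b$ is the tree containing the $\partial_o P$-edge incident to $u'$, and $q = u'$. In both cases, both trees lie in the loop (their boundary arcs $\gamma_a, \gamma_b$ contain relevant $\partial D$-edges), and the generators of $\Fix(T_a)$ and $\Fix(T_b)$ together generate the dihedral Artin group $\Stab(q)$ (applying Lemma~\ref{lem:subgroup_conjugates_generators} inside $\Stab(q)$).

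The contradiction comes from a case analysis on the positions $i, j$ of $T_a, T_b$ in the loop. If $1 < |i-j| < n - 1$, the loop-of-standard-trees axioms force $\Fix(T_a)$ and $\Fix(T_b)$ to generate a non-abelian free group, contradicting that they generate a dihedral Artin group. If $|i - j| = 1$, then $q = T_a \cap T_b$ must appear as a corner of the loop and hence lie on $\partial D$; this directly contradicts $q$ being interior to $\partial_i P$ in Case (I), while in Case (II) it forces the tree-geodesic $\gamma_a$ to contain the $\partial_i P$-edge $[u, u']$, again a contradiction since $\gamma_a \subseteq \partial D$. Finally, if $|i - j| = n - 1$ (the ``extreme pair'' unconstrained by the loop axioms), a combinatorial check of the trees around $P$ shows they must occupy consecutive positions $1, 2, \ldots, m$ in the loop, so being an extreme pair forces $n = m$; then each tree-geodesic has length $2$, and $\partial D$ is once more forced to contain an edge of $\partial_i P$. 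The most delicate step, which I anticipate as the main obstacle, is this extreme-pair case, since it is specifically excluded from the free-group axiom and demands a direct tree-geodesic computation to extract the contradiction.
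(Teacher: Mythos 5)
Your overall strategy is the same as the paper's, just written out in more detail: the paper's proof also reduces to the two configurations you call (I) and (II) (via its Figure of the two bad configurations) and derives a contradiction from the fact that the two standard trees of the loop that meet the polygon near $u$ and $u'$ would then intersect at a type $2$ vertex inside a single translate $gK_{\Gamma'}$, hence generate a dihedral Artin group, contradicting the free-subgroup condition in the definition of a loop of standard trees. Your positional case analysis ($1<|i-j|<n-1$ versus cyclically consecutive pairs) makes explicit what the paper leaves to the figure, and your treatment of the cases $1<|i-j|<n-1$ and $|i-j|=1$ is correct.

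The one place where your argument goes astray is the ``extreme pair'' case $|i-j|=n-1$, which you flag as the main obstacle and for which your sketch does not hold up: the claims that the trees around $P$ must occupy all positions $1,\ldots,m$, that this forces $n=m$, and that every $\gamma_i$ has length $2$ do not follow from the hypotheses (several type $1$ vertices of $\partial_o P$ may lie on the same $T_i$, so one only gets $n\leq m$), and no such global count is needed. The point you are missing is that the pair $(T_1,T_n)$ is \emph{not} unconstrained in the way that matters here: the definition of a loop of standard trees still requires $T_i\cap T_{i+1}$ to be a single vertex for \emph{every} $i\in\mathbb{Z}/n\mathbb{Z}$, so $T_1\cap T_n$ is the basepoint $v_0$, which lies on $\gamma\subset\partial D$. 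Hence if $\{T_a,T_b\}=\{T_1,T_n\}$, then since distinct standard trees meet in at most one type $2$ vertex (Lemma~\ref{lem:standard_trees_properties}), the common vertex $q$ you produced equals $v_0$; in configuration (I) this contradicts $q$ being interior to $\partial_i P$, and in configuration (II) it gives $u'=v_0$, so $u$ and $u'$ both lie on $\gamma_a$ and the tree-geodesic between them, namely the edge $[u,u']=\partial_i P$, lies on $\partial D$ --- exactly your $|i-j|=1$ argument. So the extreme pair is not a separate delicate case at all; only the dihedral-generation condition fails for $(T_1,T_n)$, and that condition plays no role in the positional argument. Finally, you should also say a word about why $T_a\neq T_b$ (you assert distinctness without proof): if they coincided with some $T_c$, then $u$ and $u'$ would both lie on $\gamma_c$ and the tree-geodesic in $T_c$ between them, which contains $\partial_i P$, would again be forced onto $\partial D$, the same contradiction.
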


\begin{proof}
     Since $D$ is nonsingular by assumption, $\partial_i P$ contains at least one edge, hence at least one type $2$ vertex. If we had only one type $2$ vertex, then this would mean that two non-consecutive standard trees $T_i$ and $T_j$ intersect in a fundamental domain (see Figure \ref{Fig:Type2inInnerPath}). In particular, the generators of their fixed-point sets would generate a dihedral Artin group, contradicting our assumption on $\gamma$ that they generate a free subgroup.
    
    \begin{figure}[H]
    \centering
    \includegraphics[scale=1]{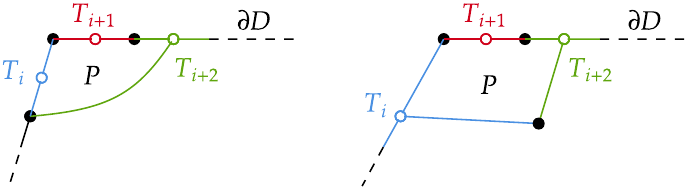}
    \caption{Two configurations illustrating the case where $\partial_i P$ contains only one type $2$ vertex. Type $2$ vertices are drawn in black and type $1$ vertices in white.}
    \label{Fig:Type2inInnerPath}
    \end{figure}
\end{proof}

For each such corner-cell $P$, we call the first and last type $2$ vertices on the embedded segment $\partial_i P$ the \textbf{special} vertices of $P$.

\begin{lemma} \label{LemmaSpecialVertices}
    Let $v$ be a type $2$ vertex of $D$. Then $v$ is a special vertex of at most two corner-cells.
\end{lemma}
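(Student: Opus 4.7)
Plan: I argue by contradiction, assuming that a type $2$ vertex $v$ is a special vertex of three distinct corner-cells $P_1, P_2, P_3$. The starting point is a local characterization of special vertices: since vertices alternate between type $1$ and type $2$ along the boundary of any polygon, and each inner path contains at least two type $2$ vertices by Lemma \ref{LemmaInnerPath}, for each corner-cell $P$ with $v$ as a special vertex there is an end-vertex $u_P$ of $P$ such that either (i) $u_P = v$ (in which case $v$ is itself a type $2$ end-vertex of $P$, forcing $v \in \partial D$), or (ii) $u_P$ is a type $1$ vertex on $\partial D$ and $v$ is adjacent to $u_P$ via an edge $e_P = (v, u_P)$ of $P$ lying in $\partial_i P$. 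I then split into two cases depending on the position of $v$.

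In Case A ($v \in \partial D$), only configuration (i) can occur, since configuration (ii) would require $v$ to be strictly interior to $\partial_i P$ (and the intermediate vertices of $\partial_i P$ never lie on $\partial D$), contradicting $v \in \partial D$. For $v$ to be a type $2$ end-vertex of $P_i$, the polygon $P_i$ must contain exactly one of the two boundary edges of $\partial D$ incident to $v$ (if it contained both, $v$ would be interior to $P_i \cap \partial D$, not an end-vertex). Since each boundary edge of $D$ lies in exactly one polygon of $D$, at most two distinct polygons at $v$ can have $v$ as an end-vertex, yielding the desired contradiction.

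In Case B ($v$ interior to $D$), only configuration (ii) can occur. For each $P_i$, we obtain an interior edge $e_i = (v, u_i)$ with $u_i \in \partial D$ a type $1$ vertex and an end-vertex of $P_i$; the other edge of $P_i$ at $v$ in $\partial_i P_i$ necessarily goes to a type $1$ vertex strictly in the interior of $D$. Each interior edge $e_i$ is shared by exactly two polygons, and a polygon containing $e_i$ can have $u_i$ as an end-vertex only if it is one of the two extremal polygons in the polygonal link of $u_i$. I then analyze the configuration of the $e_i$'s at $v$: by pigeonhole, either the three edges are all distinct, or precisely two coincide (forcing $n_{u_i} = 2$ at the shared endpoint). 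Combining the cyclic arrangement of polygons around the interior type $2$ vertex $v$ (which contains at least six polygons by the XXXL angle condition), the linear link structure at each boundary vertex $u_i$, the convexity of the standard trees forming $\partial D$, and the minimality of the disc diagram $D$ should rule out all such configurations. The main obstacle is precisely this last step in Case B: identifying the clean combinatorial-geometric obstruction that prevents three corner-cells from sharing an interior type $2$ vertex as a special vertex. I expect this to rest crucially on the XXXL condition together with the minimality of $D$ and the convexity of the standard trees comprising $\partial D$.
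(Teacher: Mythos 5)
Your Case A (the boundary case) is correct and is essentially the paper's one-line argument, and your local setup in Case B is also on the right track: each corner-cell having $v$ as a special vertex contributes a type $1$ end-vertex on $\partial D$ adjacent to $v$, and such an end-vertex can serve at most two corner-cells. But the decisive step of Case B is missing, as you yourself acknowledge, and the obstruction is not the local one you are hunting for. The paper never analyses the cyclic arrangement of polygons around $v$; instead it argues globally. Since at least three corner-cells share $v$ as a special vertex and each end-vertex serves at most two of them, there exist two \emph{distinct} type $1$ end-vertices $u \neq u'$ on $\partial D$ adjacent to $v$. An edge joining a type $1$ vertex to an adjacent type $2$ vertex lies in the standard tree through that type $1$ vertex, so writing $u \in T_i$ and $u' \in T_j$, the path $u - v - u'$ lies in $T_i \cup T_j$. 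Together with the stretch of $\partial D$ between $u$ and $u'$, this path is the loop $\gamma'$ associated with the loop of standard trees $T_i, T_{i+1}, \ldots, T_j$, and it bounds a strict subdiagram $D'$ of $D$, which is non-singular because $v$ is internal and $D$ is non-singular.

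The key point is that Lemma~\ref{LemmaSpecialVertices} sits inside a minimal-counterexample argument: $D$ was chosen of minimal area among diagrams violating Lemma~\ref{lem:cycle_trees_embedded_case}. Since $D'$ has strictly smaller area, the conclusion of Lemma~\ref{lem:cycle_trees_embedded_case} holds for $\gamma'$, so $\gamma'$ is contained in a single translate of the fundamental domain $K_\Gamma$. This forces the generators of $\Fix(T_i)$ and $\Fix(T_j)$ to generate a dihedral Artin group, contradicting the hypothesis on the original loop of standard trees that the generators attached to non-consecutive trees generate a non-abelian free group. So the missing idea is this cut-and-minimality argument; the XXXL/curvature input enters the proof elsewhere (Lemma~\ref{lem:at_least_5_cells}, the count at interior vertices, and the curvature redistribution), not here, and it is not clear that the purely local analysis of the link of $v$ that you propose (pigeonhole on the edges $e_i$, the polygon count at $v$, convexity of the trees) could be pushed to a contradiction on its own.
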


\begin{proof}
    If $v \in \partial D$, then since $D$ is non-singular and the intersection of a corner-cell with $\partial D$ is homeomorphic to an interval, $v$ can be a special vertex of at most two corner-cells.
    
    Now, assume that $v$ is contained in at least $3$ such corner-cells and that $v$ is internal.
    By assumption, $v$ is at distance $1$ from the boundary part of each of the corner cells, so for every corner cell $P$ having $v$ as a special vertex there is an end-vertex at distance $1$ from $v$.
    Note that these vertices belong to $\partial D$, so each of them is contained in at most two of the corner-cells.
    So there must exist a path of two edges $u-v-u'$ in $D$ where $u, u' \in \partial D$ are distinct vertices of type $1$ that are end-vertices of corner-cells (see Figure \ref{Fig:subdiagramD'}).
    Let $T_i$, $T_j$ be the standard trees of the cycle of standard trees such that $u \in T_i$ and $u' \in T_j$ (observe that $v$ belongs to both standard trees).
    Without loss of generality we can assume $j >i$. Note $T_i, \ldots, T_j$ form another loop of standard trees.
  
    Let $D'$ be the corresponding strict subdiagram of $D$, and $\gamma'$ the corresponding loop in $D_\Gamma$.
    Since $D$ is non-singular and $v$ is internal, it follows that $D'$ is also non-singular. 

    \begin{figure}[H]
    \centering
    \includegraphics[scale=1]{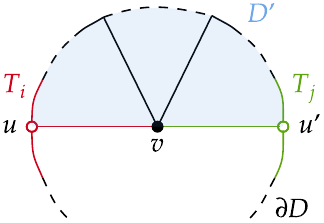}
    \caption{Subdiagram $D'$, delimited by $u-v-u'$ and part of $\partial D$.}
    \label{Fig:subdiagramD'}
    \end{figure}

    Thus, $D'$ and $\gamma'$ satisfy the hypotheses of Lemma~\ref{lem:cycle_trees_embedded_case}.
    By minimality of $\gamma$, we thus have that this sub-cycle bounds a single polygon contained in a fundamental domain of $D_\Gamma$. But that now implies that the generators of $\Fix(T_i)$ and $\Fix(T_j)$ generate a dihedral Artin group, contradicting the hypothesis on $\gamma$ that they generate a free group. 
\end{proof}

We will now redistribute curvature:
\begin{itemize}
    \item If $v$ is a special vertex of some corner-cell, set 
    \[\kappa'(v)\coloneqq  \kappa(v) + n\pi/3,\]
    where $n\leq 2$ is the number of corner-cells containing $v$ as a special vertex.
    \item If $P$ is a corner-cell of $D$, set 
    \[\kappa'(P) \coloneqq \kappa(P) - 2\pi/3\]
    \item For all other vertices and polygons, set $\kappa'(v) \coloneqq \kappa(v)$ and $\kappa'(P) \coloneqq \kappa(P)$ respectively.
\end{itemize}

\begin{lemma}\label{lem:redistribution_curvature}
    We have 
    \[\sum\limits_{v \in V(D)} \kappa'(v) + \sum\limits_{P \in P(D)} \kappa'(P) = 2\pi\]
\end{lemma}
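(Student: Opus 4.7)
The plan is to prove this by showing that the redistribution of curvature has zero net effect, so that the identity reduces to the original combinatorial Gauss--Bonnet identity already recalled above the lemma.

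First, I would invoke Gauss--Bonnet directly to get
\[\sum\limits_{v \in V(D)} \kappa(v) + \sum\limits_{P \in P(D)} \kappa(P) = 2\pi,\]
which holds because $D$ is a topological disc (it is non-singular by assumption) and each triangle has zero curvature under the chosen angles (so there is no contribution from the polygons coming from type $0$ vertex stars, modulo the redistribution already performed). It then suffices to show that
\[\sum\limits_{v \in V(D)} \bigl(\kappa'(v) - \kappa(v)\bigr) + \sum\limits_{P \in P(D)} \bigl(\kappa'(P) - \kappa(P)\bigr) = 0.\]

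For the polygon term, only corner-cells are affected and each contributes $-2\pi/3$, so the polygon sum equals $-\frac{2\pi}{3}\cdot C$, where $C$ is the number of corner-cells of $D$. For the vertex term, only special vertices contribute; a special vertex $v$ contributes exactly $n_v \cdot \pi/3$, where $n_v \in \{1,2\}$ is the number of corner-cells for which $v$ is a special vertex (this is where Lemma~\ref{LemmaSpecialVertices} is used to ensure $n_v \leq 2$ and hence that the formula we wrote down actually makes sense). Thus the vertex sum equals $\frac{\pi}{3}$ times the total number of incidences between corner-cells and their special vertices.

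The main step is therefore a simple double-counting of these incidences: by Lemma~\ref{LemmaInnerPath}, the inner path $\partial_i P$ of every corner-cell $P$ contains at least two type $2$ vertices, and by definition the special vertices of $P$ are exactly the first and last such vertices on $\partial_i P$, so $P$ has \emph{exactly} two special vertices. Hence the total number of (corner-cell, special vertex) incidences equals $2C$, and the vertex sum equals $\frac{\pi}{3}\cdot 2C = \frac{2\pi}{3}\cdot C$, which cancels the polygon contribution. I don't expect a real obstacle here; the only subtle point is checking that the definition of a special vertex really forces exactly two per corner-cell (not merely at least two), but this is immediate from the definition as ``first and last type $2$ vertex on $\partial_i P$''.
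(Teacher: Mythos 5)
Your proposal is correct and follows essentially the same route as the paper: both reduce the identity to the original combinatorial Gauss--Bonnet equation for $\kappa$ and then observe that the redistribution is net-zero, since each corner-cell has exactly two special vertices (Lemma~\ref{LemmaInnerPath} plus the definition), so the total $\frac{\pi}{3}$-gains at special vertices exactly cancel the $\frac{2\pi}{3}$-loss per corner-cell. Your explicit double-counting of (corner-cell, special vertex) incidences is just a slightly more spelled-out version of the paper's per-pair bookkeeping.
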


\begin{proof}
    This is a direct consequence of the way we have redistributed curvature. For each pair $(P, v)$ where $P$ is a corner-cell and $v$ is a special vertex of $P$, $v$ gains $\pi/3$, while $P$ loses $\pi/3$ curvature. Thus we have 
    \[\sum\limits_{v \in V(D)} \kappa'(v) + \sum\limits_{P \in P(D)} \kappa'(P) = \sum\limits_{v \in V(D)} \kappa(v) + \sum\limits_{P \in P(D)} \kappa(P)\]
    which is equal to $2\pi$ by the combinatorial Gauss--Bonnet Theorem \cite{ballmannbuyalo1996}.
\end{proof}

\begin{lemma}
Let $v$ be a vertex of $P$. Then: 
\begin{itemize}
    \item  If $v$ is a corner of $D$, we have $\kappa'(v) = 2\pi/3$. 
    \item If $v$ is the basepoint of $D$, and $v$ is not a corner, then we have $\kappa'(u) \leq \pi$.
    \item For every other vertex $v$ we have $\kappa'(v) \leq 0$.
\end{itemize}
\end{lemma}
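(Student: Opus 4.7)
The proof goes by case analysis on the location and type of $v$, applying the Gauss--Bonnet formula $\kappa(v) = 2\pi - \pi\chi(\mathrm{lk}(v)) - \sum_\Delta \angle_v(\Delta)$ to convert each estimate into a bound on an angle sum. In the polygonal structure, each polygon incident to $v$ contributes angle $\pi/3$ at $v$ if $v$ is of type~$2$ (since $v$ lies in exactly two of the triangles of that polygon, each contributing $\pi/6$), and angle $\pi$ if $v$ is of type~$1$ (two triangles, each contributing $\pi/2$). Since only type~$2$ vertices can ever be special, at every type~$1$ vertex we automatically have $\kappa'(v) = \kappa(v)$.

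The corner case is direct: a corner is a boundary type~$2$ vertex with $n_v = 1$, hence $\chi(\mathrm{lk}(v)) = 1$ and $\kappa(v) = \pi - \pi/3 = 2\pi/3$. To rule out redistribution contributions, I would note that the two boundary edges at a corner $v$ lie in the same (unique) polygon $P$, so $v$ sits in the interior of $\partial_o P$, is not an end-vertex of any corner-cell, and in particular does not lie on $\partial_i P'$ for any corner-cell $P'$; hence $v$ is not special and $\kappa'(v) = 2\pi/3$. For the basepoint case (when $v$ is not a corner), $v$ is again boundary type~$2$, but now with $n_v \geq 2$, so $\kappa(v) \leq \pi - 2\pi/3 = \pi/3$. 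Lemma~\ref{LemmaSpecialVertices} bounds the redistribution contribution by $2 \cdot \pi/3$, yielding $\kappa'(v) \leq \pi$.

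For the third item I would split into four subcases. At any type~$1$ vertex, $\kappa'(v) = \kappa(v) \leq 0$, since each incident polygon contributes $\pi$ and $n_v \geq \chi(\mathrm{lk}(v))$ in the non-singular diagram $D$. A boundary type~$2$ vertex at an intersection $T_i \cap T_{i+1}$ (with $1 \leq i < n$) that is not a corner is an almost-corner by Lemma~\ref{lem:at_least_5_cells}, so $n_v \geq 5$ and $\kappa(v) \leq -2\pi/3$, which absorbs the redistribution. For a boundary type~$2$ vertex interior to some arc $T_i$, the two boundary edges at $v$ are adjacent edges of the standard tree $T_i$, and Martin--Przytycki's XXXL angular bound \cite[Corollary~4.2]{martin2022acylindrical} forces the interior polygons at $v$ to span a total angle of at least $6\pi/3$, giving $n_v \geq 6$ and $\kappa(v) \leq -\pi$. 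Finally, for an internal type~$2$ vertex, Appel--Schupp provides $\kappa(v) \leq 0$; if $v$ is special in one or two corner-cells, I would sharpen this to $\kappa(v) \leq -2\pi/3$ by combining Appel--Schupp with the XXXL angular bounds and with the ``distance-$1$ to $\partial D$'' local configuration arising in the proof of Lemma~\ref{LemmaSpecialVertices}.

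The main obstacle is this last subcase: proving the sharp estimate $\kappa(v) \leq -2\pi/3$ for an internal type~$2$ vertex that is special in two distinct corner-cells. The generic Appel--Schupp non-positivity is not strong enough on its own, and one must exploit the XXXL angular constraints at $v$ together with the rigidity imposed by the two boundary end-vertices lying at simplicial distance~$1$ from $v$, each of which witnesses $v$ being special in a corner-cell and forces additional combinatorial density of polygons around $v$ beyond the generic XXXL lower bound.
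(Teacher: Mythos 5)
Most of your case analysis matches the paper's proof (corners are never special, the basepoint gains at most $2\cdot\pi/3$ by Lemma~\ref{LemmaSpecialVertices}, boundary type~$2$ vertices are handled via Lemma~\ref{lem:at_least_5_cells} and the angle bound of \cite[Corollary 4.2]{martin2022acylindrical}), but your final subcase — an \emph{internal} type~$2$ vertex that is special — is left unproven, and this is precisely the step where the estimate $\kappa'(v)\leq 0$ has to be earned. As written, your argument stops at "the main obstacle is this last subcase", so the proposal has a genuine gap.

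Moreover, your diagnosis of that subcase is off: no extra rigidity from the "distance-$1$ to $\partial D$" configuration, and no distinction between being special in one or two corner-cells, is needed. The point is that Appel--Schupp \cite[Lemma~6]{appel1983artin} does not merely give non-positive curvature at internal type~$2$ vertices; it bounds the length of the link cycle from below by the girth of the link of the corresponding vertex of $D_\Gamma$, which in the XXXL case means that an internal type~$2$ vertex of the (minimal) diagram is contained in at least $2m_{st}\geq 12$ polygons. With the chosen angles each polygon contributes $\pi/3$ at a type~$2$ vertex, so $\kappa(v)\leq 2\pi - 12\cdot\tfrac{\pi}{3} = -2\pi$, and since Lemma~\ref{LemmaSpecialVertices} caps the redistributed curvature at $2\cdot\tfrac{\pi}{3}$, one gets $\kappa'(v)\leq -2\pi + \tfrac{2\pi}{3} < 0$ immediately. (Even the weaker uniform bound $\kappa(v)\leq -2\pi/3$, which is what the paper actually uses for both the boundary and internal cases, follows at once from these counts.) So the subcase you flagged as the hard one is a one-line consequence of the same XXXL girth/angle bounds you already invoke elsewhere; once you add that count, your proof coincides with the paper's.
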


\begin{proof}
    Recall that with our choice of angles, we had $\kappa(v) \leq 0$ for every vertex $v \in D$ except when $v$ is a corner (where $\kappa(v) = 2\pi/3$) and possibly when $v$ is a basepoint (if it is not a corner, then $\kappa(v) \leq \pi/3$). Since a corner cannot be a special vertex, we have $\kappa'(v) = \kappa(v) = 2\pi/3$ for a corner $v$. If $v$ is the basepoint and is not a corner, then by Lemma \ref{LemmaSpecialVertices} \[\kappa'(v) \leq \kappa(v) + 2\pi/3 \leq \pi.\]
    It remains to consider the case of a vertex that is neither a corner nor the basepoint. If $v$ is not special, then $\kappa'(v) = \kappa(v) \leq 0$. Otherwise, notice that since our Artin group $A_\Gamma$ is of XXXL-type by assumption, $v$ is contained in either at least $5$ polygons if $v \in \partial D$ by Lemma~\ref{lem:at_least_5_cells}, or in at least $12$ polygons if $v$ is internal by \cite[Lemma~6]{appel1983artin}. In each case, the choice of angles gives us $\kappa(v) \leq -2\pi/3$, hence 
    \[\kappa'(v) \leq \kappa(v) + 2\pi/3 \leq 0\]
    which concludes the proof
\end{proof}

\begin{lemma}
    For every polygon $P$ of $D$ we have $\kappa'(P) \leq 0$.
\end{lemma}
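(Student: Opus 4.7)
The plan is to unravel the definition of $\kappa'$ on polygons and combine it with the bound $\kappa(P) \leq 0$ already recorded before the curvature redistribution. By construction, $\kappa'(P)$ coincides with $\kappa(P)$ unless $P$ is a corner-cell, in which case $\kappa'(P) = \kappa(P) - 2\pi/3$. So it suffices to verify that $\kappa(P) \leq 0$ for every polygon, after which both cases immediately yield $\kappa'(P) \leq 0$ (and in fact $\kappa'(P) \leq -2\pi/3$ when $P$ is a corner-cell).

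The non-positivity of $\kappa(P)$ is the point already invoked earlier in the subsection, but it is worth spelling out in the write-up. With the chosen assignment $\pi/3, \pi/2, \pi/6$ at vertices of type $0, 1, 2$, a polygon $P = P_v$ arising from a type $0$ vertex $v$ surrounded by $2k$ triangles has $\kappa(P) = \kappa(v) = 2\pi - 2k\pi/3$, since $\mathrm{lk}(v)$ is a cycle (hence $\chi = 0$) and the $2k$ triangles each contribute $\pi/3$ at $v$. So the non-positivity reduces to the bound $k \geq 3$, that is, $\partial P_v$ has length at least $6$. This follows from the observation that, in the modified Deligne complex, the link of a type $0$ vertex is the barycentric subdivision of $\Gamma$, whose girth is at least $2 g(\Gamma) \geq 6$ because $\Gamma$ is a simplicial graph; in particular any reduced closed walk of minimal length in this link is an embedded cycle of even length at least $6$.

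There is no real obstacle in this lemma: it is a short bookkeeping consequence of the redistribution scheme set up earlier. The genuine work of the curvature analysis is concentrated in the preceding lemma about vertex curvatures, where one has to handle special and corner vertices carefully; here the polygons merely absorb, with room to spare, the $2\pi/3$ of curvature that was transferred away from corner-cells to their special vertices. The XXXL hypothesis does not enter this step directly, only through the link-girth bound, which in fact holds as soon as $\Gamma$ is a simplicial graph.
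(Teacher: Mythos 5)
Your proof is correct and takes essentially the same route as the paper: the lemma is read off from the definition of $\kappa'$ (which subtracts $2\pi/3$ only on corner-cells) together with $\kappa(P)\leq 0$ for every polygon, which the paper asserts as a consequence of the choice of angles. Your extra justification of $\kappa(P)\leq 0$ via the link of a type $0$ vertex having girth at least $6$ (it is the barycentric subdivision of the simplicial graph $\Gamma$, and the boundary walk of $P_v$ is non-backtracking because $D$ has minimal area) merely spells out what the paper leaves implicit, so there is no substantive difference.
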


\begin{proof}
    This is a direct consequence of the construction of $\kappa'$, and the fact that $\kappa(P)\leq 0$ for every polygon of $D$, by our choice of angles.
\end{proof}

\begin{lemma}
    Let $P$ be a corner-cell, and let $v_1, \ldots, v_k$ be the corners contained in $P$. Then 
    \[\kappa'(P) + \sum_{1 \leq i \leq k} \kappa'(v_i)\leq 0\]
\end{lemma}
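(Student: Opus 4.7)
The plan is to reduce the desired inequality to the combinatorial bound $k \leq m_P - 2$, where $m_P$ is the dihedral label associated to the polygon $P$, and then to derive that bound from Lemma~\ref{LemmaInnerPath}.

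First I would compute $\kappa(P)$ explicitly. The polygon $P$ replaces the star of a type $0$ vertex $v_P$, whose link is a bipartite cycle with $m_P$ vertices of type $1$ alternating with $m_P$ of type $2$, and whose star contains $2m_P$ triangles each contributing an angle of $\pi/3$ at $v_P$. Since $v_P$ is interior, $\chi(\mathrm{lk}(v_P)) = 0$, so
\[\kappa(P) = \kappa(v_P) = 2\pi - \tfrac{2m_P \pi}{3}.\]
By the previous lemma each corner contributes $\kappa'(v_i) = 2\pi/3$, and by definition $\kappa'(P) = \kappa(P) - 2\pi/3$. Adding these,
\[\kappa'(P) + \sum_{i=1}^{k} \kappa'(v_i) = \tfrac{2\pi}{3}\bigl(2 + k - m_P\bigr),\]
which is $\leq 0$ if and only if $k \leq m_P - 2$.

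It thus suffices to prove $k \leq m_P - 2$, which I would obtain by examining $\partial P$. The outer path $\partial_o P = P \cap \partial D$ and the inner path $\partial_i P$ cover $\partial P$ and meet exactly at the two end-vertices. A corner $v$ of $P$ satisfies $n_v = 1$, so $v$ lies in no polygon other than $P$; in particular $v$ cannot be an end-vertex of $P$ (which must lie in at least two polygons), and so every corner of $P$ lies strictly in the interior of $\partial_o P$ and is of type $2$. By Lemma~\ref{LemmaInnerPath} the closed inner path contains at least two type $2$ vertices, and since $\partial P$ has exactly $m_P$ type $2$ vertices in total, at most $m_P - 2$ of them lie strictly in the interior of $\partial_o P$. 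In particular $k \leq m_P - 2$, as required.

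The reduction and the counting are both essentially routine; the only genuine subtlety is to observe that corners cannot be end-vertices, which follows immediately from $n_v = 1$. The XXXL hypothesis enters in two places: it ensures $m_P \geq 6$, so that $\kappa(P) \leq -2\pi$ contains enough negative curvature to absorb up to four corners, and it underlies Lemma~\ref{LemmaInnerPath}. I expect no serious obstacle beyond careful bookkeeping of vertex types on $\partial P$.
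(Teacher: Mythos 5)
Your argument is correct and is essentially the paper's proof: both reduce the statement to the count that the type~$2$ vertices of $\partial P$ comprise the $k$ corners (which lie strictly inside $\partial_o P$) together with at least two type~$2$ vertices of $\partial_i P$ supplied by Lemma~\ref{LemmaInnerPath}, which with the chosen angles yields the inequality. One caveat: your $m_P$ is the number of type~$2$ vertices on $\partial P$ (half the number of triangles around the erased type~$0$ vertex), not a dihedral label --- the boundary of $P$ maps to a loop in a translate of $K_{\Gamma}$ that may traverse several different edges of $\Gamma$ --- but since your computation only ever uses the link count, this terminological slip (and the unneeded side claim that the XXXL hypothesis forces $m_P \geq 6$) does not affect the validity of the proof.
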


\begin{proof}
By Lemma \ref{LemmaInnerPath}, there are at least two type $2$ vertices of $P$ that lie in $\partial_i P$. Let us denote by $n_i(P)\geq 2$ the number of type 2 vertices in $\partial_i P$, and by $n_c(P) =k$ the number of corners of $D$ contained in $P$. We thus have, by our choice of angles:

\[\kappa'(P) = \kappa(P) - \frac{2\pi}{3} \leq 2\pi - (n_c(P) + n_i(P)) \frac{2\pi}{3} - \frac{2\pi}{3}, \]

\noindent and hence 
\[\kappa'(P) + \sum_{1 \leq i \leq k} \kappa'(v_i) = 2\pi - n_i(P)\frac{2\pi}{3} - \frac{2\pi}{3} \leq 0.\]
\end{proof}

\begin{proof}[Proof of Lemma~\ref{lem:cycle_trees_embedded_case}] 
Recall that by Lemma \ref{lem:redistribution_curvature} we have that 
\[\sum\limits_{v \in V(D)} \kappa'(v) + \sum\limits_{P \in P(D)} \kappa'(P) = 2 \pi.\]
Since a corner belongs to a single corner-cell by definition, the above and the previous lemmas implies that

\[2 \pi \leq \kappa'(u_0) + \sum\limits_{\substack{P \in P(D) \\ \text{corner-cell}}} \underbrace{\big(\kappa'(P) + \sum\limits_{\substack{v \in V(D) \\ \text{corner of } P}} \kappa'(v) \big)}_{\leq 0} + \sum\limits_{\substack{X \in V(D) \sqcup P(D) \\ \text{not corner,} \\ \text{not corner-cell}}} \underbrace{\kappa'(X) }_{\leq 0} \leq \pi,\]
which gives a contradiction.
\end{proof}

\section{Graphs with no cut-vertex or separating edge}\label{sec:no_separating}

The goal of this section is to prove the following intermediate result on our way to Theorem~\ref{thm:XXXL_cohopf}: 

\begin{proposition}\label{prop:inj_hom_general_case}
    Let $f: A_\Gamma \hookrightarrow A_{\Gamma'}$ be an injective homomorphism between XL-type Artin groups.
    Suppose that $\Gamma$ is connected with $|V(\Gamma)|\geq 3$ and has no cut-vertex or separating edge. Suppose also that $A_{\Gamma'}$ satisfies the Cycle of Standard Trees Property.

    Then there exists an element $g\in A_{\Gamma'}$, an embedding $\varphi: \Gamma \hookrightarrow \Gamma'$ of labelled graphs, and an integer $\varepsilon = \pm 1$ such that $f(v) = g\varphi(v)^\varepsilon g^{-1}$ for all standard generators $v \in V(\Gamma)$. 
\end{proposition}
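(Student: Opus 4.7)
The plan is to combine the tools from Sections~\ref{sec:image_generator} and~\ref{sec:cycle_trees}. Since $\Gamma$ has no cut-vertex and $|V(\Gamma)|\geq 3$, no vertex of $\Gamma$ can be a leaf (its unique neighbour would otherwise be a cut-vertex), so in particular $\Gamma$ has no even-leaves, and Lemma~\ref{lem:generators_to_generators} applies to every pair of adjacent vertices. Thus every $f(v)$ is conjugate to a standard generator of $A_{\Gamma'}$ or its inverse. It then remains to produce a single common conjugator $g$ and a common sign $\varepsilon$ valid for all $v$ simultaneously.

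For the common conjugator, I would invoke the Cycle of Standard Trees Property together with Lemma~\ref{lem:trivial_labels}. For each induced cycle $\gamma$ of $\Gamma$, Lemma~\ref{lem:cycles_to_cycles} ensures that $(\Fix(f(v)))_{v\in V(\gamma)}$ forms a cycle of standard trees, and so the Cycle of Standard Trees Property produces a unique $g_\gamma \in A_{\Gamma'}$ such that the associated loop sits inside the translate $g_\gamma K_{\Gamma'}$. Lemma~\ref{lem:trivial_labels} asserts that these $g_\gamma$'s all coincide; call this common element $g$. Since $\Gamma$ is $2$-connected and has no separating edge, every edge of $\Gamma$ lies in some induced cycle (any cycle through the given edge can be iteratively shortened using chords to an induced one), so every $v \in V(\Gamma)$ lies in some induced cycle. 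For each such $v$, the standard tree $\Fix(f(v))$ contains an edge of $gK_{\Gamma'}$, forcing $f(v) = g\varphi(v)^{k_v}g^{-1}$ for some $\varphi(v) \in V(\Gamma')$ and some non-zero integer $k_v$.

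It remains to verify that $\varphi$ defines a labelled graph embedding with constant sign. Injectivity of $\varphi$ follows from the injectivity of $f$ together with the fact that distinct standard generators of a large-type Artin group never commute: if $\varphi(v_1) = \varphi(v_2)$ with $v_1 \neq v_2$, then $f(v_1)$ and $f(v_2)$ would commute, forcing $v_1 = v_2$. For adjacent $v_1, v_2 \in V(\Gamma)$, choosing an induced cycle through the edge $\{v_1, v_2\}$ exhibits the vertex $\Fix(f(v_1)) \cap \Fix(f(v_2)) = g\langle \varphi(v_1), \varphi(v_2)\rangle$ inside $gK_{\Gamma'}$, so $\varphi(v_1)$ and $\varphi(v_2)$ are adjacent in $\Gamma'$. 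Applying $f$ to the braid relation and invoking Lemma~\ref{lem:alternating product equality} forces $k_{v_1} = k_{v_2} = \pm 1$ and that $m_{v_1 v_2}$ is a multiple of $m_{\varphi(v_1)\varphi(v_2)}$; equality of the labels then follows from \cite[Lemma 6]{appel1983artin} as in the proof of Corollary~\ref{cor:injective_labels}. Since $k_{v_1} = k_{v_2}$ for every adjacent pair and $\Gamma$ is connected, a single $\varepsilon = \pm 1$ works uniformly. The substantive work is already encapsulated in Lemma~\ref{lem:trivial_labels}; Proposition~\ref{prop:inj_hom_general_case} is thus an assembly of existing tools, with the main care being to ensure every vertex of $\Gamma$ lies in an induced cycle and that the resulting assignment $\varphi$ genuinely preserves the labelled graph structure.
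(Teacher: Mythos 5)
Your proposal is correct and follows essentially the same route as the paper: Lemma~\ref{lem:generators_to_generators} for the images of generators, Lemma~\ref{lem:cycles_to_cycles} together with the Cycle of Standard Trees Property to get a conjugator $g_\gamma$ for each induced cycle, Lemma~\ref{lem:trivial_labels} to make all these conjugators coincide, and Lemma~\ref{lem:alternating product equality} plus an Appel--Schupp-type argument for the sign and the labels. The only difference is organizational: the paper packages the sign, adjacency and label checks into the cycle case (Lemma~\ref{lem:inj_hom_cycle_case}) and then glues the resulting embeddings, whereas you inline that verification globally over $\Gamma$, which amounts to the same argument.
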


We first treat the case where $\Gamma$ is a cycle.

\begin{lemma}\label{lem:inj_hom_cycle_case}
    Let $f: A_\Gamma \hookrightarrow A_{\Gamma'}$ be an injective homomorphism between XL-type Artin groups of rank $\geq 3$. Suppose that $\Gamma$ is a cycle. Suppose also that $A_{\Gamma'}$ satisfies the Cycle of Standard Trees Property.
    
    Then there exists an element $g\in A_{\Gamma'}$, an embedding $\varphi: \Gamma \hookrightarrow \Gamma'$ of labelled graphs, and an integer $\varepsilon = \pm 1$ such that $f(v) = g\varphi(v)^\varepsilon g^{-1}$ for all standard generators $v \in V(\Gamma)$.  
\end{lemma}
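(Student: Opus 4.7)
The plan is to extract a common conjugator $g$ using the Cycle of Standard Trees Property, and then verify that the resulting map of vertices is a labelled graph embedding.

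Let $v_1, \ldots, v_N$ denote the vertices of the cycle $\Gamma$, indexed cyclically. Since $\Gamma$ is connected, has no even-leaves (indeed, no leaves at all), and $A_\Gamma$ has rank at least $3$, Lemma~\ref{lem:generators_to_generators} applies: each $f(v_i)$ is conjugated to a standard generator of $A_{\Gamma'}$ or its inverse. By Lemma~\ref{lem:cycles_to_cycles}, the associated fixed-point sets $T_i = \Fix(f(v_i))$ form a cycle of standard trees in $D_{\Gamma'}$. Since $A_{\Gamma'}$ satisfies the Cycle of Standard Trees Property, there exists a (unique) element $g \in A_{\Gamma'}$ such that the associated loop $\gamma$ is contained in the translate $gK_{\Gamma'}$ of the fundamental domain of $D_{\Gamma'}$.

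Next I would show that for each $i$, the intersection $T_i \cap gK_{\Gamma'}$ contains an edge of $D_{\Gamma'}$, from which one reads off a standard generator $v_i' \in V(\Gamma')$ and a sign $\varepsilon_i \in \{\pm 1\}$ satisfying $f(v_i) = g(v_i')^{\varepsilon_i} g^{-1}$. In the typical case where the geodesic segment $\gamma_i \subseteq T_i$ has positive length, this is immediate, since $\gamma_i$ traverses at least one edge of $T_i$ lying in $gK_{\Gamma'}$: the pointwise stabiliser of $T_i$ is then the cyclic subgroup $g\langle v_i'\rangle g^{-1}$, and since $f(v_i)$ generates $\Fix(T_i)$ up to sign, we obtain the desired expression. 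The main technical obstacle is to rule out the degenerate possibility that $\gamma_i$ reduces to a single type-$2$ vertex $p$; in that case $T_{i-1}, T_i, T_{i+1}$ would be three distinct standard trees sharing $p$, and one uses Lemma~\ref{lem:subgroup_conjugates_generators} applied to the dihedral Artin stabiliser $\Stab(p)$, together with the fact that each consecutive pair $f(v_j), f(v_{j+1})$ generates a dihedral parabolic (forcing the corresponding cyclic stabilisers to be simultaneously conjugate to standard generators of $\Stab(p)$), to derive a contradiction with the distinctness of the $T_j$'s.

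Finally, I would use the braid relations to pin down signs and labels, and conclude that $\varphi : v_i \mapsto v_i'$ is a labelled graph embedding. Applying $f$ to $\Pi(v_i, v_{i+1}; m_{v_iv_{i+1}}) = \Pi(v_{i+1}, v_i; m_{v_iv_{i+1}})$ and conjugating by $g^{-1}$ yields an equality of alternating products in the dihedral Artin group $\langle v_i', v_{i+1}'\rangle$. Lemma~\ref{lem:alternating product equality} then forces $\varepsilon_i = \varepsilon_{i+1}$ and that $m_{v_i'v_{i+1}'}$ divides $m_{v_iv_{i+1}}$; propagating around the cycle gives a common sign $\varepsilon = \pm 1$. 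To upgrade divisibility to equality, I would argue exactly as in the proof of Corollary~\ref{cor:injective_labels}: any strict divisor would yield a shorter braid relation in $A_{\Gamma'}$ that pulls back through $f$ to a non-trivial relation in the dihedral parabolic $\langle v_i, v_{i+1}\rangle \leq A_\Gamma$ of length strictly less than $2m_{v_iv_{i+1}}$, contradicting Appel--Schupp~\cite[Lemma 6]{appel1983artin}. Injectivity of $\varphi$ follows from injectivity of $f$ (if $v_i' = v_j'$ under a common conjugator and sign, then $f(v_i) = f(v_j)$), and adjacency is preserved since $\langle v_i', v_{i+1}'\rangle$ is a dihedral parabolic of $A_{\Gamma'}$.
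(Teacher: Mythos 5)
Your proposal is correct and follows essentially the same route as the paper: Lemma~\ref{lem:generators_to_generators} and Lemma~\ref{lem:cycles_to_cycles} produce the cycle of standard trees, the Cycle of Standard Trees Property gives the common translate $gK_{\Gamma'}$, and then Lemma~\ref{lem:alternating product equality} together with injectivity pins down the signs, adjacency, and (via the Appel--Schupp argument) the equality of labels, exactly as in the paper. The only quibble concerns your degenerate case (which the paper does not even discuss): the contradiction there is not with the distinctness of the $T_j$'s --- three distinct trees through $p$ with consecutive pairs simultaneously conjugate to standard generators of $\Stab(p)$ is a priori possible --- but rather with injectivity, since Lemma~\ref{lem:subgroup_conjugates_generators} forces $\langle f(v_{i-1}), f(v_i)\rangle = \Stab(p) \ni f(v_{i+1})$, so $v_{i+1}$ would lie in $\langle v_{i-1}, v_i\rangle$, contradicting Theorem~\ref{thm:vdL}.
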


\begin{proof}
By Lemma~\ref{lem:generators_to_generators}, we know that every standard generator of $A_{\Gamma}$ is sent to a conjugate of a standard generator of $A_{\Gamma'}$ or its inverse. 

Denote cyclically by $\{v_i \}_{i \in \mathbb{Z} / n \mathbb{Z}}$ the vertices of $\Gamma$.
For each $i$, let $T_i$ be the standard tree of $D_{\Gamma'}$ that is the fixed-point set of $f(v_i)$. Since $f$ is injective, for every $i\in \mathbb{Z} / n \mathbb{Z}$, the elements $f(v_i)$ and $f(v_{i+1})$ generate a dihedral Artin group, hence they fix a common vertex of type $2$ in $D_{\Gamma'}$ by Lemma~\ref{lem:abstract_dihedral_subgroups}.
In particular, the trees $T_i$ and $T_{i+1}$ intersect.
Thus, $T_1, \ldots, T_n$ forms a cycle of standard trees.
Moreover, by Theorem~\ref{thm:vdL}, for every $i, j$ with $1<|j-i|<n-1$, the standard generators $v_i$ and $v_j$ generate a free subgroup, hence so do $f(v_i)$ and $f(v_j)$ by injectivity of $f$.
We can thus apply Lemma \ref{lem:trivial_labels}, and it follows that there exists an element $g \in A_{\Gamma'}$, and a map $\varphi: V(\Gamma) \rightarrow V(\Gamma')$ such that for every vertex $s$ of $\Gamma$, we have that \[f(s) = g\varphi(v)^{n_v}g^{-1}.\]
Notice that $\varphi$ is injective. Indeed, if that were not the case, there would exist two distinct vertices $s, t$ of $\Gamma$ such that $f(s)$ and $f(t)$ have a common a power. But since $\langle s \rangle \cap \langle t \rangle = \{1\}$ by Theorem~\ref{thm:vdL}, this contradicts the injectivity of $f$. 

Now, let $s, t \in V(\Gamma)$ be two adjacent vertices. Up to conjugation by $g$, the relation $\Pi(s,t;m_{st}) = \Pi(t,s;m_{st})$ through $f$ becomes
\[\Pi(\varphi(s)^{n_s},\varphi(t)^{n_t};m_{st}) = \Pi(\varphi(t)^{n_t},\varphi(s)^{n_s};m_{st}).\]
By Lemma \ref{lem:alternating product equality}, it must then be that $n_s = n_t = \pm 1$. This shows that whenever $s$ and $t$ are adjacent, $n_s$ and $n_t$ are equal and both are either $1$ or $-1$. As $\Gamma$ is connected, then all $n_{v_i}$'s are equal to some $\varepsilon = \pm 1$.

Let us now prove that $\varphi$ extends to an embedding of labelled graph $\varphi: \Gamma \hookrightarrow \Gamma'$. We only have left to show that $\varphi$ maps adjacent vertices of $\Gamma$ to adjacent vertices of $\Gamma'$. Let $s, t \in V(\Gamma)$ be two adjacent vertices. By injectivity of $f$, we have that the $f(s)$ and $f(t)$ generate a dihedral Artin group. In particular, the subgroup generated by $\varphi(s)$ and $\varphi(t)$ cannot be free, and it follows from Theorem~\ref{thm:vdL} that $\varphi(s)$ and $\varphi(t)$ are adjacent in $\Gamma'$. Finally, let us show that $\varphi$ preserves the labels of edges. Let $s, t \in V(\Gamma)$ be two adjacent vertices. Up to conjugation by $g$, the relation $\Pi(s,t;m_{st}) = \Pi(t,s;m_{st})$ through $f$ becomes
\[\Pi(\varphi(s),\varphi(t);m_{st}) = \Pi(\varphi(t),\varphi(s);m_{st}).\]
By Lemma \ref{lem:alternating product equality}, we have that $m_{st}$ is a multiple of $m_{\varphi(s)\varphi(t)}$. Up to conjugation by $g$ and composition with the global inversion, we can assume that $f$ sends $s$ to $\varphi(s)$ and $t$ to $\varphi(t)$. If $m_{st}>m_{\varphi(s)\varphi(t)}$, then the element \[\Pi(s,t;m_{\varphi(s)\varphi(t)})\Pi(t,s;m_{\varphi(s)\varphi(t)})^{-1} \]
 is a non-trivial element of $\langle s, t \rangle < A_\Gamma$ that maps to the identity element of $A_{\Gamma'}$, contradicting the injectivity of $f$. 
 Thus, $\varphi$ extends to a label-preserving embedding $\Gamma \rightarrow \Gamma'$, which concludes the proof. 
\end{proof}

\begin{proof}[Proof of Proposition~\ref{prop:inj_hom_general_case}]
    Let $\gamma$ be an induced cycle of $\Gamma$ (which exists since $\Gamma$ does not have a cut-vertex).
    By applying Lemma~\ref{lem:inj_hom_cycle_case}, we get an element $g_\gamma\in A_{\Gamma'}$ and a label-preserving graph embedding $\varphi_\gamma: \gamma \hookrightarrow \Gamma'$ such that for every standard generator $v\in V(\gamma)$, we have \[f(v)=g_\gamma\varphi_\gamma(v)g_\gamma^{-1}\]
    By Lemma~\ref{lem:trivial_labels}, we have $g_\gamma = g_{\gamma'}$ for every two induced cycles $\gamma, \gamma'$.
    Moreover, the embeddings $\varphi_\gamma$ are compatible on their intersection and yield a label-preserving graph homomorphism $\varphi:\Gamma \rightarrow \Gamma'$.
    By injectivity of $f$, it follows that $\varphi$ is an embedding, which concludes the proof. 
\end{proof}

\paragraph{Consequences.} We list below a few immediate corollaries:

\begin{corollary}\label{cor:cst_cohopfian}
    Let $A_\Gamma$ be an XL-type Artin group that satisfies the Cycle of Standard Trees Property. Suppose that $\Gamma$ is connected with $|V(\Gamma)|\geq 3$ and has no cut-vertex or separating edge.
    Then $A_\Gamma$ is co-hopfian.
\end{corollary}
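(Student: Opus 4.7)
The strategy is to apply Proposition~\ref{prop:inj_hom_general_case} directly with $\Gamma' = \Gamma$. Let $f : A_\Gamma \hookrightarrow A_\Gamma$ be an arbitrary injective endomorphism; the goal is to show that $f$ is surjective. First I note that the combinatorial hypotheses ($\Gamma$ connected, not an edge, with no cut-vertex and no separating edge) force $|V(\Gamma)| \geq 3$: the case $|V(\Gamma)|=2$ would require either a disconnected graph or an edge, and the single-vertex case $A_\Gamma \cong \mathbb{Z}$ is not co-hopfian and hence must be understood as implicitly excluded by the statement. Thus the hypotheses of Proposition~\ref{prop:inj_hom_general_case} are all satisfied (co-Hopfianness of $A_{\Gamma'}=A_\Gamma$ is not needed; only the Cycle of Standard Trees Property and the graph conditions are).

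Applying the proposition, we obtain an element $g \in A_\Gamma$, a label-preserving graph embedding $\varphi : \Gamma \hookrightarrow \Gamma$, and a sign $\varepsilon = \pm 1$ such that
\[
f(v) \;=\; g\, \varphi(v)^{\varepsilon}\, g^{-1} \qquad \text{for every } v \in V(\Gamma).
\]
Since $V(\Gamma)$ is finite, the injective self-map $\varphi$ of $V(\Gamma)$ is automatically a bijection; combined with the fact that $\varphi$ sends edges to edges and preserves labels, $\varphi$ is in fact a labelled-graph automorphism of $\Gamma$. Therefore $\{\varphi(v)^{\varepsilon} : v \in V(\Gamma)\}$ is, up to a global inversion, a permutation of the standard generating set of $A_\Gamma$, and consequently the conjugates $\{g\,\varphi(v)^{\varepsilon}g^{-1} : v \in V(\Gamma)\} \subseteq \mathrm{Im}(f)$ generate the subgroup $g A_\Gamma g^{-1} = A_\Gamma$. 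Hence $f$ is surjective, and so is an automorphism of $A_\Gamma$, which proves that $A_\Gamma$ is co-hopfian.

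The entire substantive content of the argument is packaged in Proposition~\ref{prop:inj_hom_general_case}; the corollary itself is a short finiteness argument converting an injective self-embedding of $\Gamma$ into a graph automorphism, and then converting the resulting rigidity statement about $f$ on generators into surjectivity. Accordingly, there is no genuinely new obstacle to overcome at this stage beyond the careful bookkeeping of the small-rank edge cases discussed above.
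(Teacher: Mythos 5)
Your proof is correct and follows essentially the same route as the paper: apply Proposition~\ref{prop:inj_hom_general_case} with $\Gamma'=\Gamma$, use finiteness of $\Gamma$ to upgrade the embedding $\varphi$ to a labelled-graph automorphism, and conclude that the image of $f$ contains a generating set, hence $f$ is surjective. Your extra remark about the single-vertex case is a reasonable clarification but not a substantive departure from the paper's argument.
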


\begin{proof}
    Let $f: A_\Gamma \hookrightarrow A_\Gamma$ be an injective homomorphism. By Proposition~\ref{prop:inj_hom_general_case}, we get a label-preserving embedding $\varphi: \Gamma \hookrightarrow \Gamma$, an integer $\varepsilon = \pm1$ and an element $g \in A_{\Gamma'}$ such that for every standard generator $v \in V(\Gamma)$, we have 
    \[f(v) = g\varphi(v)^{\pm1}g^{-1}\]
    Since $\Gamma$ is a finite graph, it follows that $\varphi$ is an isomorphism. In particular, the image of $f$ contains the $g$-conjugates of all the standard generators of $A_{\Gamma}$. Thus, $f$ is surjective, hence an isomorphism. 
\end{proof}

\begin{remark}\label{rem:recovering_Vaskou}
    Note that we can use Proposition~\ref{prop:inj_hom_general_case} to recover a particular case of Vaskou's solution of the Isomorphism Problem for large-type Artin groups \cite{vaskou2023isomorphism}.
    Indeed, if we have an isomorphism $f: A_\Gamma \rightarrow A_{\Gamma'}$ between two XXXL-type Artin groups where $\Gamma$ is connected, is not an edge and does not containing a cut-vertex or separating edge, then Proposition~\ref{prop:inj_hom_general_case} yields a label-preserving embedding $\varphi: \Gamma \hookrightarrow \Gamma'$, an integer $\varepsilon = \pm1$ and an element $g \in A_{\Gamma'}$ such that for every standard generator $v \in V(\Gamma)$, we have 
    $$f(v) = g\varphi(v)^{\pm1}g^{-1}$$
    Since $f$ is surjective and the image of $f$ is contained in the parabolic subgroup $g \langle \varphi(v) ~|~ v \in V(\Gamma) \rangle g^{-1}$, it follows that $\varphi$ is surjective. Thus, it is a label-preserving isomorphism. 
    
    When $\Gamma'=\Gamma$, we obtain a generalisation of a result of Vaskou \cite{vaskou2023automorphisms}, namely that $\Aut(A_\Gamma)$ is generated by the conjugations, graph automorphisms, and the global inversion.
\end{remark}

\begin{corollary}\label{cor:characterisation_finite_Out}
   Let $A_\Gamma$ be an XL-type Artin group that satisfies the Cycle of Standard Trees Property. Then $\Out(A_\Gamma)$ is finite if and only if $\Gamma$ is connected, is not an edge, and does not contain a cut-vertex or a separating edge. Moreover, when $\Out(A_{\Gamma})$ is finite, $\Aut(A_{\Gamma})$ is generated by the conjugations, the graph automorphisms, and the global inversion.
\end{corollary}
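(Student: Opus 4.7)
My plan is to prove both directions separately. The sufficiency direction will follow essentially immediately from Remark~\ref{rem:recovering_Vaskou}, while for the necessity direction I will exhibit an infinite family of pairwise distinct outer automorphisms whenever one of the listed conditions fails.

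For the sufficiency direction, I would assume $\Gamma$ is connected, is not an edge, and has neither a cut-vertex nor a separating edge, and then apply Remark~\ref{rem:recovering_Vaskou} with $\Gamma' = \Gamma$ to an arbitrary automorphism $f \in \Aut(A_\Gamma)$. This shows that $\Aut(A_\Gamma)$ is generated by inner automorphisms, graph automorphisms of $\Gamma$, and the global inversion. Since $\Aut(\Gamma)$ is finite and the global inversion has order $2$, the images of these finitely many elements in $\Out(A_\Gamma)$ form a finite generating set, so $\Out(A_\Gamma)$ is finite. This argument simultaneously yields the ``moreover'' statement about the explicit generating set.

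For the necessity direction, I plan to argue contrapositively: each failure of the conditions produces a Bass--Serre splitting of $A_\Gamma$ over a subgroup whose centraliser contains infinite-order elements, yielding Dehn twists as distinct outer automorphisms. Explicitly: (i) if $\Gamma = \Gamma_1 \sqcup \Gamma_2$ is disconnected, then $A_\Gamma = A_{\Gamma_1} * A_{\Gamma_2}$, and for $v \in V(\Gamma_1)$ the partial conjugations $\varphi_n$ (identity on $A_{\Gamma_1}$, conjugation by $v^n$ on $A_{\Gamma_2}$) give infinitely many distinct outer automorphisms; (ii) if $\Gamma$ has a cut-vertex $v$ decomposing $\Gamma = \Gamma_1 \cup \Gamma_2$ with $\Gamma_1 \cap \Gamma_2 = \{v\}$, then $A_\Gamma = A_{\Gamma_1} *_{\langle v \rangle} A_{\Gamma_2}$ and Dehn twists by powers of $v$ work; (iii) if $\Gamma$ has a separating edge $e = \{s,t\}$ (which encompasses the case where $\Gamma$ itself is the edge), then $A_\Gamma = A_{\Gamma_1} *_{A_e} A_{\Gamma_2}$ and Dehn twists by powers of the infinite-order central element $z_{st} \in Z(A_e)$ give distinct outer automorphisms.

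The hard part will be verifying that these Dehn twists yield \emph{non-inner} and \emph{pairwise non-conjugate} outer automorphisms. My approach would be via centraliser considerations: if $\tau_n \tau_m^{-1}$ were inner, say conjugation by some $g \in A_\Gamma$, then $g$ would centralise all of $A_{\Gamma_1}$; by the classification of centralisers in large-type Artin groups (Proposition~\ref{PropCentralisers} and related results), this forces $g$ into a very small subgroup, and combined with the fact that $\tau_n \tau_m^{-1}$ acts non-trivially on $A_{\Gamma_2}$ unless $n = m$, this gives the required distinctness. Alternatively, Bass--Serre theory provides a tree on which $A_\Gamma$ acts, and distinct Dehn twists can be distinguished by their translation lengths on appropriately chosen axes of this tree.
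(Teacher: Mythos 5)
Your sufficiency direction (and the ``moreover'' clause) is exactly the paper's argument via Remark~\ref{rem:recovering_Vaskou}, and your overall strategy for necessity --- partial conjugations/Dehn twists along the splitting coming from a disconnection, a cut-vertex or a separating edge --- is also the paper's. The gaps are precisely in the step you yourself call the hard part. Proposition~\ref{PropCentralisers} classifies centralisers of single \emph{elements} and is not the tool that controls the centraliser of the parabolic $A_{\Gamma_1}$; what is actually needed (and what the paper uses) is that parabolic subgroups on at least two generators are self-normalising (\cite[Theorem E]{cumplido2022parabolic}), so that an element $g$ centralising $A_{\Gamma_1}$ lies in $A_{\Gamma_1}$ and hence in $Z(A_{\Gamma_1})$, combined with van der Lek's intersection theorem, triviality of the centre of large-type Artin groups of rank at least $3$, and the fact that powers of standard generators are not central in a dihedral Artin group. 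Moreover, the centraliser of $A_{\Gamma_1}$ is \emph{not} ``very small'' in general: when $\Gamma_1$ is an edge it is the infinite cyclic group generated by the central element of that dihedral parabolic, and ruling out that such an element realises the twist as an inner automorphism is exactly the delicate point of the paper's proof. Your fallback of distinguishing the twists by translation lengths on the Bass--Serre tree does not work as stated, since a Dehn twist preserves the splitting and hence does not change translation lengths on that tree.

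Two of your explicit constructions also fail in degenerate cases. In (i), if $\Gamma_1$ is a single vertex $\{v\}$, the map ``identity on $\langle v\rangle$, conjugation by $v^n$ on $A_{\Gamma_2}$'' coincides with the global inner automorphism given by conjugation by $v^n$ (which is already the identity on the abelian factor $\langle v\rangle$), so your whole family is inner; the paper avoids this by taking a non-central infinite-order element of a non-singleton factor and treating the case $A_\Gamma\cong F_2$ separately. In (iii), the parenthetical claim that the separating-edge construction ``encompasses the case where $\Gamma$ itself is the edge'' is wrong: there the only decomposition $\Gamma=\Gamma_1\cup\Gamma_2$ with $\Gamma_1\cap\Gamma_2=e$ is the degenerate one $\Gamma_1=\Gamma_2=\Gamma$, which yields no non-trivial twist --- and no argument of this type can work, since a dihedral Artin group with odd label has finite outer automorphism group (this is why Corollary~\ref{cor:XXXL_split} in the introduction says ``even edge''). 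The paper's proof makes no such claim; if you keep your structure, the single-edge and single-vertex-factor situations must be excluded or handled separately.
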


\begin{proof}
    If $\Gamma$ is connected, not an edge, and has no cut-vertex or separating edge, then the result follows from Remark~\ref{rem:recovering_Vaskou}
    as in that case one checks that $\Out(A_{\Gamma}) \cong Aut(\Gamma) \times \mathbb{Z} / 2 \mathbb{Z}$, which is finite. 

    If $\Gamma$ is disconnected, then $A_\Gamma$ is a free product $A_{\Gamma_1} * A_{\Gamma_2}$.
    If both $\Gamma_1$ and $\Gamma_2$ are a single vertex, then $A_\Gamma$ is a free group of rank $2$, and so $\Out(A_\Gamma)$ is infinite.
    If $\Gamma_1$ is not a single vertex, then $A_{\Gamma_1}$ has an non-central element $g$ that is infinite order because $A_\Gamma$ is large-type.
    Thus, since $g^n$ cannot commute with $A_{\Gamma_2}$ for any $n\in\mathbb{Z}$, the automorphism being the identity on $A_{\Gamma_1}$ and the conjugation by $g$ on $A_{\Gamma_2}$ defines an infinite order element of $\Out(A_\Gamma)$.
    
    If there is a cut-vertex or separating edge, then the graph $\Gamma$ decomposes as a union $\Gamma = \Gamma_1 \cup \Gamma_2$ of two induced subgraphs such that $\Gamma_1\cap \Gamma_2$ is either a vertex or an edge.
    Let $z \in A_{\Gamma_1\cap \Gamma_2}$ be a non-trivial element that is central in $A_{\Gamma_1\cap \Gamma_2}$, and consider the partial conjugation $f: A_\Gamma \rightarrow A_\Gamma$ that is the identity on $A_{\Gamma_1}$ and the conjugation by $z$ on $A_{\Gamma_2}$.
    Then one checks by contradiction that for every $n$, the iterate $f^n$ is not a conjugation. Indeed, if some $f^n$ was the conjugation by an element $x\in A_{\Gamma}$, then since $f^n$ preserves each $A_{\Gamma_i}$, we would have that $x$ normalises both $\Gamma_i$.
    Since parabolic subgroups on more than one generators are self-normalising by \cite[Theorem E]{cumplido2022parabolic}, it follows that $x \in A_{\Gamma_1 \cap \Gamma_2}$.
    Since $f^n$ is the identity on $A_{\Gamma_1}$, it follows that $x$ is central in $A_{\Gamma_1}$.
    Moreover, $f^n$ is a non-trivial conjugation on $A_{\Gamma_2}$, we would have that $x$ is central in $A_{\Gamma_1}$ and non-trivial.
    In particular, since large-type Artin groups on more than two generators have trivial centre by \cite{Godelle2007}, it follows that $\Gamma_1$ is an edge and $\Gamma_1 \cap \Gamma_2$ is a vertex. But in the dihedral Artin group $A_{\Gamma_1}$, powers of standard generators cannot be central, a contradiction.
\end{proof}

\section{The co-Hopf property} \label{sec:coHopf}

The goal of this section is to prove the following:

\begin{theorem}\label{thm:cohopf_XXXL_characterisation}
Let $A_\Gamma$ be an XL-type Artin group that satisfies the Cycle of Standard Trees Property. Then $A_\Gamma$ is co-hopfian if and only if $\Gamma$ is connected with $|V(\Gamma)|\geq 3$, and does not contain a cut-vertex. 
\end{theorem}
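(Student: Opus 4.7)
My plan is to prove the two implications separately, with the forward direction (sufficiency) being the main technical content and the reverse direction (necessity) consisting of three case-by-case constructions of proper self-embeddings.

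For sufficiency, assume $\Gamma$ is connected, is not an edge, and has no cut-vertex. If $\Gamma$ has no separating edge either, then Corollary~\ref{cor:cst_cohopfian} applies directly. Otherwise, following Step~4 of the introductory strategy, I decompose $\Gamma$ into its maximal subgraphs $\Gamma_1, \ldots, \Gamma_k$ containing no separating edges (the ``chunks''). Each chunk has no cut-vertex, no separating edge, and at least three vertices, because the absence of cut-vertices in $\Gamma$ prevents any chunk from reducing to a single edge. The pairwise intersections of distinct chunks are exactly the separating edges of $\Gamma$, and their incidence pattern forms a tree $T$; correspondingly, $A_\Gamma$ splits as a finite tree of Artin groups, with vertex groups the $A_{\Gamma_i}$ and edge groups the dihedral parabolic groups $A_e$ for separating edges $e$. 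Given an injective endomorphism $f: A_\Gamma \hookrightarrow A_\Gamma$, Lemma~\ref{lem:generators_to_generators} maps each standard generator to a conjugate of a standard generator or its inverse. Applying Proposition~\ref{prop:inj_hom_general_case} to the restriction $f|_{A_{\Gamma_i}}$ for every $i$ yields an element $g_i \in A_\Gamma$, a label-preserving embedding $\varphi_i: \Gamma_i \hookrightarrow \Gamma$, and a sign $\varepsilon_i = \pm 1$ such that $f(v) = g_i \varphi_i(v)^{\varepsilon_i} g_i^{-1}$ for all $v \in V(\Gamma_i)$.

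The main technical step is to replace this family of local data with a single global triple $(g, \varphi, \varepsilon)$. For adjacent chunks $\Gamma_i, \Gamma_j$ sharing a separating edge $e$, compatibility of the two expressions for $f|_{A_e}$, combined with Lemma~\ref{lem:alternating product equality} and the fact (using Lemma~\ref{lem:abstract_dihedral_subgroups} and the known uniqueness of minimal parabolic subgroups containing a given subgroup) that the dihedral parabolic subgroup containing $f(A_e)$ is uniquely determined, forces $\varphi_i$ and $\varphi_j$ to agree on $V(e)$ and $\varepsilon_i = \varepsilon_j$. The discrepancy element $g_j^{-1} g_i$ then lies in the centralizer of the dihedral parabolic $A_{\varphi_i(e)}$ in $A_\Gamma$; using the classification of centralizers in large-type Artin groups of hyperbolic type, this centralizer reduces to the cyclic center $\langle z_{\varphi_i(e)} \rangle$ of the dihedral parabolic. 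We absorb this discrepancy by pre-composing $f$ with an appropriate power of the edge-twist automorphism along $e$ --- the Dehn twist that is the identity on the subtree of chunks on one side of $e$ and conjugation by the central element $z_e$ of $A_e$ on the other (well-defined as an automorphism of $A_\Gamma$ because $z_e$ commutes with all of $A_e$). Iterating this adjustment inductively along the tree $T$ of chunks, we reduce to the case where all $g_i$ coincide with a single $g$ and all $\varepsilon_i$ with a single $\varepsilon$. The $\varphi_i$ glue along their overlaps into a label-preserving embedding $\varphi: \Gamma \hookrightarrow \Gamma$; finiteness of $\Gamma$ forces $\varphi$ to be an isomorphism, whence $f$ is an automorphism.

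For necessity, I show that $A_\Gamma$ admits a proper self-embedding whenever $\Gamma$ is an edge, disconnected, or has a cut-vertex. If $\Gamma$ is an edge, then $A_\Gamma$ is a dihedral Artin group, which is well-known to be non-co-Hopfian (it virtually splits as $\mathbb{Z} \times F_n$ for some $n \geq 2$, from which proper self-embeddings are easily built). If $\Gamma$ is disconnected, then $A_\Gamma$ is a non-trivial free product of non-trivial torsion-free groups, for which a proper injective self-map is standard (for instance, partial conjugation by a suitably chosen cyclically reduced word yields a subgroup of infinite index, detectable via the Bass--Serre tree of the splitting). If $\Gamma$ has a cut-vertex $v$, then $A_\Gamma$ splits as $A_{\Gamma_1} *_{\langle v \rangle} A_{\Gamma_2}$, and a proper self-embedding can be constructed using this amalgamated product structure: for example, by partially conjugating one side by a power of an element centralizing $v$ but lying strictly outside the amalgamated subgroup, or by composing with a non-surjective self-embedding of a dihedral subfactor, and verifying via the amalgamation normal form that the image is both a proper subgroup and isomorphic to $A_\Gamma$. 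The main obstacle I anticipate is the reconciliation step in sufficiency --- precisely tracking how an edge-twist modifies the local data $(g_i, \varphi_i, \varepsilon_i)$ across a separating edge, and ensuring the induction along $T$ terminates with globally compatible data. A secondary obstacle is the cut-vertex case of necessity: while the splitting is natural, producing an explicit proper self-embedding requires a delicate choice of conjugating element to ensure both injectivity and strict containment of the image.
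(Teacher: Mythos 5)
Your sufficiency argument has a genuine gap at the reconciliation step. When two chunks $\Gamma_i,\Gamma_j$ share a separating edge $e$, the discrepancy $g_j^{-1}g_i$ is \emph{not} forced to lie in the centraliser of the dihedral parabolic $gA_{\varphi_i(e)}g^{-1}$: what the geometry actually gives (this is Lemma~\ref{lem:domains_sharing_two_trees}) is $g_j = g_i\Delta_{ab}^{k}$ for some power of the \emph{Garside element} $\Delta_{ab}$, and correspondingly $\varphi_i$ and $\varphi_j$ need not agree on $V(e)$ — when $m_{ab}$ is odd and $k$ is odd they differ by the swap of $a$ and $b$ induced by conjugation by $\Delta_{ab}$. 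Your claim that the two local embeddings agree on $V(e)$ and that the discrepancy is a power of the centre $z_e$ is therefore unjustified, and the proposed absorption fails: the Dehn twist by $z_e$ is indeed an automorphism of $A_\Gamma$, but it can only absorb $\Delta_{ab}^{k}$ with $k$ even (odd label); absorbing an odd power requires the Brady-et-al.\ edge-twist, which is an isomorphism onto the Artin group of a \emph{twisted} presentation graph and is in general not an automorphism of $A_\Gamma$ (this is exactly why $\Out(A_\Gamma)$ is infinite in the presence of separating edges, and why Section~8 of the paper works with the groupoid over the set $\mathcal T$ of twists). Consequently the induction "all $g_i$ equal, all $\varphi_i$ glue to a labelled embedding $\Gamma\hookrightarrow\Gamma$" does not go through; at best the local data glue to an embedding into some twisted graph. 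The paper circumvents this entirely: it never globalises the $(g_i,\varphi_i)$, but instead proves by induction along the chunk tree that $\alpha_f(\Gamma_i)\in f(A_{\Gamma_0\cup\dots\cup\Gamma_{i-1}})$ and hence $\varphi_i(V(\Gamma_i))\subseteq \mathrm{Im}(f)$, and then uses injectivity of $f$ to show the $\varphi_i(v)$ give $|V(\Gamma)|$ distinct standard generators in the image, forcing surjectivity (Lemmas~\ref{lem:label_separating_edge}--\ref{lem:cohopf_induction_g_i} and the counting lemma that follows). If you want to keep your normalisation strategy, you must either work in the twist groupoid as in Section~\ref{sec:Aut}, or switch to the paper's counting argument.

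There is also a problem in your necessity direction: in both the disconnected and the cut-vertex cases you propose to use a \emph{partial conjugation} (identity on one factor, conjugation by an element centralising the cut vertex on the other). Such a map is an automorphism of the amalgam — its image is all of $A_\Gamma$ — so it can never exhibit failure of the co-Hopf property; you flag this difficulty yourself but do not resolve it. The paper's construction conjugates \emph{each} factor by an element coming from the \emph{other} factor that centralises $c$ (i.e.\ takes $H=\langle A_{\Gamma_1}^{h_2},A_{\Gamma_2}^{h_1}\rangle$ with $h_i\in A_{\Gamma_i}$ centralising $c$ and not in $\langle c\rangle$), and verifies via the Bass--Serre tree that $H$ is a proper subgroup isomorphic to $A_{\Gamma_1}*_{\langle c\rangle}A_{\Gamma_2}$; the same trick (with trivial edge group) handles the disconnected case. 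The dihedral-edge case is fine as you state it.
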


We start with one of the implications, which holds in greater generality: 

\begin{lemma}
    Let $A_{\Gamma}$ be an Artin group. If $\Gamma$ contains a cut-vertex, then $A_\Gamma$ is not co-hopfian.
\end{lemma}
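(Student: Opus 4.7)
Plan. Since $v$ is a cut-vertex, partition $V(\Gamma) \setminus \{v\}$ into two non-empty sets $S_1, S_2$ with no edges between them, and let $\Gamma_i$ be the induced subgraph on $S_i \cup \{v\}$. Every defining relation of $A_\Gamma$ is then internal to one of the $A_{\Gamma_i}$'s, and Theorem~\ref{thm:vdL} gives $A_{\Gamma_1} \cap A_{\Gamma_2} = \langle v\rangle$; together these yield the amalgamated free product
\[A_\Gamma = A_{\Gamma_1} *_{\langle v\rangle} A_{\Gamma_2}\]
in which the infinite cyclic edge group is properly contained in each vertex group.

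To construct a proper injective endomorphism $\phi \colon A_\Gamma \to A_\Gamma$, pick for each $i = 1, 2$ a neighbour $u_i \in S_i$ of $v$ (which exists because $\Gamma$ is connected) and let $z_i$ generate the centre of the dihedral parabolic $A_{vu_i}$; then $z_i \in A_{\Gamma_i} \cap C(v) \setminus \langle v\rangle$. The element $w \coloneqq z_1 z_2 z_1 \in C(v)$ has amalgam-normal-form syllable length exactly three and so lies outside $A_{\Gamma_1} A_{\Gamma_2}$. Define $\phi$ to be the identity on $A_{\Gamma_1}$ and conjugation by $w$ on $A_{\Gamma_2}$; the equality $wvw^{-1} = v$ makes this well-defined via the universal property of the amalgam.

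Injectivity is immediate from Bass--Serre theory applied to the amalgam tree $T$: the geodesic from the vertex $v_1 = A_{\Gamma_1}$ to its translate $wv_2 = wA_{\Gamma_2}$ has length three, and each edge along it is stabilised by $\langle v\rangle$ since every partial product of the $z_i$'s centralises $v$. Hence $A_{\Gamma_1} \cap wA_{\Gamma_2}w^{-1} = \langle v\rangle$, so $\phi$ factors as an isomorphism onto $\langle A_{\Gamma_1}, wA_{\Gamma_2}w^{-1}\rangle$ followed by the inclusion into $A_\Gamma$. The main obstacle is non-surjectivity, as naive shorter choices collapse: for instance $w = z_1 z_2$ (syllable length two) gives $wA_{\Gamma_2}w^{-1} = z_1 A_{\Gamma_2}z_1^{-1}$ with $z_1 \in A_{\Gamma_1}$, hence a surjection. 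For our length-three $w$, a normal-form argument in the amalgam---equivalently, an analysis of $\im(\phi)$-orbits on $T$---shows that no element of $\im(\phi)$ lies in $A_{\Gamma_2} w^{-1}$, so the vertex $v_2$ sits in an $\im(\phi)$-orbit distinct from both $v_1$ and $wv_2$, forcing $\im(\phi) \subsetneq A_\Gamma$.
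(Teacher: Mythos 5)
Your construction is, up to conjugation, the same as the paper's: the paper also splits $A_\Gamma = A_{\Gamma_1} *_{\langle v\rangle} A_{\Gamma_2}$ over the cut-vertex and takes the subgroup generated by the two factors twisted by elements of the opposite factors that centralise $v$. Indeed, since $z_1 \in A_{\Gamma_1}$ can be absorbed, your image $\langle A_{\Gamma_1}, wA_{\Gamma_2}w^{-1}\rangle$ with $w=z_1z_2z_1$ equals $\langle A_{\Gamma_1}, (z_2z_1)A_{\Gamma_2}(z_2z_1)^{-1}\rangle$, which is the $z_2$-conjugate of the paper's subgroup $\langle h_2A_{\Gamma_1}h_2^{-1}, h_1A_{\Gamma_2}h_1^{-1}\rangle$ for $h_1=z_1$, $h_2=z_2^{-1}$. (A small point: if $m_{vu_i}=2$ the centre of $A_{vu_i}$ is $\mathbb{Z}^2$, not cyclic, so ``let $z_i$ generate the centre'' must be replaced by, say, $z_i\coloneqq u_i$, as the paper does.)

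The problem is that the two substantive verifications are asserted rather than proved. First, the inference ``$A_{\Gamma_1}\cap wA_{\Gamma_2}w^{-1}=\langle v\rangle$, so $\phi$ is an isomorphism onto $\langle A_{\Gamma_1}, wA_{\Gamma_2}w^{-1}\rangle$'' is not valid as stated: two vertex stabilisers with small intersection need not generate their amalgamated product over that intersection. What is needed is the ping-pong condition that every element of $A_{\Gamma_1}\setminus\langle v\rangle$ moves the first edge of the geodesic $\gamma$ from $v_1$ to $wv_2$, and every element of $wA_{\Gamma_2}w^{-1}\setminus\langle v\rangle$ moves the last edge; your computation that each edge of $\gamma$ has stabiliser exactly $\langle v\rangle$ does give this, so the slip is repairable, but the conclusion must be drawn through that condition (this is exactly how the paper argues). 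Second, and more seriously, non-surjectivity --- the actual content of the lemma --- rests entirely on the claim that no element of $\im(\phi)$ lies in $A_{\Gamma_2}w^{-1}$, which you do not prove: ``a normal-form argument shows'' is not an argument, and the claim is genuinely sensitive to the choice of $w$ (your own example $w=z_1z_2$ shows it fails for other centralising elements), so it cannot be waved through as routine. To close it one can argue as the paper does: the ping-pong structure shows that every $\im(\phi)$-translate of $\gamma$ either equals $\gamma$ or meets it in a single endpoint, hence the two interior vertices of $\gamma$ have valence $2$ in the minimal $\im(\phi)$-invariant subtree $T'$, whereas they have infinite valence in $T$; if $\im(\phi)=A_\Gamma$, edge-transitivity of $A_\Gamma$ on $T$ would force $T'=T$, a contradiction. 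Alternatively, the normal-form computation you allude to does work, but it has to be carried out.
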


\begin{proof}
    Let $c$ be a cut-vertex of $\Gamma$, and let $\Gamma_1, \Gamma_2$ be induced subgraphs of $\Gamma$ such that $\Gamma_1 \cup \Gamma_2 = \Gamma$ and $\Gamma_1 \cap \Gamma_2 = \{c\}$.
    This induces a decomposition of $A_\Gamma$ as an amalgamated product 
    $$A_\Gamma = A_{\Gamma_1} *_{\langle c \rangle} A_{\Gamma_2}$$
    and we denote by $T$ the corresponding Bass--Serre tree.
    Let $e \subset T$ be the fundamental edge, with vertices $v_1, v_2$ with stabilisers $A_{\Gamma_1}, A_{\Gamma_2}$ respectively. 
    
    Let $a_1, a_2$ be neighbours of $c$ in $\Gamma_1, \Gamma_2$ respectively.
    For $i =1, 2$, we pick an element $h_i \in A_{\Gamma_i} \backslash \langle v \rangle$ that centralises $v$. (For instance, if $m_{va_i} \geq 3$, pick $h_i:= z_{va_i}$. If $m_{va_i} =2$, pick $h_i:= a_i$)

    For $i=1, 2$, let $e_i := h_ie$, let $w_1:= h_2v_1$ and $w_2:=h_1v_2$. Let $\gamma:= e_1 \cup e \cup e_2$ be the geodesic path with end vertices $w_1, w_2$. Note that the stabilisers of $w_1, w_2$ are $H_1 := A_{\Gamma_1}^{h_2}$ and $H_2 :=A_{\Gamma_2}^{h_1}$ respectively. We will prove that the subgroup $H:= \langle H_1, H_2 \rangle$ is a strict subgroup of $A_\Gamma$ isomorphic to $A_\Gamma$. The pointwise stabiliser of $\gamma$ is 
    $$\Stab(w_1) \cap \Stab(w_2) = \Stab(e_1) \cap \Stab(e) \cap \Stab(e_2)  = \langle c \rangle$$
    by construction of $h_1, h_2$. For each $i =1, 2$, it follows that for $g \in \Stab(w_i)$, we either have $g\gamma = \gamma$ (if $g \in \langle c \rangle$) of $g\gamma \cap \gamma = \{ w_i \}$ (otherwise).
    Let $T'$ be the minimal subtree of $T$ that contains all the $H$-translates of $\gamma$.
    Note that by the previous discussion, the vertices $v_1, v_2$ have valence $2$ in $T'$ whereas they have infinite valence in $T$.
    Since $A_\Gamma$ acts transitively on the edges of $T$, it follows that $H$ is a strict subgroup of $A_\Gamma$.
    Moreover, we can define a new simplicial structure on $T'$ by removing all vertices of $T'$ of valence $2$, i.e. the $H$-orbits of $v_1$ and $v_2$.
    For that new structure, it follows that $H$ acts transitively on the edges of $T'$, with strict fundamental domain the edge between $w_1$ and $w_2$.
    It now follows from Bass-Serre theory that $H$ is isomorphic to the amalgamated product \[H_1 *_{\langle c \rangle} H_2 = A_{\Gamma_1}^{h_2} *_{\langle c \rangle} A_{\Gamma_2}^{h_1}\]
    Note that we have an isomorphism $f_1: A_{\Gamma_1} \rightarrow  A_{\Gamma_1}^{h_2}$ induced by the conjugation by $h_2$ in $A_\Gamma$.
    Similarly, we have an isomorphism $f_2: A_{\Gamma_2} \rightarrow A_{\Gamma_2}^{h_1}$ induced by the conjugation by $h_1$ in $A_\Gamma$.
    Moreover, $f_1$ and $f_2$ agree on the amalgamating subgroup $\langle c \rangle$, as they both induce the identity on $\langle c \rangle$ by construction of $h_1, h_2$.
    Thus, these two isomorphisms can be amalgamated into an isomorphism  $f_1*f_2: A_{\Gamma_1} *_{\langle c \rangle} A_{\Gamma_2} \rightarrow A_{\Gamma_1}^{h_2} *_{\langle c \rangle} A_{\Gamma_2}^{h_1}$, i.e. an isomorphism between $A_\Gamma$ and $H$.
    Hence, $A_\Gamma$ is not co-hopfian. 
\end{proof}

\begin{remark}
    We leave it to the reader to check that a similar (and simpler) reasoning shows that a free product of non-trivial groups is never co-hopfian. In particular, an Artin group over a disconnected graph $\Gamma$ is never co-hopfian.
\end{remark}

The condition of $\Gamma$ not being a single edge in Theorem~\ref{thm:cohopf_XXXL_characterisation} is due to the following result, which is formulated in \cite[Section 2]{bell2006braid} as a remark:

\begin{lemma}
    Dihedral Artin groups are not co-hopfian.
\end{lemma}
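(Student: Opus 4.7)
The plan is to exhibit an injective, non-surjective endomorphism for each dihedral Artin group $A_{st}$ with $m \coloneqq m_{st} \geq 2$. The case $m = 2$ is trivial, since $A_{st} \cong \mathbb{Z}^2$ and any proper finite-index sublattice (for instance $2\mathbb{Z}^2$) is abstractly isomorphic to the whole group. For $m \geq 3$, I would use the infinite cyclic centre $Z = \langle z_{st} \rangle$ of $A_{st}$ described in Section~2, and define $f \colon A_{st} \to A_{st}$ on standard generators by
\[
f(s) \coloneqq s z_{st} \quad \text{and} \quad f(t) \coloneqq t z_{st}.
\]

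The first step is to verify that $f$ is a well-defined homomorphism. Since $z_{st}$ is central, one may collect all copies of $z_{st}$ on the right in $\Pi(sz_{st}, tz_{st}; m)$ to obtain $\Pi(s,t;m) \cdot z_{st}^m$, and similarly $\Pi(tz_{st}, sz_{st}; m) = \Pi(t,s;m) \cdot z_{st}^m$. The defining relation $\Pi(s,t;m) = \Pi(t,s;m)$ of $A_{st}$ then guarantees these two sides are equal.

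Next, non-surjectivity is a direct calculation: writing $z_{st} = (st)^{m/2}$ when $m$ is even, or $z_{st} = (st)^m$ when $m$ is odd, and using centrality of $z_{st}$ once more, one finds $f(z_{st}) = z_{st}^{m+1}$ or $z_{st}^{2m+1}$ respectively. In particular $f(A_{st}) \cap Z$ is a proper subgroup of $Z$, so $z_{st}$ itself is not in the image of $f$.

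The main obstacle is injectivity, where the centrality of $z_{st}$ is essential. Since $f(Z) \subseteq Z$, the map $f$ descends to an endomorphism $\bar{f}$ of the quotient $A_{st}/Z$; by construction $\bar{f}$ is the identity, since $[sz_{st}] = [s]$ and $[tz_{st}] = [t]$ in $A_{st}/Z$. Any $x \in \ker(f)$ must therefore lie in $Z$; but $f|_Z$ is multiplication by $m+1$ (resp.\ $2m+1$) on $Z \cong \mathbb{Z}$, hence injective, forcing $x = 1$. So $f$ is injective, and $A_{st}$ is isomorphic to the proper subgroup $f(A_{st})$, proving that $A_{st}$ is not co-hopfian.
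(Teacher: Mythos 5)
Your construction is the standard ``central transvection'' trick, and it is essentially correct; note that the paper itself gives no argument here, it simply cites a remark of Bell, so your self-contained proof is a legitimate (and more informative) alternative. Well-definedness and injectivity are fine: collecting the central factor gives $\Pi(sz_{st},tz_{st};m)=\Pi(s,t;m)z_{st}^m$, and the descent to $A_{st}/Z$, where the induced map is the identity, combined with injectivity of $f|_Z$ (multiplication by $m+1$, resp.\ $2m+1$) does force $\ker f=\{1\}$.

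The one soft spot is the non-surjectivity step. From $f(z_{st})=z_{st}^{m+1}$ (resp.\ $z_{st}^{2m+1}$) you only get that $f(Z)$ is a proper subgroup of $Z$; the sentence ``in particular $f(A_{st})\cap Z$ is a proper subgroup of $Z$'' does not follow immediately, since a priori some non-central element could map onto $z_{st}$. This is easily repaired in either of two ways. First, use the length homomorphism $\ell\colon A_{st}\to\mathbb{Z}$ sending $s,t\mapsto 1$: then $\ell\circ f=(1+\ell(z_{st}))\,\ell$, so any preimage of $z_{st}$ would have total exponent $m/(m+1)$ (resp.\ $2m/(2m+1)$), which is not an integer. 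Alternatively, prove injectivity first and observe that if $f(g)=z_{st}$ then $f(gx)=f(xg)$ for all $x$, so by injectivity $g$ is central, whence $z_{st}\in f(Z)=\langle z_{st}^{m+1}\rangle$, a contradiction. (Yet another shortcut: the induced map on the abelianisation has determinant $m+1$, resp.\ is multiplication by $2m+1$, hence is not surjective.) With that one-line patch the proof is complete, including your separate and correct treatment of the case $m_{st}=2$.
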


We now consider the second implication:

\begin{proposition}\label{prop:cohopf_hard_direction}
 Let $A_\Gamma$ be an XL-type Artin group that satisfies the Cycle of Standard Trees Property. Suppose that $\Gamma$ is connected, is not an edge, and $\Gamma$ does not contain a cut-vertex. Then $A_\Gamma$ is co-hopfian. 
\end{proposition}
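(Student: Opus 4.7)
The case where $\Gamma$ has no separating edge is handled by Corollary~\ref{cor:cst_cohopfian}, so suppose $\Gamma$ has at least one separating edge. Let $f : A_\Gamma \hookrightarrow A_\Gamma$ be injective; I will show $f$ is surjective. By Lemma~\ref{lem:generators_to_generators}, $f$ sends each standard generator to a conjugate of a standard generator or its inverse. Decompose $\Gamma$ along its separating edges into a tree $\mathcal{T}$ of \emph{chunks} $\Gamma_1, \ldots, \Gamma_N$, where each $\Gamma_i$ is a maximal induced subgraph containing no separating edge of itself, and two chunks adjacent in $\mathcal{T}$ meet in exactly one separating edge of $\Gamma$. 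Using that the two endpoints of any separating edge are themselves adjacent (so no other vertex of a chunk can separate them inside that chunk), one checks that each $\Gamma_i$ inherits the no-cut-vertex property from $\Gamma$ and has at least three vertices, so satisfies the hypotheses of Proposition~\ref{prop:inj_hom_general_case}.

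Applying Proposition~\ref{prop:inj_hom_general_case} to each restriction $f|_{A_{\Gamma_i}}$ yields $g_i \in A_\Gamma$, a label-preserving embedding $\varphi_i : \Gamma_i \hookrightarrow \Gamma$, and a sign $\varepsilon_i \in \{\pm 1\}$ with $f(v) = g_i\,\varphi_i(v)^{\varepsilon_i}\,g_i^{-1}$ for $v \in V(\Gamma_i)$. For two adjacent chunks $\Gamma_i, \Gamma_j$ sharing a separating edge $e = \{s, t\}$, the two formulas for $f(s)$ and $f(t)$ must coincide, so $h := g_i^{-1} g_j$ conjugates $A_{\varphi_j(e)}$ onto $A_{\varphi_i(e)}$. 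Self-normalisation of parabolic subgroups in two-dimensional Artin groups \cite{cumplido2022parabolic} then forces $\varphi_i(e) = \varphi_j(e)$ and $h \in A_{\varphi_i(e)}$, and a short analysis inside the dihedral Artin group $A_{\varphi_i(e)}$ using Lemma~\ref{lem:subgroup_conjugates_generators} shows that, after adjusting the non-unique choice of $\varphi_j$, one has $\varphi_i|_e = \varphi_j|_e$, $\varepsilon_i = \varepsilon_j$, and $h$ belongs to the centre $\langle z_{\varphi_i(e)}\rangle$ of $A_{\varphi_i(e)}$.

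The key tool to absorb these central discrepancies is the family of \emph{edge-twist automorphisms}. For each separating edge $e$ of $\Gamma$, Theorem~\ref{thm:vdL} gives an amalgamated decomposition $A_\Gamma \cong A_{\Gamma_{\mathrm{left}}} *_{A_e} A_{\Gamma_{\mathrm{right}}}$; since $z_e$ is central in $A_e$, the map $\tau_e$ acting as the identity on the left factor and as conjugation by $z_e$ on the right factor defines an automorphism of $A_\Gamma$. Fixing a root chunk in $\mathcal{T}$ and proceeding outward, I compose $f$ with an inner automorphism arranging $g_{\mathrm{root}} = 1$ and then, for each separating edge $e$, with the appropriate integer power of $\tau_e$ to kill the central difference $g_i^{-1} g_j \in \langle z_{\varphi_i(e)}\rangle$. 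The tree structure of $\mathcal{T}$ ensures these adjustments do not interfere: twisting at a given edge only affects conjugating elements of chunks strictly further from the root. After normalisation, all $g_i$ are trivial and all $\varepsilon_i$ equal a common $\varepsilon$ by connectedness of $\mathcal{T}$; the $\varphi_i$ assemble into a single label-preserving graph embedding $\varphi : \Gamma \hookrightarrow \Gamma$, necessarily a graph automorphism as $\Gamma$ is finite. Composing once more with the induced graph automorphism (and with the global inversion when $\varepsilon = -1$) produces the identity, so $f$ itself is a composition of automorphisms, hence surjective.

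The main obstacle is verifying the edge-twist normalisation step rigorously: one must track how each $\tau_e$ acts on the conjugating elements $g_j$ of all chunks on its ``right'' side and show that successive twists outward along $\mathcal{T}$ assemble into a single automorphism that simultaneously cancels all central discrepancies. A secondary subtlety is the compatibility analysis at odd-labelled separating edges, where $A_{\varphi_i(e)}$ admits a non-trivial label-preserving automorphism interchanging its two standard generators, and one must use this ambiguity to reconcile $\varphi_i|_e$ and $\varphi_j|_e$ before the edge-twist is applied.
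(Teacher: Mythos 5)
Your overall architecture (chunk decomposition along separating edges, applying Proposition~\ref{prop:inj_hom_general_case} chunk by chunk, then reconciling the conjugators across the chunk tree) is the same as the paper's, but your normalisation step contains a genuine gap. The discrepancy $h=g_i^{-1}g_j$ between the conjugators of two adjacent chunks is a power of the \emph{Garside} element $\Delta_{ab}$ of the shared edge (this is exactly Lemma~\ref{lem:domains_sharing_two_trees}), not of the central element $z_{ab}$. When $m_{ab}$ is odd and the exponent is odd, $\Delta_{ab}^{k}$ is not central, and your claim that ``after adjusting the non-unique choice of $\varphi_j$'' one can arrange $h\in\langle z_{\varphi_i(e)}\rangle$ does not hold: the pair $(g_j,\varphi_j)$ attached to a chunk is essentially rigid (replacing $g_j$ by $g_j\Delta_{ab}$ would force $\varphi_j(v)$ to be replaced by $\Delta_{ab}^{-1}\varphi_j(v)\Delta_{ab}$, which is not a standard generator for $v$ off the edge), so there is no ambiguity to exploit. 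Consequently your edge-twist automorphisms $\tau_e$ --- partial conjugation by the \emph{central} element $z_e$, which is indeed an automorphism of $A_\Gamma$ --- can only absorb even powers of $\Delta_{ab}$ at odd-labelled separating edges. Absorbing an odd power requires partial conjugation by $\Delta_{ab}$ itself, which swaps $a$ and $b$ and is therefore not an automorphism of $A_\Gamma$ but an isomorphism onto a \emph{twisted} Artin group $A_{\Gamma'}$ with possibly different presentation graph (this is precisely why the paper introduces the graphs $\mathcal{T}$ and edge-twist isomorphisms between them in Section~\ref{sec:Aut}, where such bookkeeping is unavoidable for $\Aut(A_\Gamma)$). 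So your scheme of composing $f$ with automorphisms until all $g_i=1$ breaks down exactly at odd-labelled separating edges with odd twist parameter, a case that genuinely occurs.

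The paper avoids this issue altogether: rather than normalising $f$ into a standard automorphism, it orders the chunks $\Gamma_0,\ldots,\Gamma_k$ so that each meets the union of the previous ones in a separating edge, and proves by induction (Lemma~\ref{lem:cohopf_induction_g_i}) that each conjugator $\alpha_f(\Gamma_i)$ lies in $f(A_{\Gamma_0\cup\cdots\cup\Gamma_{i-1}})$ --- the point being that the discrepancy $\Delta_{\varphi_j(a)\varphi_j(b)}^{k}$ is a word in standard generators already known to lie in the image of $f$, regardless of the parity of $k$ or of the label. It then deduces that $\varphi_i(V(\Gamma_i))\subseteq \mathrm{Im}(f)$, shows the amalgamated graph map is injective using injectivity of $f$ and van der Lek, and concludes that $\mathrm{Im}(f)$ contains $|V(\Gamma)|$ distinct standard generators, hence all of $A_\Gamma$. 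Your argument could be repaired by switching to this image-counting induction once the discrepancies are identified as powers of $\Delta_{ab}$ (or by working in the groupoid of twisted graphs as in Section~\ref{sec:Aut}), but as written the central-twist normalisation, and hence the surjectivity conclusion, does not go through. A minor secondary point: deducing $\varphi_i(e)=\varphi_j(e)$ from self-normalisation alone is not quite enough (you also need that distinct standard dihedral parabolics are non-conjugate); the paper's route via fixed standard trees and \cite[Lemma 4.3]{martin2022acylindrical} gives the precise form $g_i^{-1}g_j=\Delta_{ab}^k$ directly.
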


The proof of Proposition~\ref{prop:cohopf_hard_direction} will take the remainder of this section. We first explain how to decompose a general graph into chunks that have no separating edge or cut-vertex. 

\begin{definition}[\cite{an2022automorphism}]
Let $\Gamma$ be a finite simplicial graph without a cut-vertex. An induced subgraph $\Gamma'$ of $\Gamma$ is called a \textbf{chunk} if it is connected and maximal among all the induced subgraphs $\Gamma''$ of $\Gamma$ such that $\Gamma''$ does not have a cut-vertex or separating edge. We define the following graph $T$: 
\begin{itemize}
    \item vertices of $T$ are the chunks of $\Gamma$ and the separating edges of $\Gamma$.
    \item Edges correspond to inclusion of subgraphs. That is, for every chunk $\Gamma'$ of $\Gamma$ and every separating edge of $\Gamma$ contained in $\Gamma'$, we add an edge between $\Gamma'$ and $e$.
\end{itemize}
The graph $T_\Gamma$ is a simplicial tree, called the \textbf{chunk tree} of $\Gamma$. 
\end{definition}

Note that the graph $T_\Gamma$ is bipartite and encodes how $\Gamma$ can be written as an iterated amalgamation of its various chunks along separating edges.
The following lemma is immediate:

\begin{lemma}\label{lem:simple_chunk}
Let $\Gamma$ be a finite simplicial graph without cut-vertex and $|V(\Gamma)| \geq 3$, and let $\Gamma'$ be a chunk of $\Gamma$. Then $|V(\Gamma')| \geq 3$, and $\Gamma'$ does not contain cut-vertices or separating edges. 
\end{lemma}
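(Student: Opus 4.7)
The second conclusion is immediate from the definition: by construction a chunk is required to be a connected induced subgraph of $\Gamma$ with no cut-vertex and no separating edge. The content of the lemma therefore lies entirely in establishing $|V(\Gamma')| \geq 3$, and I plan to prove this by contradiction.

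The key input is the observation that the hypothesis on $\Gamma$ — connected, at least three vertices, no cut-vertex — is just the standard characterisation of $2$-(vertex-)connectedness. From this I get, in particular, that every vertex of $\Gamma$ and every edge of $\Gamma$ lies on some cycle of $\Gamma$, and moreover that a shortest such cycle is necessarily induced: any chord would cut it into two shorter cycles, one of which still passes through the prescribed vertex (resp. edge), contradicting minimality.

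Before running the contradiction I would record that any induced cycle $C \subseteq \Gamma$ of length at least $3$, viewed as a graph on its own, has neither a cut-vertex nor a separating edge. The absence of cut-vertices is immediate since removing a vertex of a cycle yields a path. For the absence of separating edges, I observe that the proper connected induced subgraphs of $C$ are exactly its arcs; if $C = C_1 \cup C_2$ with $C_1, C_2$ two such arcs, both containing an edge $e$ of $C$ and meeting exactly in $e$, then $C_1$ and $C_2$ must extend $e$ on opposite sides, so that their union covers all vertices of $C$ but misses the edge of $C$ connecting the two free endpoints of the arcs — a contradiction. Hence $C$ is a connected induced subgraph of $\Gamma$ with no cut-vertex and no separating edge.

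To conclude, assume towards contradiction that a chunk $\Gamma'$ satisfies $|V(\Gamma')| \leq 2$. Then $\Gamma'$ is either a single vertex $v$ or a single edge $\{v,w\}$. Pick such a vertex $v$ and let $C$ be a shortest cycle of $\Gamma$ through $v$ (and through the edge $\{v,w\}$ in the two-vertex case, which exists by $2$-connectedness). By the discussion above, $C$ is induced and, as a graph, has no cut-vertex and no separating edge, while $|V(C)| \geq 3 > |V(\Gamma')|$, so $C$ strictly contains $\Gamma'$. This contradicts the maximality of $\Gamma'$ among connected induced subgraphs of $\Gamma$ with no cut-vertex or separating edge, completing the proof. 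The only slightly delicate point is the combinatorial verification that induced cycles have no separating edges, but the case analysis is short and essentially forced.
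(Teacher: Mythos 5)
Your proposal is correct, and in fact it supplies an argument where the paper gives none: the paper simply declares this lemma ``immediate'' and offers no proof, so there is no written argument to diverge from. Your reduction is the natural one --- the second assertion really is definitional, and the bound $|V(\Gamma')|\geq 3$ follows because the hypotheses make $\Gamma$ two-connected, so any vertex (or edge) lies on a cycle, a shortest such cycle is chordless, hence an induced subgraph of $\Gamma$ with no cut-vertex or separating edge, and its existence contradicts maximality of a one- or two-vertex chunk. Two small points are worth tightening. First, you implicitly use that $\Gamma$ is connected and that in the definition of a separating edge the subgraphs $\Gamma_1,\Gamma_2$ are required to be \emph{proper}; both are the intended reading (the paper defines cut-vertices only for connected graphs and uses the lemma only for connected $\Gamma$, and without properness the decomposition $\Gamma_1=\Gamma$, $\Gamma_2=e$ would make every edge separating, trivialising the notion), but you should say so. Second, your verification that a cycle $C$ has no separating edge describes only the configuration where the two arcs cover all vertices and miss one edge; other configurations (one arc equal to $e$ itself, free endpoints coinciding, or endpoints far apart) also occur, though all are excluded just as easily --- for instance, note that a free endpoint of one arc cannot lie on the other arc (else the intersection exceeds $e$), so it has degree $1$ in $C_1\cup C_2$ while every vertex of $C$ has degree $2$, whence $C_1\cup C_2\neq C$. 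With these clarifications your proof is complete and is, presumably, exactly the argument the authors had in mind.
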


We further have: 

\begin{lemma}
    Let $\Gamma$ be a finite connected simplicial graph with $|V(\Gamma)|\geq 3$ and without a cut-vertex. Then $\Gamma$ is the union of its chunks. \qedhere
\end{lemma}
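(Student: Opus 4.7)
The plan is to show that every vertex and every edge of $\Gamma$ lies in some chunk of $\Gamma$. Since $\Gamma$ is finite, any connected induced subgraph of $\Gamma$ with no cut-vertex and no separating edge is contained in some maximal such subgraph, which by definition is a chunk. It therefore suffices, for each vertex $v$ and each edge $e$ of $\Gamma$, to exhibit a connected induced subgraph containing it that has no cut-vertex and no separating edge.

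For edges, I would use that $\Gamma$ is connected with no cut-vertex and $|V(\Gamma)|\geq 3$, which makes $\Gamma$ $2$-vertex-connected. By a classical theorem, every edge $e$ of $\Gamma$ then lies on some cycle; let $C$ be a cycle through $e$ of minimal length. Any chord $\{v_i, v_j\}$ of $C$ would, by replacing the arc of $C$ between $v_i$ and $v_j$ not containing $e$ by the chord itself, produce a strictly shorter cycle still containing $e$, contradicting minimality. Hence $C$ is an induced cycle, necessarily of length at least $3$.

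I would then check that every induced cycle $C$ of length $n \geq 3$ has no cut-vertex (removing any vertex leaves a path) and no separating edge. The connected induced subgraphs of $C$ are exactly its arcs (subpaths) and $C$ itself. If $C = C_1 \cup C_2$ with $C_1 \cap C_2 = e'$ and neither $C_i$ equal to $e'$, then both $C_i$ are proper arcs of $C$ through both endpoints of $e'$, with $V(C_1) \cap V(C_2) = V(e')$; a short case analysis then exhibits an edge of $C$ joining $V(C_1) \setminus V(e')$ to $V(C_2) \setminus V(e')$ which is contained in neither $C_i$, contradicting $C_1 \cup C_2 = C$.

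For vertices, any $v \in V(\Gamma)$ has positive valence (as $\Gamma$ is connected with $|V(\Gamma)| \geq 3$), so $v$ lies on some edge and therefore in some chunk by the previous paragraphs. The main (rather mild) point to get right will be the case analysis for the separating-edge property of induced cycles, together with using the natural convention that excludes the trivial decomposition $\Gamma_1 = \Gamma_2 = e'$; everything else reduces to standard facts about $2$-vertex-connected graphs.
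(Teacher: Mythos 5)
Your proof is correct and follows essentially the same route as the paper's: every edge lies on a cycle because $\Gamma$ is $2$-connected, and a cycle has no cut-vertex or separating edge, so each edge sits inside a chunk. Your additional step of passing to a minimal, hence chordless, cycle is a sensible refinement, since chunks are by definition \emph{induced} subgraphs and the paper's one-line argument glosses over this point.
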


\begin{proof}
    It is enough to prove that every edge of $\Gamma$ is contained in a chunk. Since $\Gamma$ is connected with $|V(\Gamma)|\geq 3$ and without a cut-vertex, it follows that every edge is contained in a cycle. Since a cycle is a connected subgraph on at least $3$ vertices without cut-vertex and separating edge, every edge is contained in at least one chunk by construction. 
\end{proof}

\begin{example} 
We draw below an example of a graph $\Gamma$, its chunks, and the corresponding chunk tree $T_\Gamma$: 

\begin{figure}[H]
\centering
\includegraphics[scale=1]{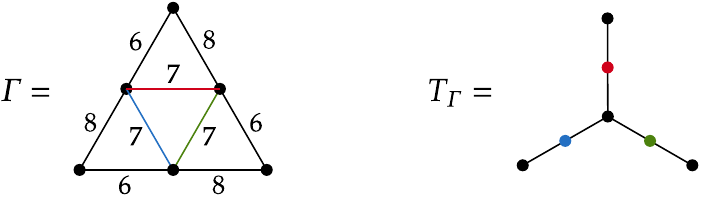}
\caption{On the left, a graph $\Gamma$ with $4$ chunks (each of them a triangle). On the right, the corresponding chunk tree $T_{\Gamma}$. The separating edges are the ones with label $7$.}
\label{FigureTriforce}
\end{figure}
\end{example}

\begin{convention}
    We will choose an ordering $\Gamma_0, \ldots, \Gamma_k$ of the chunks of $\Gamma$ such that for each $i\geq 1$, $\Gamma_i$ shares an edge (necessarily separating in $\Gamma$, by construction) with $\Gamma_0 \cup \ldots \cup \Gamma_{i-1}$. 
\end{convention}

   In the rest of this section $A_\Gamma$ will be an Artin group that satisfies the hypotheses of Proposition~\ref{prop:cohopf_hard_direction}.

\begin{lemma}\label{lem:induced_isom_chunks}
    For each chunk $\Gamma_i$ of $\Gamma$, there exists an element $g_i \in A_\Gamma$ and an embedding $\varphi_i: \Gamma_i \hookrightarrow \Gamma$ such that for every $v \in V(\Gamma_i)$, we have $f(v) = g_i\varphi_i(v)^{\pm 1}g_i^{-1}$ 
\end{lemma}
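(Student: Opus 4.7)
The plan is to apply Proposition~\ref{prop:inj_hom_general_case} directly to the restriction of $f$ to the parabolic subgroup $A_{\Gamma_i}$ of $A_\Gamma$. Note that by van der Lek's theorem (Theorem~\ref{thm:vdL}), the parabolic subgroup $A_{\Gamma_i}$ is canonically isomorphic to the Artin group associated to the induced subgraph $\Gamma_i$, so we may legitimately view the restriction $f|_{A_{\Gamma_i}}$ as an injective homomorphism $A_{\Gamma_i} \hookrightarrow A_\Gamma$ between large-type Artin groups (injectivity comes for free from the injectivity of $f$, which holds since we are inside the proof of Proposition~\ref{prop:cohopf_hard_direction}).

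To apply Proposition~\ref{prop:inj_hom_general_case}, I need to check its hypotheses for this restricted map. On the source side, Lemma~\ref{lem:simple_chunk} guarantees exactly what is required: $\Gamma_i$ is connected, satisfies $|V(\Gamma_i)| \geq 3$, and has no cut-vertex or separating edge. On the target side, $A_\Gamma$ is of hyperbolic type and satisfies the Cycle of Standard Trees Property by the standing hypotheses of Proposition~\ref{prop:cohopf_hard_direction}. Thus Proposition~\ref{prop:inj_hom_general_case} applies to $f|_{A_{\Gamma_i}}$ and produces an element $g_i \in A_\Gamma$, a label-preserving embedding $\varphi_i : \Gamma_i \hookrightarrow \Gamma$, and a sign $\varepsilon_i = \pm 1$ such that $f(v) = g_i \varphi_i(v)^{\varepsilon_i} g_i^{-1}$ for every $v \in V(\Gamma_i)$, which is exactly the content of the lemma. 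There is no real obstacle here: all of the heavy lifting was done in the proof of Proposition~\ref{prop:inj_hom_general_case}, and the lemma is really just a repackaging of that proposition for the individual chunks, in preparation for the next step where the elements $g_i$ and embeddings $\varphi_i$ associated to adjacent chunks will have to be compared along their shared separating edges.
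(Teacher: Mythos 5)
Your proposal is correct and follows exactly the paper's own argument: the paper also deduces the lemma as a direct application of Proposition~\ref{prop:inj_hom_general_case} to the restriction $f\colon A_{\Gamma_i}\to A_\Gamma$, citing Lemma~\ref{lem:simple_chunk} for the hypotheses on the chunk. Your write-up simply makes the hypothesis-checking (source and target) slightly more explicit than the paper does.
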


\begin{proof}
    Since $\Gamma_i$ has no cut-vertex or separating edge and is not an edge by Lemma~\ref{lem:simple_chunk}, this is a direct application of Proposition~\ref{prop:inj_hom_general_case} to the restriction $f_i: A_{\Gamma_i} \rightarrow A_\Gamma$.
\end{proof}

\begin{convention}
Up to conjugating $f$ by the element $g_0$, we will assume that $g_0 = 1$. 
\end{convention}

\begin{definition}\label{def:labelling_decomposition_tree}
Let $f: A_\Gamma \rightarrow A_{\Gamma'}$ be an isomorphism of two XL-type Artin groups, such that $\Gamma$ is connected, $|V(\Gamma)|\geq 3$, and $\Gamma$ does not have a cut-vertex. Further assume that $A_{\Gamma'}$ satisfies the Cycle of Standard Trees Property. For every chunk $\Gamma_i$ of $\Gamma$, we denote by $\alpha_f(\Gamma_i)$ the element of $A_{\Gamma'}$ obtained by applying Lemma~\ref{lem:induced_isom_chunks} to the restriction $f: A_{\Gamma_i} \rightarrow A_{\Gamma'}$.
\end{definition}

\begin{lemma}\label{lem:domains_sharing_two_trees}   Let $f: A_\Gamma \hookrightarrow A_{\Gamma'}$ be an injective homomorphism between two large-type Artin groups. 
    Let $\gamma, \gamma'$ be two induced cycles of $\Gamma$ that share exactly one edge $e$. Suppose that there exist  elements $g_\gamma, g_{\gamma'}\in A_{\Gamma'}$, embeddings of labelled graphs $\varphi: \gamma \hookrightarrow \Gamma'$ and $\varphi': \gamma' \hookrightarrow \Gamma'$, and integers $\varepsilon, \varepsilon' = \pm 1$ such that $f(v) = g_\gamma\varphi(v)^\varepsilon g_\gamma^{-1}$ for all standard generators $v \in V(\gamma)$ and $f(v) = g_{\gamma'}\varphi'(v)^{\varepsilon'} g_{\gamma'}^{-1}$ for all standard generators $v \in V(\gamma')$. Let $a, b$ be the vertices of the edge~$\varphi(e) \subset \Gamma'$.

    Then there exists an integer $k \in \mathbb{Z}$ such that $g_{\gamma'} = g_{\gamma} \Delta_{ab}^k$.
\end{lemma}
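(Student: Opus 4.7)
The plan is to first pin down $g_{\gamma'}$ inside a specific coset of $g_\gamma A_{ab}$ using the intersection of standard trees in $D_{\Gamma'}$, and then to use the abelianisation and the centre of the dihedral Artin group $A_{ab}$ to identify the remaining factor as a power of $\Delta_{ab}$.

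Let $s, t$ denote the endpoints of the shared edge $e$. From the two expressions for $f(s)$ and $f(t)$, the standard trees $\Fix(f(s)) = g_\gamma \Fix(a)$ and $\Fix(f(t)) = g_\gamma \Fix(b)$ are distinct but both contain the type-$2$ vertex $g_\gamma A_{ab}$; the same reasoning applied to the other expression shows that they both also contain $g_{\gamma'} A_{\varphi'(e)}$. Since by Lemma~\ref{lem:standard_trees_properties} two distinct standard trees share at most one vertex (necessarily of type~$2$), we obtain the equality
\[g_\gamma A_{ab} = g_{\gamma'} A_{\varphi'(e)}\]
of vertices of $D_{\Gamma'}$. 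Cosets of standard parabolic subgroups determine the subgraph (by Theorem~\ref{thm:vdL}), so $\varphi'(e) = ab$ as a labelled edge of $\Gamma'$, and $g_{\gamma'} = g_\gamma h$ for some $h \in A_{ab}$.

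It remains to show that $h$ is a power of $\Delta_{ab}$. Substituting $g_{\gamma'} = g_\gamma h$ into the two descriptions of $f(s)$ and $f(t)$ and cancelling conjugation by $g_\gamma$ yields, inside $A_{ab}$,
\[a^\varepsilon = h\,\varphi'(s)^{\varepsilon'}\,h^{-1}, \qquad b^\varepsilon = h\,\varphi'(t)^{\varepsilon'}\,h^{-1}.\]
Abelianising $A_{ab}$ (to $\mathbb{Z}$ when $m_{ab}$ is odd, and to $\mathbb{Z}^2$ when $m_{ab}$ is even) shows first that $\varepsilon = \varepsilon'$. Two cases remain. If $\varphi'$ sends $s \mapsto a$ and $t \mapsto b$, then $h$ centralises both $a$ and $b$, so $h \in Z(A_{ab}) = \langle z_{ab} \rangle$; since $z_{ab}$ equals $\Delta_{ab}$ or $\Delta_{ab}^2$ according to the parity of $m_{ab}$, the element $h$ is a power of $\Delta_{ab}$. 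If instead $\varphi'$ swaps the endpoints of $e$, abelianisation forces $m_{ab}$ to be odd, and in that case conjugation by $\Delta_{ab}$ is precisely the swap $a \leftrightarrow b$; hence $h \Delta_{ab}^{-1}$ lies in $Z(A_{ab}) = \langle \Delta_{ab}^2 \rangle$, and again $h$ is a power of $\Delta_{ab}$.

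The main points of care are the sign relationship between $\varepsilon$ and $\varepsilon'$ and the possibility that $\varphi'$ reverses the orientation of $e$, but both are handled uniformly by abelianisation and the explicit description of $Z(A_{ab})$. Note that the argument is entirely local to the shared edge $e$: neither the cut-vertex hypothesis nor the Cycle of Standard Trees Property is invoked.
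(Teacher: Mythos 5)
Your proof is correct, and it takes a genuinely different route from the paper's. The paper applies \cite[Lemma 4.3]{martin2022acylindrical} twice — to the standard trees $\Fix(g_\gamma a g_\gamma^{-1})$ and $\Fix(g_\gamma b g_\gamma^{-1})$, each of which crosses both translates $g_\gamma K_{\Gamma'}$ and $g_{\gamma'}K_{\Gamma'}$ of the fundamental domain — obtaining $g_\gamma^{-1}g_{\gamma'}=\Delta_{ab}^k a^q=\Delta_{ab}^{k'}b^{q'}$, and then concludes $q=q'=0$ by uniqueness of Garside normal forms. You instead observe that the two distinct standard trees $\Fix(f(s))$ and $\Fix(f(t))$ each contain both type-$2$ vertices $g_\gamma A_{ab}$ and $g_{\gamma'}A_{\varphi'(e)}$, so by Lemma~\ref{lem:standard_trees_properties} these vertices coincide, which localises $h:=g_\gamma^{-1}g_{\gamma'}$ in $A_{ab}$ (and forces $\varphi'(e)$ to have endpoints $a,b$); you then finish inside the dihedral group by abelianising to control $\varepsilon,\varepsilon'$ and a possible swap of the endpoints, and by using $Z(A_{ab})=\langle z_{ab}\rangle$ together with the fact that conjugation by $\Delta_{ab}$ realises the swap in the odd case. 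What your approach buys is self-containedness relative to the paper's stated toolkit (only Lemma~\ref{lem:standard_trees_properties}, Theorem~\ref{thm:vdL} and elementary dihedral algebra, rather than the external geometric lemma of Martin--Przytycki) and an explicit treatment of the sign/orientation bookkeeping that the paper's argument never needs to make visible; what the paper's approach buys is brevity, since the cited lemma immediately pins $g_\gamma^{-1}g_{\gamma'}$ into the two cosets $\Delta_{ab}^k\langle a\rangle$ and $\Delta_{ab}^{k'}\langle b\rangle$ and Garside uniqueness does the rest. One small point of care in your write-up: the equality of cosets $g_\gamma A_{ab}=g_{\gamma'}A_{\varphi'(e)}$ gives $A_{ab}=A_{\varphi'(e)}$, and you should note (as you implicitly do via Theorem~\ref{thm:vdL}) that a standard parabolic determines its vertex set, so that $\varphi'(s),\varphi'(t)\in\{a,b\}$ before running the abelianisation argument; this is standard and does not affect correctness.
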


\begin{proof}
By hypothesis, the standard tree $\Fix(g_\gamma a g_\gamma^{-1})$ intersects each of the fundamental domains $g_{\gamma} K_{\Gamma'}$ and $g_{\gamma'} K_{\Gamma'}$ along at least an edge.
By \cite[Lemma 4.3]{martin2022acylindrical}, this means there is some $k \neq 0$ such that $g_{\gamma}^{-1}g_{\gamma'}  \in \Delta_{ab}^k \langle a \rangle$.
In particular, there is some $q \in \mathbf{Z}$ such that
\[g_{\gamma}^{-1}g_{\gamma'} = \Delta_{ab}^k a^q.\]
By the same argument but for the standard tree $\Fix(g_\gamma b g_\gamma^{-1})$, there are some $k' \neq 0$ and $q' \in \mathbf{Z}$ such that
\[g_{\gamma}^{-1}g_{\gamma'}  = \Delta_{ab}^{k'} b^{q'}.\]
Note that the two above expressions are in Garside normal forms.
By unicity of this form, it must be that $\Delta_{ab}^k = \Delta_{ab}^{k'}$ and $a^q = b^{q'}$.
This forces $k = k'$ and $q = q' = 0$.
\end{proof}

The following lemma is a direct consequence of Lemma~\ref{lem:domains_sharing_two_trees}: 

\begin{lemma}\label{lem:label_separating_edge}
    Let $\Gamma_i, \Gamma_j$ be two chunks of $\Gamma$ that share a separating edge with vertices $a, b$. Then there exists an integer $k_{i, j}\in \mathbb{Z}$ such that $\alpha_f(\Gamma_j) = \alpha_f(\Gamma_i) \Delta_{\varphi_i(a)\varphi_i(b)}^{k_{i,j}}$.
\end{lemma}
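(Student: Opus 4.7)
The plan is to derive the equality by applying Lemma~\ref{lem:domains_sharing_two_trees} to two carefully chosen induced cycles $\gamma_i \subset \Gamma_i$ and $\gamma_j \subset \Gamma_j$ whose intersection in $\Gamma$ is exactly the separating edge $e = \{a,b\}$, and then identifying the conjugating elements that the lemma produces with the chunk elements $\alpha_f(\Gamma_i)$ and $\alpha_f(\Gamma_j)$.

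First I would record a small combinatorial fact: in a connected graph on at least three vertices with no cut-vertex or separating edge, every edge lies on an induced cycle. Indeed, such a graph is $2$-connected so every edge lies on some cycle, and a minimum-length cycle through a given edge is automatically induced (any chord would yield a strictly shorter cycle through the same edge). By Lemma~\ref{lem:simple_chunk}, the chunks $\Gamma_i$ and $\Gamma_j$ satisfy these hypotheses, so I may pick induced cycles $\gamma_i \subset \Gamma_i$ and $\gamma_j \subset \Gamma_j$, each containing~$e$. Since $\Gamma_i$ and $\Gamma_j$ are induced subgraphs of $\Gamma$, both $\gamma_i$ and $\gamma_j$ are induced cycles of $\Gamma$. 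Because distinct chunks of $\Gamma$ meet only along a separating edge of the chunk decomposition, we have $\gamma_i \cap \gamma_j = e$.

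Second, I would unpack the definition of $\alpha_f$. From Definition~\ref{def:labelling_decomposition_tree} and the proof of Proposition~\ref{prop:inj_hom_general_case}, the element $\alpha_f(\Gamma_i)$ is exactly the conjugating element produced by Lemma~\ref{lem:inj_hom_cycle_case} when applied to any induced cycle of $\Gamma_i$, with Lemma~\ref{lem:trivial_labels} ensuring this element is the same for every such cycle. Setting $g_{\gamma_i} := \alpha_f(\Gamma_i)$ and $g_{\gamma_j} := \alpha_f(\Gamma_j)$ and taking the embeddings to be the restrictions of $\varphi_i$ and $\varphi_j$, the pair $(\gamma_i, \gamma_j)$ satisfies the hypotheses of Lemma~\ref{lem:domains_sharing_two_trees}. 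That lemma then produces an integer $k_{i,j} \in \mathbb{Z}$ with
\[\alpha_f(\Gamma_j) \;=\; g_{\gamma_j} \;=\; g_{\gamma_i}\, \Delta_{\varphi_i(a)\varphi_i(b)}^{k_{i,j}} \;=\; \alpha_f(\Gamma_i)\, \Delta_{\varphi_i(a)\varphi_i(b)}^{k_{i,j}},\]
which is exactly the desired conclusion.

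The only point requiring any care is the graph-theoretic statement that every edge of a chunk lies on an induced cycle of that chunk and that the two chosen cycles may be chosen to share exactly $e$; once this is granted, the lemma is essentially a direct translation of Lemma~\ref{lem:domains_sharing_two_trees} into the chunk terminology, as advertised by the authors.
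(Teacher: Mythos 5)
Your argument is correct and takes essentially the same route as the paper, which gives no proof beyond declaring the lemma a direct consequence of Lemma~\ref{lem:domains_sharing_two_trees}: choosing induced cycles through the shared separating edge in each chunk is exactly the instantiation needed so that the hypotheses of that lemma hold with $g_{\gamma_i}=\alpha_f(\Gamma_i)$, $g_{\gamma_j}=\alpha_f(\Gamma_j)$ and the restrictions of $\varphi_i,\varphi_j$ (indeed, all that is used is the conjugation property at the two vertices of $e$). The graph-theoretic points you flag (every edge of a chunk lies on an induced cycle of the chunk, and two chunks sharing a separating edge intersect exactly in that edge) are true and are left implicit in the paper's chunk-decomposition setup, so your write-up is a faithful fleshing-out of the intended proof.
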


\begin{lemma}\label{lem:cohopf_induction_g_i}
     For every $i\geq 0$, we have  $\alpha_f(\Gamma_i) \in f(A_{\Gamma_0 \cup \ldots \cup \Gamma_{i-1}})$ and $\varphi_i(V(\Gamma_i)) \subseteq f(A_{\Gamma_0 \cup \ldots \cup \Gamma_i})$.
\end{lemma}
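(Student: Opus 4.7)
The plan is to prove the two inclusions simultaneously by induction on $i$, using the ordering convention that $\Gamma_i$ shares a separating edge with $\Gamma_0 \cup \ldots \cup \Gamma_{i-1}$, together with Lemma~\ref{lem:label_separating_edge}, which ties $\alpha_f(\Gamma_i)$ to $\alpha_f(\Gamma_j)$ for the neighbouring chunk $\Gamma_j$ with $j<i$.

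For the base case $i=0$, the convention $g_0 = \alpha_f(\Gamma_0) = 1$ handles the first claim trivially (the empty union of chunks corresponds to the trivial subgroup, which contains $1$). The second claim for $i=0$ follows directly from Lemma~\ref{lem:induced_isom_chunks}: since $g_0 = 1$, for every standard generator $v \in V(\Gamma_0)$ we have $\varphi_0(v) = f(v)^{\pm 1} \in f(A_{\Gamma_0})$.

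For the inductive step, assume the statement for all $i' < i$. By our ordering, $\Gamma_i$ shares a (necessarily separating) edge $e_i$ with some chunk $\Gamma_j$, $j < i$; write $a,b$ for its endpoints. Applying Lemma~\ref{lem:label_separating_edge} to the pair $(\Gamma_j, \Gamma_i)$ yields an integer $k \in \mathbb{Z}$ with
\[ \alpha_f(\Gamma_i) \;=\; \alpha_f(\Gamma_j)\,\Delta_{\varphi_j(a)\varphi_j(b)}^{\,k}. \]
By the inductive hypothesis applied to $\Gamma_j$, we have $\alpha_f(\Gamma_j) \in f(A_{\Gamma_0 \cup \ldots \cup \Gamma_{j-1}})$ and $\varphi_j(a),\varphi_j(b) \in f(A_{\Gamma_0 \cup \ldots \cup \Gamma_j})$, both of which are contained in $f(A_{\Gamma_0 \cup \ldots \cup \Gamma_{i-1}})$. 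Since the Garside element $\Delta_{\varphi_j(a)\varphi_j(b)}$ lies in the dihedral parabolic $\langle \varphi_j(a), \varphi_j(b) \rangle$, the displayed identity gives $\alpha_f(\Gamma_i) \in f(A_{\Gamma_0 \cup \ldots \cup \Gamma_{i-1}})$, establishing the first inclusion at level $i$. For the second, each standard generator $v \in V(\Gamma_i)$ satisfies $f(v) = \alpha_f(\Gamma_i)\,\varphi_i(v)^{\pm 1}\,\alpha_f(\Gamma_i)^{-1}$ by Lemma~\ref{lem:induced_isom_chunks}, hence
\[ \varphi_i(v) \;=\; \alpha_f(\Gamma_i)^{-1}\,f(v)^{\pm 1}\,\alpha_f(\Gamma_i), \]
which lies in $f(A_{\Gamma_0 \cup \ldots \cup \Gamma_i})$ since $\alpha_f(\Gamma_i) \in f(A_{\Gamma_0 \cup \ldots \cup \Gamma_{i-1}})$ (just proved) and $f(v) \in f(A_{\Gamma_i})$.

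There is no real obstacle here beyond bookkeeping; the only conceptual point is recognising that Lemma~\ref{lem:label_separating_edge} reduces the passage from one chunk to the next to multiplication by a Garside element of a dihedral parabolic whose two standard generators are already under control by the induction hypothesis.
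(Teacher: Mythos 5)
Your proof is correct and follows essentially the same route as the paper: induction on $i$, using the convention $\alpha_f(\Gamma_0)=1$ for the base case, Lemma~\ref{lem:label_separating_edge} to write $\alpha_f(\Gamma_i)=\alpha_f(\Gamma_j)\Delta_{\varphi_j(a)\varphi_j(b)}^{k}$ for the inductive step, and the conjugation identity from Lemma~\ref{lem:induced_isom_chunks} to place $\varphi_i(v)$ in $f(A_{\Gamma_0\cup\ldots\cup\Gamma_i})$. The only difference is that you spell out slightly more of the bookkeeping (e.g. that the Garside element lies in the subgroup generated by $\varphi_j(a),\varphi_j(b)$), which the paper leaves implicit.
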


\begin{proof}
    We prove the result by induction on $i\geq 0$.  For $i=0$, there is nothing to do since $g_0=1$ by convention. For $i\geq 1$, note that since $\Gamma_i$ shares a single edge $a,b$ with $\Gamma_{j}$ for some $j<i$, we have 
   \[\alpha_f(\Gamma_i) = \alpha_f(\Gamma_j) \Delta_{\varphi_j(a)\varphi_j(b)}^{k_{j, i}}\]
   by Lemma~\ref{lem:label_separating_edge}. 
   By the induction hypothesis, we have that $\alpha_f(\Gamma_j)$ and $\varphi_j(a), \varphi_j(b)$ are in $f(A_{\Gamma_0 \cup \ldots \cup \Gamma_j})$, hence $\alpha_f(\Gamma_i) \in f(A_{\Gamma_0 \cup \ldots \cup \Gamma_j}) \subseteq f(A_{\Gamma_0 \cup \ldots \cup \Gamma_{i-1}})$. Moreover, we have 
   \[\varphi_i(v)^{\pm 1} = \alpha_f(\Gamma_i)^{-1}f(v)\alpha_f(\Gamma_i)\]
   so it follows that $\varphi_i(v) \in f(A_{\Gamma_0 \cup \ldots \cup \Gamma_i})$.
\end{proof}

\begin{lemma}
    For every $i\geq 0$, the image of the restriction $f: A_{\Gamma_0 \cup \ldots \cup \Gamma_i} \rightarrow A_\Gamma$ contains at least $|V(\Gamma_0 \cup \ldots \cup \Gamma_i)|$ standard generators of $A_\Gamma$.
\end{lemma}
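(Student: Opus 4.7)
The plan is to proceed by induction on $i \geq 0$, using Lemma~\ref{lem:cohopf_induction_g_i} as the main input. The base case $i=0$ follows immediately from our convention $\alpha_f(\Gamma_0) = 1$: by Lemma~\ref{lem:induced_isom_chunks}, $f(v) = \varphi_0(v)^{\varepsilon}$ for every $v \in V(\Gamma_0)$, so the injective map $\varphi_0$ exhibits $|V(\Gamma_0)|$ distinct standard generators of $A_{\Gamma}$ inside $f(A_{\Gamma_0})$.

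For the inductive step, I would first observe that since the chunk tree is a tree, $\Gamma_i$ meets $\Gamma_0 \cup \cdots \cup \Gamma_{i-1}$ along a single separating edge, contained in a unique previous chunk $\Gamma_j$ with vertices $a_i, b_i$; in particular $V(\Gamma_i) \cap V(\Gamma_0 \cup \cdots \cup \Gamma_{i-1}) = \{a_i, b_i\}$. By Lemma~\ref{lem:label_separating_edge}, $\alpha_f(\Gamma_i) = \alpha_f(\Gamma_j)\Delta^{k}$ for some integer $k$, where $\Delta = \Delta_{\varphi_j(a_i)\varphi_j(b_i)}$. Comparing the two expressions for $f(a_i)$ given by Lemma~\ref{lem:induced_isom_chunks} applied to $\Gamma_i$ and $\Gamma_j$ yields $\varphi_i(a_i)^{\varepsilon} = \Delta^{-k}\varphi_j(a_i)^{\varepsilon}\Delta^{k}$; as conjugation by $\Delta$ permutes the two standard generators $\varphi_j(a_i), \varphi_j(b_i)$ (fixing them if $m_{\varphi_j(a_i)\varphi_j(b_i)}$ is even, swapping them otherwise), one concludes $\{\varphi_i(a_i), \varphi_i(b_i)\} = \{\varphi_j(a_i), \varphi_j(b_i)\}$. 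Hence these two ``shared'' standard generators already lie in $\varphi_j(V(\Gamma_j)) \subseteq f(A_{\Gamma_0 \cup \cdots \cup \Gamma_{i-1}})$.

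The heart of the argument, and main obstacle, is to show that for every $v \in V(\Gamma_i) \setminus \{a_i, b_i\}$, the standard generator $\varphi_i(v)$ is genuinely new, meaning $\varphi_i(v) \notin f(A_{\Gamma_0 \cup \cdots \cup \Gamma_{i-1}})$. I would argue by contradiction: suppose $\varphi_i(v) = f(w)$ for some $w \in A_{\Gamma_0 \cup \cdots \cup \Gamma_{i-1}}$. By Lemma~\ref{lem:cohopf_induction_g_i} we can write $\alpha_f(\Gamma_i) = f(\gamma)$ for some $\gamma \in A_{\Gamma_0 \cup \cdots \cup \Gamma_{i-1}}$, and the equation $f(v) = \alpha_f(\Gamma_i)\varphi_i(v)^{\varepsilon}\alpha_f(\Gamma_i)^{-1}$ becomes $f(v) = f(\gamma w^{\varepsilon}\gamma^{-1})$. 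Injectivity of $f$ then forces $v = \gamma w^{\varepsilon}\gamma^{-1} \in A_{\Gamma_0 \cup \cdots \cup \Gamma_{i-1}}$, which contradicts Theorem~\ref{thm:vdL}: indeed $v$ is a standard generator of $A_{\Gamma}$ outside $V(\Gamma_0 \cup \cdots \cup \Gamma_{i-1})$, so $\langle v \rangle \cap A_{\Gamma_0 \cup \cdots \cup \Gamma_{i-1}} = \{1\}$ by van der Lek.

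Putting these together, $f(A_{\Gamma_0 \cup \cdots \cup \Gamma_i}) \cap V(\Gamma)$ contains the $|V(\Gamma_0 \cup \cdots \cup \Gamma_{i-1})|$ distinct generators supplied by the induction hypothesis, together with the $|V(\Gamma_i)| - 2$ genuinely new generators in $\varphi_i(V(\Gamma_i) \setminus \{a_i, b_i\})$, and the two collections are disjoint by the previous step. This yields at least $|V(\Gamma_0 \cup \cdots \cup \Gamma_{i-1})| + |V(\Gamma_i)| - 2 = |V(\Gamma_0 \cup \cdots \cup \Gamma_i)|$ distinct standard generators in the image, completing the induction.
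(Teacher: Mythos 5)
Your proof is correct and follows essentially the same route as the paper's: induction on $i$, using Lemma~\ref{lem:cohopf_induction_g_i} to place $\alpha_f(\Gamma_i)$ and the previously obtained generators inside $f(A_{\Gamma_0 \cup \cdots \cup \Gamma_{i-1}})$, and then injectivity of $f$ together with van der Lek's Theorem~\ref{thm:vdL} to conclude that each $\varphi_i(v)$ with $v \notin \Gamma_0 \cup \cdots \cup \Gamma_{i-1}$ is genuinely new. The only (harmless) difference is that you check explicitly the consistency $\{\varphi_i(a_i),\varphi_i(b_i)\}=\{\varphi_j(a_i),\varphi_j(b_i)\}$ on the shared separating edge, a point the paper subsumes in the assertion that the embeddings $\varphi_j$ amalgamate to a graph map.
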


\begin{proof}
    Note that for every $i$, the graph embeddings $\varphi_i: \Gamma_i \hookrightarrow \Gamma$ can be amalgamated into a graph map $\Phi_i: \Gamma_0 \cup \ldots \cup \Gamma_i \to \Gamma$.
    By Lemma~\ref{lem:cohopf_induction_g_i}, it is enough to prove that each $\Phi_i$ is injective.
    Again, we prove this by induction on $i\geq 0$. For $i=0$, this follows from Proposition~\ref{prop:inj_hom_general_case}.
    For $i\geq 1$, let $v$ be a standard generator in $\Gamma_i \backslash (\Gamma_0 \cup \ldots \cup \Gamma_{i-1})$. We have $f(v) = \alpha_f(\Gamma_i)\varphi_i(v)^{\pm 1}\alpha_f(\Gamma_i)^{-1}$ by construction. Suppose by contradiction that there exists $j<i$ such that $\varphi_i(v) = \varphi_j(w)$ for some $w \in V(\Gamma_j)$. Then, by Lemma~\ref{lem:cohopf_induction_g_i}, we have that both $\alpha_f(\Gamma_i)$ and $\varphi_j(w)$ are in $f(A_{\Gamma_0 \cup \ldots \cup \Gamma_{i-1}})$, hence so is $f(v)$. But this contradicts the injectivity of $f$, since $v\notin A_{\Gamma_0 \cup \ldots \cup \Gamma_{i-1}}$ by van der Lek \cite{van1983homotopy}. 
\end{proof}

It now follows from the previous lemma that the image of $f$ contains at least $|V(\Gamma)|$ standard generators, hence it contains a generating set of $A_\Gamma$. Thus, $f$ is surjective, and it follows that $f$ is an isomorphism, which concludes the proof of Proposition~\ref{prop:cohopf_hard_direction}.

\section{Generators of the automorphism group}\label{sec:Aut}

The goal of this final section is to describe a generating set of the automorphism group of XXXL-type Artin groups (and more generally XL-type Artin groups that satisfy the Cycle of Standard Trees Property), when the presentation graph does not have cut-vertices. 

\begin{definition}[Edge-twist \cite{brady2002rigidity}]
    Let $\Gamma$ be a simplicial graph, let $e$ be a separating edge of $\Gamma$ with vertices $a, b\in V(\Gamma)$, and let $\Gamma_1, \Gamma_2$ induced subgraphs of $\Gamma$ such that $\Gamma_1 \cup \Gamma_2 = \Gamma$ and $\Gamma_1 \cap \Gamma_2 = e$. (Note that the graphs $\Gamma_1, \Gamma_2$ may not be unique.)

    Let $\Delta_{ab}$ be the Garside element of $A_{ab}$. Conjugation by $\Delta_{ab}$ induces a permutation of $\{a, b\}$ (the identity if $m_{ab}$ is even, a transposition  otherwise). We construct a new graph $\Gamma'$ from the disjoint union $\Gamma_1 \sqcup \Gamma_2$, as follows: Let us denote by $e_1=\{a_1, b_1\}$ and $e_2=\{a_2, b_2\}$ the edges of the disjoint copies $\Gamma_1$ and $\Gamma_2$ corresponding to $e$. We identify the edges $e_1, e_2$ by identifying $a_1$ with $(a^{\Delta_{ab}})_2$, and $b_1$ with $(b^{\Delta_{ab}})_2$. The graph $\Gamma'$ is said to be obtained from $\Gamma$ by performing an \textbf{edge-twist} (on the subgraph $\Gamma_2$).
    
    We define the following isomorphism $\tau_{e, \Gamma_2}: A_{\Gamma}\rightarrow A_{\Gamma'}$, also called an \textbf{edge-twist} for simplicity, as follows: We first define it on each standard generators of the disjoint copies of $\Gamma_1, \Gamma_2$, and verify that it is compatible with the edge identification described above.
    For $v\in V(\Gamma_1)$, we set $\tau_{e, \Gamma_2}(v) := v$. For $v\in V(\Gamma_2)$, we set $\tau_{e, \Gamma_2}(v) = v^{\Delta_{ab}}$. 
\end{definition}

\begin{remark}
    These edge-twists are part of a larger family of isomorphisms between Artin groups described in \cite{brady2002rigidity}. However, for large-type Artin groups, all such isomorphisms are of the form described above.
\end{remark}

\begin{definition}
    For a presentation graph $\Gamma$, we denote by $\mathcal{T}$ the set whose elements are the presentation graphs with vertex set $V(\Gamma)$ obtained from $\Gamma$ by a sequence of edge-twists. Such a graph will be referred to as a \textbf{twist} of $\Gamma$.

    For every $\Gamma' \in \mathcal{T}$, we fix an isomorphism $\varphi_{\Gamma'}: A_{\Gamma'} \rightarrow A_\Gamma$, with the convention that~$\varphi_\Gamma = Id$. 
    For $\Gamma', \Gamma'' \in \mathcal{T}$ and an isomorphism $\varphi: A_{\Gamma'} \rightarrow A_{\Gamma''}$, we denote by 
    \[\overline \varphi := \varphi_{\Gamma''} \circ \varphi \circ \varphi_{\Gamma'}^{-1}: A_\Gamma \rightarrow A_\Gamma\]
    the associated automorphism of $A_\Gamma$.
\end{definition}

\begin{remark}
Note that the set $\mathcal{T}$ contains simplicial graphs with vertex set $V(\Gamma)$ and with the same set of labels as $\Gamma$. In particular, $\mathcal{T}$ is always finite. For instance, if $\Gamma$ is the graph from Figure \ref{FigureTriforce}, then $\mathcal{T}$ contains exactly $2^3 = 8$ elements, with the three degrees of freedom coming from the three separating edges.
\end{remark}

\begin{definition}
    Let $\Gamma', \Gamma''$ be two labelled graphs, and let $\psi: \Gamma' \rightarrow \Gamma''$ be an isomorphism of labelled simplicial graphs. Then there is an associated isomorphism of Artin groups, that we will still denote by $\psi: A_{\Gamma'} \rightarrow A_{\Gamma''}$ for simplicity, that sends each generator $v\in V(\Gamma')$ to the corresponding standard generator $\psi(v) \in V(\Gamma'')$. Such an isomorphism of Artin groups is called a \textbf{graph isomorphism}.  
\end{definition}

The goal of this section is to prove the following: 

\begin{theorem}\label{thm:Aut_generating_set}
 Let $A_\Gamma$ be an XL-type Artin group that satisfies the Cycle of Standard Trees Property. Suppose that $\Gamma$ is connected, is not an edge, and does not have a cut-vertex. Let $S_{Aut_\Gamma}$ be the finite family of automorphisms of $A_\Gamma$ consisting of: 
    \begin{itemize}
        \item the conjugations by a standard generator of $A_\Gamma$,
        \item the graph automorphisms of $A_\Gamma$, 
        \item the global inversion of $A_\Gamma$,
        \item all the $\overline \tau$ for $\tau: A_{\Gamma'} \rightarrow A_{\Gamma''}$ an edge-twist, with $\Gamma', \Gamma'' \in \mathcal{T}$. 
        \item all the $\overline \psi$ for $\psi: A_{\Gamma'} \rightarrow A_{\Gamma''}$ a graph isomorphism, with $\Gamma', \Gamma'' \in \mathcal{T}$.
    \end{itemize}
    Then $\Aut(A_\Gamma)$ is generated by $S_{Aut_\Gamma}$. In particular, $\Aut(A_\Gamma)$ is finitely generated.
\end{theorem}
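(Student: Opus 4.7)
The plan is to take an arbitrary $f \in \Aut(A_\Gamma)$ and normalise it step by step, composing with generators from $S_{\Aut_\Gamma}$ until it becomes the identity, using the chunk decomposition $\Gamma = \Gamma_0 \cup \cdots \cup \Gamma_k$ of Section~\ref{sec:coHopf}. Applying Lemma~\ref{lem:induced_isom_chunks} to each chunk yields elements $g_i \in A_\Gamma$, signs $\varepsilon_i \in \{\pm 1\}$, and label-preserving embeddings $\varphi_i : \Gamma_i \hookrightarrow \Gamma$ such that $f(v) = g_i \varphi_i(v)^{\varepsilon_i} g_i^{-1}$ for every $v \in V(\Gamma_i)$. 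Since each $\varphi_i$ must preserve adjacency and non-adjacency (by Theorem~\ref{thm:vdL} applied to the parabolic subgroup $f(A_{\Gamma_i})$), the image $\varphi_i(\Gamma_i)$ is an induced subgraph of $\Gamma$ without cut-vertex or separating edge; surjectivity of $f$ then forces these subgraphs to be precisely the chunks of $\Gamma$. Hence $f$ induces a bijection $\sigma$ on the set of chunks, with each $\varphi_i : \Gamma_i \to \Gamma_{\sigma(i)}$ a label-preserving graph isomorphism.

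I would then unify signs and normalise the basepoint. For two adjacent chunks $\Gamma_i, \Gamma_j$ sharing a separating edge $e = \{a,b\}$, the equality of the two expressions for $f(a)$ and the fact that a standard generator is never conjugate to its inverse in its dihedral parabolic subgroup (via abelianisation) forces $\varepsilon_i = \varepsilon_j$; connectedness of the chunk tree gives a common sign $\varepsilon$. Post-composing with the global inversion when $\varepsilon = -1$ makes all signs positive, and post-composing with conjugation by $g_0^{-1}$ (a product of conjugations by standard generators) then reduces to $g_0 = 1$.

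The heart of the argument, which I expect to be the main obstacle, is using edge-twists to absorb the remaining $g_i$'s. By Lemma~\ref{lem:label_separating_edge}, for every pair of chunks adjacent in the chunk tree with shared separating edge $e_{ij}$, there is an integer $k_{ij}$ with $g_j = g_i \Delta_{\varphi_i(e_{ij})}^{k_{ij}}$, and conjugation by this Garside power is exactly the effect of the $k_{ij}$-fold iterated edge-twist at $\varphi_i(e_{ij})$. Using the chunk tree rooted at $\Gamma_0$, I would construct a graph $\Gamma_f \in \mathcal{T}$ by performing, along every separating edge $e_{ij}$ in a traversal from the root, the edge-twist $\tau_{\varphi_i(e_{ij}),\,\Gamma_{\sigma(j)}\text{-side}}$ iterated $k_{ij}$ times. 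The delicate verification is that the $\varphi_i$'s then glue into a single label-preserving graph isomorphism $\psi : \Gamma_f \to \Gamma$: this requires checking that, on each separating edge, the potential discrepancy between $\varphi_i|_{e_{ij}}$ and $\varphi_j|_{e_{ij}}$ (namely whether they agree on endpoints or swap them) matches the permutation of endpoints induced by $\Delta_{\varphi_i(e_{ij})}^{k_{ij}}$, whose action on the endpoints depends on the parity of $k_{ij}$ and of the label $m_{\varphi_i(e_{ij})}$.

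With $\Gamma_f$ and $\psi$ in hand, the composition of the iterated edge-twists yields an isomorphism $\tau : A_\Gamma \to A_{\Gamma_f}$, and the product of the corresponding generators in $S_{\Aut_\Gamma}$ telescopes (the intermediate $\varphi_{\Gamma^{(\ell)}}$'s cancel pairwise in the definition of $\overline{\cdot}$) to give $\overline{\tau}$. A straightforward induction along the chunk tree, analogous to the one in Lemma~\ref{lem:cohopf_induction_g_i}, then shows $\overline{\psi} \circ \overline{\tau}$ agrees with $f$ on every standard generator, proving $f \in \langle S_{\Aut_\Gamma}\rangle$. Finiteness of $S_{\Aut_\Gamma}$ is immediate: $\mathcal{T}$ contains only finitely many labelled graphs on $V(\Gamma)$ (edge-twists preserve the vertex set and the label multiset, so $|\mathcal{T}| \leq 2^{\#\{\text{separating edges of }\Gamma\}}$), and the remaining families of generators are visibly finite.
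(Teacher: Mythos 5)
Your proposal is correct and follows essentially the paper's own route: per-chunk rigidity via Lemma~\ref{lem:induced_isom_chunks}, Garside-power discrepancies across separating edges via Lemmas~\ref{lem:domains_sharing_two_trees} and~\ref{lem:label_separating_edge}, absorption of these discrepancies by edge-twists along the chunk tree, and a final reduction to graph automorphisms, the global inversion and conjugations, together with finiteness of $\mathcal{T}$. The only difference is organisational: the paper normalises the elements $\alpha_f(\Gamma_i)$ to $1$ by an induction along the chunk tree (Lemma~\ref{lem:change_labels_chunks}, Corollary~\ref{cor:twists_make_trivial_labels}, Lemma~\ref{lem:iso_trivial_labels}) rather than building the twisted graph $\Gamma_f$ in one pass, and the ``delicate'' gluing check you flag is exactly what the relation $g_{\gamma'}=g_\gamma\Delta_{ab}^k$ of Lemma~\ref{lem:domains_sharing_two_trees} provides, so your route goes through.
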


\begin{remark}
    The generating set $S_{Aut_\Gamma}$ depends on the choices of isomorphisms $\varphi_{\Gamma'}$. A slightly more natural result is to show that the isomorphism groupoid of $A_\Gamma$, whose objects are the graphs $\Gamma' \in \mathcal{T}$ and whose morphisms are all the isomorphisms $A_{\Gamma'} \rightarrow A_{\Gamma''}$ for $\Gamma', \Gamma'' \in \mathcal{T}$, is generated by the finite set $S_{Iso_{\mathcal T}}$: 
    \begin{itemize}
        \item for each $\Gamma' \in \mathcal T$, the conjugations by a standard generator of $A_{\Gamma'}$,
        \item for each $\Gamma' \in \mathcal T$, the graph automorphisms of $A_{\Gamma'}$, 
        \item for each $\Gamma' \in \mathcal T$, the global inversion of $A_{\Gamma'}$,
        \item all the twists $\tau: A_{\Gamma'} \rightarrow A_{\Gamma''}$, with $\Gamma', \Gamma'' \in \mathcal{T}$. 
        \item all the graph isomorphisms $\psi: A_{\Gamma'} \rightarrow A_{\Gamma''}$, with $\Gamma', \Gamma'' \in \mathcal{T}$.
    \end{itemize}
    We leave it to the reader to check that the proofs below carry over in a straightforward way to yield this more general result. Such a result was obtained in the large-type triangle-free case by Crisp \cite{crisp2005automorphisms}.
\end{remark}

\begin{lemma}\label{lem:change_labels_chunks}
     Let $f: A_\Gamma \rightarrow A_{\Gamma'}$ be an isomorphism with $\Gamma' \in \mathcal{T}$. Let $e$ be a separating edge of $\Gamma$, and let $\Gamma_0, \ldots, \Gamma_k$ denote the family of chunks of $\Gamma$ containing $e$.
     Let us assume that $\alpha_f(\Gamma_0)=1$, where the coefficients $\alpha_f(\Gamma_i)$ were defined in Definition \ref{def:labelling_decomposition_tree}. 
     Then there exists a sequence $s_1, \ldots, s_n$ of edge-twists in $S_{Iso_{\mathcal T}}$ and a graph $\Gamma''$ in $\mathcal T$ (obtained from $\Gamma'$ by a sequence of edge-twists) such that 
    the composition 
    \[f' := s_n \circ \cdots \circ s_1 \circ f: A_\Gamma \rightarrow A_{\Gamma''}\]
    is well-defined, and this isomorphism satisfies 
     \[\alpha_{f'}(\Gamma_i) = 1 ~~~\mbox{ for all } ~~~ 0\leq i \leq k\]
\end{lemma}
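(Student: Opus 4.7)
The plan is to iteratively post-compose $f$ with edge-twists along (images of) the separating edge $e$ in order to reduce each coefficient $\alpha_f(\Gamma_i)$ to the identity, one chunk at a time. As a first step, I would invoke Lemma~\ref{lem:label_separating_edge} for each pair $(\Gamma_0, \Gamma_i)$ with $i = 1, \ldots, k$: since both chunks contain $e$ and $\alpha_f(\Gamma_0) = 1$, we obtain an integer $k_i$ with $\alpha_f(\Gamma_i) = \Delta_{e'}^{k_i}$, where $e' := \varphi_0(e)$ is the corresponding edge in $\Gamma'$. The problem therefore reduces to the purely ``combinatorial'' task of driving each $k_i$ to zero via twists, ideally without inadvertently changing the exponents attached to the other chunks.

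Next, I would fix some $i \in \{1, \ldots, k\}$ and construct a single edge-twist that shifts $k_i$ by $\pm 1$. The key observation is that $\varphi_i(\Gamma_i)$ is a connected induced subgraph of $\Gamma'$ meeting the rest of $\Gamma'$ only along the separating edge $e'$; this follows from $f$ being an isomorphism, together with the fact that each restriction $f|_{A_{\Gamma_j}}$ is, up to conjugation, a graph embedding (Lemma~\ref{lem:induced_isom_chunks}). This gives a valid decomposition $\Gamma' = \Lambda_1 \cup \Lambda_2$ with $\Lambda_2 := \varphi_i(\Gamma_i)$ and $\Lambda_1 \cap \Lambda_2 = e'$, and I would take $\tau := \tau_{e', \Lambda_2}: A_{\Gamma'} \to A_{\Gamma''}$ as my next twist.

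I would then compute $\tau \circ f$ on generators. For $v \in V(\Gamma_0)$, the image $\varphi_0(v)$ lies in $V(\Lambda_1)$ and is fixed by $\tau$, so $\alpha_{\tau \circ f}(\Gamma_0)$ remains trivial. For $v \in V(\Gamma_j)$ with $j \neq 0, i$, one likewise has $\varphi_j(v) \in V(\Lambda_1)$, so $\alpha_{\tau \circ f}(\Gamma_j)$ is unchanged. For $v \in V(\Gamma_i)$, the element $\varphi_i(v) \in V(\Lambda_2)$ gets conjugated by $\Delta_{e'}^{\mp 1}$ by the twist, while the factor $\Delta_{e'}^{k_i}$ (which lies in $A_{e'} \subset A_{\Lambda_1}$) is preserved; the net effect is $\alpha_{\tau \circ f}(\Gamma_i) = \Delta_{e''}^{\,k_i \mp 1}$ in the new graph $\Gamma''$. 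Thus each application of $\tau$ (or of $\tau^{-1}$, chosen according to the sign of $k_i$) reduces $|k_i|$ by one and touches no other chunk. Iterating $|k_i|$ times kills the $i$-th exponent, and repeating the procedure sequentially for $i = 1, \ldots, k$ produces the required sequence of edge-twists whose composition with $f$ is the desired $f'$.

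The main obstacle I anticipate is the book-keeping during iteration, particularly when $m_{ab}$ is odd: in that case each twist genuinely modifies the underlying graph by swapping the roles of $a$ and $b$ on the $\Lambda_2$-side. One must carefully check that after each twist the chunk decomposition of the updated graph is compatible with the next twist (i.e.\ the edge $e'$ is replaced by a corresponding separating edge $e''$, and the image of $\varphi_i(\Gamma_i)$ is still a chunk of $\Gamma''$ meeting the rest of $\Gamma''$ only along $e''$), and that the identification $\tau(\Delta_{e'}) = \Delta_{e''}$ used in the exponent calculation is valid because the endpoints of $e'$ lie in $V(\Lambda_1)$ and are fixed by $\tau$. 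Once this identification is in place at each step, the iteration proceeds transparently, and the resulting $\Gamma''$ lies in $\mathcal{T}$ since it is obtained from $\Gamma' \in \mathcal{T}$ by a finite sequence of edge-twists.
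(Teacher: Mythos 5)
Your overall strategy --- using Lemma~\ref{lem:label_separating_edge} (equivalently Lemma~\ref{lem:domains_sharing_two_trees}) to write $\alpha_f(\Gamma_i)=\Delta_{e'}^{k_i}$ with $e'=\varphi_0(e)$, and then killing these exponents by post-composing $f$ with edge-twists along $e'$ --- is exactly the paper's argument (the paper simply applies the twist on the $\Gamma_i$-side raised to the power $-k_i$ in one step, rather than one power at a time). However, your construction of the individual twist has a genuine flaw: you set $\Lambda_2:=\varphi_i(\Gamma_i)$ and claim that this subgraph meets the rest of $\Gamma'$ only along $e'$, so that $(\Lambda_1,\Lambda_2)$ is a legitimate edge-twist decomposition. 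That claim is false in general, and the justification you give (injectivity of $f$ together with Lemma~\ref{lem:induced_isom_chunks}) does not address it: a chunk containing $e$ may contain several separating edges. In the graph of Figure~\ref{FigureTriforce}, if $\Gamma_i$ is the central triangle then (already for $f=\mathrm{id}$) $\varphi_i(\Gamma_i)$ meets the rest of the graph along three separating edges, so there is no decomposition $\Gamma'=\Lambda_1\cup\Lambda_2$ with $\Lambda_2=\varphi_i(\Gamma_i)$ and $\Lambda_1\cap\Lambda_2=e'$, and your twist $\tau_{e',\Lambda_2}$ is simply not defined. (A smaller gap: you assert that $\varphi_i(\Gamma_i)$ is itself a chunk of $\Gamma'$; Lemma~\ref{lem:induced_isom_chunks} only gives that it is \emph{contained} in a chunk $\Gamma_i'$, which is all that is needed anyway.)

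The repair is the one the paper implicitly makes. Since $e'$ is a separating edge of $\Gamma'$, take $\Lambda_2$ to be the whole branch of $\Gamma'$ on the $\Gamma_i'$-side of $e'$, i.e.\ the union of $e'$ with all chunks of $\Gamma'$ lying in the component of the chunk tree of $\Gamma'$ containing $\Gamma_i'$ after removing the vertex $e'$. This is a valid edge-twist decomposition; it conjugates every generator of $\varphi_i(\Gamma_i)\subseteq\Gamma_i'$ by $\Delta_{e'}^{\pm 1}$ as you intend; and the extra chunks it conjugates are not among the chunks $\Gamma_0',\dots,\Gamma_k'$ containing $e'$ (two distinct chunks containing $e'$ are neighbours of the vertex $e'$ in the chunk tree, hence lie in different components once that vertex is removed), so the coefficients $\alpha(\Gamma_j)$ for $0\le j\le k$ --- the only ones the lemma concerns --- are unaffected except for the intended change in $\alpha(\Gamma_i)$. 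With this correction, your exponent computation, the identification $\tau(\Delta_{e'})=\Delta_{e''}$, and your bookkeeping in the odd-label case all go through, and the argument coincides with the paper's.
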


\begin{proof}
 By applying Lemma~\ref{lem:induced_isom_chunks} to each restriction $f: A_{\Gamma_i}\rightarrow A_{\Gamma'}$, we get an embedding of labelled graphs $\varphi_i: \Gamma_i \hookrightarrow \Gamma'$. Note that since each $\Gamma_i$ is a chunk, the image $\varphi_i(\Gamma_i)$ is contained in a chunk $\Gamma_i'$ of $\Gamma'$. Moreover, by applying the same reasoning to the inverse isomorphism $f^{-1}$, we have that $\Gamma_i' \neq \Gamma_j'$ for each $i\neq j$. 

Denote by $e' := \varphi_0(e) = \cdots = \varphi_k(e)$, and let us denote by $a, b \in V(\Gamma)$ the vertices of $e'$. 
    Since $\alpha_f(\Gamma_0)=1$, for each $1\leq i \leq k$, Lemma~\ref{lem:domains_sharing_two_trees} yields an integer $m_i\in \mathbb{Z}$ such that $\alpha_f(\Gamma_i) = \Delta_{ab}^{m_i}$.
    The composition 
   \[f' := \tau_{e', \Gamma_1'}^{-m_1}\circ \cdots \tau_{e', \Gamma_k'}^{-m_k}\circ f\] thus satisfies 
   $\alpha_{f'}(\Gamma_i) = 1  = \alpha_f(\Gamma_0)$ for all $ 0\leq i \leq k$.
\end{proof}

\begin{corollary}\label{cor:twists_make_trivial_labels}
    Let $f: A_{\Gamma} \rightarrow A_\Gamma$ be an automorphism. There exists a sequence of conjugations $i_1, \ldots, i_k$ in $S_{Aut_{\mathcal{T}}}$, a sequence $s_1, \ldots, s_n$ of edge-twists in $S_{Iso_{\mathcal T}}$ and a graph $\Gamma'$ in $\mathcal T$ such that 
    the composition 
    \[f' := s_n \circ \cdots \circ s_1 \circ i_k \circ \cdots \circ i_1 \circ f: A_\Gamma \rightarrow A_{\Gamma'}\]
    is well-defined, and this isomorphism is such that $\alpha_{f'}(\Gamma_i) =1$ for every chunk $\Gamma_i$ of $\Gamma$. 
\end{corollary}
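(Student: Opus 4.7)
The plan is to induct on the chunk tree $T_\Gamma$ of $\Gamma$: first use conjugations to normalize the $\alpha$-value of a chosen root chunk, then propagate the normalization outward via edge-twists from Lemma~\ref{lem:change_labels_chunks} at each successive separating edge. Fix a chunk $\Gamma_0$ to serve as the root of $T_\Gamma$. Writing $\alpha_f(\Gamma_0)^{-1} \in A_\Gamma$ as a word $g_1 \cdots g_r$ in the standard generators and composing $f$ with the corresponding sequence of single-generator conjugations (each in $S_{Aut_\Gamma}$) yields an automorphism $f_0$ with $\alpha_{f_0}(\Gamma_0) = 1$.

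I would then induct, maintaining as invariant that the set $S$ of chunks on which $\alpha$ equals $1$ is a connected subtree of $T_\Gamma$ containing $\Gamma_0$; initially $S = \{\Gamma_0\}$. While $S$ does not contain all chunks, choose a separating edge $e$ adjacent to $S$ in $T_\Gamma$; since $T_\Gamma$ is a tree, exactly one chunk $\Gamma_p$ containing $e$ lies in $S$. Apply Lemma~\ref{lem:change_labels_chunks} to $e$ with $\Gamma_p$ playing the role of the lemma's $\Gamma_0$. This produces a finite sequence of edge-twists in $S_{Iso_{\mathcal{T}}}$ which, composed with the current map, forces $\alpha = 1$ for every chunk containing $e$. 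We enlarge $S$ accordingly, and connectedness is preserved since the newly added chunks are all adjacent to $\Gamma_p$ in $T_\Gamma$.

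The main technical point, and the place where care is required, is to verify that each such batch of edge-twists does not alter the $\alpha$-values of chunks already in $S$. The twists produced by Lemma~\ref{lem:change_labels_chunks} have the form $\tau_{e',\Gamma_q'}^{-m_q}$, where $e'$ is the image of $e$ in the current target and $\Gamma_q'$ is the side of $e'$ containing $\varphi_q(\Gamma_q)$; each such twist acts as the identity on the complementary side of $e'$, which is the side containing $\varphi_p(\Gamma_p)$. Because $S$ is a connected subtree of $T_\Gamma$ containing $\Gamma_p$ but none of the $\Gamma_q$, and the embeddings $\varphi_j$ are compatible with the chunk-tree structure, every image $\varphi_j(\Gamma_j)$ with $\Gamma_j \in S$ lies on the $\Gamma_p$-side of $e'$, where the twists act trivially. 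Consequently $\alpha(\Gamma_j)$ is preserved throughout the process for every $\Gamma_j \in S$. After finitely many iterations, $S$ exhausts all chunks and we obtain the desired isomorphism $f' \colon A_\Gamma \to A_{\Gamma'}$ with $\Gamma' \in \mathcal{T}$.
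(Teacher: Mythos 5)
Your argument is correct and follows essentially the same route as the paper's proof: post-compose with conjugations by standard generators to arrange $\alpha_f(\Gamma_0)=1$, then induct over the connected chunk tree, applying Lemma~\ref{lem:change_labels_chunks} at each separating edge adjacent to the already-normalized region. The only difference is that you spell out the point the paper leaves implicit, namely that each batch of edge-twists acts trivially on the branch of the twisted edge containing the image of $\Gamma_p$ and hence preserves the $\alpha$-values of the previously normalized chunks; this verification is sound and worth making explicit.
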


\begin{proof}
    Up to post-conjugation by $\alpha_{f}(\Gamma_0)^{-1}$ (which amounts to post-composing $f$ by a finite number of conjugations  in $S_{Aut_\Gamma}$), we can assume that $\alpha_f(\Gamma_0)=1$. One now proves the result by induction on the number of chunks of $\Gamma$:  The case of $\Gamma$ having one single chunk is a direct consequence of Lemma~\ref{lem:induced_isom_chunks}, and since the chunk tree is connected, the induction step is obtained by applying Lemma~\ref{lem:change_labels_chunks}. 
\end{proof}

\begin{lemma}\label{lem:iso_trivial_labels}
    Let $f: A_{\Gamma} \rightarrow A_{\Gamma'}$ be an isomorphism such that $\alpha_f(\Gamma_i)=1$ for every chunk $\Gamma_i$ of $\Gamma$. Then $\Gamma'$ is isomorphic to $\Gamma$ as a labelled graph.

    Moreover, let $f: A_{\Gamma} \rightarrow A_{\Gamma}$ be an automorphism such that $\alpha_f(\Gamma_i)=1$ for every chunk $\Gamma_i$ of $\Gamma$. Then $f$ is in the subgroup of $\Aut(A_\Gamma)$ generated by the graph automorphisms and the global inversion of $A_\Gamma$.
\end{lemma}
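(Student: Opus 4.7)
The plan is to glue the local embeddings $\varphi_i: \Gamma_i \hookrightarrow \Gamma'$ provided by the chunk decomposition into a single label-preserving graph isomorphism $\varphi: \Gamma \to \Gamma'$, and then observe that in the automorphism case, $f$ coincides either with $\varphi$ or with its composition with the global inversion.

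First, I would apply Proposition~\ref{prop:inj_hom_general_case} to the restriction of $f$ to each chunk $A_{\Gamma_i}$ (which is an injective homomorphism into $A_{\Gamma'}$; note that $\Gamma_i$ has at least three vertices and no cut-vertex or separating edge by Lemma~\ref{lem:simple_chunk}). Combined with the hypothesis $\alpha_f(\Gamma_i) = 1$, this yields for each chunk $\Gamma_i$ a label-preserving embedding $\varphi_i: \Gamma_i \hookrightarrow \Gamma'$ and a sign $\varepsilon_i = \pm 1$ with $f(v) = \varphi_i(v)^{\varepsilon_i}$ for every $v \in V(\Gamma_i)$. For the gluing, consider a vertex $v \in \Gamma_i \cap \Gamma_j$ lying in two distinct chunks: the equality $\varphi_i(v)^{\varepsilon_i} = \varphi_j(v)^{\varepsilon_j}$ in $A_{\Gamma'}$ forces $\varphi_i(v) = \varphi_j(v)$ and $\varepsilon_i = \varepsilon_j$, since distinct standard generators are distinct infinite-order elements and none is the inverse of another (as that would yield a relation of length two contradicting Theorem~\ref{thm:vdL}). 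The chunk tree of $\Gamma$ being connected, all $\varepsilon_i$ coincide with a common $\varepsilon = \pm 1$, and the $\varphi_i$ assemble into a well-defined vertex map $\varphi: V(\Gamma) \to V(\Gamma')$ satisfying $f(v) = \varphi(v)^{\varepsilon}$ for all $v \in V(\Gamma)$.

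Next, I would check that $\varphi$ extends to a label-preserving graph isomorphism. Preservation of edges and labels on each chunk is already built into the $\varphi_i$. To show $\varphi$ preserves non-adjacency, suppose $v, w \in V(\Gamma)$ are non-adjacent; then $\langle v, w \rangle \cong F_2$ by Theorem~\ref{thm:vdL}, and by injectivity of $f$, the subgroup $\langle \varphi(v)^{\varepsilon}, \varphi(w)^{\varepsilon} \rangle = \langle \varphi(v), \varphi(w) \rangle$ is also free of rank two. If $\varphi(v), \varphi(w)$ were adjacent in $\Gamma'$, this subgroup would equal the dihedral Artin group $A_{\varphi(v)\varphi(w)}$, contradicting its freeness. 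Bijectivity of $\varphi$ on vertices follows from $f$ being an isomorphism: injectivity comes from $f(v) = f(w) \Rightarrow v = w$, and surjectivity from the fact that $\{\varphi(v) : v \in V(\Gamma)\}$ generates $A_{\Gamma'}$, which forces $\varphi(V(\Gamma)) = V(\Gamma')$ via the abelianization map sending each standard generator to a distinct basis element of $\mathbb{Z}^{|V(\Gamma')|}$. This proves $\Gamma \cong \Gamma'$ as labelled graphs.

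For the moreover part with $\Gamma' = \Gamma$, the map $\varphi$ is a graph automorphism of $\Gamma$ and induces an element of $\Aut(A_\Gamma)$, and the identity $f(v) = \varphi(v)^{\varepsilon}$ on generators gives $f = \varphi$ (if $\varepsilon = 1$) or $f = \iota \circ \varphi$ (if $\varepsilon = -1$), where $\iota$ denotes the global inversion. In either case $f$ belongs to the subgroup generated by graph automorphisms and the global inversion. None of these steps poses a significant technical obstacle, as the heavy lifting has already been done in Proposition~\ref{prop:inj_hom_general_case}; the main subtlety lies in consistently propagating the sign $\varepsilon_i$ across all chunks, which is cleanly handled by the pointwise gluing argument since it only uses the non-triviality of standard generators as group elements.
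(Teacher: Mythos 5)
Your proposal follows the paper's own proof almost verbatim: apply the chunk-wise statement (Lemma~\ref{lem:induced_isom_chunks}, itself an application of Proposition~\ref{prop:inj_hom_general_case}) together with $\alpha_f(\Gamma_i)=1$ to get $f(v)=\varphi_i(v)^{\varepsilon_i}$ on each chunk, use connectivity of the chunk tree to make all signs agree, amalgamate the $\varphi_i$ into a single label-preserving embedding $\varphi$, and use surjectivity of $f$ to conclude that $\varphi$ is onto; the ``moreover'' part is then immediate. Your extra check that $\varphi$ reflects non-adjacency (via freeness of $\langle v,w\rangle$ and injectivity of $f$) is a welcome precision that the paper leaves implicit.

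One justification in your write-up is actually false and should be repaired, although the statement it is meant to support is true. You argue that $\{\varphi(v): v\in V(\Gamma)\}$ generating $A_{\Gamma'}$ forces $\varphi(V(\Gamma))=V(\Gamma')$ ``via the abelianization map sending each standard generator to a distinct basis element of $\mathbb{Z}^{|V(\Gamma')|}$''. This description of the abelianization only holds when all labels are even: for an odd label $m_{st}$ the relation $\Pi(s,t;m_{st})=\Pi(t,s;m_{st})$ abelianizes to $s=t$, so adjacent generators along odd edges are identified (for a connected graph with all labels odd the abelianization is just $\mathbb{Z}$), and odd labels are of course allowed in the large-type and XXXL settings. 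The correct (and equally short) argument is via Theorem~\ref{thm:vdL}: if some $w\in V(\Gamma')$ were not in $\varphi(V(\Gamma))$, then $\langle w\rangle \cap \langle \varphi(V(\Gamma))\rangle = \{1\}$, while $\langle \varphi(V(\Gamma))\rangle = A_{\Gamma'}$ contains the infinite-order element $w$, a contradiction; hence a proper subset of the standard generators generates a proper standard parabolic subgroup and $\varphi$ must hit every vertex of $\Gamma'$. With this substitution your proof is complete and matches the paper's.
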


\begin{proof}
    By Lemma~\ref{lem:induced_isom_chunks} applied to each chunk $\Gamma_i$, and using the fact that $\alpha_f(\Gamma_i)=1$, we have that for each $i$, there exists an embedding $\varphi_i: \Gamma_i \hookrightarrow \Gamma'$ of labelled graphs, and an integer $\varepsilon_i = \pm 1$ such that for every $v\in V(\Gamma_i)$, we have 
    \[f(v) = \varphi_i(v)^{\varepsilon_i}\]
    Since the graph $T$ is connected, it follows that the $\varepsilon_i$ are all equal. Up to possibly post-composing $f$ with the global inversion of $A_{\Gamma'}$ (which is in $S_{Aut}$), we can assume that $\varepsilon_i =1$ for all $i$. By injectivity of $f$, we get that the $f(v)$ are all distinct, hence the embeddings $\varphi_i$ can be amalgamated to an embedding of labelled graphs $\varphi: \Gamma \hookrightarrow \Gamma'$. Since $f$ is an isomorphism by assumption, it follows that $\varphi$ is surjective, hence an isomorphism. 
    
    If $f: A_\Gamma \rightarrow A_\Gamma$ is an automorphism, the same reasoning as above for $\Gamma' = \Gamma$ implies that there exists an integer $\varepsilon = \pm 1$ and an isomorphism of labelled graphs $\varphi: \Gamma \rightarrow \Gamma$ such that for every $v\in V(\Gamma)$, we have 
    \[f(v) = \varphi(v)^{\varepsilon}\]
    and the result follows.  
\end{proof}

\begin{corollary}\label{cor:auto_simplification}
    Let $A_\Gamma$ be an XL-type Artin group that satisfies the Cycle of Standard Trees Property. Further assume that $\Gamma$ is connected, with $|V(\Gamma)|\geq 3$, and without cut vertex. Let $f: A_{\Gamma} \rightarrow A_\Gamma$ be an automorphism. Then there exists a sequence of conjugations $i_1, \ldots, i_k$ in $S_{Aut_{\mathcal{T}}}$, a sequence $s_1, \ldots, s_n$ of edge-twists in $S_{Iso_{\mathcal T}}$ and a graph isomorphism $\psi$ in $S_{Iso_{\mathcal T}}$ such that 
    the composition 
    \[f' := \psi \circ s_n \circ \cdots \circ s_1 \circ i_k \circ \cdots \circ i_1 \circ f: A_\Gamma \rightarrow A_{\Gamma}\]
    is a well-defined automorphism of $A_\Gamma$, and is in the subgroup of $\Aut(A_\Gamma)$ generated by the conjugations, graph automorphisms, and the global inversion of $A_\Gamma$.
\end{corollary}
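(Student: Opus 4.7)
The plan is a direct two-step chaining of the preceding results. First, I would apply Corollary~\ref{cor:twists_make_trivial_labels} to the given automorphism $f$. This produces conjugations $i_1, \ldots, i_k \in S_{Aut_\Gamma}$, edge-twists $s_1, \ldots, s_n \in S_{Iso_{\mathcal T}}$, and a graph $\Gamma' \in \mathcal{T}$ such that the composition
\[g := s_n \circ \cdots \circ s_1 \circ i_k \circ \cdots \circ i_1 \circ f : A_\Gamma \longrightarrow A_{\Gamma'}\]
is a well-defined isomorphism satisfying $\alpha_g(\Gamma_i) = 1$ for every chunk $\Gamma_i$ of $\Gamma$.

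Next, I would invoke the first half of Lemma~\ref{lem:iso_trivial_labels} applied to $g$: its proof yields a label-preserving graph isomorphism $\varphi : \Gamma \to \Gamma'$ and a sign $\varepsilon = \pm 1$ with $g(v) = \varphi(v)^\varepsilon$ for every $v \in V(\Gamma)$. Let $\psi : A_{\Gamma'} \to A_\Gamma$ be the graph isomorphism induced by $\varphi^{-1}$; since $\Gamma$ belongs to $\mathcal{T}$ (via the empty sequence of edge-twists) and $\Gamma' \in \mathcal{T}$ by construction, the map $\psi$ lies in $S_{Iso_{\mathcal T}}$. Then the composition $f' := \psi \circ g$ sends every standard generator $v \in V(\Gamma)$ to $v^\varepsilon$, so $f'$ is either the identity or the global inversion of $A_\Gamma$ — in either case, $f'$ lies in the subgroup generated by conjugations, graph automorphisms, and the global inversion.

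There is no real obstacle: the corollary is essentially an unpacking of Corollary~\ref{cor:twists_make_trivial_labels} together with Lemma~\ref{lem:iso_trivial_labels}. The only bookkeeping is to confirm that the graph isomorphism $\psi$ genuinely lies in $S_{Iso_{\mathcal T}}$ (immediate from $\Gamma,\Gamma' \in \mathcal{T}$ and $\psi$ being label-preserving), and that the final map sending each generator to itself or its inverse manifestly sits inside the subgroup generated by graph automorphisms and the global inversion. Equivalently, one could observe that post-composing $g$ with the graph isomorphism $\psi$ preserves triviality of all the chunk-level coefficients $\alpha_{\cdot}(\Gamma_i)$ (since $\psi$ carries standard generators to standard generators and the fundamental domain structure is preserved), and then quote the second half of Lemma~\ref{lem:iso_trivial_labels} directly.
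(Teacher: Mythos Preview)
Your proposal is correct and follows exactly the same route as the paper, which simply states that the result is a direct consequence of Corollary~\ref{cor:twists_make_trivial_labels} and Lemma~\ref{lem:iso_trivial_labels}. You have merely unpacked this one-line proof in detail, supplying the graph isomorphism $\psi$ explicitly and verifying it lies in $S_{Iso_{\mathcal T}}$.
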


\begin{proof}
    This is a direct consequence of Corollary~\ref{cor:twists_make_trivial_labels} and Lemma~\ref{lem:iso_trivial_labels}.
\end{proof}

\begin{proof}[Proof of Theorem~\ref{thm:Aut_generating_set}]
Let $f: A_\Gamma \rightarrow A_\Gamma$ be an automorphism. By Corollary~\ref{cor:auto_simplification}, we can pick edge-twists $s_1, \ldots, s_n \in S_{Iso_{\mathcal T}}$, and a graph isomorphism $\psi\in S_{Iso_{\mathcal T}}$ such that the composition
\[\psi \circ s_n \circ \cdots \circ s_1 \circ i_k \circ \cdots \circ i_1 \circ f: A_\Gamma \rightarrow A_{\Gamma}\]
is well-defined, and is in the subgroup of $\Aut(A_\Gamma)$ generated by the conjugations, graph automorphisms, and the global inversion of $A_\Gamma$.

By construction of the isomorphisms $\varphi_{\Gamma'}$ for $\Gamma' \in \mathcal{T}$, we can now rewrite 
\[\psi \circ s_n \circ \cdots \circ s_1 \circ i_k \circ \cdots \circ i_1 \circ f = \overline{\psi} \circ \overline{s_n} \circ \cdots \circ \overline{s_1} \circ i_k \circ \cdots \circ i_1 \circ f\]
with $\overline{\psi}, \overline{s_1}, \ldots, \overline{s_n} \in S_{Aut_\Gamma}$ by construction. Thus, $f$ can be written as a product of elements of $S_{Aut_\Gamma}$, and we are done.
\end{proof}

\bibliographystyle{alpha}
\bibliography{mybib}

\vspace{0.5cm}

\bigskip\noindent
\textbf{Mart\'in Blufstein}, 

\noindent Address: Department of Mathematical Sciences, University of Copenhagen, 2100 Copenhagen, Denmark.

\noindent Email: \texttt{mblufstein@dm.uba.ar}

\smallskip

\bigskip\noindent
\textbf{Alexandre Martin}, 

\noindent Address: Department of Mathematics and the Maxwell Institute for the Mathematical Sciences, Heriot-Watt University, Edinburgh EH14 4AS, UK.

\noindent Email: \texttt{alexandre.martin@hw.ac.uk}

\smallskip

\bigskip\noindent
\textbf{Nicolas Vaskou}, 

\noindent Address: School of Mathematics, University of Bristol, Bristol BS8 1UG, UK.

\noindent Email: \texttt{nicolas.vaskou@gmail.com}

\end{document}